\theoremstyle{plain}
\newtheorem{de}{Definition}[section]
\newtheorem{lem}[de]{Lemma}
\newtheorem{prop}[de]{Proposition}
\newtheorem{cor}[de]{Corollary}
\newtheorem{thm}[de]{Theorem}
\theoremstyle{definition}
\newtheorem{rem}[de]{Remark}
\numberwithin{equation}{section}
\newcommand{\Id}{\mathbf{I}}
\newcommand{\covD}{\mathbf{D}}	
\newcommand{\imu}{i}
\renewcommand{\epsilon}{\varepsilon}
\renewcommand{\Im}{\operatorname{Im}}
\renewcommand{\Re}{\operatorname{Re}}
\newcommand{\R}{{\mathbb{R}}}
\newcommand{\N}{{\mathbb{N}}}
\newcommand{\C}{{\mathbb{C}}}
\newcommand{\Z}{{\mathbb{Z}}}
\renewcommand{\S}{{\mathbb{S}}}
\newcommand{\cH}{{\mathcal{H}}}
\renewcommand{\H}{\mathcal H} 
\begin{document}
 \title[Modified scattering for the three dimensional Maxwell-Dirac system]{Modified scattering for the three dimensional Maxwell-Dirac system}
 
 \author{Sebastian Herr}
\address[S.~Herr]{Fakult\"at f\"ur Mathematik, Universit\"at Bielefeld, Postfach 10 01 31, 33501 Bielefeld, Germany}
\email{herr@math.uni-bielefeld.de}
 \author{Mihaela Ifrim}
  \address[M.~Ifrim]{Department of Mathematics,
480 Lincoln Drive,
213 Van Vleck Hall,
Madison, WI 53706, USA}
\email{ifrim@math.wisc.edu}
\author{Martin Spitz}
 \address[M.~Spitz]{Fakult\"at f\"ur Mathematik, Universit\"at Bielefeld, Postfach 10 01 31, 33501
  Bielefeld, Germany}
\email{mspitz@math.uni-bielefeld.de}

\keywords{Global well-posedness, modified scattering, Dirac-Maxwell system, vector fields, wave packets}

\subjclass{81R20,35Q61,35Q41}
\begin{abstract}

In this work we prove global well-posedness for the massive Maxwell-Dirac system in the Lorenz gauge in $\mathbb{R}^{1+3}$, for
small, sufficiently smooth and decaying initial data, as well as modified scattering for the solutions. Heuristically we exploit the close connection between the massive  Maxwell-Dirac and the wave-Klein-Gordon equations, while 
developing a novel approach which applies directly at the level of the Dirac equations. The modified scattering result follows from a precise description of the asymptotic behavior of the solutions inside the light cone, which we derive via the method of testing with wave packets of Ifrim-Tataru.

\end{abstract}

\maketitle

\setcounter{tocdepth}{1}
\tableofcontents

%%%%%%%%%%%%%%%%%%%%%%%%%%%%%%%%%%%%%%%%%
\section{Introduction and main results}
%%%%%%%%%%%%%%%%%%%%%%%%%%%%%%%%%%%%%%%%%

The problem we will address in this work is the Cauchy problem for the Maxwell-Dirac system on the Minkowski space-time $\mathbb{R}^{1+3}$. In the Lorenz  gauge 
	 $$\partial^\mu A_\mu = 0,$$ it has the form
\begin{equation}
\left\{
	\begin{aligned}
	\label{eq:MD}
	- i& \gamma^\mu \partial_\mu \psi + \psi = \gamma^\mu A_\mu \psi, \\
	& \Box A_\mu = - \overline{\psi} \gamma_\mu \psi.
	 \end{aligned}
  \right.
\end{equation}
This is a fundamental model arising  from relativistic field theory, and it describes the interaction of an electron with its self-induced electromagnetic field. 
The main interest here is on the long time dynamics of the Cauchy problem
with prescribed initial data at time $t = 0$,
\begin{equation}
\label{id}
\psi(0, x) = \psi_0(x), \quad A_{\mu}(0,x)=a_\mu(x),\quad \partial_tA_{\mu}(0,x)=\dot{a}_\mu(x) .
\end{equation}

\medskip

The unknowns are the spinor field  $\psi=\psi(t,x)$, taking values in $\mathbb{C}^4$, and the real-valued potentials $A_{\mu}(t,x)$, with $\mu=\overline{0,3}$.
Without loss of generality we have set the mass in the Dirac equation to be equal to $1$.

\medskip

The main results we present in this paper address  two fundamental questions: (i)  the  global
existence of solutions to the above system, for small initial data satisfying some mild regularity and spatial decay assumptions, and (ii)  the asymptotic description of the solutions.
Examining more closely the asymptotic behavior of the solutions at infinity,
we will show that a modified scattering phenomenon occurs. Precisely, we prove that inside the light cone the following hold:
\begin{enumerate}[label=(\roman*)]
\item  $A$ has $t^{-1}$ decay,
\item $\psi$ does decay at the dispersive
$t^{-\frac32}$ rate, but with a logarithmic
phase correction.
\end{enumerate}

\smallskip

Compared with prior related  works, our novel contributions here include the following:
\begin{itemize}
\item Even though our problem is semilinear in nature, the asymptotic description of the solutions gives a modified scattering result, that reveals a stronger coupling between the Dirac and the Maxwell equation,  more than one suspects when taking a first glance at  the nonlinearity.
\item To a large extent our estimates  are Lorentz invariant, which reflects the full Lorentz symmetry of the Maxwell-Dirac system in the Lorenz gauge, and  is a consequence of having derived the Lorentz vector fields that commute with the linear component of our system \eqref{eq:MD}. 
\item We make no assumptions on the support of the initial
  data. Furthermore, we make very mild decay assumptions on the
  initial data at infinity. In particular, we use only three Lorentz vector fields in the analysis, which is close to optimal and significantly below all previous results on Maxwell-Dirac.
\item Rather than using arbitrarily high regularity, here we work with very limited regularity for the initial data. We only require energy-space bounds on three vector fields which are complemented with bounds on a maximum of nine derivatives without any further decay assumption.
\item To compare with the best results on the massive Maxwell-Klein-Gordon equation \cite{KWY19 ,FWY21}, the global results require up to two derivatives with an additional decay of $r^{-\frac52-}$ in the energy space, which is different from the one for Maxwell-Dirac.
\item In terms of methods, our work employs a combination
  of energy estimates localized to dyadic space-time regions, and
  pointwise interpolation type estimates within the same regions. This is  akin
  to ideas previously used by Metcalfe-Tataru-Tohaneanu \cite{mtt12} in
  a linear setting, and then later refined to apply to a quasilinear setting in the work of Ifrim-Stingo \cite{ifrims}.
  \item Similar to the hyperboloidal foliation method developed in~\cite{Kl85, tataru-hyp, LFM16, LFM24}, we make essential use of the hyberbolic nature of the problem. In order to minimize the regularity and decay assumptions and to effectively employ the testing by wave packet method, we proceed differently in using space-time estimates in the $C_{TS}$ regions.
  \item The asymptotic description of the spinor vector field $\psi$ is obtained using the wave packet testing method of Ifrim-Tataru \cite{IT4, IT1, IT2, IT3}, combined with a novel set of projections that we uncovered in the analysis of the Dirac equation.
\item We identify an asymptotic system for $\psi$ and $A$ inside the light cone,
which has a very clean expression in hyperbolic coordinates.
\end{itemize}

%%%%%%%%%%%%%%%%%%%%%%%%%%%%%%%%%%%%%%%%%
\subsection{Previous work}
%%%%%%%%%%%%%%%%%%%%%%%%%%%%%%%%%%%%%%%%%
A brief survey of previous results on the massive Maxwell-Dirac system and related equations is in order.  We would like to include a more exhaustive list of works, in order to create a context of ideas and results that have emerged in this line of research, in higher dimension, as well as works that address related models like the massless Maxwell-Dirac system or Maxwell-Klein-Gordon systems.
We start with a brief survey of previous results on \eqref{eq:MD} and related equations, namely with the early work on local well-posedness of \eqref{eq:MD} on $\mathbb{R}^{1+3}$ by Gross \cite{Gr} and Bournaveas \cite{Bou}, followed by the more recent work of D'Ancona--Foschi--Selberg \cite{DFS2}  where they established local well-posedness of \eqref{eq:MD} on $\mathbb{R}^{1+3}$ in the Lorenz gauge $\partial^{\mu} A_{\mu} = 0$ for data $\psi(0) \in H^{\epsilon}, A_{\mu}[0] \in H^{\frac{1}{2} + \epsilon} \times H^{-\frac{1}{2}+\epsilon}$, which is almost optimal. Relevant to their method of proof is their  discovery of a deep system null structure of \eqref{eq:MD} in the Lorenz gauge. A very interesting complementary result of Selberg and Tesfahun is the work in \cite{ST21}, which proves ill-posedness below the charge class in space dimensions three and lower. Also, we mention the work on uniqueness of Masmoudi--Nakanishi \cite{MN03}. In more recent work, \cite{GO},  Gavrus and Oh  obtained global well-posedness of the massless Maxwell-Dirac equation in Coulomb gauge on $\mathbb{R}^{1+d}$ $(d \geq 4)$ for data with
small scale-critical Sobolev norm, as well as modified scattering of the solutions. In \cite{lee2023scattering}, Lee has obtained linear scattering for solutions of \eqref{eq:MD} on $\mathbb{R}^{1+4}$.
\smallskip

In terms of global well-posedness, D'Ancona--Selberg \cite{DS} have already obtained a global result for  \eqref{eq:MD} on $\mathbb{R}^{1+2}$ and proved global well-posedness in the charge class.  Regarding work in  $\mathbb{R}^{1+3}$ for\eqref{eq:MD}, we also mention the work of Georgiev \cite{Geo}, Flato, Simon, and Taflin \cite{FST}, and  Psarelli \cite{Psa05}  on global well-posedness for small, smooth and localized data,  as well as the works  \cite{BMS, NM1} on the non-relativistic limit and \cite{NM2} on unconditional uniqueness at regularity $\psi \in C_{t} H^{1/2}, \ (A,\partial_t A) \in C_{t} (H^{1} \times L^{2})$ in the Coulomb gauge. Simplified versions of~\eqref{eq:MD}   were studied in~\cite{P2014, C2020, CKLY2022}. 
Also, stationary solutions have been constructed by Esteban--Georgiev--S\'er\'e \cite{EGS96}.

The next few paragraphs will also discuss related models as they played a crucial role in the ideas that emerged in the study of the Maxwell-Dirac system. For example, a scalar counterpart of \eqref{eq:MD} is the \emph{Maxwell--Klein--Gordon equations} (MKG). We first discuss the massless (MKG). In $d = 3$, Klainerman--Machedon \cite{KM94} proved global well-posedness in the Coulomb and temporal gauge. The asymptotic behavior was investigated by Candy--Kauffman-Lindblad~\cite{CKL19}. Further recent work studying these models should be mentioned: In the energy critical case $d =4$, local well-posedness results for (MKG) were proved by Krieger--Sterbenz--Tataru \cite{KST}. Global well-posedness of (MKG) for arbitrary finite energy data was recently established by Oh and Tataru \cite{OT2, OT3, OT1}, and independently by Krieger--L\"uhrmann \cite{KL}.

Recent contributions to the massive (MKG) in $d = 3$ include the work by Klainerman--Wang--Yang~\cite{KWY19}, establishing asymptotic properties for non-compactly supported data, and Fang--Wang--Yang~\cite{FWY21}, allowing for large Maxwell fields. A precise modified scattering result in the spirit of our Theorem~\ref{thm:ms} for the massive (MKG) in Lorenz gauge was established by Chen~\cite{Ch24} and similarly for the wave-Klein-Gordon system by Chen--Lindblad in~\cite{CL23}. Both these systems and the Maxwell-Dirac system share the dispersive decay heuristics and the strong influence of the hyperbolic nature of the problem. We emphasize that in the Maxwell-Dirac case the Dirac component satisfies a first order equation, resulting in a different nonlinear coupling. Moreover, the intricate spinorial structure of the Maxwell-Dirac system requires new insights, as we will describe in further detail below Theorem~\ref{thm:ms}.

Another model which contributed to the circle of ideas  later circulated in this research direction is provided by the works on the \emph{Dirac-Klein-Gordon systems}. Here, recent work includes the work of D'Ancona-Foschi  \cite{DAncona2007}, as well as the most recent work of Bejenaru and Herr \cite{bejher}, where under a non-resonant condition on the masses, they proved global well-posedness and scattering for the massive Dirac-Klein-Gordon system with small initial data of subcritical regularity in $d=3$.

Work on Dirac equations was also influential in the results obtained for Maxwell-Dirac equations. Notable recent results here are the optimal small data global well-posedness works which were proved recently for the \emph{cubic Dirac equation} in $\mathbb{R}^{1+2}$ and $\mathbb{R}^{1+3}$ by Bejenaru--Herr \cite{BH1, BH2} (massive) and Bournaveas--Candy \cite{BC} (massless). The references in this paragraph  make use of  a feature that the Dirac equation possesses, namely a spinorial null structure.

We also insist on mentioning that our list of references, and the references within these works, is by no means exhaustive; the interested reader can see it as a suggestion of most relevant works related to our current work.   

\smallskip

We would like to mention that our  work is very  different from previous works, in that it does not make use of a spinorial null structure  which traditionally has been developed in order to relate the Dirac equation to the Klein-Gordon models; more so this connection was exploited in scattering results that have emerged for Maxwell-Dirac equations.  Instead,  we work  directly at the level of the Dirac equation in order to uncover the modified scattering behavior. In doing so we reveal a new structural property of the Dirac equations, which is more suitable to global dynamic purposes, explicitly in deriving the asymptotic equation for the spinor vector field $\psi$.

Recent work of the second author with Tataru on modified scattering for a series of relevant models  \cite{IT4, IT1, IT2} played a crucial role in this novel approach we present here.  A comprehensive and exhaustive expository work on recent developments on  modified scattering   is due to the second author and Tataru; see \cite{IT4}. A second important reference that played a direct  role in the energy and pointwise estimates we perform here is the work of the second author with Stingo \cite{ifrims} on almost global existence for wave-Klein-Gordon systems.  

%%%%%%%%%%%%%%%%%%%%%%%%%%%%%%%%%%%%%%%%%
\subsection{The Maxwell-Dirac system}
%%%%%%%%%%%%%%%%%%%%%%%%%%%%%%%%%%%%%%%%%

We consider the Maxwell-Dirac system  on the Minkowski space-time $\mathbb{R}^{1+d}$ for space dimension $d=3$. The space-time coordinates are denoted by $x^\alpha$ with $\alpha =\overline{0,3}$ and 
$t = x^0$, and the Minkowski metric and its inverse are
\[
(g_{\alpha\beta} ) := \mbox{diag}(-1, 1, 1, 1),
\qquad 
(g^{\alpha\beta} ):= \mbox{diag}(-1, 1, 1, 1),
\]
with standard conventions for raising and lowering indices, and $\Box=-\partial^\nu\partial_\nu$.

\bigskip

The Dirac equation is described using the ``gamma matrices'', which are $4 \times 4$ complex-valued matrices $\gamma^{\mu}$ with $\mu$ ranging from $0$ to $3$, 
\begin{align*}
	\gamma^0 := \begin{pmatrix}
		\Id_{2} &0 \\
		0 &-\Id_{2}
	\end{pmatrix}, \qquad
	\gamma^j := \begin{pmatrix}
		0 &\sigma^j \\
		-\sigma^j &0
	\end{pmatrix}
\end{align*}
with the Pauli matrices given by
\begin{align*}
	\sigma^1 := \begin{pmatrix}
		0 &1 \\
		1 &0
	\end{pmatrix}, \qquad
	\sigma^2 := \begin{pmatrix}
		0 &-\imu \\
		\imu &0
	\end{pmatrix}, \qquad
	\sigma^3 := \begin{pmatrix}
		1 &0 \\
		0 &-1
	\end{pmatrix},
\end{align*}
and satisfying the anti-commutation relations
\begin{equation} \label{eq:gmmRel}
	 (\gamma^{\mu} \gamma^{\nu} + \gamma^{\nu} \gamma^{\mu}) = -2 g^{\mu \nu} \, \Id_{4},
\end{equation}
where $\Id_{4}$ is the $4 \times 4$ identity matrix; if no confusion is created, a handy short hand notation we will be using is $\Id_{4}=:\Id$.

 Given a vector valued function (spinor field) $\psi$  on $\mathbb{R}^{1+3}$  that takes values in $\mathbb{C}^{4}$, on which $\gamma^{\mu}$ acts as multiplication,  we define the following \emph{conjugation operation}
\begin{equation}
\label{def:bar for vect}
\overline{\psi}: =\psi^\dag \gamma^0,
\end{equation}
where  $\psi^\dag$ is the Hermitian adjoint of $\psi$.
The same  conjugation relation defined for vectors in equation \eqref{def:bar for vect} extends to general $4\times 4$ matrices $\gamma$ 
\begin{equation*} %\label{eq:gmmRel:conj}
	\overline{\gamma}: = \gamma^0 \gamma ^\dag \gamma^{0}. 
\end{equation*}
In particular for the matrices $\gamma^{\alpha} $ above one easily verifies that
\begin{equation*}
%\label{eq:gamma-adjoint}
\overline{\gamma^{\alpha}}=\gamma^{\alpha}.
\end{equation*}

\bigskip

A \emph{spinor field} $\psi$ is a function on $\mathbb{R}^{1+3}$ or on any  open subset of $\mathbb{R}^{1+3}$ that takes values in $\mathbb{C}^{4}$. 
Given a real-valued 1-form $A_{\mu}$ (connection 1-form), we introduce the \emph{gauge covariant derivative} on spinors
\begin{equation*}
	\covD_{\mu} \psi := \partial_{\mu} \psi - i A_{\mu} \psi,
\end{equation*}
and the associated \emph{curvature 2-form} 
\begin{equation*}
	F_{\mu \nu} : = \partial_{\mu} A_{\nu} - \partial_{\nu} A_{\mu}=(\mbox{d}A)_{\mu\nu}.
\end{equation*}

The Maxwell–Dirac equations describe the relativistic quantum electrodynamics of particles within self-consistent generated and external electromagnetic fields.
The relativistic Lagrangian field describing the interaction between a connection 1-form $A_{\mu}$, representing an electromagnetic potential, and a spinor field $\psi$, modeling a charged fermionic field is a  space-time integral that takes the form
\begin{equation*}
	\mathcal{S}[A_{\mu}, \psi] = \iint_{\mathbb{R}^{1+3}} \frac{1}{4} F_{\mu \nu} F^{\mu \nu} + i \langle\gamma^{\mu} \covD_{\mu} \psi, \gamma^{0} \psi\rangle -  \langle \psi,  \gamma^{0}\psi\rangle \, dtdx .
\end{equation*}
Here $\langle \psi^{1}, \psi^{2}\rangle := (\psi^{2} )^\dagger \psi^{1}$ is the usual inner product on $\mathbb{C}^{4}$. The Euler--Lagrange equations for $\mathcal{S}[A_{\mu}, \psi]$ take the form
\begin{equation} \label{eq:MD-long}
\left\{
\begin{aligned}
	&\partial^{\nu} F_{\mu \nu} = \langle\psi, \gamma^{0} \gamma_{\mu} \psi\rangle \\
	&i \gamma^{0} \gamma^{\mu} \covD_{\mu} \psi = \gamma^{0} \psi.
\end{aligned}
\right.
\end{equation}
We will refer to \eqref{eq:MD-long} as the \emph{Maxwell--Dirac equations}.

A key   feature of \eqref{eq:MD-long} is its  invariance under gauge transformations meaning that  given any solution $(A, \psi)$ of \eqref{eq:MD-long} and a real-valued function $\chi$, called gauge transformation, on $I \times \mathbb{R}^{3}$, the gauge transform $(\tilde{A}, \tilde{\psi}) = (A + \mbox{d} \chi, e^{i \chi} \psi)$ of $(A, \psi)$ is also a solution to \eqref{eq:MD-long}. This in fact says that relative to this gauge transform we should think of a solution as being  an  equivalence class of functions that are solutions to our problem.  

In order to address the well-posedness theory we need to remove the ambiguity arising from this invariance,  for our system \eqref{eq:MD-long}, and fix the gauge. Traditionally there are several gauges that have been used to address this issue. This includes for instance the \emph{Coulomb gauge} $\partial_j A_j = 0$,
which leads to a  mix of hyperbolic
and elliptic equations. Another possible gauge choice  is the \emph{temporal gauge} $A_0=0$, which retains finite speed of propagation but loses some ellipticity.
In our paper we impose the  \emph{Lorenz gauge} condition, which 
reads
\begin{equation*}% \label{eq: Lorentz-g}
	\partial^\mu A_\mu =  0,
\end{equation*}
and has the advantage  that it is Lorentz invariant, resulting in a more symmetric form of the equations (nonlinear wave equations) compared to the other choices discussed above.

When applied to \eqref{eq:MD-long}, the Lorenz gauge leads us to the system
\begin{equation}\label{eq:MD-lg}
\left\{
	\begin{aligned}
	- i \gamma^\mu \partial_\mu \psi + \psi ={}& \gamma^\mu A_\mu \psi \\
	 \Box A_\mu ={}& - \overline{\psi} \gamma_\mu \psi \\
	 \partial^\mu A_\mu ={}& 0.
	 \end{aligned}
  \right.
\end{equation}

The main interest here is on the long time dynamics of the Cauchy problem
with prescribed initial data at time $t = 0$,
given by \eqref{id}.

If one considers only the (self-contained) system formed by the two equations in \eqref{eq:MD-lg}, then the
initial data above can be chosen arbitrarily. However, if in addition one also adds the third equation, then the initial data is required to satisfy the following constraint equations
\begin{equation}\label{eq:constr}
\left\{
	\begin{aligned}
\dot a_0 ={}&  \ \partial_j a_j 
\\
\Delta a_0 ={}&  \ \partial_j \dot a_j +|\psi_0|^2 ,
 \end{aligned}
  \right.
\end{equation}
which are then propagated to later times by the flow generated by the first two equations.

\bigskip

%%%%%%%%%%%%%%%%%%%%%%%%%%%%%%%%%%%%%%%%%
\subsection{Functional spaces} 
%%%%%%%%%%%%%%%%%%%%%%%%%%%%%%%%%%%%%%%%%

In this section, we introduce the main function spaces we use to prove our main results. As a guideline we use the scaling of the massless Maxwell-Dirac system, which is known to be  invariant under the scaling $(\lambda > 0)$
\begin{equation*}
%\label{scaling}
(\psi, A_{\mu})\rightarrow (\lambda^{-\frac{3}{2}}\psi (\lambda^{-1}t, \lambda^{-1}x), \lambda^{-1}A_{\mu} (\lambda^{-1}t, \lambda^{-1}x) ).
\end{equation*}
This leads to the critical Sobolev space $\mathcal{H}^{0}:=L^2 \times \dot{H}^{\frac{1}{2}}\times\dot{H}^{-\frac{1}{2}}$; the first space measures $\psi$ and the remaining spaces measure position and velocity respectively. In terms of interesting quantities let us state the ones that are available for this model, but emphasize that none of them will play a role in our analysis:
\begin{enumerate}
\item the \textbf{charge conservation}
\begin{equation*}
%\label{eq:ChargeConservation}
    {\bf q} := - \int |\psi|^2\, dx  =- \|\psi_0\|^2_{L^2},
\end{equation*}

\item  the \textbf{energy}
\begin{equation*}
%\label{eq:EnergyConservation}
{\bf E} := \int i \overline{\gamma^j \mathbf{D}_j \psi} \psi +  \overline{\psi} \psi + \frac12 (|E|^2+|B|^2) \, dx, \qquad 
\end{equation*}
\end{enumerate}
with the electric field $E=(F_{10},F_{20},F_{30})$ and the magnetic field $B=(F_{23},F_{31},F_{12})$. Note that ${\bf E} $ is not sign definite.
In terms of terminology, our problem is called \emph{charge critical}, and this is because the charge is measured in the critical space $L^2$. In $d=4$, the critical Sobolev space would change and the energy will be expressed in terms of these critical Sobolev spaces, leading to the terminology \emph{energy critical Maxwell-Dirac system}.

%%%%%%%%%%%%%%%%%%%%%%%%%%%%%%%%%%%%%%%%%
\subsection{Main results} 
%%%%%%%%%%%%%%%%%%%%%%%%%%%%%%%%%%%%%%%%%

 To study the small data long time well-posedness problem for the
nonlinear evolution  \eqref{eq:MD} one needs to add some decay assumptions for the initial data to the
mix. Before doing so we need to introduce two  small pieces of notations. Section~\ref{s:not} will contain the bulk of the notations and definitions pertaining to this work.  
\begin{itemize}
\item We make the convention of using upper-case letters for multi-indices, e.g.\
$\partial^I=\partial_{x_0}^{i_0}\cdots \partial_{x_d}^{i_d}$ and $x^I={x_0}^{i_0}\cdots {x_d}^{i_d}$, where $I=(i_0,\ldots,i_d)$, and we write $I_0$ if $i_0=0$. 
We also use the shorthand notation $\partial^k=\{\partial^I\}_{|I|=k}$ and $\partial^{\leq k}=\{\partial^I\}_{|I|\leq k}$, for $k\in \N_0$. Similar as above, $k_0$ indicates that there are no $\partial_t(=\partial_{x_0})$ derivatives.

\item We also recall the vector fields (denoted here by) $\Omega_{\alpha\beta}$,
\[
\Omega_{\alpha\beta}:= x_{\beta} \partial_{\alpha} - x_{\alpha}\partial_{\beta}, \quad \alpha, \beta =\overline{0,3},
\]
which represent the generators of the Lorentz group.
\end{itemize}

At this point we are ready to state our  first main theorem, which
clarifies the type of initial data we are considering:

\begin{thm}
\label{thm:gwp-easy}
Assume that the initial data $(\psi_0,a,\dot a)$ for the system \eqref{eq:MD}
satisfies the smallness and decay conditions
\begin{equation}\label{small-data}
\sum_{3j_0 + k_0 \leq 9} 
\| |x|^{j_0} \partial^{\leq j_0+k_0} \psi_0\|_{L^2}
+ \| |x|^{j_0} \partial^{j_0+k_0} a\|_{\dot H^\frac12} + \| |x|^{j_0} \partial^{j_0+k_0} \dot a\|_{\dot H^{-\frac12}} \leq \epsilon,
\end{equation}
as well as the additional low frequency bound 
\begin{equation}\label{small-data-nu}
\|a^\mu\|_{\dot{H}^{\frac12 - \nu}} + 
\|\dot a^\mu\|_{\dot{H}^{-\frac12 - \nu}} \leq \epsilon, \qquad \nu > 0.
\end{equation}

If $\epsilon$ is small enough, then the solution $(\psi,A)$ is global in time, and satisfies the vector field 
bounds
\begin{equation*}
\sum_{3|J| + |K| \leq 9}
\| \Omega^{J} \partial^{K} \psi\|_{L^2}
+ \| \Omega^J \partial^{K} A\|_{\dot H^\frac12} + \| \Omega^J \partial^{K} \partial_t A\|_{\dot H^{-\frac12}} \lesssim \epsilon \langle t\rangle ^{C\epsilon},
\end{equation*}
as well as the pointwise bounds
\begin{equation}\label{thm-point}
|\psi(t,x)| \lesssim  \frac{\epsilon}{\langle t+|x|\rangle^{\frac32-C\epsilon} \langle t-|x|\rangle^{C\epsilon} }, \qquad  |A(t,x)| \lesssim \frac{\epsilon}{\langle t+|x|\rangle }, \; (t>0)
\end{equation}
and, in addition, when $|x|-t\geq \langle t\rangle^{\frac13}$,
\begin{equation*}
    |\psi(t,x)| \lesssim \frac{\epsilon}{{\langle t+|x|\rangle }^{\frac{3}{2}}\langle |x|-t\rangle^{\delta}}.
\end{equation*}
\end{thm}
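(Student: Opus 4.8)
The plan is to establish this result via a continuity/bootstrap argument built on two complementary families of estimates — energy estimates localized to dyadic space-time regions and pointwise interpolation estimates within those same regions — in the spirit of Metcalfe--Tataru--Tohaneanu and Ifrim--Stingo. First I would set up the bootstrap: assume on a time interval $[0,T]$ the vector field bounds $\sum_{3|J|+|K|\le 9}(\|\Omega^J\partial^K\psi\|_{L^2}+\|\Omega^J\partial^K A\|_{\dot H^{1/2}}+\|\Omega^J\partial^K\partial_t A\|_{\dot H^{-1/2}})\le \epsilon t^{C\epsilon}$ together with the pointwise bounds \eqref{thm-point}, with doubled constants, and aim to recover them with the original constants. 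The key structural input is that the Lorentz vector fields $\Omega_{\alpha\beta}$ (suitably modified to act on spinors, incorporating the spin part) commute with the linear Maxwell-Dirac operator; this has to be verified carefully for the Dirac block, since $\gamma^\mu\partial_\mu$ requires the spinorial correction to the rotation/boost generators. Applying $\Omega^J\partial^K$ to \eqref{eq:MD} then yields equations of the same type with nonlinear source terms that are, schematically, products of lower-order vector-field-differentiated $A$ and $\psi$; the null/algebraic structure of $\gamma^\mu A_\mu\psi$ and $\overline{\psi}\gamma_\mu\psi$ must be used to control these.

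Second, I would prove the energy estimates. For the $\psi$-equation one uses the conserved-type $L^2$ energy for Dirac, $\frac{d}{dt}\|\psi\|_{L^2}^2 \lesssim |\langle A_\mu\gamma^0\gamma^\mu\psi,\psi\rangle|$, and for $\Omega^J\partial^K\psi$ the analogous identity with a commutator source; the right-hand side is estimated by splitting space-time into the interior cone region $\{|x|\lesssim t/2\}$, the exterior region $\{|x|\gtrsim 2t\}$, and the transition region near $|x|=t$, using in each region the pointwise decay of $A$ (namely $t^{-1}$ from the bootstrap, refined near the cone) paired with Hardy/Sobolev and the $L^2$ control of the high-order spinor norms. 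For the $A$-equation $\Box A_\mu = -\overline{\psi}\gamma_\mu\psi$ one uses the standard energy estimate for the wave equation in $\dot H^{1/2}\times\dot H^{-1/2}$, together with the Klainerman--Sobolev / vector field bounds for the quadratic spinor source, which by the bootstrap decays like $t^{-3+C\epsilon}$ pointwise and is integrable in time, giving the $t^{C\epsilon}$ growth. The low-frequency hypothesis \eqref{small-data-nu} is needed to close the $\dot H^{-1/2}$ piece for $A$ uniformly. Gr\"onwall in $t$ then produces $\epsilon t^{C\epsilon}$.

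Third, I would upgrade the energy bounds to the pointwise bounds \eqref{thm-point} via interpolation-type inequalities on dyadic regions: inside a region $|x|\sim 2^j$, $t\sim 2^k$ one interpolates between $L^2$ bounds on $\Omega^J\partial^K\psi$ (few vector fields, high derivatives versus many vector fields, low derivatives) to extract $t^{-3/2}$ decay with a $\langle t-|x|\rangle^{C\epsilon}$ loss and a $\langle t+|x|\rangle^{C\epsilon}$ loss; for $A$ one combines the wave-equation fundamental solution representation with the vector field bounds on the source to get the sharp $\langle t+|x|\rangle^{-1}$ decay. The improved exterior bound $|\psi|\lesssim \epsilon\langle t+|x|\rangle^{-3/2}t^{-\delta}$ for $|x|-t\ge t^{1/3}$ follows from the same scheme but exploiting that the exterior region sees extra decay from the weight $x^{J_0}$ in the data and the fact that the Dirac flow has no genuine slow component there. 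I expect the main obstacle to be the region near the light cone $|x|\approx t$, where neither the interior dispersive decay nor the exterior weighted decay is directly available: there one must carefully track the $\langle t-|x|\rangle$-weighted energies and use the precise commutator structure (and the fact that one has only three Lorentz vector fields, so no room to waste) to avoid a logarithmic divergence in the energy estimate — this is exactly where the dyadic localization of MTT/Ifrim--Stingo, rather than global vector field estimates, becomes essential.
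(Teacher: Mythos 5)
Your proposal follows the paper's general architecture through the first two stages (vector field commutators, dyadic space--time energy estimates with $t^{C\epsilon}$ growth, Klainerman--Sobolev type interpolation on dyadic regions), but it is missing the mechanism that actually closes the bootstrap, and this is a genuine gap rather than a presentational one. Energy estimates plus Klainerman--Sobolev can only ever yield $|\psi|\lesssim \epsilon t^{-3/2+C\epsilon}$ and hence $|\overline{\psi}\gamma_\mu\psi|\lesssim \epsilon^2 t^{-3+C\epsilon}$, so feeding this source into the fundamental solution of $\Box$ gives at best $|A|\lesssim \epsilon^2 t^{-1+C\epsilon}$ near the cone. This cannot recover the sharp, lossless bound $|A|\lesssim \epsilon\langle t+|x|\rangle^{-1}$ in \eqref{thm-point}, and it also destroys the bootstrap structure: the quantity that must be reproduced without loss is the dispersive-rate control $\|\psi(t)\|_{L^6}+\|A(t)\|_{L^\infty}\lesssim \epsilon\langle t\rangle^{-1}$ (this is the paper's entire bootstrap hypothesis), since any $t^{C\epsilon}$ degradation here turns the Gronwall factor $\exp(C\epsilon\int s^{-1}ds)$ into super-polynomial growth and the iteration does not close. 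The paper removes the loss by a step your plan omits entirely: testing $\psi$ with wave packets on hyperboloids to define asymptotic profiles $\rho^{\pm}(t,v)\in V^{\pm}_v$, deriving the asymptotic ODE $i\partial_t\rho^{\pm}=-v^\alpha A_\alpha\rho^{\pm}$ along rays plus integrable errors, and exploiting that $v^\alpha A_\alpha$ is \emph{real}, so the ODE is a pure phase rotation preserving $\|\rho^{\pm}\|_H$; this yields uniform $t^{-3/2}$ bounds for $\psi$ inside the cone (Lemma~\ref{psi-final}), which close the $\psi$ part of the bootstrap and give the improved source bound needed for the lossless $t^{-1}$ bound on $A^{inhom}$. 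In other words, even though the conclusion is ``only'' global existence with $t^{C\epsilon}$-type bounds, its proof already requires the modified-scattering analysis.

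Two smaller corrections: the low-frequency hypothesis \eqref{small-data-nu} is not used to close the $\dot H^{-1/2}$ energy for $A$; it is used only for the pointwise bound on the \emph{homogeneous} wave evolution $A^{hom}$, where $\nu>0$ prevents a logarithmic loss near the cone (Proposition~\ref{prop:Ahom} and Remark~\ref{rem:mu0}). And the paper deliberately does not invoke any spinorial null structure in $\gamma^\mu A_\mu\psi$ or $\overline{\psi}\gamma_\mu\psi$: the nonlinear source terms are handled by H\"older and Gagliardo--Nirenberg/vector-field interpolation on the cup regions $C_T$ alone, so building your argument on a null-form gain would be both unnecessary and unsubstantiated here.
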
 

 Concerning the Lorenz gauge condition, we point out that the compatibility conditions \eqref{eq:constr} between $a_0$ and $\psi_0$ give rise to the so called charge-problem, see Remark \ref{rem:charge-problem} below.

\begin{rem} For smooth and localized initial data the existence of a unique global solution of~\eqref{eq:MD} was shown in Theorem~1 in the work of Georgiev~\cite{Geo}. On the other hand the work in Psarelli in \cite{Psa05} provides a lower regularity global well-posedness result, though working with compactly supported initial data which is a very restrictive assumption to make.   The same result also includes pointwise decay bounds for the solutions, however no asymptotic equations are derived. By contrast our result applies at low regularity  without using any support assumptions, and additionally we derive clean asymptotic equations for the solutions; see Theorem~\ref{thm:ms} below. 
\end{rem}

We comment here on the decay rates for $\psi$ and $A$ in the above theorem. Beginning with $\psi$, we see that we have the standard dispersive decay rate of $t^{-\frac32}$ inside the cone, but a better decay rate outside. The latter happens simply because of the initial data localization, as the group velocities for $\psi$ waves lie 
inside the cone, and approach the cone only in the high frequency limit.
However, because of the $t^{-1}$ size of $A$  there are strong nonlinear interactions that happen inside the cone which prevent  standard scattering and instead remodulate the $\psi$ waves, suggesting there should be a modified scattering asymptotic.

Turning our attention to $A$, if one were to naively think of the $A$ equation  as a  linear homogeneous wave then the bulk of it would be localized near the cone, with better decay inside, and would have a minimal interaction with $\psi$. 
 However, as it turns out, the bulk of $A$  inside the cone  comes from solving the wave equation with a $\psi$ dependent quadratic source term. This is what produces the exact $t^{-1}$ decay rate.  However, we do get the expected decay  estimates  for $\nabla A$ both outside  and inside the cone. 

To capture the asymptotic behavior of $\psi$  and $A$ at infinity,  and also understand the coupling between $A$ and $\psi$ in time-like directions, one needs to make the above heuristic discussion rigorous. We do this in the next theorem, which describes the asymptotic profiles for $\psi$ and $A$ as well as the modified scattering asymptotics.

\begin{thm}\label{thm:ms}
There exists $\delta > 0$ so that, for all solutions $(\psi,A)$ for the Maxwell-Dirac equations as in Theorem~\ref{thm:gwp-easy}, 
there exist asymptotic profiles
\begin{equation*}
(\rho^{\pm}_{\infty},a^\mu_\infty) \in C^\frac12(B(0,1)), 
\end{equation*}
vanishing at the boundary,
so that inside the light cone we have the asymptotic expansions
\begin{equation}
\label{A-asymptotic}
A^\mu(t,x) = (t^2-x^2)^{-\frac12} a^{\mu}_\infty(x/t) + O(\epsilon\langle t\rangle ^{-1}
\langle t-r \rangle^{-\delta}) ,
\end{equation}
respectively
\begin{equation}
\label{psi-asymptotic}
\psi(t,x) = (t^2-x^2)^{-\frac34} 
\sum_{\pm} e^{\pm i\sqrt{t^2-x^2}}
e^{i \frac{x_\mu a^\mu_{\infty}(x/t)}{2 \sqrt{t^2-x^2}}  \log (t^2-x^2)}
\rho^\pm_\infty(x/t) + O(\epsilon \langle t\rangle^{-\frac32} \langle t-r\rangle^{-\delta}) , \end{equation}
where $a_\infty^\mu$ is uniquely determined by $\rho^\pm_\infty$ via the elliptic equation
\begin{equation}\label{coupling}
  (-1-\Delta_H) a^\mu_{\infty}= 
  - \overline{\rho^\pm_{\infty}} \gamma^\mu  \rho^\pm_{\infty},
\end{equation}
for the hyperbolic Laplacian $\Delta_H$ in the Klein--Beltrami disk model.
\end{thm}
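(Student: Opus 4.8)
The plan is to work entirely inside the forward light cone $\{t>|x|\}$, where we pass to the hyperbolic (Klein--Beltrami) coordinates $y=x/t\in B(0,1)$ and the hyperbolic time $s=\sqrt{t^2-|x|^2}$, starting from the global solution $(\psi,A)$ and its vector field and pointwise bounds provided by Theorem~\ref{thm:gwp-easy}. The first step is the decomposition of the spinor by means of the novel projections adapted to the Dirac symbol $\gamma^\mu\xi_\mu$: write $\psi=\psi^{+}+\psi^{-}$, where $\psi^{\pm}$ carries the oscillation $e^{\mp i s}$ corresponding to the two sheets of the mass hyperboloid, and set $\rho^{\pm}(t,x):=s^{3/2}e^{\pm i s}\psi^{\pm}$, up to the appropriate projection factor, a profile that should be slowly varying in $s$. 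The stationary phase heuristic for the linear Dirac flow selects, at $(t,x)$, group velocity $y=x/t$ and phase $\mp\sqrt{t^2-|x|^2}=\mp s$, which already fixes the shape of the leading term in \eqref{psi-asymptotic}.

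The heart of the matter is to derive the asymptotic ODE for $\psi$ by the wave packet testing method of Ifrim--Tataru \cite{IT4,IT1,IT2,IT3}. For each velocity $v\in B(0,1)$ one builds an approximate solution $\Psi^{\pm}_v$ of the linear Dirac equation --- a wave packet concentrated on the ray $x=vt$ at the frequency picked out by stationary phase, normalized in $L^2$ --- and sets $\gamma^{\pm}(t,v):=\langle \psi^{\pm}(t),\Psi^{\pm}_v\rangle$. Differentiating in $t$ and inserting the Dirac equation, the linear part cancels up to wave packet errors, which we control by the vector field bounds together with the dyadic $L^2\to L^\infty$ interpolation estimates localized to space-time regions, in the spirit of Metcalfe--Tataru--Tohaneanu \cite{mtt12} and Ifrim--Stingo \cite{ifrims}; the nonlinearity $\gamma^\mu A_\mu\psi$, combined with the $t^{-1}$ profile of $A$ from Theorem~\ref{thm:gwp-easy}, yields a right-hand side of the form $\tfrac{i}{t}\Phi(v)\gamma^{\pm}(t,v)+(\text{error integrable in }t)$, where $\Phi(v)$ is governed by the limiting profile of $A$ along the ray. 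Integrating this ODE produces the logarithmic phase: rewriting $dt/t$ and the $A$-profile in terms of $s$ and $a^\mu_\infty$ recovers exactly the exponent $\tfrac{x_\mu a^\mu_{\infty}(x/t)}{2\sqrt{t^2-x^2}}\log(t^2-x^2)$, and it identifies the limit $\rho^{\pm}_\infty(v):=\lim_{t\to\infty}\gamma^{\pm}(t,v)$ once the modulation is factored out. A reconciliation step, re-expressing $\psi$ pointwise in terms of its tested coefficients via stationary phase, then upgrades this to the pointwise expansion \eqref{psi-asymptotic} with the stated $\langle t-r\rangle^{-\delta}$ error.

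For the potential I would split $A^\mu=A^\mu_{\mathrm{free}}+A^\mu_{\mathrm{D}}$ via Duhamel, $A^\mu_{\mathrm{free}}$ solving the homogeneous wave equation with data \eqref{id}; the free part lives near the cone and, owing to the low frequency bound \eqref{small-data-nu}, contributes only to the error in \eqref{A-asymptotic} in the interior. Into $\Box A^\mu_{\mathrm{D}}=-\overline\psi\gamma^\mu\psi$ one inserts the source asymptotics $\overline\psi\gamma^\mu\psi=s^{-3}\big(-\overline{\rho^{\pm}_\infty}\gamma^\mu\rho^{\pm}_\infty\big)+(\text{oscillatory cross terms}\sim e^{\pm 2is})+(\text{error})$, the cross terms integrating to lower order and the diagonal term being exactly self-similar, $s^{-3}h(x/t)$. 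Using the identity $\Box\big(s^{-1}v(x/t)\big)=s^{-3}(1+\Delta_H)v(x/t)$ for the hyperbolic foliation, such a source forces $A^\mu_{\mathrm{D}}=s^{-1}a^\mu_\infty(x/t)+\text{l.o.t.}$ with $a^\mu_\infty$ solving \eqref{coupling}; the spectral gap $\mathrm{spec}(-\Delta_H)=[1,\infty)$ on $\mathbb{H}^3$ makes $-1-\Delta_H$ boundedly invertible between the relevant weighted spaces, which both pins down $a^\mu_\infty$ uniquely from $\rho^{\pm}_\infty$ and gives the $C^{1/2}$ regularity up to, and the vanishing at, $\partial B(0,1)$.

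Finally, the scheme is genuinely coupled --- the phase in the $\psi$-ODE depends on $a^\mu_\infty$, which is manufactured from $\rho^\pm_\infty$, which is itself read off from the modulated $\psi$ --- so it must be closed by a bootstrap run on top of the a priori bounds of Theorem~\ref{thm:gwp-easy}: assume the bounds, run the wave packet analysis to produce $\rho^\pm_\infty$ and then $a^\mu_\infty$ with uniform estimates, feed the latter back into the modulation, and verify that the errors are absorbed. I expect the main obstacle to be controlling the errors \emph{uniformly up to the light cone}: near $\{t=|x|\}$ the hyperbolic coordinates degenerate ($s\to0$), the separation of the $\psi^{+}$ and $\psi^{-}$ components deteriorates, and the self-similar ansatz for $A$ competes with the free wave. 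Handling this region should require the improved exterior and near-cone decay from Theorem~\ref{thm:gwp-easy} --- in particular the bound for $|x|-t\ge t^{1/3}$ and the $\langle t-r\rangle$ weights --- together with the localized energy estimates, in order to push every error contribution into the $\langle t-r\rangle^{-\delta}$ tail while simultaneously extracting the boundary H\"older regularity of the profiles.
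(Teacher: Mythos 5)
Your overall architecture coincides with the paper's: wave packet testing along rays to extract slowly modulated profiles and the logarithmic phase for $\psi$, a self-similar ansatz $(t^2-x^2)^{-\frac12}a^\mu_\infty(x/t)$ for the Duhamel part of $A$ driven by the non-oscillatory part of $\overline\psi\gamma^\mu\psi$, the hyperbolic elliptic equation \eqref{coupling} via $\Box\bigl((t^2-x^2)^{-\frac12}a(v)\bigr)=(t^2-x^2)^{-\frac32}(-1-\Delta_H)a(v)$, and the free wave pushed into the error by the low-frequency assumption \eqref{small-data-nu} (the paper tests on hyperboloids with the $\langle\cdot,\cdot\rangle_H$ pairing rather than on time slices, which buys orthogonality of the $V^\pm_v$ components and painless integration by parts for the Lorentz fields, but that is a minor variant). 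The genuine gap is at the point where you close the coupled system. You correctly identify the circularity --- the phase correction for $\rho^\pm_\infty$ requires $a^\mu_\infty$, which is built from $\rho^\pm_\infty$ --- but you propose to resolve it ``by a bootstrap'' without specifying the space, the map, or any contraction/absorption mechanism; as stated this is not an argument. The paper needs no such fixed point: it first defines a preliminary profile using the time-dependent phase $\theta(t,v)=\int_{t_0(v)}^t x^\alpha A_\alpha(t,vt)\,t^{-1}dt$, hence determined only up to a phase rotation, and then observes that the source of \eqref{coupling} depends only on $\|\rho^+_\infty\|_H^2+\|\rho^-_\infty\|_H^2$ (see \eqref{cuplu}), which is insensitive to that ambiguity; thus $a^\mu_\infty$ is already uniquely determined, and only afterwards is the phase fixed by subtracting $\frac{v_\mu a^\mu_\infty}{2(1-v^2)}\log(t^2-x^2)$ and passing to the limit along rays. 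Without this decoupling observation your scheme does not close.

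Two further points are weaker than you suggest. First, the ``spectral gap'' claim is incorrect as stated: $-\Delta_H$ on $\mathbb{H}^3$ has spectrum $[1,\infty)$, so $-1-\Delta_H$ has spectrum $[0,\infty)$ and no gap; its radial kernel decays like $e^{-3d/2}$ while spheres grow like $e^{2d}$, so there is no bounded inverse on unweighted spaces, and the boundary decay $a^\mu_\infty\sim(1-v^2)^{\frac12}$ sits exactly at the generalized ground-state rate. The paper obtains existence, uniqueness and the $C^{\frac12}$ regularity with vanishing at $\partial B(0,1)$ by solving the forward wave equation with the decaying source $b^\mu_\infty\lesssim\epsilon^2(1-v^2)^{\frac32-C\epsilon}$ (positivity of the fundamental solution) and then bootstrapping local elliptic regularity in weighted H\"older/$L^2$ norms, not from an abstract invertibility statement. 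Second, ``the cross terms integrating to lower order'' is not automatic: to exploit the $e^{\pm 2i\sqrt{t^2-x^2}}$ oscillation one needs regularity of the amplitude $\overline{\rho^\pm_\infty}\gamma^\mu\rho^\mp_\infty$, and only about two Lorentz vector fields in weighted $L^2$ are available; the paper splits this amplitude in hyperbolic frequency, uses Bernstein and optimizes the cutoff to gain the $(t^2-x^2)^{-\delta}$ improvement, and without some such argument the naive bound only reproduces the $t^{-1}$ rate of the main term, destroying the error structure in \eqref{A-asymptotic}.
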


\medskip

The statement of the last theorem is somewhat brief since many notations are introduced 
later. However, some comments may be helpful.

\begin{enumerate}
\item Modified scattering: the asymptotic 
expansion for $\psi$ in \eqref{psi-asymptotic}
departs from the corresponding linear asymptotic
due to the logarithmic phase correction.
This is in turn generated by the exact $t^{-1}$
decay rate for $A$ inside the cone, which is also not consistent with the linear theory.
    \item Hyperbolic geometry:  the asymptotic profiles should be best viewed as functions
    on the hyperbolic space $H$, with the Klein-Beltrami representation via the
    velocity coordinate $v = x/t \in B(0,1)$.
\item Profile regularity: the $C^\frac12$ bound represents just the simplest common regularity property for $\rho^{\pm}$ and $a^{\mu}_\infty$, but in fact we prove an expanded set of bounds, which are best expressed in the hyperbolic setting, where the Lorentz vector fields $\Omega$ play the role of normalized derivatives:
\begin{equation*}
%\label{point-vf-a-profile}
|\Omega^{\leq 2} a^\mu_\infty(v) | \lesssim \epsilon^2 (1-v^2)^\frac12,    
\end{equation*}
\begin{equation*}
%\label{point-rho-profile}
| \rho^{\pm}_\infty(v) | \lesssim \epsilon(1-v^2)^{1-C\epsilon} ,   
\end{equation*}
\begin{equation*}
%\label{point-vf-rho-profile}
    \| (1-v^2)^{-\frac32+C\epsilon}
\Omega^{\leq 2} \rho^{\pm}_\infty\|_{L^2} 
 \lesssim \epsilon  .
\end{equation*}
We refer the reader to the last section for more details.

\item Higher regularity: If the initial data for $(\psi,A)$ has additional regularity, 
then the hyperbolic space regularity
of $(\rho^{\pm}_\infty, a^{\mu}_\infty)$ 
can be improved, as well as the decay rate for 
$\rho^{\pm}_\infty$ at the boundary of the unit ball. However, there is no improved decay rate
for $a^{\mu}_\infty$; instead,  $
(1-v^2)^{-\frac12} a^{\mu}_\infty$ will always have a limit at the boundary. This can be easily seen  as  a slight extension of the proof of Proposition~ \ref{prop:Ainhom}, or it follows from the asymptotics of the fundamental solution for $(-1-\Delta_{H})$ in Remark~\ref{r:fundamental}. A similar result has been proven in the Maxwell-Klein-Gordon case in \cite{Ch24} and in the Wave-Klein-Gordon case in \cite{CL23}.

\item Low frequency assumption: the additional condition \eqref{small-data-nu} on the initial data for $A$ is necessary in order to obtain the expansion \eqref{A-asymptotic} even if $\psi = 0$. Otherwise, as $\nu \to 0$, we correspondingly must have $\delta \to 0 $ in 
\eqref{A-asymptotic}.

\item Connection to Klein-Gordon: the Dirac waves are closely related to Klein-Gordon waves,
and this is reflected in the form of the asymptotic expansion for $\psi$. 
The two  components $\rho^{\pm}_{\infty}$
correspond exactly to the two Klein-Gordon 
half-waves, as it can be readily seen by examining the phases of the associated terms in the $\psi$ expansion. In a related vein, 
the ranges of $\rho^{\pm}_\infty(v)$ are restricted to $v$ dependent but Lorentz invariant subspaces $V^{\pm}_v$, see \eqref{eq:V+}, which are orthogonal with respect to the $\langle \cdot,\cdot \rangle_H $ inner product defined in \eqref{eq:DefInnerProductH}. With these notations, the source term in the coupling equation \eqref{coupling} takes the form 
\begin{equation*}
%\label{cuplu}
   \overline{\rho^\pm_{\infty}} \gamma^\mu  \rho^\pm_{\infty} = \frac{v^\mu}{\sqrt{1-v^2}} (\| \rho^+_\infty\|_{H}^2 + \| \rho^-_\infty\|_{H}^2).
\end{equation*}

\item Charge conservation: this is reflected in the asymptotic profile via the identity
\begin{equation*}
%\label{rho-versus-data}
 \| \rho^+_\infty\|_{L^2(H)}^2 + \| \rho^-_\infty\|_{L^2(H)}^2 = \|\psi_0\|_{L^2}^2.
\end{equation*}

\end{enumerate}

\medskip

 Finally we comment on the low frequency assumption \eqref{small-data-nu}:

 \begin{rem}
 The result in Theorem~\ref{thm:gwp-easy} also holds
 without the assumption \eqref{small-data-nu}
 if one is willing to slightly relax the pointwise bound for $A$ to
\[
|A(t,x)| \lesssim  \ \frac{\epsilon}{\langle t+|x|\rangle } \log \frac{2\langle t+|x| \rangle} {\langle t-|x|\rangle}.
\] 
See also Proposition~\ref{prop:Ahom} and the following Remark~\ref{rem:mu0} later on, which is the only place in the paper where 
this assumption is needed and used. 

One possible approach to achieve this is to rely on the weaker BMO bound in \eqref{eq:A-hom-BMO} for $A$; this in turn would require replacing 
the $L^\infty$ endpoint with a BMO endpoint in some of the vector field interpolation Lemmas. Alternatively, one can slightly rebalance the bootstrap bounds for $A$ and $\psi$, from $L^\infty$ and $L^6$ to $L^{\infty-}$ and $L^{6+}$, with appropriate
changes in the powers of $t$.

We chose not to pursue either alternative here because on one hand this assumption 
turns out to be needed for Theorem~\ref{thm:ms}, and on the other hand, it allows for a more streamlined argument.
 \end{rem}

 \begin{rem}\label{rem:charge-problem}
 The main results of this paper, Theorem \ref{thm:gwp-easy} and Theorem \ref{thm:ms}, apply to the PDE system \eqref{eq:MD}, with conditions on the full set of initial data $(a,\dot{a})$ and $\psi_0$. To obtain a Lorenz gauge solution to Maxwell-Dirac, the compatibility conditions \eqref{eq:constr} have to be satisfied. However, if $\psi_0\ne 0$, the equation $\Delta a_0=\partial_j \dot{a}_j + |\psi_0|^2$ shows that $a_0$ cannot decay faster than $1/|x|$, which is incompatible with the assumptions on $a_0,\dot{a}_0$ in the above Theorems. This is known as the charge problem, see e.g.\ \cite[Section 4]{CKL19} in the Maxwell-Klein-Gordon case. There is a standard solution to this, which we sketch here: With the conserved charge $\mathbf{q}=-\int |\psi_0|^2$ as above, define $Q(t,x)=\varrho(|x|-t)\frac{\mathbf{q}}{4\pi |x|}$ for a smooth function $\varrho$ with $\varrho(s)=1$ for $s\geq 1$ and $\varrho(s)=0$ for $s<1/2$. Then, the key property is that $\Box Q=0$ and this function is zero inside the light cone. The modified gauge field $\tilde{A}_\mu:=A_\mu -\delta_{0\mu} Q$ satisfies the same wave equations as $A_\mu$, but with modified initial data $(\tilde{a}_\mu,\dot{\tilde{a}}_\mu)$. Now, assume that $(a_j,\dot{a}_j,\psi_0)$ satisfy the assumptions of Theorem \ref{thm:gwp-easy} and $(a_0,\dot{a}_0)$ satisfy \eqref{eq:constr}. Then,
 $$\tilde{a}_0=a_0-\varrho(|x|)\frac{\mathbf{q}}{4\pi |x|}, \quad \dot{\tilde{a}}_0=a_0+\varrho'(|x|)\frac{\mathbf{q}}{4\pi |x|}$$
 satisfies the assumptions. Note that $(\psi,\tilde{A}_\mu)$ satisfies the modified system
 \begin{equation}
	\label{eq:ModMD}
	\begin{aligned}
	- i \gamma^\mu \partial_\mu \psi &= \gamma^\mu \tilde{A}_\mu \psi + \gamma^0 Q \psi, \\
	\Box \tilde{A}_\mu &= - \overline{\psi} \gamma_\mu \psi.
	\end{aligned}
\end{equation}
 Then, the same conclusion of Theorem \ref{thm:gwp-easy} holds true for $(\psi,\tilde{A}_\mu)$. In particular,  in view of the decay of $Q$, the same pointwise bounds hold for $(\psi,A_\mu)$. The analysis remains unchanged, except for the estimates for $F$ in Section \ref{s:vf}. To estimate the additional contribution in the Dirac equation, we use that $|\Omega^J\partial^K Q | \lesssim \epsilon^2 \langle t+|x|\rangle^{-1}$.

 Similarly, the conclusion of Theorem \ref{thm:ms} holds for the modified system. Here, the statement remains unchanged, as the system has not changed inside the light cone and we only use the bounds established in Theorem \ref{thm:gwp-easy}.
 \end{rem}

In the direction of our main result, we also mention the recent preprint \cite{CL24}, which was posted on the ArXiv a couple of weeks after our paper. Therein, the authors obtain less precise asymptotics under stronger assumptions on the initial data (which are necessary for the space-time resonance method).

%%%%%%%%%%%%%%%%%%%%%%%%%%%%%%%%%%%%%%%%%
\subsection{Outline of the paper} 
%%%%%%%%%%%%%%%%%%%%%%%%%%%%%%%%%%%%%%%%%
The paper is structured in a modular fashion. This in particular means that each section can be understood separately and only the main result 
carries forward.  We distinguish  four main steps:
\begin{enumerate}
\item  energy estimates for the linearized equation, 
\item vector field energy estimates,
\item 
pointwise bounds  derived from energy estimates (sometimes called Klainerman-Sobolev
inequalities), 
\item asymptotic and wave packet analysis.
\end{enumerate}
While this may seem like a standard approach, there are a number of technical difficulties that prevent us from carrying a straightforward analysis, and also there are several improvements 
we bring to the analysis. 

After Section \ref{s:not} which contains notations and definitions we use throughout our work, we structure the proof of the global result as  a bootstrap argument. But unlike the classical 
approach where a large number of vector field bounds are needed, here our bootstrap assumption involves only pointwise bounds on the solutions,
precisely it has the form
\[
\| \psi(t)\|_{L^6} + \|A(t)\|_{L^\infty} \lesssim 
\frac{C \epsilon}{\langle t \rangle}, 
\]
which is consistent with the linear dispersive decay bounds for the Dirac, respectively the 
wave equation. Then the final objective becomes 
to show that we can improve this bound. This is accomplished in several steps:

\medskip

\emph{ I. Energy estimates for the linearized equation.} These are relatively straightforward,
as they are carried out in our base Sobolev space
$L^2 \times \dot H^\frac12 \times \dot H^{-\frac12}$. Nevertheless, their proof is still 
instructive in understanding how a minimal 
$t^{C\epsilon}$ energy growth can be derived using only the above bootstrap assumptions.

\medskip

\emph{ II. Energy estimates for the solutions.}
This is again done under the above bootstrap condition, and it yields energy bounds with a $t^{C\epsilon}$ growth.
It includes vector field bounds, and for clarity
are separated into several steps. They are first proved for the solution and its higher derivatives in Subsection~\ref{ss:higher derivatives}, second for vector fields in Subsection \ref{ss:vector fields}, 
and finally for both vector fields and derivatives applied to the solution in Subsection~\ref{ss:mixed}. While
using just interpolation inequalities and Gronwall type inequalities in time work in the 
first case, in order to obtain vector field
energy bounds using only our bootstrap assumptions
we work instead in dyadic time slabs denoted by $C_T$, which with the proper set-up enable us to 
optimize the interpolation of vector field bounds.
In this we follow the lead of the earlier work of Ifrim-Stingo~\cite{ifrims}.

\medskip

\emph{III. Pointwise (Klainerman-Sobolev) bounds.}
These are derived from the previous energy bounds,
and are akin to classical Sobolev embeddings but 
on appropriate scales. For this purpose we separate the dyadic time slabs $C_T$ above into 
smaller sets, namely the  dyadic regions
$C^{\pm}_{TS}$, where $T$ stands for dyadic time, $S$ for the dyadic distance to the cone, and $\pm$ for the interior/exterior cone, plus an additional interior region $C_T^{int}$ and 
an exterior region $C_T^{ext}$. Then it becomes 
important, as an intermediate step, to derive
 space-time $L^2$ \emph{local energy bounds}, localized to these sets.  Then our pointwise bounds are akin to Sobolev embeddings or interpolation inequalities in these regions,
 with the extra step of  also using the linear equation in several interesting cases.
We note that these bounds inherit the $t^{C\epsilon}$ extra growth from the energy estimates, so they do not suffice in order to close the bootstrap. 

\medskip

\emph{IV. Asymptotic profiles and the asymptotic equation for the spinor field $\psi$}.
Heuristically one expects a Klein-Gordon type 
asymptotic expansion  for the spinor field,
\[
\psi(t,x) \approx t^{-\frac32} \sum_{\pm}
e^{\pm i \sqrt{t^2-x^2}} \rho^{\pm}(t,x)
\]
with well chosen slower varying \emph{asymptotic profiles} $\rho^{\pm}$.  In the case of the linear
Dirac flow one may choose $\rho^{\pm}$ to depend only on the velocity $v = x/t$, but for our nonlinear flow this is no longer possible. Then we need (i) to identify good asymptotic profiles, 
and (ii) to study their time dependence on rays 
(asymptotic equation). This is carried out in 
Section \ref{s:wp} using the method of 
wave packet testing of Ifrim-Tataru \cite{IT1}, \cite{IT3}, 
\cite{IT2}, \cite{IT4}.
However, the wave packet analysis is carefully adapted to the Dirac system, which is novel and quite interesting.
The asymptotic equation turns out to be an ode  of the form
\[
i \partial_t \rho^{\pm}(t,v) \approx v_\alpha A^\alpha  \rho^{\pm}(t,v).
\]
Since the connection coefficients $A^\alpha$ 
are real, this equation  allows us to propagate uniform pointwise bounds for $\rho^{\pm}$, which are then transferred to $\psi$. Thus, by the end of this section we are able to close the $\psi$ part of the bootstrap loop.

\medskip

\emph{V.  Uniform bounds for $A$.}
The $t^{-1}$ decay bounds for $A$ are obtained in Section \ref{s:A}, directly from the wave equation for $A$. Here one needs to separately estimate the contributions of the initial data and of the source term, where for the latter we use  the $t^{-\frac32}$ decay bounds for $\psi$ from the previous section.  

\medskip

\emph{VI.  Radiation profiles for $\psi$ and  $A$
inside the cone.} 
These are constructed in the last section of the paper, whose final objective is to prove the modified scattering result in Theorem \ref{thm:ms}.  This is achieved in several steps,
where we successively construct

a) an initial radiation profile $\rho^\pm_\infty$
for $\psi$, which is only accurate up to a phase rotation, but suffices for the next step.

b) a radiation profile $a_\infty^{\mu}(v)$ for $A$, which can be thought of as the limit of 
$(t^2-x^2) A^\mu$ along rays $x = vt$.

c) Using the result in part (b) we refine 
the choice of the radiation profile $\rho^\pm_\infty$
for $\psi$, removing the  phase rotation
ambiguity in (a).

%%%%%%%%%%%%%%%%%%%%
%%%%%%%%%%%%%%%%%%%%%%%%%%%%%%%%%%%%%%%%%
\subsection*{Acknowledgments}
%%%%%%%%%%%%%%%%%%%%%%%%%%%%%%%%%%%%%%%%%
The second author, M.I.,  was supported by the Sloan Foundation,  by an NSF CAREER grant DMS-1845037, by a Simons Foundation via a Simons Fellowship, and by a Visiting Miller Professorship via the Miller Foundation.
 M.S.\ and S.H.\ acknowledge support by the Deutsche Forschungsgemeinschaft (DFG, German Research
Foundation) – Project-ID 317210226 – SFB 1283.
We would like to thank the Erwin Schr\"odinger International Institute in Vienna as well the University of California at Berkeley, where part of this paper has been written, for their hospitality.
The authors would like to thank the anonymous referees for their careful remarks which helped a lot to improve the presentation.
%%%%%%%%%%%%%%%%%%%%%%%%%%%%%%%%%%%%%%%%%
\section{Preliminaries and notations}
\label{s:not}
%%%%%%%%%%%%%%%%%%%%%%%%%%%%%%%%%%%%%%%%%

%%%%%%%%%%%%%%%%%%%%%%%%%%%%%%%%%%%%%%%%%
\subsection{Notations}
%%%%%%%%%%%%%%%%%%%%%%%%%%%%%%%%%%%%%%%%%

To express the equations we first need to establish some notations; we begin by recalling the standard rectilinear coordinates $x:=(x^0, x^1, x^2, x^3)$ which is a position in time and space vector; in particular $x^0 :=t$ denotes  the time and  $(x^1, x^2, x^3)$ stands for the spatial position. We will use $x^{\alpha}$, with $\alpha =\overline{0,3}$, to denote  the entries of the vector $x$.  

For indices we have the following traditional convention: (i) Greek indices range over $0, 1, \dots, d$, (ii) Latin indices over $1, \dots, d$, (iii) Einstein summation
convention of summing repeated upper and lower indices over these ranges, and (iv)  raising
and lowering indices is performed using the Minkowski metric.

The equations will be written in covariant form on $\mathbb{R}^{1+3} = \mathbb{R}_t \times \mathbb{R}^3_x$
 with the Minkowski metric 
\[
(g_{\alpha\beta} ) := \mbox{diag}(-1, 1, 1, 1),
\]
which admits the inverse metric 
\[
(g^{\alpha\beta} ):= \mbox{diag}(-1, 1, 1, 1).
\]
We raise and lower the indices with respect to this metric, which in particular calls for the following notation
\[
x_{\alpha} := g_{\alpha\beta}x^{\beta},
\]
where we call $x_{\alpha}$ a \emph{covector}. 
We also can reverse the action with the help of the inverse metric, and  raise the indexes, so that we obtain a vector
\[
 g^{\alpha\beta}x_{\beta} =x^{\alpha}.
\]

Lastly we also recall the multi-index notation we will be using throughout the paper, namely we use upper-case letters for multi-indices, e.g.\
$\partial^I_x=\partial_{x_0}^{i_0}\cdots \partial_{x_d}^{i_d}$ and $x^I={x_0}^{i_0}\cdots {x_d}^{i_d}$, where $I=(i_0,\ldots,i_d)$, and we write $I_0$ if $i_0=0$. Similarly, when using  the multi-index notation for vector fields
\[
\Omega^I=\Omega_1^{i_1} \cdots \Omega_n^{i_n}. 
\]
%%%%%%%%%%%%%%%%%%%%%%%%%%%%%%%%%%%%%%%%%%
\subsection{Vector fields}
%%%%%%%%%%%%%%%%%%%%%%%%%%%%%%%%%%%%%%%%%%
 To state the main results of this paper we have already used  
 in the introduction the vector fields associated to the symmetries of the Minkowski space-time.
Recall that rotation vector fields
and Lorentz boosts where denoted by 
$\Omega_{\alpha \beta}$, 
\begin{equation*}
%\label{def:vf}
\Omega_{\alpha\beta}:=x_\beta \partial_\alpha -x_\alpha \partial_\beta , \qquad \alpha, \beta =\overline{0,3}.
\end{equation*}
Together with the translations, these Lorentz generators will be denoted by $\Gamma$, which we define as
\begin{equation*}
%\label{def:gamma}
\Gamma:= \left\{ \partial_0,  \partial_1,  \partial_2, \partial_3, \Omega_{\alpha\beta} \right\}.
\end{equation*}
As defined above, $\Omega_{\alpha \beta} $ do not commute with the linear component of the Dirac equation in \eqref{eq:MD} due 
to the vectorial structure of the spinors.
Instead we need to consider a correction to the Lorentz vector fields, which represents the Lie derivative of the spinor field 
with respect to the Lorentz vector fields:
\begin{equation*}
    \hat{\Omega}_{\alpha \beta} := \Omega_{\alpha \beta} + \frac{1}{2} \gamma_\alpha \gamma_\beta, \quad \mbox{ for all } \  0 \leq \alpha < \beta \leq 3.
\end{equation*}
This indeed satisfies
\begin{equation*}
    [\hat{\Omega}_{\alpha \beta}, \imu \gamma^\mu \partial_\mu] = 0.
\end{equation*}
See also~\cite{Ba88, Geo} for the use of $\hat{\Omega}_{\alpha \beta}$ and their property of commuting with the Dirac operator.

We will later apply $\hat\Omega_{\alpha \beta}$ to the Dirac component of the Maxwell-Dirac system~\eqref{eq:MD}. However, this is not the end of the story as we want to apply these vector fields to the nonlinear system, which itself has Lorentz invariance.
 Explicitly, applying $\hat\Omega_{\alpha \beta}$ to the Maxwell-Dirac system~\eqref{eq:MD} implies, for instance, that for the first equation we should formally  be able to express the  RHS as follows
\begin{equation}
\label{eq:Lebnitz psi}
- i \gamma^\mu \partial_\mu \hat\Omega_{\alpha \beta} \psi + \hat\Omega_{\alpha \beta}\psi =  \hat\Omega_{\alpha \beta} \left(\gamma^\mu A_\mu \psi \right) = \tilde\Omega_{\alpha \beta}  A_\mu \gamma^\mu \psi  +   A_\mu \gamma^\mu \hat\Omega_{\alpha \beta}\psi.
\end{equation}
Here the only thing we did was to distribute $\hat\Omega_{\alpha \beta}$, observing that one potential outcome would be to have the corresponding vector field applied to $A_{\mu}$, which is 
naturally different from the vector field applied to $\psi$.  At the same time, this new vector field, denoted here by $\tilde\Omega_{\alpha \beta}$, should  be commuting with the linear component of the second equation. More so, it should distribute itself according to the product rule in the nonlinearity of the wave equation, namely, we should have
 \begin{equation}
 \label{eq:Lebnitz A}
     \Box \tilde\Omega_{\alpha \beta}A_\mu = - \overline{ \hat\Omega_{\alpha \beta}\psi} \gamma_\mu \psi 
	 - \overline{\psi} \gamma_\mu  \hat\Omega_{\alpha \beta}\psi 
	.
 \end{equation}
 Indeed, a direct computation leads to the following 
 expressions for the generators of the Lorentz group 
 of symmetries for the full Maxwell-Dirac system:
 
 \begin{lem}\label{lem:com} The family of vector fields $\left\{ \hat\Omega_{\alpha \beta}, \tilde\Omega_{\alpha \beta}\right\}$, with $\alpha, \beta =\overline{0,3}$, and so that 
 \begin{equation}
  \label{eq:new vf}
 \left\{
\begin{aligned}
&\hat\Omega_{\alpha \beta}:=  \Omega_{\alpha \beta} + \frac{1}{2} \gamma_\alpha \gamma_\beta\\
&\tilde\Omega_{\alpha \beta} A_{\delta}:=  \Omega_{\alpha \beta} A_{\delta} +  g_{\beta\delta} A_{\alpha} -g_{\alpha \delta}A_{\beta} ,
\end{aligned}
\right.
 \end{equation}
 commute with the linear Maxwell-Dirac equations and satisfy the product rule in  \eqref{eq:Lebnitz psi} and \eqref{eq:Lebnitz A}.
 \end{lem}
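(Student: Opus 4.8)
The plan is to verify the two asserted properties of the family $\{\hat\Omega_{\alpha\beta},\tilde\Omega_{\alpha\beta}\}$ by direct computation, organized around the Leibniz identities \eqref{eq:Lebnitz psi} and \eqref{eq:Lebnitz A}. The key structural input is the anticommutation relation \eqref{eq:gmmRel}, $\gamma^\mu\gamma^\nu+\gamma^\nu\gamma^\mu=-2g^{\mu\nu}\Id$, together with the elementary commutator $[\Omega_{\alpha\beta},\partial_\mu]=g_{\mu\alpha}\partial_\beta-g_{\mu\beta}\partial_\alpha$, which follows from $\Omega_{\alpha\beta}=x_\beta\partial_\alpha-x_\alpha\partial_\beta$ and $[\partial_\mu,x_\nu]=g_{\mu\nu}$. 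First I would establish the already-quoted identity $[\hat\Omega_{\alpha\beta},i\gamma^\mu\partial_\mu]=0$: expanding, $[\Omega_{\alpha\beta},\gamma^\mu\partial_\mu]=\gamma^\mu(g_{\mu\alpha}\partial_\beta-g_{\mu\beta}\partial_\alpha)=\gamma_\alpha\partial_\beta-\gamma_\beta\partial_\alpha$, and this must be cancelled by $[\tfrac12\gamma_\alpha\gamma_\beta,\gamma^\mu\partial_\mu]=\tfrac12[\gamma_\alpha\gamma_\beta,\gamma^\mu]\partial_\mu$. Using \eqref{eq:gmmRel} twice, $[\gamma_\alpha\gamma_\beta,\gamma^\mu]=\gamma_\alpha\{\gamma_\beta,\gamma^\mu\}-\{\gamma_\alpha,\gamma^\mu\}\gamma_\beta=-2\delta_\beta^\mu\gamma_\alpha+2\delta_\alpha^\mu\gamma_\beta$, so $\tfrac12[\gamma_\alpha\gamma_\beta,\gamma^\mu]\partial_\mu=-\gamma_\alpha\partial_\beta+\gamma_\beta\partial_\alpha$, which indeed cancels the previous term. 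Since the zeroth-order term $+\psi$ in the Dirac operator commutes with the constant matrix $\tfrac12\gamma_\alpha\gamma_\beta$ and with $\Omega_{\alpha\beta}$, this proves that $\hat\Omega_{\alpha\beta}$ commutes with the linear Dirac operator.

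Next I would check that $\tilde\Omega_{\alpha\beta}$ commutes with $\Box$ acting on the $1$-form $A_\delta$. Since $\Omega_{\alpha\beta}$ is a Killing field for the Minkowski metric, $[\Omega_{\alpha\beta},\Box]=0$ as a scalar operator; the correction terms $g_{\beta\delta}A_\alpha-g_{\alpha\delta}A_\beta$ in $\tilde\Omega_{\alpha\beta}A_\delta$ are constant-coefficient linear combinations of the components, hence also commute with $\Box$ componentwise. Thus $\Box\,\tilde\Omega_{\alpha\beta}A_\delta=\tilde\Omega_{\alpha\beta}\Box A_\delta$, and this is the statement that $\tilde\Omega_{\alpha\beta}$ commutes with the linear wave operator on $1$-forms (it is the Lie derivative $\mathcal L_{\Omega_{\alpha\beta}}$ acting on the covector $A$, which is precisely why the index-correction terms appear). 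One should also note that $\tilde\Omega_{\alpha\beta}$ preserves the Lorenz gauge condition, i.e. $\partial^\delta(\tilde\Omega_{\alpha\beta}A_\delta)=\Omega_{\alpha\beta}(\partial^\delta A_\delta)$, which again is a short computation using $[\partial^\delta,\Omega_{\alpha\beta}]$ and the antisymmetry of the correction.

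The remaining and slightly more delicate task is to verify the two Leibniz identities for the nonlinear terms. For \eqref{eq:Lebnitz A} I would compute $\Omega_{\alpha\beta}(\overline\psi\gamma_\mu\psi)$ and show it equals $\tilde\Omega_{\alpha\beta}(\overline\psi\gamma_\mu\psi)$ with the latter expanded via \eqref{eq:new vf}; this requires knowing how $\Omega_{\alpha\beta}$ acts on $\overline\psi=\psi^\dagger\gamma^0$, which brings in the conjugate correction $-\tfrac12\gamma_\alpha\gamma_\beta$ (using \eqref{eq:gmmRel:conj}, \eqref{eq:gamma-adjoint}, so that $\overline{\tfrac12\gamma_\alpha\gamma_\beta}=\tfrac12\gamma_\beta\gamma_\alpha=-\tfrac12\gamma_\alpha\gamma_\beta$ modulo the $g_{\alpha\beta}$ term which cancels), and then the matrix identity $\tfrac12\overline{(\gamma_\alpha\gamma_\beta)}\gamma_\mu+\gamma_\mu\tfrac12\gamma_\alpha\gamma_\beta=-g_{\beta\mu}\gamma_\alpha+g_{\alpha\mu}\gamma_\beta$, which is exactly the correction term in $\tilde\Omega_{\alpha\beta}A_\mu$ evaluated on the bilinear form. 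The main obstacle, and the one requiring the most care with signs and index placement, is precisely the bookkeeping in this last matrix identity and its interplay with the conjugation operation — ensuring that the spinorial correction $\tfrac12\gamma_\alpha\gamma_\beta$ applied through $\overline\psi$ and $\psi$ reassembles into the $1$-form correction $g_{\beta\delta}A_\alpha-g_{\alpha\delta}A_\beta$. For \eqref{eq:Lebnitz psi}, applying $\hat\Omega_{\alpha\beta}$ to $\gamma^\mu A_\mu\psi$ and using that $\hat\Omega_{\alpha\beta}$ is a derivation up to the constant-matrix commutator, the term $[\tfrac12\gamma_\alpha\gamma_\beta,\gamma^\mu]A_\mu\psi=(-\gamma_\alpha A_\beta+\gamma_\beta A_\alpha)\psi$ must combine with $\gamma^\mu(\Omega_{\alpha\beta}A_\mu)\psi$ to produce $\gamma^\mu(\tilde\Omega_{\alpha\beta}A_\mu)\psi=\gamma^\mu(\Omega_{\alpha\beta}A_\mu+g_{\beta\mu}A_\alpha-g_{\alpha\mu}A_\beta)\psi=\gamma^\mu(\Omega_{\alpha\beta}A_\mu)\psi+(\gamma_\beta A_\alpha-\gamma_\alpha A_\beta)\psi$, which matches. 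Once all four pieces — commutation with the linear Dirac operator, commutation with $\Box$ on $1$-forms, and the two Leibniz identities — are assembled, the lemma follows.
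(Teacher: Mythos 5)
Your proposal is correct and follows exactly the route the paper intends: the paper's own proof is a one-line "direct computation, details left to the reader," and your verification via the anticommutation relations \eqref{eq:gmmRel}, the commutator $[\Omega_{\alpha\beta},\partial_\mu]=g_{\mu\alpha}\partial_\beta-g_{\mu\beta}\partial_\alpha$, and the conjugation rule \eqref{eq:gmmRel:conj} supplies precisely those details, including the useful extra observation that $\tilde\Omega_{\alpha\beta}$ preserves the Lorenz gauge. One small slip: the displayed matrix identity should read $\tfrac12\overline{(\gamma_\alpha\gamma_\beta)}\,\gamma_\mu+\gamma_\mu\,\tfrac12\gamma_\alpha\gamma_\beta= g_{\beta\mu}\gamma_\alpha-g_{\alpha\mu}\gamma_\beta$ (for $\alpha\neq\beta$, using $\overline{\gamma_\alpha\gamma_\beta}=\gamma_\beta\gamma_\alpha=-\gamma_\alpha\gamma_\beta$), i.e. with the opposite sign to what you wrote — as it stands your formula contradicts your own (correct) conclusion that it reproduces the correction term $g_{\beta\delta}A_\alpha-g_{\alpha\delta}A_\beta$ in $\tilde\Omega_{\alpha\beta}$.
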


\begin{proof}
The proof relies on a direct computation where one can take the vector fields in \eqref{eq:new vf} and apply them to the equation \eqref{eq:MD} and show that  both \eqref{eq:Lebnitz psi} and \eqref{eq:Lebnitz A} hold true. The details are left to the interested reader.
\end{proof}

For both the Dirac and the wave components  of \eqref{eq:MD} we have defined ten vector fields and in the following we denote these generalized vector fields by $\Gamma_1$ to $\Gamma_{10}$ (omitting the hat and tilde) and employ multi-index notation in the following, i.e.,
\begin{align*}
	\Gamma^J = \Gamma_1^{j_1} \cdots \Gamma_{10}^{j_{10}}, \qquad J \in \N_0^{10}.
\end{align*}
Separating derivatives and vector fields,
we will weight differently the two kinds of derivatives, and set 
\[
\Gamma^{\leq k} = \{  \Omega^J \partial^I\}_{|I|+3|J| \leq k}.
\]

\subsection{Energies for the Dirac equation
on hyperboloids and orthogonal decompositions in $\C^4$}

Suppose $\psi$ is a solution for the homogeneous Dirac equation.
We can write the $L^2$-conservation law of the Dirac equation in the density-flux form
	\begin{equation}
 \label{dens-flux}
		\partial_t |\psi|^2 + \partial_j ( \psi^\dag  \gamma^0 \gamma^j \psi ) = -2 \Im (\psi^\dag \gamma^0 F ).
	\end{equation}
An immediate consequence of this is the conservation of the $L^2$ norm of the solution
on time slices. However, in this article we will also need to use energy functionals 
on hyperboloids 
\[
H := \{(t,x)\, | \,  t^2 -x^2 = c^2 > 0 \}.
\]
Integrating the density-flux relation 
within the region between $H$ and the 
initial surface $t = 0$ we obtain the energy relation
\[
\|\psi(0)\|_{L^2}^2 = E_H(\psi),
\]
where the energy of $\psi$ on the hyperboloid 
$H$ is given by 
\begin{equation*}
E_H(\psi) = \int_{H} (\nu_0 |\psi|^2 + \nu_j \psi^\dag \gamma^0 \gamma^j \psi)\,  d \sigma.
\end{equation*}
The density for this energy is positive definite since the normal vector to the hyperboloid is time-like.

We first diagonalize the above density 
\[
e_H(\psi) = \nu_0 |\psi|^2 + \nu_j \psi^\dag \gamma^0 \gamma^j \psi
\]
with respect to  the Euclidean metric, by writing
\[
e_H(\psi) = \frac{t}{\sqrt{t^2+x^2}} 
|\psi|^2 - \frac{\langle \psi, x_j \gamma^0\gamma^j \psi\rangle}{\sqrt{t^2+x^2}} = \frac{1}{\sqrt{t^2+x^2}}
( t |\psi|^2 - |x| \langle \psi, \gamma^0\gamma^\theta \psi \rangle ),
\]
where using polar coordinates we have denoted
\begin{equation*}
\gamma^\theta:= \theta_{j} \gamma^j, \qquad \theta = \frac{x}{|x|}.    
\end{equation*}
To complete our diagonalization we need to 
consider the spectral properties of the matrix
$\gamma^0\gamma^\theta$. The matrices $\gamma^\theta$ share with $\gamma^j$ the following properties:

\begin{lem}
\label{lem:gammatheta}
For each $\theta \in \S^2$, the matrix $\gamma^0 \gamma^\theta$ is Hermitian and has double eigenvalues $\pm 1$. 
\end{lem}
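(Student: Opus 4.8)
The claim is that for each $\theta\in\S^2$, the matrix $M_\theta := \gamma^0\gamma^\theta$ is Hermitian and has eigenvalues $\pm 1$, each with multiplicity two. First I would verify Hermiticity: since $\overline{\gamma^\alpha}=\gamma^\alpha$ in the notation of \eqref{eq:gamma-adjoint}, i.e. $\gamma^0(\gamma^\alpha)^\dagger\gamma^0=\gamma^\alpha$, we get $(\gamma^\alpha)^\dagger = \gamma^0\gamma^\alpha\gamma^0$. Hence $(\gamma^0\gamma^j)^\dagger = (\gamma^j)^\dagger(\gamma^0)^\dagger = (\gamma^0\gamma^j\gamma^0)(\gamma^0) = \gamma^0\gamma^j(\gamma^0)^2 = \gamma^0\gamma^j$, using $(\gamma^0)^2=\Id$ (which follows from \eqref{eq:gmmRel} with $\mu=\nu=0$). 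Taking the real linear combination $M_\theta = \theta_j\gamma^0\gamma^j$ with $\theta_j\in\R$ preserves Hermiticity, so $M_\theta$ is Hermitian; alternatively one can just read it off the explicit block form $\gamma^0\gamma^j = \begin{pmatrix} 0 & \sigma^j \\ \sigma^j & 0\end{pmatrix}$, together with $(\sigma^j)^\dagger = \sigma^j$.

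Next, for the eigenvalues, I would compute $M_\theta^2$ directly from the anticommutation relations. Using \eqref{eq:gmmRel}, $\gamma^0\gamma^j\gamma^0\gamma^k = -\gamma^0\gamma^0\gamma^j\gamma^k = -\gamma^j\gamma^k$ (again since $(\gamma^0)^2=\Id$ and $\gamma^0$ anticommutes with $\gamma^j$), so
\begin{equation*}
M_\theta^2 = \theta_j\theta_k\,\gamma^0\gamma^j\gamma^0\gamma^k = -\theta_j\theta_k\,\gamma^j\gamma^k = -\tfrac12\theta_j\theta_k(\gamma^j\gamma^k+\gamma^k\gamma^j) = \tfrac12\theta_j\theta_k\cdot 2\delta^{jk}\Id = |\theta|^2\Id = \Id.
\end{equation*}
Therefore $M_\theta^2 = \Id$, which forces every eigenvalue of $M_\theta$ to be $\pm 1$, and moreover $M_\theta$ is diagonalizable (its minimal polynomial divides $\lambda^2-1$, which has distinct roots). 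It remains only to pin down the multiplicities.

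For the multiplicities, the cleanest route is to compute the trace: if $+1$ occurs with multiplicity $p$ and $-1$ with multiplicity $q$, then $p+q=4$ and $\tr M_\theta = p-q$. Now $\tr(\gamma^0\gamma^j) = 0$ for each $j$ — visible from the explicit off-diagonal block form, or from the standard fact that $\tr(\gamma^\mu\gamma^\nu) = -4g^{\mu\nu}$ so $\tr(\gamma^0\gamma^j)=-4g^{0j}=0$. Hence $\tr M_\theta = \theta_j\tr(\gamma^0\gamma^j) = 0$, giving $p=q=2$. This completes the proof.

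The main (and only mild) obstacle is bookkeeping with the sign conventions: one must be careful that $g^{00}=-1$ forces $(\gamma^0)^2 = \Id$ rather than $-\Id$, and that the cross terms $\gamma^0\gamma^j\gamma^0\gamma^k$ pick up exactly one sign from moving $\gamma^0$ past $\gamma^j$, so that $M_\theta^2$ comes out $+\Id$ and not $-\Id$. Everything else is a direct consequence of \eqref{eq:gmmRel}, \eqref{eq:gamma-adjoint}, and a trace computation; no deeper structure is needed. As an optional remark, one can note that $\frac12(\Id\pm M_\theta)$ are then the orthogonal (since $M_\theta$ is Hermitian) rank-two spectral projections onto the $\pm1$ eigenspaces, which is presumably how the lemma will be used in the subsequent diagonalization of the hyperboloid energy density $e_H(\psi)$.
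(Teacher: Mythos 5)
Your proof is correct. The Hermiticity step and the computation $(\gamma^0\gamma^\theta)^2=\Id$ from the anticommutation relations \eqref{eq:gmmRel} are exactly the paper's argument; the only place you diverge is in pinning down the multiplicities. The paper starts from the observation that each $\gamma^0\gamma^j$ already has double eigenvalues $\pm1$ and then invokes smoothness of $\gamma^0\gamma^\theta$ in $\theta$: since the eigenvalues are constrained to $\{\pm1\}$ by $(\gamma^0\gamma^\theta)^2=\Id$ and vary continuously over the connected sphere $\S^2$, the multiplicities cannot jump from their value $(2,2)$ at the coordinate directions. You instead compute $\tr(\gamma^0\gamma^\theta)=\theta_j\tr(\gamma^0\gamma^j)=0$, so with $p+q=4$ and $p-q=0$ you get $p=q=2$ pointwise in $\theta$. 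Your trace argument is a bit more self-contained, needing neither the continuity/connectedness input nor the prior verification for the coordinate matrices, while the paper's version leans on the block structure of $\gamma^0\gamma^j$ it has already recorded; both close the gap in one line. Your closing remark is also on target: $\frac12(\Id\pm\gamma^0\gamma^\theta)$ are precisely the orthogonal projectors $P^\theta_\pm$ introduced right after the lemma and used to diagonalize the energy density $e_H(\psi)$.
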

\begin{proof}
The starting point is the observation that 
the matrices $\gamma^0 \gamma^j$ have the properties in the lemma, and in particular
$(\gamma^0 \gamma^j)^2= I_4$, whereas different
$\gamma^0 \gamma^j$ anticommute.

Then the fact that $\gamma^0 \gamma^\theta$ is Hermitian is immediate. Since they are smooth in $\theta$, it only remains to show that $(\gamma^0 \gamma^\theta)^2 = I_4$. But this is a direct computation,
\[
(\gamma^0 \gamma^\theta)^2 = \sum_{j,k}
\theta_j \theta_k \gamma^0 \gamma^j \gamma^0 \gamma^k  = - \sum_{j,k}
\theta_j \theta_k  \gamma^j \gamma^k
= -\frac12 \sum_{j,k} \theta_j \theta_k  (\gamma^j \gamma^k + \gamma^k \gamma^j),
\]
where we next  relay on the anti-commutation properties of the matrices $\gamma$ stated in \eqref{eq:gmmRel} to conclude the proof of the lemma
\[
(\gamma^0 \gamma^\theta)^2 =  -\frac12 \sum_{j,k} \theta_j \theta_k  (\gamma^j \gamma^k + \gamma^k \gamma^j) =
\sum \theta_j^2 I_4 = I_4.
\]
\end{proof}

Motivated by this lemma, in order to better describe the energy on hyperboloids it is useful to introduce projectors
\[
P^\theta_{\pm}:= \frac12(I_4 \pm \gamma^0 \gamma^\theta)
\]
on the positive, respectively the negative eigenspaces of $\gamma^0\gamma^{\theta}$. Correspondingly, we split 
\[
\psi = \psi_+ + \psi_{-} := P^\theta_+ \psi + P^\theta_- \psi,
\]
where we can think of the two components as ``outgoing", respectively ``incoming".
Then we can rewrite  the energy density on the hyperboloid  $H$ as 
\begin{equation}\label{eH}
e_H(\psi):= \frac{t-r}{\sqrt{t^2+r^2}} |\psi_+|^2
+ \frac{t+r}{\sqrt{t^2+r^2}} |\psi_-|^2.
\end{equation}
The two components $\psi_{\pm}$ of $\psi$ will play different roles in our decay bounds for the Dirac field.
\bigskip

Another interpretation of the energy density 
on the hyperboloids can be naturally obtained
by using the hyperbolic metric and volume element. The invariant measure on the hyperbolic space is related to the 
above Euclidean measure by
\[
d \sigma = \sqrt{t^2+x^2}\,(t^2-x^2) \, dV_H.
\]
Then the above energy is rewritten in an invariant form as
\[
E_H(\psi) = - \int_H (t^2-x^2)^\frac32 \langle \gamma^0 \gamma^H \psi, \psi \rangle\,  dV_H, \qquad \gamma^H := \frac{x_\alpha
\gamma^\alpha}{\sqrt{t^2-x^2}}.
\]
Here it is natural to introduce the (positive definite) inner product on $\C^4$
\begin{equation}  
\label{eq:DefInnerProductH}
\langle \psi^1,\psi^2 \rangle_H =  - \langle \gamma^0 \gamma^H \psi^1, \psi^2 \rangle.
\end{equation}
Comparing this with \eqref{eH} we can diagonalize this in terms of the $\psi_{\pm}$
decomposition as
\begin{equation*}
\| \psi\|_H^2 =   \frac{t-r}{\sqrt{t^2-r^2}} |\psi_+|^2
+ \frac{t+r}{\sqrt{t^2-r^2}} |\psi_-|^2.  
\end{equation*}

The matrix $\gamma^H$ above will play 
an important role in the sequel. We begin with 

\begin{lem}
The matrix $\gamma^H$ satisfies $(\gamma^H)^2 = I_4$, and has double eigenvalues $\pm 1$.    
\end{lem}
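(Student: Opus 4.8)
The plan is to mimic exactly the proof of the preceding lemma about $\gamma^0\gamma^\theta$, since $\gamma^H$ is the natural Lorentzian analogue. Recall $\gamma^H = (x_\alpha \gamma^\alpha)/\sqrt{t^2-x^2}$, where $x_\alpha = g_{\alpha\beta}x^\beta$, so $x_0 = -t$ and $x_j = x^j$, and on the hyperboloid $t^2 - x^2 = c^2 > 0$. Thus $x_\alpha\gamma^\alpha = -t\gamma^0 + x_j\gamma^j$. The Hermiticity claim is not asserted here (indeed $\gamma^H$ need not be Hermitian, since $\gamma^j$ are anti-Hermitian), so the only two things to prove are $(\gamma^H)^2 = I_4$ and that the eigenvalues are $\pm 1$, each with multiplicity two.

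First I would compute $(\gamma^H)^2$ directly from the anticommutation relations \eqref{eq:gmmRel}, $\gamma^\mu\gamma^\nu + \gamma^\nu\gamma^\mu = -2g^{\mu\nu}I_4$. Writing $\gamma^H = (x_\alpha\gamma^\alpha)/\sqrt{t^2-x^2}$ we get
\begin{equation*}
(\gamma^H)^2 = \frac{1}{t^2-x^2} x_\alpha x_\beta \gamma^\alpha\gamma^\beta = \frac{1}{t^2-x^2}\cdot\frac12 x_\alpha x_\beta(\gamma^\alpha\gamma^\beta + \gamma^\beta\gamma^\alpha) = \frac{-g^{\alpha\beta}x_\alpha x_\beta}{t^2-x^2} I_4.
\end{equation*}
Now $g^{\alpha\beta}x_\alpha x_\beta = x^\beta x_\beta = -t^2 + |x|^2 = -(t^2-x^2)$, so the scalar factor equals $(t^2-x^2)/(t^2-x^2) = 1$, giving $(\gamma^H)^2 = I_4$. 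Hence the eigenvalues of $\gamma^H$ lie in $\{+1,-1\}$, and $\gamma^H$ is diagonalizable.

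It remains to pin down the multiplicities. Since $(\gamma^H)^2 = I_4$, the minimal polynomial divides $(\lambda-1)(\lambda+1)$ and $\C^4 = \ker(\gamma^H - I) \oplus \ker(\gamma^H + I)$, with $\dim\ker(\gamma^H-I) = (4 + \tr\gamma^H)/2$. So it suffices to compute $\tr\gamma^H$. Using linearity of the trace and the standard fact $\tr\gamma^\alpha = 0$ for each $\alpha$ (immediate from the block forms given, or from \eqref{eq:gmmRel}: $\tr\gamma^\alpha = -\tr(\gamma^\beta\gamma^\alpha\gamma^\beta)/g^{\beta\beta}$ for a fixed $\beta$ with $\gamma^\beta$ invertible, $\beta\ne\alpha$, combined with cyclicity), we get $\tr\gamma^H = (x_\alpha\tr\gamma^\alpha)/\sqrt{t^2-x^2} = 0$. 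Therefore each eigenspace has dimension $2$, which completes the proof. The only mild subtlety — hardly an obstacle — is remembering that the relevant contraction is $x^\alpha x_\alpha = -(t^2-x^2)$ with the mostly-plus signature, so that the minus sign in $-2g^{\mu\nu}$ and the minus sign in $x^\alpha x_\alpha$ cancel to give exactly $+1$; everything else is a verbatim repeat of the $\gamma^0\gamma^\theta$ argument.
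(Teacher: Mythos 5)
Your proof is correct, and it is precisely the ``straightforward computation using \eqref{eq:gmmRel}'' that the paper leaves to the reader: the symmetrization $x_\alpha x_\beta\gamma^\alpha\gamma^\beta = -g^{\alpha\beta}x_\alpha x_\beta\, I_4 = (t^2-x^2) I_4$ gives $(\gamma^H)^2=I_4$, and your observation that $\tr\gamma^H=0$ (each $\gamma^\alpha$ being traceless) pins down the two-dimensional eigenspaces is a clean way to supply the multiplicity statement, which works without any Hermiticity of $\gamma^H$.
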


\begin{proof}
The proof is again a  relatively straightforward computation,  using the relations \eqref{eq:gmmRel}, and is left for the reader.
\end{proof}
Based on this property, we introduce 
the new set of projectors 
\begin{equation}
\label{def proj Pv}
2 P^{\pm}_v := I_4 \mp \gamma^H, \qquad v = x/t \in B(0,1). 
\end{equation}
These generate a decomposition of $\C^4$ 
as a direct sum of two subspaces $V^{\pm}$
defined as 
\begin{equation}
\label{eq:V+}
 V^{\pm}_v  :=  \ker P^{\pm}_v.  
\end{equation}
Since $\gamma^H$ is in general not symmetric,
these projectors are no longer orthogonal in the Euclidean setting. However, the $\langle \cdot,\cdot \rangle_H$ inner product turns out instead to be the  one with respect to which the projectors $P_v^{\pm}$ are indeed orthogonal: 

\begin{lem}
\label{lem:VpmOrthogonal}
The subspaces $V^+$ and $V^-$ are orthogonal with respect 
to the $\langle \cdot,\cdot \rangle_H$ inner product, and 
$P^{\pm}$ are the corresponding orthogonal projectors.
\end{lem}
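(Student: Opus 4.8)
The claim is that $V^+_v$ and $V^-_v$ are orthogonal with respect to $\langle\cdot,\cdot\rangle_H$ and that $P^\pm_v$ are the corresponding orthogonal projectors. The key algebraic input is the already-established fact that $(\gamma^H)^2 = I_4$ together with the definition $\langle\psi^1,\psi^2\rangle_H = -\langle\gamma^0\gamma^H\psi^1,\psi^2\rangle$. The plan is to verify two things: (i) that $\langle\cdot,\cdot\rangle_H$-adjoint of $\gamma^H$ is $\gamma^H$ itself (i.e. $\gamma^H$ is self-adjoint for this inner product, even though it is not Hermitian in the Euclidean sense), and (ii) that $P^\pm_v$ are idempotent and complementary, which is immediate from $(\gamma^H)^2 = I_4$.

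First I would record the elementary identities. From $2P^\pm_v = I_4 \mp \gamma^H$ and $(\gamma^H)^2 = I_4$ one gets $(P^\pm_v)^2 = P^\pm_v$, $P^+_v + P^-_v = I_4$, and $P^+_v P^-_v = P^-_v P^+_v = 0$; hence $\C^4 = \ran P^+_v \oplus \ran P^-_v$, and since $V^\pm_v = \ker P^\pm_v = \ran P^\mp_v$, the subspaces $V^+_v$ and $V^-_v$ are complementary. It remains to show the decomposition is orthogonal for $\langle\cdot,\cdot\rangle_H$ and that $P^\pm_v$ is the orthogonal projector onto $V^\mp_v = \ran P^\pm_v$ along $V^\pm_v$.

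The heart of the argument is the self-adjointness of $\gamma^H$ with respect to $\langle\cdot,\cdot\rangle_H$. Writing $\gamma^H = x_\alpha\gamma^\alpha/\sqrt{t^2-x^2}$ and using $\overline{\gamma^\alpha} = \gamma^\alpha$ from \eqref{eq:gamma-adjoint}, one computes $\overline{\gamma^H} = \gamma^H$, i.e. $\gamma^0(\gamma^H)^\dagger\gamma^0 = \gamma^H$, equivalently $(\gamma^H)^\dagger\gamma^0 = \gamma^0\gamma^H$. Then for any $\psi^1,\psi^2$,
\[
\langle \gamma^H\psi^1,\psi^2\rangle_H = -\langle\gamma^0\gamma^H\gamma^H\psi^1,\psi^2\rangle = -\langle\gamma^0\psi^1,\psi^2\rangle,
\]
while
\[
\langle\psi^1,\gamma^H\psi^2\rangle_H = -\langle\gamma^0\gamma^H\psi^1,\gamma^H\psi^2\rangle = -(\gamma^H\psi^2)^\dagger\gamma^0\gamma^H\psi^1 = -(\psi^2)^\dagger(\gamma^H)^\dagger\gamma^0\gamma^H\psi^1 = -(\psi^2)^\dagger\gamma^0\psi^1 = -\langle\gamma^0\psi^1,\psi^2\rangle,
\]
using $(\gamma^H)^\dagger\gamma^0 = \gamma^0\gamma^H$ and $(\gamma^H)^2 = I_4$. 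Hence $\langle\gamma^H\psi^1,\psi^2\rangle_H = \langle\psi^1,\gamma^H\psi^2\rangle_H$, so $\gamma^H$ is self-adjoint for $\langle\cdot,\cdot\rangle_H$; consequently $P^\pm_v = \tfrac12(I_4\mp\gamma^H)$ is self-adjoint as well. A self-adjoint idempotent is an orthogonal projector, so $\ran P^+_v \perp \ker P^+_v$, i.e. $\ran P^+_v \perp \ran P^-_v$, which says precisely $V^-_v \perp V^+_v$ with respect to $\langle\cdot,\cdot\rangle_H$, and $P^\pm_v$ are the orthogonal projectors onto $V^\mp_v$.

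The main (and only real) obstacle is the self-adjointness computation for $\gamma^H$: one must carefully exploit $\overline{\gamma^\alpha} = \gamma^\alpha$ together with $(\gamma^H)^2 = I_4$ and keep track of the non-Hermitian nature of $\gamma^H$, since a naive attempt to treat $\langle\cdot,\cdot\rangle_H$ like the Euclidean inner product fails. Everything else — idempotency, complementarity, and the passage from "self-adjoint idempotent" to "orthogonal projector" — is routine linear algebra.
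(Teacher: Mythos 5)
Your proof is correct: the self-adjointness of $\gamma^H$ with respect to $\langle\cdot,\cdot\rangle_H$ (via $\overline{\gamma^\alpha}=\gamma^\alpha$, i.e. $(\gamma^H)^\dagger\gamma^0=\gamma^0\gamma^H$, combined with $(\gamma^H)^2=I_4$) together with the idempotency and complementarity of $P^\pm_v$ gives exactly the claimed orthogonality, and you correctly track the convention $V^\pm_v=\ker P^\pm_v=\ran P^\mp_v$. The paper omits the argument as a "straightforward calculation left for the reader," and your computation is precisely the intended one, so there is nothing to compare beyond noting that you have supplied the missing details.
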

\begin{proof}
The proof is a straightforward calculation which is left for the reader.
\end{proof}

\section{Energy estimates for the linearized equation }\label{s:lin}
In this section we prove the energy estimates for the linearized equation. We obtain them in the energy space $\mathcal{H}^0$. Such estimates are needed in Section~\ref{s:vf}, where vector fields bounds for the solution to \eqref{eq:MD} will be derived. This section will also contain the bootstrap bounds we rely on in getting the pointwise decay bound via Klainermann-Sobolev inequalities in Section~\ref{s:ks}.
The solutions for the linearized  system around a solution $(\psi, A)$ are
denoted by $(\phi, B )$.

Including also source terms, the linearized system takes the form

\begin{equation}
\label{eq:MD-lin}
\left\{
	\begin{aligned}
- &\imu \gamma^\mu \partial_\mu \phi + \phi = \gamma^\mu A_\mu \phi + \gamma^\mu B_\mu \psi + F \\
	 &\Box B_\mu = - \overline{\phi} \gamma_\mu \psi -\overline{\psi} \gamma_\mu \phi + G_\mu.
	 \end{aligned}
  \right.
\end{equation}

We next prove energy estimates for the linearized system~\eqref{eq:MD-lin}. We assume the bootstrap hypothesis
\begin{equation}
\label{eq:boot-psi-A}
\| \psi(t) \|_{L^6} +\Vert A(t)\Vert_{L^{\infty}} \leq  C_0 \epsilon \langle t \rangle^{-1},
\end{equation}
which is consistent with having minimal assumptions on the control norms used in getting these energy estimates. To keep the ideas simple here, we assume this holds in a time interval $[0,T]$. However, our bootstrap argument 
for the full problem will instead be carried out in the regions $C_{<T}$ which we introduce in the next section, see \eqref{def:C<T}.

\begin{prop}
\label{prop:EnergyEstimate}
Assuming the bootstrap bound~\eqref{eq:boot-psi-A} (on $\psi$),
we have the estimate
\begin{align*}
    %\label{eq:EnergyIncrementWithSource}
    \|(\phi,B)(t)\|_{\cH^0}^2 &\leq \|(\phi,B)(0)\|_{\cH^0}^2 + \int_0^t C_1 C_0 \epsilon \langle s\rangle^{-1} \|(\phi, B)(s)\|_{\cH^0}^2 \, d s \nonumber\\
    &\qquad + \Big|\Re \int_0^t \int_{\R^3} (\phi \cdot \overline{\imu \gamma^0 F} + |D|^{-1} \partial_t B_\mu \overline{G_\mu}) \, d x d s \Big|.
\end{align*}
In particular, in the case $F = G = 0$, we get the energy estimate
\begin{equation*}
\| (\phi,B)(t)\|_{\H^0}    
\lesssim \langle t \rangle^{c\epsilon}  \| (\phi,B)(0)\|_{\H^0}.
\end{equation*}
\end{prop}

\begin{proof}
    Computing the time derivative of the $\H^0$-norm of $(\phi,B)$, we get
    \begin{align}
    \label{eq:EnergyDer}
        &\frac{1}{2} \partial_t \|(\phi,B)\|_{\H^0}^2 
        = \frac{1}{2} \partial_t (\|\phi\|_{L^2}^2 + \||D|^{\frac{1}{2}} B\|_{L^2}^2 + \||D|^{-\frac{1}{2}} \partial_t B\|_{L^2}^2) \nonumber\\
        &=\Re \int_{\R^3} \phi \cdot \overline{\partial_t \phi}\, d x + \Re \int_{\R^3} |D|^{\frac{1}{2}} B \cdot \overline{|D|^{\frac{1}{2}} \partial_t B}\,  d x \nonumber\\
        &\qquad + \Re \int_{\R^3} |D|^{-\frac{1}{2}} \partial_t B \cdot \overline{|D|^{-\frac{1}{2}} \partial_t^2 B}\, d x \nonumber\\
        &= \Re \int_{\R^3} \phi \cdot \overline{\partial_t \phi}\, d x  + \Re \int_{\R^3} |D| B_\mu \,\overline{ \partial_t B_\mu} \,d x + \Re \int_{\R^3} |D|^{-1} \partial_t B_\mu \,  \overline{\partial_t^2 B_\mu}\, d x \nonumber\\
        &= \Re \int_{\R^3} \phi \cdot \overline{(- \gamma^0 \gamma^j \partial_j \phi - \imu \gamma^0 \phi + \imu \gamma^0 \gamma^\mu A_\mu \phi + \imu \gamma^0 \gamma^\mu B_\mu \psi + \imu \gamma^0 F)}\, d x \nonumber\\
        &\qquad + \Re \int_{\R^3} |D| B_\mu \, \overline{\partial_t B_\mu} \, d x + \Re \int_{\R^3} |D|^{-1} \partial_t B_\mu \, \overline{(\Delta B_\mu - \overline{\phi} \gamma_\mu \psi - \overline{\psi} \gamma_\mu \phi + G_\mu)} \, d x,
    \end{align}
    where we employed~\eqref{eq:MD-lin} in the last step. Using that $\gamma^0 \gamma^j$ is hermitian, an integration by parts yields
    \begin{align*}
        \Re \int_{\R^3} \phi \cdot \overline{(- \gamma^0 \gamma^j \partial_j \phi)} \, d x 
        = \Re \int_{\R^3} \partial_j (\gamma^0 \gamma^j \phi) \cdot \overline{\phi} \, d x
        = \Re \int_{\R^3} \phi \cdot \overline{\gamma^0 \gamma^j \partial_j \phi} \, d x,
    \end{align*}
    and hence
    \begin{equation}
    \label{eq:EnergyDerVan1}
         \Re \int_{\R^3} \phi \cdot \overline{(- \gamma^0 \gamma^j \partial_j \phi)} \, d x = 0.
    \end{equation}
    Since $\phi \cdot \overline{\gamma^0 \phi}$ is real, we also have
    \begin{equation*}
   % \label{eq:EnergyDerVan2}
        \Re \int_{\R^3} \phi \cdot \overline{ (- \imu \gamma^0 \phi )}\,  d x = 0.
    \end{equation*}
    Using once again that $\gamma^0 \gamma^\mu$ is hermitian and that $A_\mu$ is real, we further infer
    \begin{align*}
        \Re \int_{\R^3} \phi \cdot \overline{ \imu \gamma^0 \gamma^\mu A_\mu \phi } \, d x 
        = - \Re \int_{\R^3} \imu \gamma^0 \gamma^\mu \phi \cdot \overline{A_\mu \phi} \, d x 
        =  - \Re \int_{\R^3} \imu \gamma^0 \gamma^\mu A_\mu  \phi \cdot \overline{\phi}\, d x ,
    \end{align*}
    and thus
    \begin{equation*}
      %  \label{eq:EnergyDerVan3}
         \Re \int_{\R^3} \phi \cdot \overline{ \imu \gamma^0 \gamma^\mu A_\mu \phi } \, d x = 0.
    \end{equation*}
    Finally, we observe that
    \begin{align}
    \label{eq:EnergyDerVan4}
        &\Re \int_{\R^3} |D| B_\mu \, \overline{\partial_t B_\mu} \, d x + \Re \int_{\R^3} |D|^{-1} \partial_t B_\mu \, \overline{\Delta B_\mu } \, d x \nonumber \\
        &= \Re \int_{\R^3} |D| B_\mu \, \overline{\partial_t B_\mu}\,  d x + \Re \int_{\R^3}  \partial_t B_\mu \, \overline{|D|^{-1} (-|D|^2) B_\mu } \, d x = 0.
    \end{align}
    Inserting~\eqref{eq:EnergyDerVan1} to~\eqref{eq:EnergyDerVan4} into~\eqref{eq:EnergyDer}, we arrive at
    \begin{align}
        \label{eq:EnergyDerSimplified}
        \frac{1}{2} \partial_t \|(\phi,B)\|_{\H^0}^2 
        &= \Re \int_{\R^3} \phi \cdot \overline{(\imu \gamma^0 \gamma^\mu B_\mu \psi + \imu \gamma^0 F)} \, d x \nonumber \\
        &\qquad + \Re \int_{\R^3} |D|^{-1} \partial_t B_\mu \, \overline{(- \overline{\phi} \gamma_\mu \psi - \overline{\psi} \gamma_\mu \phi + G_\mu)}\,  d x.
    \end{align}
    The Sobolev embedding $\dot{H}^{\frac{1}{2}}(\R^3) \hookrightarrow L^3(\R^3)$ allows us to estimate the first summand in the first integral by
    \begin{align*}
        \Big|\int_{\R^3} \phi \cdot \overline{\imu \gamma^0 \gamma^\mu B_\mu \psi} \,d x  \Big|
        &\lesssim \|\phi\|_{L^2} \|B\|_{L^3} \|\psi\|_{L^6} \lesssim \|\phi\|_{L^2} \|B\|_{\dot{H}^{\frac{1}{2}}} \|\psi\|_{L^6}\\
        & \lesssim \|\psi\|_{L^6} \|(\phi,B)\|_{\H^0}^2.
    \end{align*}
    Similarly, we derive for the second integral in~\eqref{eq:EnergyDerSimplified}
    \begin{align*}
        &\Big| \int_{\R^3} |D|^{-1} \partial_t B_\mu \, \overline{(- \overline{\phi} \gamma_\mu \psi - \overline{\psi} \gamma_\mu \phi)} \,d x \Big|
        \lesssim \||D|^{-1} \partial_t B\|_{L^3} \|\phi\|_{L^2} \|\psi\|_{L^6} \\
        & \qquad \lesssim \|\partial_t B\|_{\dot{H}^{-\frac{1}{2}}} \|\phi\|_{L^2} \|\psi\|_{L^6} \lesssim \|\psi\|_{L^6} \|(\phi,B)\|_{\H^0}^2.
    \end{align*}
    We denote the maximum of the implicit constants in the above two estimates by $C_1$. Combining the these estimates with the bootstrap hypothesis~\eqref{eq:boot-psi-A}, we get
    \begin{align*}
        \|(\phi,B)(t)\|_{\H^0}^2 &= \|(\phi,B)(0)\|_{\H^0}^2 + \int_0^t (\partial_t \|(\phi,B)\|_{\H^0}^2)(s) \,d s \\
       & \leq \|(\phi,B)(0)\|_{\H^0}^2 + \int_0^t C_1 \|\psi(s)\|_{L^6}  \|(\phi,B)(s)\|_{\H^0}^2\, d s \\
       &\qquad + \Big|\Re \int_0^t \int_{\R^3} (\phi \cdot \overline{\imu \gamma^0 F} + |D|^{-1} \partial_t B_\mu \overline{G_\mu})\, d x d s \Big| \\
       & \leq \|(\phi,B)(0)\|_{\H^0}^2 + \int_0^t C_1 C_0 \epsilon \langle s\rangle^{-1}  \|(\phi,B)(s)\|_{\H^0}^2 \, d s \\
       &\qquad + \Big|\Re \int_0^t \int_{\R^3} (\phi \cdot \overline{\imu \gamma^0 F} + |D|^{-1} \partial_t B_\mu \overline{G_\mu})\, d x d s \Big|,
    \end{align*}
    which is the first part of the Proposition. In the case $F = G = 0$,
    Gronwall's inequality then yields
    \begin{align*}
        \|(\phi,B)(t)\|_{\H^0}^2 \leq  \langle t \rangle^{2 c \epsilon} \, \|(\phi,B)(0)\|_{\H^0}^2 ,
    \end{align*}
    where $c = \frac{1}{2} C_0 C_1$. This shows the assertion of the Proposition.
\end{proof}

%%%%%%%%%%%%%%%%%%%%%%%%%%%%%%%%%%%%%%%%%%
\section{Vector fields bounds}\label{s:vf}
%%%%%%%%%%%%%%%%%%%%%%%%%%%%%%%%%%%%%%%%%%

The main goal of this section is to establish energy bounds for $(\psi,
A)$ and their higher derivatives as well as  energy bounds for the solution $(\psi,
A)$ to which
we have applied a certain number of vector fields admissible to \eqref{eq:MD}. These functions solve a 
system which is closely related to the linearized system which was studied in
Section~\ref{s:lin}, but the vector fields
bring in additional difficulties which 
require a more complex argument.

%%%%%%%%%%%%%%%%%%%%%%%%%%%%%%%%%%%%%%%%%%

\subsection{Energy estimates for vector fields}

In the remainder of this section we prove
vector field energy bounds given the pointwise bootstrap assumption \eqref{eq:boot-psi-A}, which for convenience we recall here
\begin{equation*}
\| \psi (t)\|_{L^6} +\| A(t) \|_{L^\infty} \leq C_0 \epsilon \langle t \rangle^{-1}.
\end{equation*}

Ideally, given such a bootstrap assumption in a time interval $[0,T]$, one would like  to prove a vector field energy bound of the form  
\begin{equation}
\label{eq:EnergyEstimateVectorFields0}
\| \Gamma^{\leq k}(\psi,A)(t)\|_{\H^0}   \lesssim \langle t \rangle^{c\epsilon}  \| \Gamma^{\leq k}(\psi,A)(0)\|_{\H^0},
\end{equation}
where $c\approx C_0$.
Here we would like to  use interpolation and Gronwall's inequality as in the previous section. But since vector fields are time dependent,  the interpolation should happen in a space-time setting. Because of this we will no longer be able to apply directly Gronwall's inequality in time; instead it turns out that a dyadic time  decomposition would address the issue; this is similar to  the work of the second author in \cite{ifrims} and relies on ideas of Metcalfe-Tataru-Tohaneanu~\cite{mtt12}.

\begin{figure}[h]
\begin{center}
\begin{tikzpicture}[scale=1.9]

%Axis
\draw[->] (0,-0.3) -- (0,2.5);
\draw[->] (-2.5,0) -- (2.5,0);
\node[below] at (2.4,0) {\small $x$};
\node[left] at (0,2.4) {\small $t$};

\draw[red, thick]  (-2.5,0.8) -- (2.5,0.8);
\node[below, left] at (0,0.7) {\tiny $T$};
\draw[red,thick]  (-2.5,1.6) -- (-1.058,1.6);
\node[above, left] at (0,1.7) {\tiny $4T$};
\draw[red,thick]  (1.058,1.6) -- (2.5,1.6);
\draw[blue, thick, dashed] (-1.03,1.6) -- (1.058,1.6);
\node[right] at (.9,1.7) {\tiny $B$};
\node[left] at (-.9,1.7) {\tiny $A$};

\node[left] at (.47, 1.4) {\tiny ``cup'' region};

\draw[red,thick] [domain=-1.058:1.058] plot(\x, {((1.2)^2+(\x)^2)^(0.5)});
\draw[dashed] [domain=-2:-1.058] plot(\x, {((1.2)^2+(\x)^2)^(0.5)});
\draw[dashed] [domain=1.058:2] plot(\x, {((1.2)^2+(\x)^2)^(0.5)});
\node[below, left] at (1.8,2.2) {\tiny $H_\rho$};
\node[red] at (-1.5,1.2) {\tiny $C_T$};

\draw[dashed] [domain=0:2.4] plot(\x,\x);
\draw[dashed] [domain=-2.4:0] plot(\x,-\x);
\node[right] at (2.2,2.2) {\tiny $t=|x|$};

\fill [red!40,nearly transparent, domain=-1.058:1.058, variable=\x]
  (-1.058, 0.8)
  -- plot ({\x},{((1.2)^2+(\x)^2)^(0.5)})
  -- (1.058, 0.8)
  -- cycle;

\fill[red!40,nearly transparent] (-2.5, 0.8) -- (-2.5,1.6) -- (-1.058,1.6) -- (-1.058, 0.8) -- cycle;
\fill[red!40,nearly transparent]  (1.058,1.6) -- (1.058, 0.8) -- (2.5, 0.8) -- (2.5, 1.6) -- cycle;

\end{tikzpicture}
\caption{Region $C_T$ in 3+1 space-time dimension; ``cup'' region definition}
\label{f:CT}
\end{center}
\end{figure}
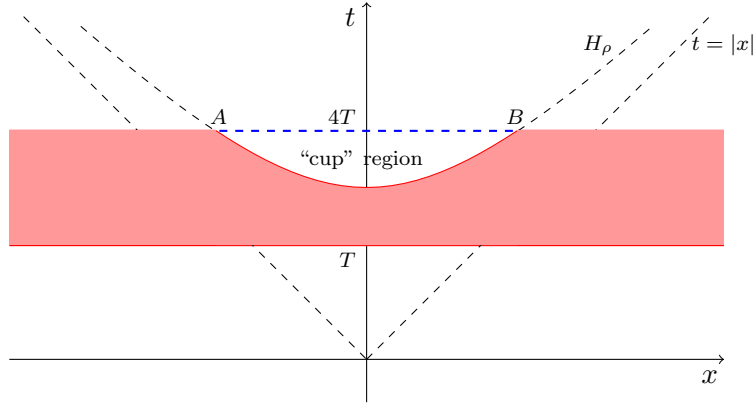

%%%%%%%

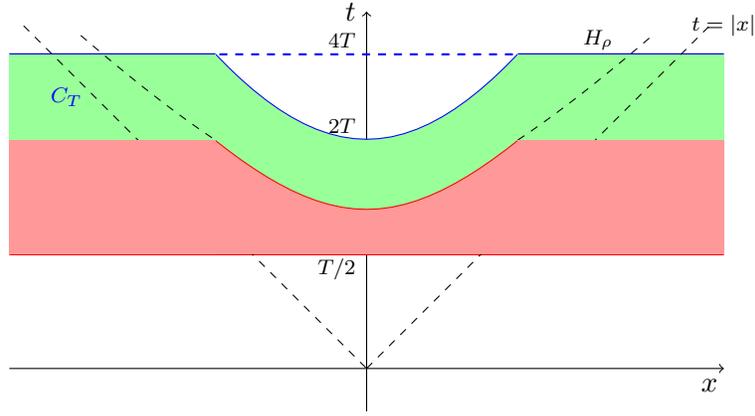
\begin{figure}[h]
\begin{center}
\begin{tikzpicture}[scale=1.9]

%Axis
\draw[->] (0,-0.3) -- (0,2.5);
\draw[->] (-2.5,0) -- (2.5,0);
\node[below] at (2.4,0) {\small $x$};
\node[left] at (0,2.5) {\small $t$};

\draw[red, thick]  (-2.5,0.8) -- (2.5,0.8);
\node[below, left] at (0,1.2) {\tiny $T$};
\node[below, left] at (0,0.7) {\tiny $T/2$};
\draw[red,thick]  (-2.5,1.6) -- (-1.058,1.6);
\node[above, left] at (0,1.7) {\tiny $2T$};
\node[above, left] at (0,2.3) {\tiny $4T$};
\draw[red,thick]  (1.058,1.6) -- (2.5,1.6);
\draw[blue, thick, dashed] (-1.03,1.6) -- (1.058,1.6);
%\node[right] at (.9,1.7) {\tiny $B$};
%\node[left] at (-.9,1.7) {\tiny $A$};

 \draw[blue, thick]  (-2.5,1.1) -- (2.5,1.1);
\draw[blue,thick]  (-2.5,2.2) -- (-1.058,2.2);
\draw[blue, thick, dashed] (-1.03,2.2) -- (1.058,2.2);
\draw[blue,thick]  (1.058,2.2) -- (2.5,2.2);
\draw[blue,thick] [domain=-1.058:1.058] plot(\x, {((1.4)^2+(\x)^2)^(0.7)});

\fill [green!40,nearly transparent, domain=-1.058:1.058, variable=\x](-1.058, 1.1)
  -- plot ({\x},{((1.4)^2+(\x)^2)^(0.7)})
  -- (1.058, 1.1)
  -- cycle;

\fill[green!40,nearly transparent] (-2.5, 1.1) -- (-2.5,2.2) -- (-1.058,2.2) -- (-1.058, 1.1) -- cycle;

\fill[green!40,nearly transparent]  (1.058,2.2) -- (1.058, 1.1) -- (2.5, 1.1) -- (2.5, 2.2) -- cycle;

\draw[red,thick] [domain=-1.058:1.058] plot(\x, {((1.1)^2+(\x)^2)^(0.55)});

\draw[dashed] [domain=-2:-1.058] plot(\x, {((1.2)^2+(\x)^2)^(0.5)});
\draw[dashed] [domain=1.058:2] plot(\x, {((1.2)^2+(\x)^2)^(0.5)});
\node[below, left] at (1.8,2.3) {\tiny $H_\rho$};
\node[red] at (-1.5,1.2) {\tiny $C_{T/2}$};

\node[blue] at (-2.1,1.9) {\tiny $C_{T}$};

\draw[dashed] [domain=0:2.4] plot(\x,\x);
\draw[dashed] [domain=-2.4:0] plot(\x,-\x);
\node[right] at (2.2,2.4) {\tiny $t=|x|$};

\fill [red!40,nearly transparent, domain=-1.058:1.058, variable=\x]
  (-1.058, 0.8)
  -- plot ({\x},{((1.1)^2+(\x)^2)^(0.55)})
  -- (1.058, 0.8)
  -- cycle;

\fill[red!40,nearly transparent] (-2.5, 0.8) -- (-2.5,1.6) -- (-1.058,1.6) -- (-1.058, 0.8) -- cycle;
\fill[red!40,nearly transparent]  (1.058,1.6) -- (1.058, 0.8) -- (2.5, 0.8) -- (2.5, 1.6) -- cycle;

\end{tikzpicture}
\caption{Overlapping $C_T$  regions}
\label{f:CToverlap}
\end{center}
\end{figure}

Ideally one might want to work in the overlapping dyadic regions $[T,4T] \times \R^3$, 
except that such regions cannot be well foliated by hyperboloids.
So we define instead the regions 
\[
C_T := \{ t \in [T,4T]\,,  \,  t^2 -x^2 \leq 4 T^2\},
\]
and also 
\begin{equation}
\label{def:C<T}
C_{<T} := \{ t \in [0,4T]\,,  \,  t^2 -x^2 \leq 4 T^2\}.
\end{equation}
For interpolation purposes, we will also use a slight enlargement $C_T^+$ of $C_T$ where we  add a lower cap, thus working with the region we define next
\begin{equation}
\label{eq:ct+}
C_T^+ := C_T \cup \{ (t,x) \in [T/2,T] \times \R^3, t^2 - x^2 \geq T^2/4\};
\end{equation}
explicitly, this is a slab with a cap removed on top and   with a similar cup added at the bottom.

Our main bootstrap argument in this paper 
will be carried out exactly in a region of the form $C_{<T}$, where we will assume that the bootstrap assumption \eqref{eq:boot-psi-A} holds with a large constant $C_0$, and then show that we can improve the constant.

The aim of this section is to carry out the first step of this process, namely to prove that, under such a bootstrap assumption in $C_{<T}$, we can obtain vector field energy estimates in the same region $C_{<T}$. For this, the strategy will be to inductively 
prove the vector field bounds in the (overlapping) regions $C_T$
for dyadic $T$, losing a $1+C\epsilon$ factor at every step.

To measure our solutions in $C_T$, it is 
convenient to introduce a stronger norm which does not contain only the energy, but also the size of the source terms 
in the corresponding linear equation.
Thus we define the following norms in the regions $C_T$:
\begin{equation}
\label{XT-def}
\|(\psi,A)\|_{X_T} := \|(\psi,A)(T)\|_{\cH^0} + 
T^\frac12 \| - \imu \gamma^\mu \partial_\mu \psi + \psi    \|_{L^2(C_T)} + T^{\frac13} \| \Box A\|_{L^\frac32(C_T)}.
\end{equation}
Now we are ready to state our main energy estimates:
\begin{prop}
\label{prop:EnergyEstGamma}
Assume that the bootstrap bounds  \eqref{eq:boot-psi-A}, recalled above, hold in $C_{<T}$.
Then in any dyadic region $C_{T_1} \subset C_{<T}$ we have the energy estimates 
\begin{equation*}
%\label{eq:EnergyEstimateVectorFields-new}
\| \Gamma^{\leq k}(\psi,A)\|_{X_{T_1}}   \lesssim \langle T_1 \rangle^{c\epsilon}  \| \Gamma^{\leq k}(\psi,A)(0)\|_{\H^0}, \qquad T_1 \leq T,
\end{equation*}
holding for a total of $k = 9$ vector fields and derivatives, with $c\approx C_0$.
\end{prop}
As we will see immediately in the next subsection, this in particular implies the fixed time energy bounds in \eqref{eq:EnergyEstimateVectorFields0}, 
but also provides additional information which we will use later on for the Klainerman-Sobolev inequality.

The rest of this section is 
devoted to the proof of this proposition. We split the analysis in three parts. First we derive the bounds if only translation vector fields are applied, then if only $\Omega$ (this is a shorthand notation for $\Omega_{\alpha \beta}$ which we will frequently use throughout) vector fields are applied, and finally if a mix of translation and $\Omega$  vector fields is applied.

\subsection{ Linear energy estimates in $C_T$.}
Our preliminary task here is to discuss energy estimates for the linear homogeneous system
\begin{equation}
\label{eq: linear system}
\left\{
\begin{aligned}
&(- \imu \gamma^\mu \partial_\mu + 1) \psi  = F\\
&\Box A = G
\end{aligned}
\right.
\end{equation}
in the cup regions $C_T$, which in particular imply that the fixed time energy bounds in \eqref{eq:EnergyEstimateVectorFields0} follow from Proposition~\ref{prop:EnergyEstGamma}.
These energy estimates are as follows:
\begin{lem}\label{lem:LinearEnergyEstCT} Assume $(\psi,A)$ solve the linear homogeneous system~\eqref{eq: linear system}.
	\begin{enumerate}
		\item 
			For  the spinor $\psi$ we have
			\begin{equation}
				\label{eq:EnergyEstLinearDiracCT}
					\|\psi\|_{L^\infty_t L^2_x(C_T)} \lesssim \|\psi(T)\|_{L^2} + \|F\|_{L^1_t L^2_x(C_T)}.
			\end{equation}
		\item 
			For the electromagnetic field $A$ we  have
			\begin{equation}
   \label{eq:EnergyEstLinearWaveCT-full}
				\|\nabla_{t,x} A\|_{L^\infty_t \dot{H}^{-\frac{1}{2}}_x(C_T)} \lesssim \|\nabla_{t,x} A(T)\|_{\dot{H}^{-\frac{1}{2}}} + \|G\|_{L^1_t L^{\frac{3}{2}}_x(C_T)}.
			\end{equation}
	\end{enumerate}
\end{lem}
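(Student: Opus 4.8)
The plan is to derive both estimates from the fundamental density-flux identities for the Dirac and wave equations, integrated over truncated hyperbolic slabs, so that the good (positive-definite) energies on the hyperboloids never enter with a bad sign. For part (i), I would start from the density-flux relation \eqref{dens-flux}, which with the source term $F$ on the right reads $\partial_t |\psi|^2 + \partial_j(\psi^\dag \gamma^0\gamma^j\psi) = -2\Im(\psi^\dag \gamma^0 F)$. The region $C_T = \{t \in [T,4T],\ t^2-x^2 \le 4T^2\}$ is foliated, in its interior part, by the hyperboloids $H_\rho = \{t^2-x^2 = \rho^2\}$ with $\rho \le 2T$, and capped off laterally by the portions of the time slices $\{t = \tau\}$, $\tau \in [T,4T]$, lying outside $\{t^2-x^2 \le 4T^2\}$. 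Fix any $\tau \in [T,4T]$ and integrate the identity over the subregion of $C_T$ below time $\tau$; by the divergence theorem the boundary consists of (a) the slice $\{t=\tau\} \cap C_T$, where the flux through contributes exactly $\|\psi(\tau)\|_{L^2(\cdot)}^2$ since the normal is timelike and $e_H$ is positive definite, (b) the bottom boundary (part of $\{t=T\}$ together with the lateral hyperboloid-capped faces), where the incoming flux is controlled by $\|\psi(T)\|_{L^2}^2$ plus the already-accounted-for lateral pieces, and (c) the spacelike hyperboloid piece, whose contribution has a favorable sign by the lemma on positivity of $e_H$. Bounding the space-time integral of $|\psi^\dag\gamma^0 F| \le |\psi||F|$ by $\|\psi\|_{L^\infty L^2(C_T)}\|F\|_{L^1 L^2(C_T)}$ and taking the supremum over $\tau$, a standard absorption/continuity argument closes \eqref{eq:EnergyEstLinearDiracCT}.

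For part (ii) the idea is the same but applied to the wave equation $\Box A = G$ at the regularity level $\dot H^{-1/2}$, i.e.\ to $|D|^{-1/2}A$, which solves $\Box(|D|^{-1/2}A) = |D|^{-1/2}G$. The natural conserved quantity is the energy $\frac12\int |\nabla_{t,x}(|D|^{-1/2}A)|^2\,dx = \frac12\|\nabla_{t,x}A\|_{\dot H^{-1/2}}^2$, with density-flux form $\partial_t(\tfrac12|\partial_t u|^2 + \tfrac12|\nabla u|^2) - \partial_j(\partial_t u\,\partial_j u) = \partial_t u\,\Box u$ for $u = |D|^{-1/2}A$. Integrating over the truncated slab below time $\tau$ as above, the spacelike hyperboloid faces again contribute with the correct sign (the energy density of the wave equation is nonnegative on any spacelike hypersurface), the top slice gives $\tfrac12\|\nabla_{t,x}A(\tau)\|_{\dot H^{-1/2}}^2$, and the bottom gives the data term $\|\nabla_{t,x}A(T)\|_{\dot H^{-1/2}}^2$. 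The source contribution is $\int \partial_t u\,\Box u = \int |D|^{-1/2}\partial_t A\cdot |D|^{-1/2}G$, which I estimate by $\|\partial_t A\|_{L^\infty \dot H^{-1/2}(C_T)}\|G\|_{L^1 L^{3/2}(C_T)}$ using the Sobolev embedding $\dot H^{1/2}(\R^3) \hookrightarrow L^3(\R^3)$ in the dual form $L^{3/2} \hookrightarrow \dot H^{-1/2}$. Taking the supremum over $\tau \in [T,4T]$ and absorbing the quadratic-in-$u$ term yields \eqref{eq:EnergyEstLinearWaveCT-full}.

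The main obstacle, and the point requiring genuine care rather than routine computation, is the geometry of $C_T$: it is \emph{not} a smooth hyperbolic slab but a region obtained by intersecting a time slab with the interior of a hyperboloid and then capping the exterior part with flat slices, so the boundary has corners where the hyperboloid $\{t^2-x^2 = 4T^2\}$ meets the slices $\{t=T\}$ and $\{t=4T\}$. One must check that all the boundary pieces introduced by this truncation — in particular the lateral "cup"/"cap" faces visible in Figure \ref{f:CT} — carry fluxes of the favorable sign (spacelike normals for the hyperboloid parts, timelike for the slice parts), so that discarding them only helps. A secondary technical point for part (ii) is the commutation of $|D|^{-1/2}$ with the spatial cutoff implicit in restricting to $C_T$: since $|D|^{-1/2}$ is nonlocal, one should either phrase the estimate first globally in time for the homogeneous flow and then localize using finite speed of propagation for the $\dot H^{-1/2}$-energy (the wave equation has finite speed of propagation and $C_T$ is a union of backward-truncated domains of dependence), or work with the energy flux identity directly in physical space for $u = |D|^{-1/2}A$ without ever cutting off $|D|^{-1/2}$ itself. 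I would adopt the latter route, treating $u$ as a genuine solution of a wave equation with source $|D|^{-1/2}G$ and only using $L^\infty_t$ norms of its energy over $C_T$, which sidesteps the nonlocality issue entirely.
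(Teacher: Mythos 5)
Your part (i) is correct and is essentially the paper's own argument: integrate the density-flux identity \eqref{dens-flux} over the portion of $C_T$ below time $\tau$, discard the flux through the hyperboloid (nonnegative because the hyperboloid is a spacelike surface with timelike unit normal -- your parenthetical ``spacelike normals for the hyperboloid parts'' has the terminology backwards, though your sign conclusion is right), estimate the source by $\|\psi\|_{L^\infty L^2}\|F\|_{L^1L^2}$, take the supremum over $\tau$ and absorb.

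Part (ii), as you have set it up, has a genuine gap, and it sits exactly at the point you flagged and then chose to ``sidestep.'' The route you adopt -- run the flux identity for $u=|D|^{-1/2}A$ over the truncated region and ``only use $L^\infty_t$ norms of its energy over $C_T$'' -- does not avoid the nonlocality; it meets it twice. First, the source term in your identity is $\int \partial_t u\cdot |D|^{-1/2}G$, and on $C_T$ the function $|D|^{-1/2}G$ depends on the values of $G$ outside $C_T$ (in the application $G=\Box A$ is a globally defined quadratic expression which is only estimated inside $C_T$), so it is not controlled by $\|G\|_{L^1L^{3/2}(C_T)}$; the embedding $L^{3/2}\hookrightarrow \dot H^{-1/2}$ only helps once you hold the full spatial $L^{3/2}$ norm. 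Second, even granting the source term, the top boundary term your identity produces is $\|\nabla_{t,x}u(\tau)\|_{L^2}$ taken over the slice of $C_T$ at time $\tau$, i.e.\ a local $L^2$ norm of the nonlocal half-integral of $A$; for $\tau>2T$ the quantity claimed in \eqref{eq:EnergyEstLinearWaveCT-full} is instead the $\dot H^{-1/2}$ norm of $\nabla_{t,x}A(\tau)$ restricted to a punctured region, which the paper defines as the infimum of the global norm over all extensions, and the former does not control the latter without a further nonlocal argument. So the step ``taking the supremum over $\tau$ and absorbing'' does not close for the norms actually appearing in the lemma.

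The repair is precisely the reduction you mentioned and discarded. Replace $G$ by an extension of $G|_{C_T}$ to the strip $[T,4T]\times\R^3$ with comparable $L^1L^{3/2}$ norm (extension by zero already suffices at this regularity; Lemma~\ref{lem:Extension} is what the paper invokes, and is needed in the vector-field versions), solve $\Box\tilde A=\tilde G$ with the same data at $t=T$, and use that $C_T$ is a domain of determination to conclude $\tilde A=A$ on $C_T$. The estimate then becomes a global-in-space statement on the strip, where your computation with $u=|D|^{-1/2}\tilde A$ is legitimate (the paper instead runs a Littlewood--Paley decomposition, applies the standard energy inequality at each frequency $\lambda$, divides by $\lambda$ and sums, then uses $L^{3/2}\hookrightarrow\dot H^{-1/2}$), and the $C_T$-norm on the left-hand side follows because $\nabla_{t,x}\tilde A(\tau)$ is itself an admissible extension of $\nabla_{t,x}A(\tau)$ restricted to the slice. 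With this reduction no truncated flux identity or hyperboloid flux is needed for the wave part at all; that machinery is only required for the Dirac part.
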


Some comments are useful here concerning 
the uniform energy bound for $A$ in \eqref{eq:EnergyEstLinearWaveCT-full}.
This is a nonlocal norm, whose interpretation at times $t \in [T,4T]$ depends on $t$. This is the full $\cH_0$ norm if $t \in [T,2T]$
(which is what allows us to easily advance time in the continuity argument later on), but if $t \in [2T,4T]$ this is only an $\cH_0$ norm in a punctured region with a ball removed. This is defined as the best norm of an extension
to the full $\R^3$, which is less descriptive. However, by applying Sobolev embeddings in all of $\R^3$ for this extensions we obtain as a corollary the local bound 
\begin{equation}
		\label{eq:EnergyEstLinearWaveCT}
				\|A\|_{L^\infty_t L^3_x(C_T)} \lesssim \|\nabla_{t,x} A(T)\|_{\dot{H}^{-\frac{1}{2}}} + \|G\|_{L^1_t L^{\frac{3}{2}}_x(C_T)}.
			\end{equation}
This weaker bound, rather than  \eqref{eq:EnergyEstLinearWaveCT-full}, is exactly what is needed and used in order to close the proof of Proposition~\ref{prop:EnergyEstGamma}.

\begin{proof}
	Let $s \in [T, 4T]$ and let $H_s$ denote the part of the hyperboloid $t^2 - |x|^2 = 4 T^2$ with $t \leq s$ and $C_{T,s } := \{(t,x) \in C_T \colon t = s\}$. Finally, we write $\Sigma_s := C_{T,s} \cup H_s$. If $s \leq 2T$, $\Sigma_s = \{(s,x) \colon x \in \R^3 \}$ and the analysis simplifies. Throughout  this section, we thus assume without loss of generality that  $s > 2T$.
	
	For the first part we write the $L^2$-conservation of the Dirac equation in the density-flux form
	\begin{align*}
		\partial_t |\psi|^2 + \partial_j ( \psi^\dag  \gamma^0 \gamma^j \psi ) = -2 \Im (\psi^\dag \gamma^0 F ),
	\end{align*}
	and integrate this identity over the domain, denoted by $D_s$, between $\{t = T\}$ and $\Sigma_s$, leading to
	\begin{equation}
		\int_{C_{T,s}} |\psi(s)|^2\, dx - \int_{\R^3} |\psi(T)|^2 \, d x + \int_{H_s} (\nu_0 |\psi|^2 + \nu_j \psi^\dag \gamma^0 \gamma^j \psi) \, d \sigma  = -2 \int_{D_s} \Im(\psi^\dag \gamma^0 F) \, dx d t. \label{eq:EnergyIdentDirac}
	\end{equation}
	Employing Lemma~\ref{lem:gammatheta}, we have
	\begin{align*}
		\int_{H_s} (\nu_0 |\psi|^2 + \nu_j \psi^\dag \gamma^0 \gamma^j \psi)\,  d \sigma \geq 0
	\end{align*}
	as the unit outer normal $\nu$ of the hyperboloid is time-like. Discarding this term in~\eqref{eq:EnergyIdentDirac}, we obtain
	\begin{align*}
		\int_{C_{T,s}} |\psi(s)|^2 \, dx \lesssim \|\psi(T)\|_{L^2}^2 + \|\psi\|_{L^\infty_t L^2_x(D_s)} \|F\|_{L^1_t L^2_x(D_s)}.
	\end{align*}
	Taking the supremum over $s \in [T,4T]$, we arrive at 
	\begin{align*}
		\|\psi\|_{L^\infty_t L^2_x(C_T)} \lesssim \|\psi(T)\|_{L^2} + \|F\|_{L^1_t L^2_x(C_T)}.
	\end{align*}
	
	For the wave part we proceed similarly.
	Let $\tilde{G}$ denote an extension of $G$ to the strip $[T, 4T] \times \R^3$ with norm on the full strip bounded by the norm on of $G$ on $C_T$, see Lemma \ref{lem:Extension}. As $C_T$ is a domain of determination, the corresponding solution $\tilde{A}$ of the linear wave equation provides an extension of $A$ to the full strip. Applying a Littlewood-Paley decomposition, the standard energy inequality for the linear wave equation yields
	\begin{align*}
		\lambda^2 \|\tilde{A}_\lambda(s)\|_{L^2}^2 +  \|\partial_t \tilde{A}_\lambda(s)\|_{L^2}^2 &\lesssim \lambda^2 \|A_\lambda(T)\|_{L^2}^2 +  \|\partial_t A_\lambda(T)\|_{L^2}^2 \\
		&\qquad + \int_{T}^s \|\partial_t \tilde{A}_\lambda(t)\|_{L^2} \|G_\lambda(t)\|_{L^2}\, dt,
	\end{align*}
	where the subscript $\lambda$ denotes the part of the function with frequency localized around $\lambda \in 2^\Z$. Dividing by $\lambda$ and summing up, we get
	\begin{align*}
		\| \tilde{A}(s)\|_{\dot{H}^{\frac{1}{2}}}^2 + \|\partial_t \tilde{A}(s)\|_{\dot{H}^{-\frac{1}{2}}}^2
		\lesssim \| \nabla_{t,x} A(T)\|_{\dot{H}^{-\frac{1}{2}}}^2 + \|\partial_t \tilde{A} \|_{L^\infty_t \dot{H}^{-\frac{1}{2}}_x} \|\tilde{G}\|_{L^1_t \dot{H}^{-\frac{1}{2}}_x},
	\end{align*}
	where the space-time norms are taken over $[T,s] \times \R^3$. Taking the supremum over $s \in [T, 4T]$, an application of Young's inequality and the embeddings $\dot{H}^{\frac{1}{2}} \hookrightarrow L^3$ and $L^{\frac{3}{2}} \hookrightarrow \dot{H}^{-\frac{1}{2}}$ yield
	\begin{align*}
		\|\nabla_{t,x} A\|_{L^\infty_t \dot H^{-\frac12}_x(C_T)} &\lesssim \|\nabla_{t,x} \tilde{A}\|_{L^\infty_t \dot{H}^{-\frac{1}{2}}_x([T,4T] \times \R^3)} \lesssim \| \nabla_{t,x} A(T)\|_{\dot{H}^{-\frac{1}{2}}} +  \|\tilde{G}\|_{L^1_t L^{\frac{3}{2}}_x([T,4T]\times \R^3)} \\
		&\lesssim \| \nabla_{t,x} A(T)\|_{\dot{H}^{-\frac{1}{2}}} +  \|G\|_{L^1_t L^{\frac{3}{2}}_x(C_T)}.
	\end{align*}
\end{proof}

\begin{rem}
    Being only interested in the energy estimates at this stage, we have simply discarded the integral over the hyperboloid in~\eqref{eq:EnergyIdentDirac}. In Section~\ref{ss:vector fields}, however, the control this integral provides will be crucial in order to recover the pointwise bounds.
\end{rem}

\subsection{Bounds for derivatives}
\label{ss:higher derivatives}

Let $I \in \N_0^4$. Applying $\partial^I$ to the Maxwell-Dirac system~\eqref{eq:MD}, we infer that $(\partial^I \psi, \partial^I A)$ solves the linear system~\eqref{eq: linear system} with source terms
\begin{align}
    &F = F_I := \sum_{0 \leq I' \leq I} \binom{I}{I'} \gamma^\mu \partial^{I'} A_\mu \partial^{I - I'} \psi, \label{eq:DefFalpha}\\
    &G_\mu = G_{\mu,I} := - \sum_{0 \leq I' \leq I} \binom{I}{I'} \partial^{I'} \overline{\psi} \gamma^\mu \partial^{I - I'} \psi. \label{eq:DefGmualpha}
\end{align}

Next, using Gagliardo-Nirenberg estimates in space-time, we derive energy estimates for $(\partial^I \psi, \partial^I A)$.

\begin{lem}
    \label{lem:EnergyEstHigherDeriv}
    Assuming the bootstrap hypothesis~\eqref{eq:boot-psi-A} in $C_{<T}$, we obtain
    \begin{equation}
        \label{eq:EnergyEstHigherDeriv}
        \|\partial^k (\psi, A) \|_{X_{T_1}} \lesssim \langle T_1\rangle^{c \epsilon} \| \partial^{\leq k} (\psi, A)(0)\|_{\cH^0}
    \end{equation}
    for all $k \in \N$, $T_1 \leq T$  and  $c\approx C_0$.
\end{lem}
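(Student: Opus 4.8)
\textbf{Proof plan for Lemma~\ref{lem:EnergyEstHigherDeriv}.}
The plan is to combine the linear energy estimates in $C_T$ from Lemma~\ref{lem:LinearEnergyEstCT} with the product structure of the source terms $F_I$ and $G_{\mu,I}$ in \eqref{eq:DefFalpha}--\eqref{eq:DefGmualpha}, estimating the latter by Gagliardo--Nirenberg interpolation against the bootstrap quantities in \eqref{eq:boot-psi-A}, and finally chaining the resulting one-step gain across the dyadic regions $C_{T_1}$ via a discrete Gronwall argument. First I would treat the spatial case $I_0 = 0$. Here $(\partial^I \psi, \partial^I A)$ solves \eqref{eq: linear system} with $F = F_I$, $G = G_{\mu,I}$, so by Lemma~\ref{lem:LinearEnergyEstCT} and the definition \eqref{XT-def} of the $X_{T_1}$ norm it suffices to control $T_1^{1/2}\|F_I\|_{L^2(C_{T_1})}$ and $T_1^{1/3}\|\Box \partial^I A\|_{L^{3/2}(C_{T_1})} = T_1^{1/3}\|G_{\mu,I}\|_{L^{3/2}(C_{T_1})}$ (and the fixed-time $\cH^0$ data term, which is the right-hand side of \eqref{eq:EnergyEstHigherDeriv} at $T_1$ up to the dyadic induction). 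In each term of $F_I = \sum \binom{I}{I'}\gamma^\mu \partial^{I'}A_\mu\, \partial^{I-I'}\psi$ I would put the factor with fewer derivatives in $L^\infty$ (using the $A$-bootstrap $\|A\|_{L^\infty}\lesssim C_0\epsilon\langle t\rangle^{-1}$ when it is $A$, or a Gagliardo--Nirenberg/Sobolev bound when it is $\psi$, interpolating $\|\psi\|_{L^6}\lesssim C_0\epsilon\langle t\rangle^{-1}$ against a higher $\dot H^s$ energy) and the factor with more derivatives in $L^2$; this yields a bound of the shape $\|F_I\|_{L^2_x}\lesssim C_0\epsilon\langle t\rangle^{-1}\|\partial^{\leq k}(\psi,A)\|_{\cH^0}$, and after the $L^2_t$ (resp.\ $L^{3/2}_t$) integration over $C_{T_1}\subset\{t\in[T_1,4T_1]\}$ the extra $t^{-1}$ exactly compensates the $T_1^{1/2}$ (resp.\ $T_1^{1/3}$) prefactor, leaving a uniformly-in-$T_1$ bound times $\sup_{C_{T_1}}\|\partial^{\leq k}(\psi,A)\|_{\cH^0}$ — at least once the $X_{T_1}$ norm has been shown to dominate this sup, which is part of the content of Lemma~\ref{lem:LinearEnergyEstCT} and the remark following it.

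Next I would handle the general multi-index $I$ with $I_0 > 0$ by induction on $i_0$: differentiating the Dirac equation in \eqref{eq:MD}, each time derivative $\partial_t\psi$ can be solved for in terms of spatial derivatives of $\psi$ and the nonlinearity, i.e.\ $\partial_t\psi = \gamma^0\gamma^j\partial_j\psi + \imu\gamma^0\psi - \imu\gamma^0\gamma^\mu A_\mu\psi$, and similarly $\partial_t^2 A$ is replaced by $\Delta A$ plus the quadratic source from the wave equation; iterating this trades all time derivatives for spatial ones at the cost of extra lower-order nonlinear terms, all of which are again estimated by the same Gagliardo--Nirenberg-against-bootstrap mechanism. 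This reduces everything to the $I_0 = 0$ case already handled, with $c$ unchanged up to absorbing combinatorial constants.

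Finally I would run the dyadic bootstrap across scales. Having shown, for any $C_{2T_1}$ sitting inside $C_{<T}$, an estimate of the form $\|\partial^{\leq k}(\psi,A)\|_{X_{2T_1}} \leq (1 + C C_0\epsilon)\,\|\partial^{\leq k}(\psi,A)\|_{X_{T_1}}$ — where the gain factor comes from the logarithmic-in-$T_1$ total of the $\int t^{-1}\,dt$ contributions over one dyadic block being $O(1)$, so that after $\log_2(T_1)$ blocks one accumulates $(1+CC_0\epsilon)^{\log_2 T_1}\approx T_1^{c\epsilon}$ with $c\approx C_0$ — I would iterate from the initial slab $C_1$, where the $X_1$ norm is controlled by $\|\partial^{\leq k}(\psi,A)(1)\|_{\cH^0}$ by local theory, to obtain \eqref{eq:EnergyEstHigherDeriv}. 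The main obstacle I anticipate is not the algebra of the product estimates but the bookkeeping at the top order: for the highest admissible $k$ one cannot afford to put \emph{any} derivative loss on the low-derivative factor, so the split has to be arranged so that the factor estimated in $L^\infty$ (or $L^6$) carries strictly fewer than $k$ derivatives and hence falls under the inductive hypothesis or directly under \eqref{eq:boot-psi-A}; keeping the interpolation exponents consistent with the mixed $L^2_x$/$\dot H^{\pm 1/2}$ structure of $\cH^0$ and with the $L^2_t C_T$ versus $L^{3/2}_t C_T$ weights in \eqref{XT-def}, while making sure the $t$-power budget always closes, is the delicate point. A secondary subtlety is that $\|A\|_{L^\infty}$ alone does not control low frequencies, so wherever an undifferentiated $A$ appears one should really use the local $L^3$ energy bound \eqref{eq:EnergyEstLinearWaveCT} rather than $L^\infty$; since here $A$ is the low-derivative factor and already enters through the bootstrap at the pointwise level, this causes no trouble, but it must be stated carefully.
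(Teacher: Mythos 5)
Your plan follows the paper's proof essentially step for step: linear energy estimates on the cup regions $C_{T_1}$ (Lemma~\ref{lem:LinearEnergyEstCT}), Gagliardo--Nirenberg interpolation of the source terms $F_I$, $G_{\mu,I}$ against the bootstrap quantities $\|A\|_{L^\infty}$ and $\|\psi\|_{L^6}$ from \eqref{eq:boot-psi-A}, H\"older in time to absorb the $T_1^{1/2}$ and $T_1^{1/3}$ weights in \eqref{XT-def}, a discrete Gronwall iteration across dyadic scales producing the $T_1^{c\epsilon}$ growth, and an inductive reduction of time derivatives to spatial ones via the equations. The one point where your sketch should be tightened to match the paper is the intermediate splittings $\partial^{I'}A\,\partial^{I-I'}\psi$ with $0<|I'|<|I|$: there is no pointwise control of $\partial^{j}A$ available at this stage, so one cannot literally put the low-derivative factor in $L^\infty$; instead both factors are interpolated, $\|\partial^{I'}A\|_{L^{p_1}}\lesssim\|\partial^{\leq k}A\|_{L^3}^{\theta_1}\|A\|_{L^\infty}^{1-\theta_1}$ and $\|\partial^{I-I'}\psi\|_{L^{p_2}}\lesssim\|\partial^{\leq k}\psi\|_{L^2}^{\theta_2}\|\psi\|_{L^6}^{1-\theta_2}$ with $\tfrac{1}{p_1}+\tfrac{1}{p_2}=\tfrac12$ (and analogously with sum $\tfrac23$ for $G_{\mu,I}$), which is exactly what produces the bound shape you state.
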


\begin{proof}
We prove the lemma using a discrete 
version of Gronwall's inequality, starting from the following bound in 
a region  of the  form $C_T$:
\begin{equation}\label{eq:FG-deriv}
T_1^{\frac12}\| F_I\|_{L^2_{t,x}(C_{T_1})} + T_1^{\frac13}\|G_I\|_{L^\frac32_{t,x}(C_{T_1})}
\lesssim C_0 \epsilon  \| \partial^{\leq k} (\psi,A)\|_{X_{T_1}}, \qquad |I| \leq k.
\end{equation}
We first show that this bound implies
\eqref{eq:EnergyEstHigherDeriv}. 

\medskip

By definition of the $X_{T_1}$-norm, the above bound implies that 
\begin{equation}\label{eq:XT-deriv-CT}
 \| \partial^{\leq k} (\psi,A)\|_{X_{T_1}} \lesssim \|\partial^{\leq k} (\psi,A)(T_1)\|_{\H^0}, 
\end{equation}
and returning to \eqref{eq:FG-deriv},
\[
T_1^{\frac12}\| F_I\|_{L^2_{t,x}(C_{T_1})} + T_1^{\frac13}\|G_I\|_{L^\frac32_{t,x}(C_{T_1})} \lesssim C_0\epsilon \|\partial^{\leq k} (\psi,A)(T_1)\|_{\H^0}.
\]
Then direct energy estimates for $(\psi,A)$, again as in Lemma~\ref{lem:LinearEnergyEstCT},
give 
\begin{equation}\label{eq:Gronwall-CT}
\|\partial^{\leq k} (\psi,A)(2T_1)\|_{\H^0} \leq (1+ c\epsilon) \|\partial^{\leq k} (\psi,A)(T_1)\|_{\H^0}, \qquad c \approx C_0.
\end{equation}
This last inequality forms the basis
for our discrete Gronwall argument. For any $t \in [1,T]$, we fix $l \in \N$ with $2^{l-1} \leq t < 2^l$ and set $t_1 = 2^{-l} t \in [0,1)$. Iterating the above estimate, we infer
\begin{align*}
    \|\partial^{\leq k} (\psi,A)(t)\|_{\H^0} \leq (1 + c\epsilon)^l \|\partial^{\leq k} (\psi,A)(t_1)\|_{\H^0}.
\end{align*}
Since $(1 + c\epsilon)^l \lesssim t^{c' \epsilon}$ with $c' \approx c \approx C_0$ and $\|\partial^{\leq k} (\psi,A)\|_{L^\infty_t([0,1])\H^0} \lesssim \|\partial^{\leq k} (\psi,A)(0)\|_{\H^0}$ by local well-posedness, we arrive at
\[
\|\partial^{\leq k} (\psi,A)(t)\|_{\H^0} \lesssim \langle t\rangle^{c\epsilon} \|\partial^{\leq k} (\psi,A)(0)\|_{\H^0}, \qquad 0 \leq t \leq T
\]
with a constant $c \approx C_0$. Together with \eqref{eq:XT-deriv-CT}, this implies the desired estimate \eqref{eq:EnergyEstHigherDeriv}.

\medskip

It now remains to prove the bound \eqref{eq:FG-deriv}. Given the bootstrap norms in \eqref{eq:boot-psi-A} and Lemma \ref{lem:LinearEnergyEstCT}, by H\"older's inequality in time it suffices to show that we have
\begin{equation}\label{eq:FI}
\|F_I\|_{L^2_{t,x}(C_{T_1})} \lesssim \| A\|_{L^\infty_{t,x}(C_{T_1})} \|\partial^{\leq k}\psi \|_{L^2_{t,x}(C_{T_1})}+ \| \psi\|_{L^6_{t,x}(C_{T_1})} \| \partial^{\leq k} A \|_{L^3_{t,x}(C_{T_1})},    
\end{equation}
respectively
\begin{equation}\label{eq:GI}
\|G_I\|_{L^\frac32_{t,x}(C_{T_1})} \lesssim \| \psi\|_{L^6_{t,x}(C_{T_1})} \|\partial^{\leq k}\psi \|_{L^2_{t,x}(C_{T_1})}.   
\end{equation}
We prove this separately for all the 
terms in $F_I$ and $G_I$. It suffices to assume that $|I|=k$.
This is obvious if $I'= 0$ and if 
$I'=I$. For intermediate values of $I'$ we denote $|I'|= j$, and define exponents $p_1$ and $p_2$ by
    \begin{align*}
        \begin{aligned}
        \frac{1}{p_1} &= \frac{j}{3k} = \frac{j}{4} + \theta_1 \Big(\frac{1}{3} - \frac{k}{4} \Big), \qquad &\theta_1 &=  \frac{j}{k}, \\
        \frac{1}{p_2} &= \frac{k-j}{3k} + \frac{1}{6} = \frac{k-j}{4} + \theta_2 \Big( \frac{1}{2} - \frac{k}{4} \Big) + \frac{1 - \theta_2}{6}, \qquad &\theta_2 &= \frac{k-j}{k}. 
        \end{aligned}
    \end{align*}
    Then $\theta_1 + \theta_2 = 1$, $\frac{1}{p_1} + \frac{1}{p_2} = \frac{1}{2}$, and the classical Gagliardo-Nirenberg estimates applied in $C_{T_1}$ yield
 \begin{align*}
        \|\partial^{I'} A_\mu \partial^{I - I'} \psi\|_{L^2_{t,x}(C_{T_1})} \leq & \ \|\partial^{I'} A_\mu\|_{L^{p_1}_{t,x}(C_{T_1})} \|\partial^{I - I'} \psi\|_{L^{p_2}_{t,x}(C_{T_1})}
        \\ 
        \lesssim & \  \|\partial^{\leq k} A\|_{L^3_{t,x}(C_{T_1})}^{\theta_1} \|A\|_{L^\infty_{t,x}(C_{T_1})}^{1-\theta_1} \|\partial^{\leq k} \psi\|_{L^2_{t,x}(C_{T_1})}^{\theta_2} \|\psi\|_{L^6_{t,x}(C_{T_1})}^{1-\theta_2} \\
    \lesssim & \ 
       \| A\|_{L^\infty_{t,x}(C_{T_1})} \|\partial^{\leq k}\psi \|_{L^2_{t,x}(C_{T_1})}+ \| \psi\|_{L^6_{t,x}(C_{T_1})} \| \partial^{\leq k} A \|_{L^3_{t,x}(C_{T_1})}         
    \end{align*}
as needed.

The bound for the corresponding term in $G_I$ is similar.
This time we use the exponents
\begin{align*}
    \begin{aligned}
    \frac{1}{p_1} &= \frac{j}{3k} + \frac{1}{6} =\frac{j}{4} + \theta_1 \Big(\frac{1}{2} - \frac{k}{4} \Big) + \frac{1-\theta_1}{6}, \qquad &\theta_1 &= \frac{j}{k}, \\
    \frac{1}{p_2} &= \frac{1}{2}- \frac{j}{3k} = \frac{k-j}{4} + \theta_2 \Big(\frac{1}{2} - \frac{k}{4}\Big) + \frac{1-\theta_2}{6}, &\theta_2 &= \frac{k-j}{k},
    \end{aligned}
\end{align*}
which satisfy $\frac{1}{p_1} + \frac{1}{p_2} = \frac{2}{3}$, and again we have $\theta_1 + \theta_2 = 1$. The Gagliardo-Nirenberg estimates and the bootstrap hypothesis~\eqref{eq:boot-psi-A} thus yield
\begin{align*}
    \|\partial^{I'} \overline{\psi} \gamma^\mu \partial^{I - I'} \psi\|_{L^{\frac{3}{2}}_{t,x}(C_{T_1})}
    \lesssim & \ \|\partial^{I'} \psi \|_{L^{p_1}_{t,x}(C_{T_1})} \|\partial^{I - I'} \psi \|_{L^{p_2}_{t,x}(C_{T_1})}
    \\
    \lesssim & \ \|\partial^{\leq k}\psi \|_{L^2_{t,x}(C_{T_1})}^{\theta_1} \|\psi\|_{L^6_{t,x}(C_{T_1})}^{1-\theta_1} \|\partial^{\leq k} \psi \|_{L^2_{t,x}(C_{T_1})}^{\theta_2} \|\psi\|_{L^6_{t,x}(C_{T_1})}^{1-\theta_2} \\
    \lesssim & \ \|\partial^{\leq k} \psi\|_{L^2_{t,x}(C_{T_1})} \|\psi\|_{L^6_{t,x}(C_{T_1})},
\end{align*}
again as needed. This concludes the proof of \eqref{eq:FI} and \eqref{eq:GI} and thus the proof of the lemma.
\end{proof}

\begin{rem}
    \label{rem:EnergyEstNumberDer}
    We provided the energy estimates for an arbitrary number of derivatives, though we only use a maximum of three vector fields or nine regular derivatives.
\end{rem}

\subsection{Bounds for vector fields}
\label{ss:vector fields}

Here again we want to use some sort of interpolation inequalities, but since vector fields are time dependent  the interpolation should happen in a space-time setting.

In order to estimate the source terms in the proof of the energy estimates for the $\Omega$ vector fields (again omitting hat and tilde notation), the following Gagliardo-Nirenberg type interpolation result for vector fields is crucial. As already mentioned above, this interpolation has to happen in a space-time region since the vector fields are time dependent.
\begin{lem}[Higher order interpolation for vector fields]\label{lem:high-ord}
    Let $0 \leq j \leq m$ be integers and $\theta = \frac{j}{m}$. Let $p,q,r \in [1,\infty]$ satisfy
    \begin{align*}
        \frac{1}{p} = \frac{\theta}{r} + \frac{1-\theta}{q}.
    \end{align*}
    Then, for $|J|=j$,
    \begin{align}
        \label{eq:EstVectorFieldCT}
        \|\Omega^J u\|_{L^p_{t,x}(C_T^+)} \lesssim \|\Omega^{\leq m} u\|_{L^r_{t,x}(C_T^+)}^\theta \|u\|_{L^q_{t,x}(C_T^+)}^{1-\theta}.
    \end{align}
\end{lem}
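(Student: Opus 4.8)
The plan is to reduce the higher-order vector field interpolation inequality \eqref{eq:EstVectorFieldCT} to the first-order case $m=1$, and then to establish the first-order case directly by a localized Gagliardo--Nirenberg argument adapted to the region $C_T^+$. First I would record the base case: for a single vector field $\Omega \in \{\Omega_{\alpha\beta}\}$ and a function $v$ on $C_T^+$, I would prove the multiplicative bound
\begin{equation}
\label{eq:first-order-vf}
\|\Omega v\|_{L^2_{t,x}(C_T^+)} \lesssim \|\Omega^{\leq 2} v\|_{L^2_{t,x}(C_T^+)}^{1/2} \|v\|_{L^2_{t,x}(C_T^+)}^{1/2},
\end{equation}
and more generally with $L^2$ replaced by the three-exponent scale $(p,r,q)$ with $\theta=1/2$. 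The key geometric point is that $C_T^+$ is, up to the hyperbolic rescaling $(t,x)\mapsto (t,x)/T$, comparable to a fixed compact region of hyperbolic space with a lower cap attached, on which the vector fields $\Omega_{\alpha\beta}$ form (together with a bounded elliptic correction) a spanning set of smooth vector fields with bounded coefficients and bounded commutators. On such a fixed region a standard integration-by-parts/Gagliardo--Nirenberg argument gives \eqref{eq:first-order-vf}; the reason $C_T^+$ rather than $C_T$ appears is precisely that the added ``cup'' at the bottom provides the room needed so that no boundary terms obstruct the integration by parts (the hyperboloidal top boundary is handled because the $\Omega_{\alpha\beta}$ are tangent to hyperboloids, while the flat slices need the cap).

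The second step is the iteration from $m=1$ to general $m$. Given $|J|=j$ with $0\le j\le m$, I would interpolate by repeatedly applying the first-order inequality to bring the number of vector fields up and down. Concretely, one writes $\Omega^J u$ as $\Omega$ applied to $\Omega^{J'}u$ with $|J'|=j-1$, applies the first-order bound, and then runs the standard Gagliardo--Nirenberg bootstrap/induction on $m$: the combinatorial bookkeeping is identical to the Euclidean proof of $\|\partial^j u\|_{L^p}\lesssim \|\partial^m u\|_{L^r}^\theta\|u\|_{L^q}^{1-\theta}$, with $\partial$ replaced by the family $\{\Omega_{\alpha\beta}\}$ and with all norms taken over the fixed region $C_T^+$ (equivalently, after rescaling, over the fixed hyperbolic region). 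One must check that the exponent relations $\theta=j/m$ and $1/p=\theta/r+(1-\theta)/q$ propagate correctly through the induction, which they do by the same arithmetic as in the classical case. Since the region is fixed and all implied constants depend only on $m$ (and $p,q,r$), no powers of $T$ are produced — which is exactly what is needed for the later Gronwall-type iteration over dyadic scales.

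The main obstacle I anticipate is the careful treatment of the boundary of $C_T^+$ in the first-order estimate \eqref{eq:first-order-vf}, together with the low-regularity nature of the boundary (corners where the hyperboloid meets the flat slices). The integration by parts $\int \Omega v\cdot\overline{\Omega v} = -\int v\cdot\overline{\Omega^2 v} + \text{(commutator)} + \text{(boundary)}$ must be justified with the boundary term either vanishing (by tangency of $\Omega_{\alpha\beta}$ to the relevant pieces of $\partial C_T^+$) or controllable; the purpose of enlarging $C_T$ to $C_T^+$ by the lower cup is to make this work uniformly in $T$. I would handle this either by a direct computation of the outward normals on each face of $C_T^+$ and checking $\Omega_{\alpha\beta}$-tangency where needed, or — cleaner — by extending $u$ from $C_T^+$ to a slightly larger hyperbolically-convex region, proving the interior estimate there, and then restricting; the extension must be done with norm control uniform in $T$, which again follows after rescaling to the fixed region. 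A secondary but routine point is that one works with $\Omega_{\alpha\beta}$ directly here (not the modified $\hat\Omega$, $\tilde\Omega$), since \eqref{eq:EstVectorFieldCT} is a statement about scalar functions $u$; the spinorial corrections only enter when these interpolation bounds are later combined with the energy estimates.
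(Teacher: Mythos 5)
Your overall strategy -- rescale $C_T^+$ by $T$, pass to coordinates adapted to the level sets of $t^2-x^2$ in which the $\Omega_{\alpha\beta}$ act as unit-size derivatives along the leaves, and then run the classical Gagliardo--Nirenberg scheme (first-order estimate plus induction, or leaf-wise Gagliardo--Nirenberg combined with H\"older across the foliation via the relation $\tfrac1p=\tfrac{\theta}{r}+\tfrac{1-\theta}{q}$) -- is exactly the paper's proof, which consists of the single sentence that in appropriate hyperbolic coordinates the statement reduces to standard Gagliardo--Nirenberg estimates. At that level you and the paper agree, and the uniformity in $T$ does come from scaling, as you say.

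Two of your supporting geometric claims are inaccurate, however, and would need repair in a complete write-up. First, after rescaling, $C_T^+$ is not ``a fixed compact region of hyperbolic space with a lower cap'': by \eqref{eq:ct+} and the definition of $C_T$, the region contains every point with $t\in[T,4T]$ lying \emph{outside} the light cone, so it is unbounded, the relevant leaves there are the one-sheeted hyperboloids $x^2-t^2=\mathrm{const}$, and the coefficients of the boosts and rotations grow like $|x|/T$. In particular the $\Omega_{\alpha\beta}$ are not a family ``with bounded coefficients'' on the rescaled region, and the uniformity of the slice-wise constants in the far exterior -- where the leaves intersected with the slab become thin in the boost direction while the rotations carry large coefficients -- is a point that requires an argument, not a remark. (They also never span the direction transverse to the leaves, so no bounded ``elliptic correction'' is available from this family; none is needed, since both sides of \eqref{eq:EstVectorFieldCT} involve only $\Omega$ derivatives and one can work leaf by leaf, summing transversally with H\"older, which is precisely where the exponent relation enters.) Second, the cup does not make the boundary terms in your integration by parts vanish: $\partial C_T^+$ still contains flat pieces on $\{t=T\}$ and $\{t=4T\}$ outside the corresponding hyperboloids, and the boosts are not tangent to those; the role of the added cup is rather to complete the hyperboloid slices near the bottom of $C_T$ into full geodesic balls so that leaf-wise interpolation and extension are uniform. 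For these reasons the variant you mention only in passing -- extend $u$ across the boundary in hyperbolic-type coordinates by reflection, as in Lemma~\ref{lem:Extension}, apply the standard multiplicative Gagliardo--Nirenberg inequality on each leaf, and sum across the foliation -- is the one to make the backbone of the argument, and the exterior-of-cone portion of $C_T^+$ must be treated there explicitly rather than absorbed into a compactness claim.
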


\begin{proof}
    Using appropriate hyperbolic coordinates, the statement reduces to standard Gagliardo-Nirenberg estimates.
\end{proof}

We are now ready to prove Proposition~\ref{prop:EnergyEstGamma} for the $\Omega$ vector fields.
\begin{lem}
	\label{lem:EnergyEstOmegavf}
	Assuming the bootstrap hypothesis~\eqref{eq:boot-psi-A} in $C_{<T}$, we obtain
    \begin{equation}
        \label{eq:EnergyEstHighervf}
        \|\Omega^J (\psi, A) \|_{X_{T_1}} \lesssim \langle T_1\rangle^{c \epsilon} \| \Omega^{\leq k} (\psi, A)(0)\|_{\cH^0}
    \end{equation}
    for all $|J|=k \in \N$ with $T_1\leq T$ and $c\approx C_0$.
\end{lem}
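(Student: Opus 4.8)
The strategy follows the same discrete Gronwall scheme as in Lemma~\ref{lem:EnergyEstHigherDeriv}, the only difference being that the derivatives $\partial^I$ are replaced by the Lorentz vector fields $\Omega^J$ (which here stands for the full family $\hat\Omega_{\alpha\beta}$, $\tilde\Omega_{\alpha\beta}$ from Lemma~\ref{lem:com}), and accordingly the interpolation step must be carried out in a space-time slab rather than at fixed time. Concretely: applying $\Omega^J$ with $|J|=k$ to the Maxwell--Dirac system~\eqref{eq:MD} and using the commutation and product rules from Lemma~\ref{lem:com}, one sees that $(\Omega^J\psi,\Omega^J A)$ solves the linear system~\eqref{eq: linear system} with source terms that are, schematically,
\[
F_J = \sum_{J_1+J_2 = J} c_{J_1 J_2}\, \gamma^\mu (\Omega^{J_1} A_\mu)(\Omega^{J_2}\psi), \qquad
G_{\mu,J} = -\sum_{J_1 + J_2 = J} c_{J_1 J_2}\, \overline{\Omega^{J_1}\psi}\,\gamma_\mu\,\Omega^{J_2}\psi .
\]
The linear energy estimates of Lemma~\ref{lem:LinearEnergyEstCT} then reduce matters, via Hölder in time over $C_{T_1}$, to the nonlinear source bound
\[
T_1^{1/2}\|F_J\|_{L^2(C_{T_1})} + T_1^{1/3}\|G_J\|_{L^{3/2}(C_{T_1})}
\lesssim C_0\epsilon\,\|\Omega^{\leq k}(\psi,A)\|_{X_{T_1}},
\]
exactly the analogue of~\eqref{eq:FG-deriv}. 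Once this is in hand, the argument is identical to the one already written: it yields $\|\Omega^{\leq k}(\psi,A)\|_{X_{T_1}} \lesssim \|\Omega^{\leq k}(\psi,A)(T_1)\|_{\cH^0}$, then a step inequality of the form~\eqref{eq:Gronwall-CT} between dyadic times, and iterating the latter from $t=1$ to $T_1$ produces the claimed $T_1^{c\epsilon}$ growth.

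The new ingredient is the proof of the source bound, which rests on the higher-order vector-field interpolation of Lemma~\ref{lem:high-ord} on the enlarged region $C_T^+$ (the slab-with-cap enlargement is precisely what makes the hyperbolic-coordinate change in that lemma clean). For a product term with $|J_1|=j$, $|J_2|=k-j$ one chooses, just as in the derivative case, exponents $p_1,p_2$ with $\tfrac1{p_1}+\tfrac1{p_2}=\tfrac12$ (for $F_J$) resp.\ $=\tfrac23$ (for $G_J$), and interpolation parameters $\theta_1 = j/k$, $\theta_2=(k-j)/k$ so that Lemma~\ref{lem:high-ord} gives
\[
\|\Omega^{J_1}A_\mu\|_{L^{p_1}(C_{T_1}^+)} \lesssim \|\Omega^{\leq k}A\|_{L^3(C_{T_1}^+)}^{\theta_1}\|A\|_{L^\infty(C_{T_1}^+)}^{1-\theta_1},
\]
and similarly for the $\psi$ factors in $L^2$/$L^6$; multiplying and using Young's inequality produces the right-hand side of~\eqref{eq:FI}/\eqref{eq:GI} with $\Omega$ in place of $\partial$. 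The endpoint cases $j=0$ and $j=k$ are handled directly by Hölder without interpolation. Feeding in the bootstrap bounds~\eqref{eq:boot-psi-A} for $\|A\|_{L^\infty}$ and $\|\psi\|_{L^6}$ (integrated in time, which is where the $C_0\epsilon$ and the matching powers of $T_1$ come from) and the local wave bound~\eqref{eq:EnergyEstLinearWaveCT} together with the already-established derivative bounds of Lemma~\ref{lem:EnergyEstHigherDeriv} to control $\|\Omega^{\leq k}A\|_{L^3}$ and $\|\Omega^{\leq k}\psi\|_{L^2}$ in terms of the $X_{T_1}$-norm closes the estimate.

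The main obstacle I anticipate is bookkeeping rather than conceptual: one must check that the enlargement from $C_{T_1}$ to $C_{T_1}^+$ does not break the induction — the lower cap of $C_{T_1}^+$ overlaps the previous dyadic region $C_{T_1/2}$, so the norms on $C_{T_1}^+$ are controlled by those already bounded at the previous step, and a uniformly finite overlap of the $C_T^+$'s keeps the constants under control. A secondary point requiring care is that $\Omega^J$ does not commute exactly with the linear Dirac/wave operators unless one uses the corrected fields $\hat\Omega,\tilde\Omega$ of Lemma~\ref{lem:com}; the extra zeroth-order terms $\tfrac12\gamma_\alpha\gamma_\beta$ and the metric contractions in $\tilde\Omega$ are bounded and are absorbed into the lower-order part of the source, so they cause no loss. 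Everything else is a transcription of the derivative case, so the bulk of the work is verifying that the exponent choices in Lemma~\ref{lem:high-ord} are admissible, i.e.\ that $p_1,p_2\in[1,\infty]$ throughout the range $0\le j\le k$.
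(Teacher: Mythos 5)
Your proposal follows essentially the same route as the paper: apply the corrected vector fields via Lemma~\ref{lem:com}, run the linear energy estimates of Lemma~\ref{lem:LinearEnergyEstCT} in $C_{T_1}$, prove the source bounds by the space-time interpolation Lemma~\ref{lem:high-ord} on the enlarged region (with the same exponent choices and the bootstrap bounds supplying the $C_0\epsilon$ and matching powers of $T_1$), note that the lower cap forces the right-hand side to live on $C_{<T_1}$, and close with the discrete Gronwall iteration over dyadic times. The only small inaccuracy is your appeal to Lemma~\ref{lem:EnergyEstHigherDeriv} to control $\|\Omega^{\leq k}A\|_{L^\infty L^3}$ and $\|\Omega^{\leq k}\psi\|_{L^\infty L^2}$: that control comes from applying the linear estimates \eqref{eq:EnergyEstLinearDiracCT}--\eqref{eq:EnergyEstLinearWaveCT} directly to $\Omega^{\leq k}(\psi,A)$ and absorbing the sources, not from the pure-derivative lemma, but this does not affect the validity of the argument.
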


\begin{proof}
Here we rely  again on the estimates for the linear system  \eqref{eq: linear system}. We directly apply $\Omega^J$ with a multi-index $|J| = k$ to obtain
\begin{equation}
\left\{
	\begin{aligned}
	(&- \imu \gamma^\mu \partial_\mu + 1)\, \Omega^J \psi  = F_J  \\
	 &\Box \Omega^J A_\mu = G_J, \\
	 \end{aligned}
  \right.
\end{equation}
with source terms
\begin{align*}
	F_J &= \sum_{J_1 \leq J} \binom{J}{J_1} \Omega^{J_1} A_\mu \gamma^\mu \Omega^{J - J_1} \psi, \qquad
	G_J = - \sum_{J_1 \leq J} \binom{J}{J_1} \Omega^{J_1} \overline{\psi} \gamma_\mu \Omega^{J - J_1} \psi.
\end{align*}
We will estimate the source terms in terms of the energy, i.e.,
\begin{align}
\label{est perturb 1}
& T_1^{\frac12}\|F_J\|_{L^2_{t,x}(C_{T_1})} \lesssim  \epsilon  \, (\| \Omega^{\leq k} \psi\|_{L^\infty_t L^2_x(C_{< T_1})} + \|\Omega^{\leq k} A\|_{L^\infty_t L^3_x(C_{< T_1})} ),
\\
\label{est perturb 2}
&T_1^{\frac13}\| G_J \|_{L^\frac32_{t,x} (C_{T_1})}  \lesssim \epsilon \, \| \Omega^{\leq k} \psi\|_{L^\infty_t L^2_x (C_{< T_1})}.
\end{align}

Another important remark concerns the use of $C_{T_1}$ and $C_{< T_1}$ slabs in estimates \eqref{est perturb 1} and \eqref{est perturb 2}: on the LHS we use the $C_{T_1}$ region but in order to be able to use interpolation inequalities on hyperboloids  in the proof, we also need to add a cup region under $C_{T_1}$, see the interpolation Lemma~\ref{lem:high-ord}. To address this issue we are working instead with $C_{< T_1}$ on the right; one could also equivalently work with $C_{T_1} \cup C_{T_1/2}$.

 Using the linear energy estimates from Lemma~\ref{lem:LinearEnergyEstCT} for every $|J| \leq k$, we deduce
\begin{align*}
\| \Omega^{\leq k} \psi \|_{L^\infty_t L^2_x (C_{T_1})} + \| \Omega^{\leq k} A \|_{L^\infty_t L^3_x(C_{T_1})} &\lesssim  \| \Omega^{\leq k} \psi(T_1) \|_{L^2} + \| \nabla_{t,x} \,\Omega^{\leq k} A (T_1) \|_{\dot H^{-\frac12}} \\
& \qquad + \sum_{|J| \leq k} (\|F_J\|_{L^1_t L^2_x(C_{T_1})}
+ \| G_J \|_{L^1_t L^\frac32_x (C_{T_1})}).
\end{align*}
Assuming the bounds~\eqref{est perturb 1} and~\eqref{est perturb 2} for a moment, we obtain with H{\"o}lder's inequality in time
\begin{align}
\label{eq:VecFieldEnergyEstCT}
&\| \Omega^{\leq k} \psi \|_{L^\infty_t L^2_x (C_{T_1})} + \| \Omega^{\leq k} A \|_{L^\infty_t L^3_x(C_{T_1})}\lesssim \| \Omega^{\leq k} \psi(T_1) \|_{L^2} + \| \nabla_{t,x} \,\Omega^{\leq k} A (T_1) \|_{\dot H^{-\frac12}}.
\end{align}
Plugging this estimate into~\eqref{est perturb 1} and~\eqref{est perturb 2}, we infer
\begin{align*}
T_1^{\frac12}\|F_J\|_{L^2_{t,x}(C_{T_1})} + T_1^{\frac13}\| G_J \|_{L^\frac32_{t,x} (C_{T_1})}
\lesssim \epsilon (\| \Omega^{\leq k} \psi(T_1) \|_{L^2} + \| \nabla_{t,x} \, \Omega^{\leq k} A(T_1)  \|_{\dot H^{-\frac{1}{2}}}).
\end{align*}
Now we use the linear energy estimates in the strip $[T_1,2T_1] \times \R^3$, cf. the proof of Lemma~\ref{lem:LinearEnergyEstCT},
to arrive at
\begin{align*}
&\| \Omega^{\leq k} (\psi,A) \|_{L^\infty \cH^0([T_1,2T_1]\times \R^3)} \leq  \| \Omega^{\leq k} (\psi,A)(T_1) \|_{\cH^0} \\
&\qquad + C \sum_{|J| \leq k}(\|F_J\|_{L^1_t L^2_x([T_1,2T_1] \times \R^3)} + \|G_J\|_{L^1_t L^{\frac{3}{2}}_x([T_1,2T_1] \times \R^3)}) \\
 &\leq (1 + C \epsilon) \| \Omega^{\leq k} (\psi,A)(T_1)\|_{\cH^0}.
\end{align*}
Iterating this argument and applying a discrete Gronwall as in the proof of Lemma~\ref{lem:EnergyEstHigherDeriv}, we obtain
\begin{align*}
	\| \Omega^{\leq k} (\psi,A)(t) \|_{\cH^0} \lesssim  \langle t\rangle^{c\epsilon} \| \Omega^{\leq k} (\psi,A)(0)\|_{\cH^0}, \qquad 1 \leq t \leq T,
\end{align*}
which implies the assertion of the lemma in view of~\eqref{est perturb 1} to~\eqref{eq:VecFieldEnergyEstCT} and the definition of the $X_{T_1}$-norm. It only remains to prove~\eqref{est perturb 1} and~\eqref{est perturb 2}.

After switching to space-time norms with the same exponents for space and time, we can basically proceed as in the case for regular derivatives due to the interpolation Lemma~\ref{lem:high-ord}. Take a multiindex $J$ with $|J| = k$ and a multiindex $J_1$ with $J_1 \leq J$. We set $j = |J_1|$ and note that $|J - J_1| = k - j$. Setting
\begin{align*}
	\frac{1}{p_1} = \frac{j}{3k}, \qquad \text{and} \qquad \frac{1}{p_2} = \frac{k-j}{3k} + \frac{1}{6} = \frac{k-j}{k}\cdot \frac{1}{2} + \frac{j}{k} \cdot \frac{1}{6},
\end{align*}
we have $\frac{1}{p_1} + \frac{1}{p_2} = \frac{1}{2}$, and hence
\begin{align*}
	 T_1^{\frac{1}{2}} \|\Omega^{J_1} A_\mu \gamma^\mu \Omega^{J - J_1} \psi \|_{L^2_{t,x}(C_{T_1})} \lesssim T_1^{\frac{1}{2}} \|\Omega^{J_1} A\|_{L^{p_1}_{t,x}(C_{T_1})} \| \Omega^{J - J_1} \psi \|_{L^{p_2}_{t,x}(C_{T_1})}.
\end{align*}
Applying Lemma~\ref{lem:high-ord} and the bootstrap hypothesis~\eqref{eq:boot-psi-A}, we obtain
\begin{align*}
	\|\Omega^{J_1} A\|_{L^{p_1}_{t,x}(C_{T_1})} &\lesssim \|\Omega^{\leq k} A\|_{L^3_{t,x}(C_{T_1})}^{\frac{j}{k}} \|A\|_{L^\infty_{t,x}(C_{T_1})}^{1-\frac{j}{k}} \\
	&\lesssim T_1^{\frac{1}{3} \frac{j}{k}} \|\Omega^{\leq k} A\|_{L^\infty_t L^3_x(C_{T_1})}^{\frac{j}{k}} (C_0 \epsilon T_1^{-1})^{1 - \frac{j}{k}},
\end{align*}
and
\begin{align*}
	\| \Omega^{J - J_1} \psi \|_{L^{p_2}_{t,x}(C_{T_1})} &\lesssim \| \Omega^{\leq k} \psi \|_{L^2_{t,x}(C_{T_1})}^{1 - \frac{j}{k}} \|\psi\|_{L^6_{t,x}(C_{T_1})}^{\frac{j}{k}} \\
	&\lesssim T_1^{\frac{1}{2}(1 - \frac{j}{k})} \|\Omega^{\leq k} \psi \|_{L^\infty_t L^2_x(C_{T_1})}^{1 - \frac{j}{k}} T_1^{\frac{1}{6} \frac{j}{k}} (C_0 \epsilon T_1^{-1})^{\frac{j}{k}}.
\end{align*}
Plugging the last two estimates into the previous one, we arrive at
\begin{align*}
	 T_1^{\frac{1}{2}} \|\Omega^{J_1} A_\mu \gamma^\mu \Omega^{J - J_1} \psi \|_{L^2_{t,x}(C_{T_1})} 
	&\lesssim C_0 \epsilon  \|\Omega^{\leq k} A\|_{L^\infty_t L^3_x(C_{T_1})}^{\frac{j}{k}} \|\Omega^{\leq k} \psi \|_{L^\infty_t L^2_x(C_{T_1})}^{1 - \frac{j}{k}} \\
	&\lesssim C_0 \epsilon (\|\Omega^{\leq k} \psi \|_{L^\infty_t L^2_x(C_{T_1})} + \|\Omega^{\leq k} A\|_{L^\infty_t L^3_x(C_{T_1})}).
\end{align*}
Summing over $J_1 \leq J$ yields~\eqref{est perturb 1}.

To prove~\eqref{est perturb 2}, we proceed analogously. With the same notation as above, we set
\begin{align*}
	\frac{1}{p_1} = \frac{j}{k} \cdot \frac{1}{2} + \frac{k-j}{k} \cdot \frac{1}{6} \qquad \text{and} \qquad 
	\frac{1}{p_2} = \frac{k-j}{k} \cdot \frac{1}{2} + \frac{j}{k} \cdot \frac{1}{6}.
\end{align*}
We then have $\frac{1}{p_1} + \frac{1}{p_2} = \frac{2}{3}$ and thus
\begin{align*}
	T_1^{\frac13}\|\Omega^{J_1} \overline{\psi} \gamma_\mu \Omega^{J - J_1} \psi\|_{L^{\frac{3}{2}}_{t,x}(C_{T_1})} \lesssim T_1^{\frac{1}{3}} \|\Omega^{J_1} \psi\|_{L^{p_1}_{t,x}(C_{T_1})} \| \Omega^{J - J_1} \psi \|_{L^{p_2}_{t,x}(C_{T_1})}.
\end{align*}
Applying Lemma~\ref{lem:high-ord} and the bootstrap hypothesis~\eqref{eq:boot-psi-A} once more, we get
\begin{align*}
	 \|\Omega^{J_1} \psi\|_{L^{p_1}_{t,x}(C_{T_1})} &\lesssim \| \Omega^{\leq k} \psi \|_{L^{2}_{t,x}(C_{T_1})}^{\frac{j}{k}} \|\psi\|_{L^6_{t,x}(C_{T_1})}^{1 - \frac{j}{k}} \\
	 &\lesssim T_1^{\frac{1}{2} \frac{j}{k}} \|\Omega^{\leq k} \psi \|_{L^\infty_t L^2_x(C_{T_1})}^{\frac{j}{k}} T_1^{\frac{1}{6} (1-\frac{j}{k})} (C_0 \epsilon T_1^{-1})^{1-\frac{j}{k}},
\end{align*}
and analogously
\begin{align*}
	\| \Omega^{J - J_1} \psi \|_{L^{p_2}_{t,x}(C_{T_1})} \lesssim T_1^{\frac{1}{2} (1-\frac{j}{k})} \|\Omega^{\leq k} \psi \|_{L^\infty_t L^2_x(C_{T_1})}^{1-\frac{j}{k}} T_1^{\frac{1}{6} \frac{j}{k}} (C_0 \epsilon T_1^{-1})^{\frac{j}{k}}.
\end{align*}
The last two estimates combined with the previous one yield
\begin{align*}
	T_1^{\frac13}\|\Omega^{J_1} \overline{\psi} \gamma_\mu \Omega^{J - J_1} \psi\|_{L^{\frac{3}{2}}_{t,x}(C_{T_1})} \lesssim C_0 \epsilon \|\Omega^{\leq k} \psi\|_{L^\infty_t L^2_x(C_{T_1})}.
\end{align*}
Summing over $J_1$ with $J_1 \leq J$, we arrive at~\eqref{est perturb 2}.
\end{proof}

\begin{rem}
    \label{rem:EnergyEstNumberVectorFields}
    We point out that we provide the above energy estimates for an arbitrary number of vector fields, but we will use it only for $k \leq 3$ in order to keep the regularity and decay assumptions in our main results as low as possible.
\end{rem}

\subsection{Bounds for a mix of derivatives and vector fields}
\label{ss:mixed}

For the proof of Proposition~\ref{prop:EnergyEstGamma}, it only remains to treat a mix of regular derivatives and $\Omega$ vector fields. The crucial tool in this case is the following interpolation result, which allows to separate the vector fields and the derivatives.

\begin{lem}
	\label{lem:InterpolationMixedDerivatives}
	\begin{enumerate}
		\item[]
		\item \label{it:InterpolationMixed1vf} Let $2 \leq p, q \leq \infty$ with
				\begin{align*}
					\frac{1}{p} = \frac{1}{6} + \frac{2}{3q}.
				\end{align*}
				Let $J \in \N_0^4$ with $|J|$ even. We then have
				\begin{equation}
					\label{eq:Interpolation1VFDer}
					\| \Omega \partial^{J} \phi\|_{L^p_{t,x}(C_T^+)} \lesssim \|\Omega^{\leq 3} \phi\|_{L^2_{t,x}(C_T^+)}^{\frac{1}{3}} \| \partial^{\leq \frac{3}{2} |J|} \phi \|_{L^q_{t,x}(C_T^+)}^{\frac{2}{3}}.
				\end{equation}
					
		\item \label{it:InterpolationMixed2vf} Let $2 \leq p, q \leq \infty$ with
				\begin{align*}
					\frac{1}{p} = \frac{1}{3} + \frac{1}{3q}.
				\end{align*}
				Let $J \in \N_0^4$. We then have
				\begin{equation}
					\label{eq:Interpolation2VFDer}
					\| \Omega^2 \partial^{J} \phi\|_{L^p_{t,x}(C_T^+)} \lesssim \|\Omega^{\leq 3} \phi\|_{L^2_{t,x}(C_T^+)}^{\frac{2}{3}} \| \partial^{\leq 3 |J|} \phi \|_{L^q_{t,x}(C_T^+)}^{\frac{1}{3}}.
				\end{equation}
	\end{enumerate}
\end{lem}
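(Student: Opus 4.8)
Both inequalities are scale invariant: under $(t,x)\mapsto(\la t,\la x)$, $\phi\mapsto\phi(\la\,\cdot)$, $C_T^+\mapsto C^+_{T/\la}$, the vector fields $\Omega$ are homogeneous of degree $0$, each $\partial$ of degree $-1$, and $\|\cdot\|_{L^p(C_T^+)}$ of degree $4/p$; a dimensional count shows that the relations $\tfrac1p=\tfrac16+\tfrac2{3q}$ and $\tfrac1p=\tfrac13+\tfrac1{3q}$ are exactly what is needed for the two sides to have the same homogeneity. So the plan is to first reduce to $T=1$, and then to view $\Omega\partial^{J}$ (resp.\ $\Omega^{2}\partial^{J}$) as the interpolant, at weight $\theta=\tfrac13$ (resp.\ $\theta=\tfrac23$), between a ``pure $\Omega$'' endpoint of order $3$ — producing the factor $\|\Omega^{\leq 3}\phi\|_{L^2}$ — and a ``pure $\partial$'' endpoint of order $\tfrac32|J|$ (resp.\ $3|J|$) — producing the factor $\|\partial^{\leq\frac32|J|}\phi\|_{L^q}$ (resp.\ $\|\partial^{\leq 3|J|}\phi\|_{L^q}$). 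The parity assumption on $|J|$ in \eqref{it:InterpolationMixed1vf} is only there so that $\tfrac32|J|$ is an integer.

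Next, since $[\Omega_{\alpha\beta},\partial_\mu]\in\mathrm{span}\{\partial_0,\dots,\partial_3\}$, the linear span of $\{\Omega^{a'}\partial^{b'}:a'\leq a,\ b'\leq b\}$ is stable under commuting $\Omega$'s past $\partial$'s modulo terms of strictly lower total order, so it suffices to bound the reordered operators $\partial^{b_1}\Omega^{b_2+a}\phi$ with $b_1+b_2=|J|$ and $a\in\{1,2\}$. On the part of $C_1^+$ lying strictly inside the light cone we pass to hyperbolic coordinates $\rho=\sqrt{t^2-|x|^2}$, $z\in\mathbb{H}^3$: there the ten fields $\Omega$ form an elliptic family spanning the tangent spaces of the slices $\{\rho=\mathrm{const}\}$, i.e.\ a full set of $z$-derivatives, while each Cartesian $\partial_\mu$ becomes a combination of $\partial_\rho$ and $z$-derivatives with smooth coefficients bounded on this subregion. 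The target bound then becomes an anisotropic Gagliardo--Nirenberg inequality on the product $I_\rho\times\mathbb{H}^3$, interpolating $\|\Omega^{\leq 3}\phi\|_{L^2}$ (three $z$-derivatives, no $\rho$-derivatives) against $\|\partial^{\leq\frac32|J|}\phi\|_{L^q}$ (up to $\tfrac32|J|$ mixed derivatives): the interpolant at $\theta=\tfrac13$ carries up to $\theta\cdot 3+(1-\theta)\cdot\tfrac32|J|=1+|J|$ $z$-derivatives and up to $(1-\theta)\cdot\tfrac32|J|=|J|$ $\rho$-derivatives, which covers all the reordered operators $\partial^{b_1}\Omega^{b_2+1}\phi$, and the Lebesgue exponent is fixed by the scaling computation above (the case \eqref{it:InterpolationMixed2vf} is identical with $\theta=\tfrac23$ and budget $3|J|$). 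After this change of variables these are standard Gagliardo--Nirenberg estimates, exactly as in Lemma~\ref{lem:high-ord}; the spacelike exterior, foliated by $\{|x|^2-t^2=\mathrm{const}\}$, is treated identically in the corresponding de Sitter coordinates, where the $\Omega$'s again furnish a full set of slice-derivatives.

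The remaining — and essentially the only delicate — region is a neighborhood of the light cone, where the coordinate change degenerates and, more importantly, the $\Omega$'s control only the outgoing-null and angular derivatives, and those with a weight $\sim\langle t-|x|\rangle$ rather than $\sim\langle t\rangle$; this is precisely why the right-hand sides must carry the full norm $\|\Omega^{\leq 3}\phi\|_{L^2}$ and not merely $\|\phi\|_{L^2}$, and this is the main obstacle. To handle it one works at the original scale $T$ and decomposes this neighborhood dyadically in the distance to the cone, $C_{TS}:=C_T^+\cap\{\langle t-|x|\rangle\sim S\}$ for dyadic $1\leq S\lesssim T$, then rescales each $C_{TS}$ anisotropically to unit size — a dilation by a factor $\sim S$ in the incoming-null direction and $\sim T$ in the other three — so that on the model piece the relevant vector fields are bounded-coefficient derivatives and, after straightening, the Gagliardo--Nirenberg estimate of the previous paragraph applies; the powers of $\langle t-|x|\rangle$ produced by the Jacobians cancel in the resulting inequality. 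One finally sums the local estimates over $S$ using Hölder's inequality for sequences with exponents $(3,\tfrac32)$ in part \eqref{it:InterpolationMixed1vf} and $(\tfrac32,3)$ in part \eqref{it:InterpolationMixed2vf} — these being admissible is once more equivalent to $\tfrac1p=\tfrac16+\tfrac2{3q}$, resp.\ $\tfrac1p=\tfrac13+\tfrac1{3q}$ — and bounded overlap of slightly enlarged pieces $C_{TS}$ converts $\sum_S\|\Omega^{\leq 3}\phi\|_{L^2(C_{TS})}^2$ and the analogous sum of $L^q$-norms of $\partial^{\leq\frac32|J|}\phi$ (resp.\ $\partial^{\leq 3|J|}\phi$) back into the global norms over $C_T^+$. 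Away from the cone the lemma is thus a routine application of classical Gagliardo--Nirenberg in hyperbolic/de Sitter coordinates; all the work is in the near-cone bookkeeping and the correct anisotropic rescaling there.
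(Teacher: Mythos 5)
Your geometric reductions (scaling, commuting $\Omega$ past $\partial$, hyperbolic coordinates in which the $\Omega$'s span the slice derivatives) are reasonable, but the proposal begs the question at its central analytic step. You assert that, after the change of variables, the required bound is a ``standard Gagliardo--Nirenberg estimate, exactly as in Lemma~\ref{lem:high-ord}.'' It is not. Lemma~\ref{lem:high-ord} interpolates within a \emph{single} family of derivatives (only $\Omega$'s, same function and same derivative type at both ends), which is why it reduces to classical GN in hyperbolic coordinates. Here the two endpoint norms involve \emph{different} derivative families and \emph{different} Lebesgue exponents --- pure $\Omega^{\leq 3}$ in $L^2$ versus pure $\partial^{\leq \frac32|J|}$ in $L^q$ --- while the quantity to be bounded, $\Omega\,\partial^{J}\phi$, is a genuine mixture of the two. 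Classical GN does not cover this two-parameter, mixed-derivative situation, and the natural attempt to iterate one-dimensional GN in the two groups of variables does not close: for instance, applying GN in the ``$\Omega$-directions'' to $\Omega\partial^{J}\phi$ produces the intermediate quantity $\Omega^{3}\partial^{J}\phi$ (three vector fields \emph{and} $|J|$ derivatives), which is controlled by neither endpoint norm ($3+|J|>\tfrac32|J|$ for $|J|<6$). This is precisely the point of the lemma, and it is exactly what the paper's proof supplies: following Ifrim--Stingo, one runs Stein's interpolation theorem for the analytic family $T_z\phi=e^{(z-\frac23)^2}|D_y|^{3(1-z)}|D_s|^{\frac32|J|z}\phi$ (resp.\ $e^{(z-\frac13)^2}|D_y|^{3(1-z)}|D_s|^{3|J|z}\phi$), whose boundary values $\operatorname{Re}z=0,1$ match the two endpoint norms and whose value at $z=\tfrac23$ (resp.\ $\tfrac13$) recovers $\Omega\partial^{J}$ (resp.\ $\Omega^2\partial^{J}$). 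If you want a real-variable route you would have to invoke and verify an anisotropic embedding/interpolation theorem of Besov--Nikolskii type in this borderline configuration --- in particular at the admitted endpoint $q=\infty$, where imaginary powers and Riesz-type operators are not bounded and the statement needs extra care --- none of which appears in your outline.

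Secondary, but also unproved as written: in the near-cone region you claim that the Jacobian powers of $\langle t-r\rangle$ produced by the anisotropic rescaling ``cancel'' and that the dyadic summation in $S$ closes by H\"older with exponents $(3,\tfrac32)$, resp.\ $(\tfrac32,3)$. These are exactly the places where degenerating weights (the boosts scale like $(t-r)\partial$ near the cone) typically destroy such an argument, and the paper sidesteps this bookkeeping entirely by performing the complex interpolation directly in the Ifrim--Stingo coordinates on $C_T^+$. So even granting the interior/exterior reductions, the proposal is missing the key ingredient that makes the mixed interpolation true.
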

\begin{proof}
The proof  of the lemma is similar to the interpolation  result of Ifrim-Stingo (see \cite[Lemma~1.1]{ifrims}) with the difference that  we use different analytic families of operators, namely $T_z \phi = e^{(z - \frac{2}{3})^2} |D_y|^{3(1-z)} |D_s|^{\frac{3}{2}|\beta| z} \phi$ for~\eqref{eq:Interpolation1VFDer} and
$T_z \phi = e^{(z - \frac{1}{3})^2} |D_y|^{3(1-z)} |D_s|^{3|\beta| z} \phi$ for~\eqref{eq:Interpolation2VFDer}.
\end{proof}

We next provide the energy estimates for a mix of vector fields and regular derivatives. Together with Lemma~\ref{lem:EnergyEstHigherDeriv} and Lemma~\ref{lem:EnergyEstMixed} this lemma yields Proposition~\ref{prop:EnergyEstGamma}.
\begin{lem}	
	\label{lem:EnergyEstMixed}
	Let $|I|+3|J|\leq k=9$ and $|I| > 0$ and $|J| > 0$. Assuming the bootstrap hypothesis~\eqref{eq:boot-psi-A} in the $C_{<T}$ region, we get
    \begin{equation}
        \label{eq:EnergyEst}
        \|\Omega^J \partial^I (\psi, A) \|_{X_{T_1}} \lesssim \langle T_1\rangle^{c \epsilon} \| \Gamma^{\leq k} (\psi, A)(0)\|_{\cH^0}, \quad \mbox{ for } 0 \leq T_1\leq T \mbox{ and } c\approx C_0.
    \end{equation}
\end{lem}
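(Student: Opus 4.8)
The plan is to mirror the structure of the proofs of Lemma~\ref{lem:EnergyEstHigherDeriv} and Lemma~\ref{lem:EnergyEstOmegavf}: apply $\Omega^J\partial^I$ to the system~\eqref{eq:MD}, so that $(\Omega^J\partial^I\psi,\Omega^J\partial^I A)$ solves the linear system~\eqref{eq: linear system} with source terms obtained by the Leibniz rule,
\begin{align*}
 F_{I,J} &= \sum_{\substack{I_1\leq I\\ J_1\leq J}} \binom{I}{I_1}\binom{J}{J_1}\, \Omega^{J_1}\partial^{I_1}A_\mu\,\gamma^\mu\, \Omega^{J-J_1}\partial^{I-I_1}\psi,\\
 G_{\mu,I,J} &= -\sum_{\substack{I_1\leq I\\ J_1\leq J}} \binom{I}{I_1}\binom{J}{J_1}\, \Omega^{J_1}\partial^{I_1}\overline\psi\,\gamma_\mu\,\Omega^{J-J_1}\partial^{I-I_1}\psi.
\end{align*}
The goal is then the source-term bound
\[
 T_1^{\frac12}\|F_{I,J}\|_{L^2(C_{T_1})} + T_1^{\frac13}\|G_{I,J}\|_{L^{\frac32}(C_{T_1})} \lesssim C_0\epsilon\, \|\Gamma^{\leq k}(\psi,A)\|_{X_{T_1}},
\]
after which the discrete Gronwall argument in dyadic slabs, exactly as in the earlier two lemmas, promotes this to~\eqref{eq:EnergyEst}. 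First I would reduce, as before, to the case $|I|+3|J|=k$ and split each product dyadically; the endpoint cases where all derivatives/vector fields land on one factor are handled by the bootstrap~\eqref{eq:boot-psi-A} together with Lemma~\ref{lem:EnergyEstHigherDeriv} and Lemma~\ref{lem:EnergyEstOmegavf}.

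For the genuinely mixed interior terms, the key new device is Lemma~\ref{lem:InterpolationMixedDerivatives}, which lets one trade one or two vector fields against a fixed number of plain derivatives. Concretely, for a factor of the form $\Omega^{J_1}\partial^{I_1}A$ with $|J_1|\le 2$ I would write $\Omega^{J_1}\partial^{I_1}A = \Omega^{J_1}\partial^{I_1}A$ and apply~\eqref{eq:Interpolation1VFDer} or~\eqref{eq:Interpolation2VFDer} to bound its $L^{p_1}(C_{T_1}^+)$ norm by a product of $\|\Omega^{\le 3}A\|_{L^2}^{\theta}$ and $\|\partial^{\le k}A\|_{L^q}^{1-\theta}$; the remaining $\partial^{\le k}$-derivatives are then controlled in $L^3$ or $L^\infty$ using the pointwise bootstrap bound for $A$ and the derivative energy estimate~\eqref{eq:EnergyEstHigherDeriv}. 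The analogous reasoning applies to the $\psi$ factors, using $L^2$ and $L^6$ as the base spaces. One has to bookkeep the exponents: choosing $1/p_1+1/p_2=1/2$ in the $F$ bound and $1/p_1+1/p_2=2/3$ in the $G$ bound, with the scaling weights of $T_1$ matching the $1/2$, $1/3$ normalizations in the $X_{T_1}$-norm, so that every term collects exactly one factor of $C_0\epsilon T_1^{-1}$ and the scaling powers balance. Since $k=9$, at most three vector fields appear, so $|J_1|\le 2$ on at least one factor in each product, which is precisely the regime covered by Lemma~\ref{lem:InterpolationMixedDerivatives}; this is what makes the separation of vector fields from derivatives possible and is the crux of the whole argument.

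The main obstacle I expect is the exponent bookkeeping in the mixed interpolation, in particular verifying that for every splitting $(I_1,J_1)$ one can simultaneously (i) hit the admissible exponent relations in Lemma~\ref{lem:InterpolationMixedDerivatives}, (ii) keep $1/p_1+1/p_2$ equal to the value dictated by the target space ($1/2$ for $F_{I,J}$, $2/3$ for $G_{\mu,I,J}$), and (iii) land the leftover derivative counts within the range $\le k$ where the derivative energy bound~\eqref{eq:EnergyEstHigherDeriv} and the bootstrap apply — the factor $\tfrac32|J|$ or $3|J|$ appearing in Lemma~\ref{lem:InterpolationMixedDerivatives} means one must check $\tfrac32|J_1| + |I_1| \le k$ (resp.\ $3|J_1|+|I_1|\le k$) in the relevant cases, which uses $|I|+3|J|\le 9$ crucially. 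A secondary technical point is the now-familiar mismatch between $C_{T_1}$ and $C_{T_1}^+$: the interpolation lemmas require the enlarged region with the added cup, so one argues on $C_{<T_1}$ (equivalently $C_{T_1}\cup C_{T_1/2}$) on the right-hand side, exactly as in the proof of Lemma~\ref{lem:EnergyEstOmegavf}. Once these are in place, the closing Gronwall step in dyadic slabs $[T_1,2T_1]$ is identical to before and yields~\eqref{eq:EnergyEst}, completing the proof of Proposition~\ref{prop:EnergyEstGamma}.
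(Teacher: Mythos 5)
Your outline is essentially the paper's proof: apply $\Omega^J\partial^I$ to \eqref{eq:MD}, bound the Leibniz source terms in the dyadic regions by separating vector fields from plain derivatives via Lemma~\ref{lem:InterpolationMixedDerivatives}, combined with Gagliardo--Nirenberg for the pure-derivative factors and the bootstrap \eqref{eq:boot-psi-A} for the undifferentiated ones, and then close with the same discrete Gronwall argument as in Lemmas~\ref{lem:EnergyEstHigherDeriv} and~\ref{lem:EnergyEstOmegavf}, working on $C_{<T_1}$ (equivalently $C_{T_1}\cup C_{T_1/2}$) to accommodate the enlarged region $C_{T_1}^+$. One small count: since $|I|>0$ and $|I|+3|J|\le 9$, the mixed case has $|J|\le 2$, i.e.\ one vector field with up to six derivatives or two vector fields with up to three derivatives, not ``at most three vector fields.''

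There is, however, one concrete step where your plan as written would fail: part~\ref{it:InterpolationMixed1vf} of Lemma~\ref{lem:InterpolationMixedDerivatives} is stated only for an \emph{even} number of plain derivatives accompanying the single vector field, while the Leibniz splitting inevitably produces terms $\Omega\partial^{I_1}\psi$ with $|I_1|$ odd, to which \eqref{eq:Interpolation1VFDer} does not directly apply. The paper's proof handles this with a separate commutator/absorption argument: write $\partial^{I_1}=\partial_\nu\partial^{I_2}$ with $|I_2|$ even, commute $\Omega$ past $\partial_\nu$ (the commutators are first-order derivatives, controlled by Gagliardo--Nirenberg), and interpolate
\[
\|\partial_\nu\Omega\partial^{I_2}\psi\|_{L^{q_1}(C_{T_1})}\ \lesssim\ \|\Omega\partial^{I_2}\psi\|_{L^{q_1}(C_{T_1})}^{1/2}\,\|\partial^{\le 2}\Omega\partial^{I_2}\psi\|_{L^{q_1}(C_{T_1})}^{1/2},
\]
then sum over $\nu$ and absorb the lower-order pieces, which reduces everything to the even case (both $|I_2|$ and $|I_2|+2$ being even). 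So you must either add this reduction or prove the interpolation lemma without the parity restriction. Apart from that, your exponent bookkeeping ($1/p_1+1/p_2=1/2$ for $F$, $2/3$ for $G$, and the check that $\tfrac32|I_1|\le 9$, resp.\ $3|I_1|\le 9$, so that \eqref{eq:EnergyEstHigherDeriv} covers the leftover derivatives) is exactly what the paper carries out.
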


\begin{proof}
	There are two cases to consider. Either $\Omega^J\partial^I$ contains one $\Omega$ vector field and up to six regular derivatives or it contains two $\Omega$ vector fields and up to three regular derivatives. We start with the case of one vector field and we assume $|I| = 6$, the case of $|I| < 6$ being treated similarly. Applying $\Omega \partial^I$ to the Maxwell-Dirac system~\eqref{eq:MD}, we get
	\begin{equation*}
\left\{
	\begin{aligned}
	&(- \imu \gamma^\mu \partial_\mu + 1) \Omega \partial^I \psi  = F_{1,I} \\
	& \Box \Omega \partial^I A_\mu = G_{\mu,1,I}, \\
	 \end{aligned}
  \right.
\end{equation*}
with source terms
\[
\left\{
\begin{aligned}
&F_{1,I} = \sum_{I_1 \leq I} \binom{I}{I_1} (\partial^{I - I_1} A_\mu \gamma^\mu \Omega \partial^{I_1} \psi + \Omega \partial^{I - I_1} A_\mu \gamma^\mu \partial^{I_1} \psi) \\
	&G_{\mu,1,I} =  -\sum_{I_1 \leq I} \binom{I}{I_1} (\partial^{I - I_1} \overline{\psi} \gamma_\mu \Omega \partial^{I_1} \psi + \Omega \partial^{I - I_1} \overline{\psi} \gamma_\mu \partial^{I_1} \psi).
 \end{aligned}
 \right.
\]
We use a discrete Gronwall inequality as in the proof of Lemma~\ref{lem:EnergyEstOmegavf} and concentrate on the key step of showing the estimates
\begin{align}
	\label{est perturb 1b}
&T_1^{\frac12} \|F_{1,I}\|_{L^2_{t,x}(C_{T_1})} \lesssim  \epsilon  \, (\| \Gamma^{\leq 9} \psi\|_{L^\infty_t L^2_x(C_{T_1})} + \|\Gamma^{\leq 9} A\|_{L^\infty_t L^3_x(C_{T_1})} ),
\\
\label{est perturb 2b}
&T_1^{\frac13}\| G_{1,I} \|_{L^1_t L^\frac32_x (C_{T_1})}  \lesssim \epsilon \, \| \Gamma^{\leq 9} \psi\|_{L^\infty_t L^2_x (C_{T_1})}.
\end{align}
We first consider the case that $|I_1|$ is even. Here we set
\begin{align*}
	\theta_1 = \frac{2|I - I_1|}{3|I|}, \quad \frac{1}{p_1} = \frac{1}{3} \theta_1, \quad \frac{1}{q_1} = \frac{1}{3} \cdot \frac{1}{2} + \frac{2}{3} \cdot \frac{1}{q_2} = \frac{1}{6} + \frac{2}{3q_2}, \quad \frac{1}{q_2} = \frac{1}{2} - \frac{1}{2} \theta_1.
\end{align*}
Then $\frac{1}{p_1} + \frac{1}{q_1} = \frac{1}{2}$ and we infer
\begin{align*}
	T_1^{\frac12} \| \partial^{I - I_1} A_\mu \gamma^\mu \Omega \partial^{I_1} \psi \|_{L^2_{t,x}(C_{T_1})} 
	\lesssim T_1^{\frac{1}{2}} \|\partial^{I - I_1} A\|_{L^{p_1}_{t,x}(C_{T_1})} \|\Omega \partial^{I_1} \psi\|_{L^{q_1}_{t,x}(C_{T_1})}.
\end{align*}
Rescaling to $T_1 = 1$ and using classical extension theorems, we see that we can use Gagliardo-Nirenberg estimates also on $C_{T_1}$. This yields
\begin{align*}
	\|\partial^{I - I_1} A\|_{L^{p_1}_{t,x}(C_{T_1})} &\lesssim \|\partial^{\leq \frac{3}{2} |I|} A\|_{L^3_{t,x}(C_{T_1})}^{\theta_1} \|A\|_{L^\infty_{t,x}(C_{T_1})}^{1 - \theta_1} \\
	&\lesssim T_1^{\frac{1}{3} \theta_1} \|\partial^{\leq 9} A\|_{L^\infty_t L^3_x(C_{T_1})}^{\theta_1} (C_0 \epsilon {T_1}^{-1})^{1 - \theta_1},
\end{align*}
where we also employed the bootstrap hypothesis~\eqref{eq:boot-psi-A}. For the remaining mixed term, we apply the interpolation result from Lemma~\ref{lem:InterpolationMixedDerivatives}~\ref{it:InterpolationMixed1vf}, which yields
\begin{align*}
	\|\Omega \partial^{I_1} \psi\|_{L^{q_1}_{t,x}(C_{T_1})} \lesssim \|\Omega^{\leq 3} \psi \|_{L^2_{t,x}(C_{T_1})}^{\frac{1}{3}} \| \partial^{\leq \frac{3}{2}|I_1|} \psi\|_{L^{q_2}_{t,x}(C_{T_1})}^{\frac{2}{3}}.
\end{align*}
We next take a multiindex $I_2$ with $|I_2| = \frac{3}{2} |I_1|$, the case of $|I_2| < \frac{3}{2} |I_1|$ again being treated similarly. Setting $\theta_2 = \frac{|I_1|}{|I|}$, we have
\begin{align*}
	\frac{\theta_2}{2} + \frac{1-\theta_2}{6} = \frac{1}{q_2},
\end{align*}
so that an application of Gagliardo-Nirenberg estimates leads to
\begin{align*}
 	\| \partial^{\leq \frac{3}{2}|I_1|} \psi\|_{L^{q_2}_{t,x}(C_{T_1})} \lesssim \| \partial^{\leq \frac{3}{2} |I|} \psi \|_{L^2_{t,x}(C_{T_1})}^{\theta_2} \|\psi\|_{L^6_{t,x}(C_{T_1})}^{1 - \theta_2}.	
\end{align*}
We thus infer
\begin{align*}
	\|\Omega \partial^{I_1} \psi\|_{L^{q_1}_{t,x}(C_{T_1})} &\lesssim T_1^{\frac{1}{6}} \|\Omega^{\leq 3} \psi\|_{L^\infty_t L^2_x(C_{T_1})}^{\frac{1}{3}} T_1^{\frac{1}{2} \theta_2 \cdot \frac{2}{3}} \|\partial^{\leq 9} \psi\|_{L^\infty_t L^2_x(C_{T_1})}^{\frac{2}{3}\theta_2}\\
	&\qquad \cdot T_1^{\frac{1}{6}(1 - \theta_2)\cdot \frac{2}{3}} (C_0 \epsilon T_1^{-1})^{(1 - \theta_2)\cdot \frac{2}{3}},
\end{align*}
where we also employed the bootstrap hypothesis~\eqref{eq:boot-psi-A} again in the last step. Combining this estimate with the one for $\partial^{I - I_1} A$, we get
\begin{align}
	T_1^{\frac12} \| \partial^{I - I_1} A_\mu \gamma^\mu \Omega \partial^{I_1} \psi \|_{L^2_{t,x}(C_{T_1})} \lesssim C_0 \epsilon (\|\Gamma^{\leq 9} \psi \|_{L^\infty_t L^2_x(C_{T_1})} + \| \Gamma^{\leq 9} A\|_{L^\infty_t L^3_x(C_{T_1})}). \label{eq:EstMixedEven}
\end{align}
Next we consider the case that $|I_1|$ is odd. We write $\partial^{I_1} = \partial_\nu \partial^{I_2}$ and use that the commutator of $\Omega$ and $\partial_\nu$ is a linear combination of first order derivatives. Terms of the form $\partial^{I - I_1} A_\mu \gamma^\mu \partial^1 \partial^{I_2} \psi$ can be estimated by Gagliardo-Nirenberg estimates so that we concentrate on estimating
\begin{align*}
	T_1^{\frac12}\| \partial^{I - I_1} A_\mu \gamma^\mu \partial_\nu \Omega \partial^{I_2} \psi \|_{L^2_{t,x}(C_{T_1})} 
	\lesssim T_1^{\frac{1}{2}} \|\partial^{I - I_1} A\|_{L^{p_1}_{t,x}(C_{T_1})} \|\partial_\nu \Omega \partial^{I_2} \psi\|_{L^{q_1}_{t,x}(C_{T_1})}
\end{align*}
with the same $p_1$ and $q_1$ as above. Using Gagliardo-Nirenberg estimates, we get
\begin{align*}
	\|\partial_\nu \Omega \partial^{I_2} \psi\|_{L^{q_1}_{t,x}(C_{T_1})} \lesssim \| \Omega \partial^{I_2} \psi \|_{L^{q_1}_{t,x}(C_{T_1})}^{\frac{1}{2}} \| \partial^{\leq 2} \Omega \partial^{I_2} \psi \|_{L^{q_1}_{t,x}(C_{T_1})}^{\frac{1}{2}}.
\end{align*}
Summing first over $\nu$, we can then absorb the first order derivatives on the above right-hand side in the left-hand side, which leads to
\begin{align*}
	\|\partial_\nu \Omega \partial^{I_2} \psi\|_{L^{q_1}_{t,x}(C_{T_1})} \lesssim \| \Omega \partial^{I_2} \psi \|_{L^{q_1}_{t,x}(C_{T_1})} + \| \Omega \partial^{I_2} \psi \|_{L^{q_1}_{t,x}(C_{T_1})}^{\frac{1}{2}}  \| \partial^{2} \Omega \partial^{I_2} \psi \|_{L^{q_1}_{t,x}(C_{T_1})}^{\frac{1}{2}}.
\end{align*}
Using again that the commutator of $\Omega$ with regular derivatives yields regular derivatives which can be dealt with by Gagliardo-Nirenberg estimates, it remains to estimate $\| \Omega \partial^{I_2} \psi \|_{L^{q_1}_{t,x}(C_{T_1})}$ and $\| \Omega \partial^2 \partial^{I_2} \psi \|_{L^{q_1}_{t,x}(C_{T_1})}$. But as $|I_2|$ is even, we can proceed here as in the case $|I_1|$ even. In conclusion, we also obtain~\eqref{eq:EstMixedEven} in the case $|I_1|$ odd.

The remaining terms in $F_{1,I}$ are treated in a similar way, which yields~\eqref{est perturb 1b}. With adaptions analogous to the ones done in the proof of Lemma~\ref{lem:EnergyEstOmegavf} for the source terms of the wave equation, we then also derive~\eqref{est perturb 2b}.

To treat the case of two $\Omega$ vector fields and up to three regular derivatives, we apply $\Omega^2 \partial^I$ to~\eqref{eq:MD} and estimate the arising source terms in a similar way as in the case of one vector field above, employing part~\ref{it:InterpolationMixed2vf} of Lemma~\ref{lem:InterpolationMixedDerivatives}.
\end{proof}

\section{Pointwise bounds}\label{s:ks}  A first step in recovering the bootstrap bounds on the global time scale is to prove appropriate Klainerman-Sobolev inequalities, where the aim is to obtain pointwise bounds from the integral $X_T$ type  bounds
in Proposition~\ref{prop:EnergyEstGamma}. By itself this does not suffice globally in time because the time growth $t^{C\epsilon}$ from the energy estimates will carry over. 
Instead it only suffices almost globally in time.
Nevertheless, the bounds we establish here will suffice in order to estimate the errors in the asymptotic  equations in later sections.

Our main result here is linear (applies to \eqref{eq: linear system}) and applies at a fixed dyadic scale $T$. Because of this, we omit the $T^{C\epsilon}$ factor
in Proposition~\ref{prop:EnergyEstGamma}.
Then we want to show that 
\begin{thm}\label{t:KS}
    Assume that in a time dyadic region $C_T\cup C_{T/2}$ we have
\begin{equation}\label{Ek} 
\| \Gamma^{\leq k}(\psi,A)(t)\|_{X_T} \leq 1, \qquad k = 9. 
\end{equation}
Then in $C_T$ we have
\begin{equation}\label{eq:psi-uniform}
|\psi| \lesssim \langle t+r\rangle^{-\frac32}  \langle t-r \rangle_-^{-\delta} ,   \end{equation}
\begin{equation}\label{eq:A-uniform}
|\partial A| \lesssim t^{-1} \langle t-r \rangle^{-\frac12},
\end{equation}
\begin{equation}\label{eq:A-uniform2}
|\mathcal T A| \lesssim t^{-\frac32} ,
\end{equation}
where $\delta > 0$. In addition, inside the cone we have an improved bound for $\mathcal{T} A$, namely
\begin{equation}
\label{eq:improved TA}
|\mathcal{T} A|\lesssim \langle t \rangle ^{-\frac{3}{2}}\langle t-r\rangle^{-\frac{1}{2}}.
\end{equation} 
\end{thm}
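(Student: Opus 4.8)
\emph{Overall strategy.} The plan is to deduce the pointwise bounds from localized $L^2$ energy estimates via Sobolev-type embeddings adapted to the geometry near the light cone, with the linear equations used to supply the transversal derivative that the limited number of vector fields does not directly provide. Accordingly I would first split the slab $C_T$ according to the distance to the cone: a deep interior region $C_T^{int}$ where $\sqrt{t^2-r^2}\gtrsim T$, transition regions $C_{TS}^{\pm}$ at dyadic distance $S\in[1,T]$ to the cone (inside, resp.\ outside), and a far exterior region $C_T^{ext}$. In the interior the hyperboloids $\{t^2-r^2=\rho^2\}$ foliate well and the $\Omega$ vector fields act as honest normalized derivatives on the $3$-dimensional hyperbolic leaves; near and outside the cone one must instead work in the flat boxes $C_{TS}$, which are strongly anisotropic --- one short null direction of size $S$, the complementary directions of size $\sim T$.

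\emph{Localized energy bounds.} From \eqref{Ek} I would upgrade the fixed-time control to weighted $L^2$ bounds on hyperboloids, and then to space-time $L^2$ bounds on the sub-regions. For $\psi$ this uses the density--flux identity \eqref{dens-flux}: integrating between $\{t=T\}$ and $H_\rho\cap C_T$ and now \emph{keeping} the hyperboloidal flux term discarded in Lemma~\ref{lem:LinearEnergyEstCT}, one controls $\int_{H_\rho}e_H(\psi)\,d\sigma$, i.e.\ by \eqref{eH} the weighted quantity built from $\tfrac{t-r}{\sqrt{t^2+r^2}}|\psi_+|^2$ and $\tfrac{t+r}{\sqrt{t^2+r^2}}|\psi_-|^2$, in terms of the $X_T$ norm; applying this to $\Gamma^{\le 9}\psi$ and integrating over a dyadic family of hyperboloids yields $\|\Gamma^{\le 9}\psi\|_{L^2(C_{TS}^{\pm})}$ and $\|\Gamma^{\le 9}\psi\|_{L^2(C_T^{int/ext})}$ with the correct powers of $S,T$. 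For $A$ I would run the analogous argument with the wave energy--momentum tensor contracted against $\partial_t$ (plus a Morawetz/conformal multiplier where the good-derivative bounds demand it), after a Littlewood--Paley decomposition to accommodate the $\dot H^{\pm1/2}$ regularity, again keeping the fluxes and absorbing the source via the $T^{1/3}\|\Box A\|_{L^{3/2}(C_T)}$ piece of $X_T$.

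\emph{From local energy to pointwise bounds.} In $C_T^{int}$ I would pass to hyperbolic coordinates and use Sobolev embedding $H^2\hookrightarrow L^\infty$ on the hyperbolic leaves: only $2$ normalized derivatives are needed whereas $|J|\le3$ is available, and the volume/weight factors turn the $L^2$ bound into $|\psi|\lesssim(t^2-r^2)^{-3/4}\lesssim t^{-3/2}$ (with $\langle t-r\rangle_-=1$ inside), and into $|\mathcal{T}A|\lesssim t^{-3/2}\langle t-r\rangle^{-1/2}$, the extra half power coming from the degeneracy factor $(1-v^2)^{1/2}$ of the hyperbolic metric. Near and outside the cone I would instead use a rescaled anisotropic Sobolev/interpolation inequality in each box $C_{TS}^\pm$, exploiting that the rotations satisfy $\Omega_{jk}\sim r\,\partial_\omega$ and that near the cone the boosts obey $\Omega_{0j}\approx\tfrac{T}{S}(S\partial_u)$, so each boost supplies a normalized transversal derivative \emph{together with} a gain of $S/T\le1$; interpolating in the one surplus vector field converts this into the claimed $\langle t-r\rangle_-^{-\delta}$ gain for $\psi$, the $t^{-1}\langle t-r\rangle^{-1/2}$ bound for $\partial A$, and $t^{-3/2}$ for $\mathcal{T}A$. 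At the points where box-Sobolev would call for a transversal $\partial_t$, rather than spending derivatives we invoke the equation: $\partial_t^2A=\Delta A-\overline\psi\gamma\psi$ (more precisely $\Box A=G$ with $G$ controlled by $X_T$) trades a second $\partial_t$ for the controlled $\Delta A$ and source, and $(-\imu\gamma^\mu\partial_\mu+1)\psi=F$ trades $\partial_t\psi$ for spatial derivatives and $F$; this is the ``use the linear equation'' step in the spirit of Metcalfe--Tataru--Tohaneanu and Ifrim--Stingo.

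\emph{Main obstacle.} The crux is the near-cone transition region: there the hyperboloidal foliation degenerates, so hyperbolic-coordinate Sobolev is unavailable, while in the flat box the controlled derivatives do not form a balanced normalized frame --- the transversal direction is accessible only through the equation and through the $S/T$-gain of the boosts. The delicate point is to track, uniformly over all dyadic $S\le T$, exactly how much decay the surplus vector field buys (so that one gets a fixed $\delta>0$ that does not degenerate as $S\to1$ or $S\to T$) together with the exact powers of $S$ in the local energy bounds above; the $\mathcal{T}A$ estimates further require writing $\mathcal{T}$ as $t^{-1}$ times a combination of $\Omega$'s modulo lower order terms, so that a second Klainerman--Sobolev step on $\Omega A$, which sits at $\dot H^{1/2}$ scaling, produces the extra half power of $t$.
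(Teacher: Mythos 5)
Your outline has the same skeleton as the paper's argument: the decomposition of $C_T^+$ into $C_T^{int}$, $C_{TS}^{\pm}$ and $C_T^{ext}$, hyperboloidal energies obtained by keeping the flux term in the density--flux identities, Sobolev embedding on hyperboloids in the interior, Littlewood--Paley for $A$, and the use of the equations to reach the transversal direction. However, at the two places where the actual work happens your mechanisms are either absent or would not deliver the stated bounds. For $\psi$ in the regions $C_{TS}^{\pm}$, ``trade $\partial_t\psi$ for spatial derivatives via the Dirac equation, exploit an $S/T$ gain of the boosts, and interpolate in the surplus vector field'' is not a proof scheme that closes. First, the boost heuristic is off: $\Omega_{0r}=-(t-r)\partial_u+(t+r)\partial_v$ up to good terms, so a boost is the \emph{normalized} bad derivative $S\partial_u$ plus a good derivative, with no $S/T$ gain attached; the genuine gains in the paper come from the flux with weight $\chi(t-r)$ (Lemma~\ref{lem:CTS}, giving $S^{1/2}$ only for $\psi_-$, while $\psi_+$ only gets $T^{1/2}$ with no smallness) and, crucially, from projecting with $P^{\theta}_{\pm}$ and rewriting the radial Dirac equation as the coupled system \eqref{coupled}, which is hyperbolic inside the cone and elliptic outside. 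Inside, the pointwise bounds come from the approximately conserved density $e=(t-r)|\psi_+|^2+(t+r)|\psi_-|^2$; outside --- which is where the factor $\langle t-r\rangle_-^{-\delta}$ actually lives --- they come from weighted elliptic estimates obtained by squaring the two equations with matched weights and cancelling the cross terms (leading to \eqref{eqn:elliptic} and \eqref{eq:PointwBoundCTSmin}); and both cases need the intermediate improvement \eqref{eqn:undiff}, $\|\Gamma^{\leq 6}\psi_{\pm}\|_{L^2(C_{TS})}\lesssim S T^{-1/2}$, which again uses the system structure. None of this is recoverable from interpolation in one extra vector field alone, and your outline also skips the fact that $P^{\theta}_{\pm}$ do not commute with all the $\Gamma$'s, so the perturbative commutator check of Corollary~\ref{c:CTS} is needed before arguing componentwise.

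For the wave part, the improved bound \eqref{eq:improved TA} is not produced by the degeneracy factor of the hyperbolic metric: the obstruction is the low-frequency part of $A$, since the baseline regularity is only $\dot H^{1/2}\times\dot H^{-1/2}$. After Littlewood--Paley, the good pointwise bound for $\nabla\mathcal{T}A_\lambda$ (the analogue of \eqref{DTA}) holds only strictly inside the cone, and to combine it with the global bound \eqref{TA} one must split each frequency at the scale $t-r\sim\lambda^{-1}$ with a cutoff and use the kernel localization of $P_\lambda$ before summing over $\lambda<1$; integrating naively from the cone, or appealing to the metric weight, loses a logarithm and fails for $t-r\lesssim\lambda^{-1}$. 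Likewise, for $|\partial A|\lesssim t^{-1}\langle t-r\rangle^{-1/2}$ the transversal control is not obtained from $\partial_t^2A=\Delta A+\dots$ on time slices, but from recasting the wave operator in the polar/boost frame so that $(t^2-r^2)\partial_r^2A_\lambda$ is the principal term with the source measured in $L^{3/2}$ as dictated by the $X_T$ norm; the specific trace interpolation at $L^{5/3}$ on hyperboloids (as in \eqref{dr4A}) is what makes the powers of $S$, $T$ and $\lambda$ match. So the plan has the right shape, but without the projected coupled system for $\psi$ (hyperbolic inside, elliptic outside) and the frequency-localized matching for $\mathcal{T}A$, the exterior $\delta$-gain in \eqref{eq:psi-uniform} and the improved bound \eqref{eq:improved TA} are not actually reached.
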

Here $\mathcal{T}$ stands for normalized derivatives in directions which are tangent to the light cones
$\{t-r = const\}$.
These are spanned by $\mathcal T = \{ \partial_r+\partial_t,\slash\!\!\!\!\nabla \}$
where $\slash\!\!\!\!\nabla$ stands for the derivatives in angular directions. Equivalently, in the above theorem one may 
use derivatives tangent to the hyperboloids $\{t^2-r^2 = const\}$.

\begin{rem} What is missing here is the uniform bound 
for $A$,
\[
|A| \lesssim t^{-1},
\]
which would be too much to ask for at this point, using only the information given in the hypothesis of the theorem above. Instead, we will prove pointwise bounds for $A$ later on, in Section~\ref{s:A}, by using the wave equation for 
$A$ and the pointwise bounds for $\psi$.
\end{rem}

\begin{rem}
The improved bound \eqref{eq:improved TA} is due to the fact that our baseline spaces for the wave equation are $\dot{H}^\frac{1}{2} \times \dot{H}^{-\frac{1}{2}}$, as opposed to $ \dot{H}^1\times L^2$ . With additional work one should be able to obtain a similar improved bound for $\nabla A$ 
\begin{equation}
\label{eq:improved NA}
|\nabla A|\lesssim \langle t \rangle^{-1} \langle t-r\rangle^{-1} ,
\end{equation} 
but this would require some adjustments to the $X_T$-norm, which we chose not to pursue here. 
\end{rem}
 
\medskip
\begin{proof} To fix the notations we denote the right hand side of the linear equation for $(\psi, A)$ as follows
\begin{equation}
\left\{
	\begin{aligned}
	\label{eq:MD-inh}
	-& \imu \gamma^\mu \partial_\mu \psi + \psi = F \\
	 &\Box A_\mu = G .
	 \end{aligned}
  \right.
\end{equation}
These equations are assumed to hold in $C_T \cup C_{T/2}$. Because the proof involves fractional and negative Sobolev spaces we would want to use a spatial Littlewood-Paley decomposition. But this dyadic decomposition is not entirely compatible with the geometry of $C_T$, so we would like to extend $(\psi, A)$ to the entire strip. It suffices to appropriately extend $(F, G)$:
\begin{lem} 
\label{lem:Extension}
Let $(F, G)$ be functions in $C_T$ which satisfy the bound
\begin{equation}
T^\frac12 \|\Gamma^{\leq 2k} F   \|_{L^2_{t,x}(C_T)} + T^{\frac13} \| \Gamma^{\leq 2k}G\|_{L^\frac32_{t,x}(C_T)} \lesssim 1.
\end{equation}
Then there exists an extension, still denoted by $(F, G)$, which satisfies the same estimate in the full time slice $[T, 4T]$.
\end{lem}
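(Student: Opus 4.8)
The plan is to build the extension region by region, respecting the two-part geometry of $C_T$ (a flat slab piece near spatial infinity, glued to a hyperboloidal cap near the light cone) and then to patch the pieces with a partition of unity. First I would record the key structural fact about $C_T$: for $t\in[T,2T]$ the time-slice $C_{T,t}$ is all of $\R^3$, so no extension is needed there; the only place where $C_{T,t}$ is a proper (exterior) subset of $\R^3$ is for $t\in[2T,4T]$, where the removed region is the Euclidean ball $\{|x|\le \sqrt{t^2-4T^2}\}$, whose boundary is a smooth hypersurface staying at Euclidean distance $\gtrsim T$ from the cone $\{|x|=t\}$ inside $[2T,4T]$. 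Thus the extension problem is, after the rescaling $t=T\tau$, $x=Ty$ which normalizes all lengths to order $1$, a uniform (scale-$1$) Sobolev extension problem across a uniformly smooth hypersurface, with uniformly bounded geometry. This rescaling is what lets us quote classical extension theorems with $T$-independent constants.

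The core step is then a standard Stein-type extension: for the hyperboloidal boundary portion one introduces hyperbolic (or, after rescaling, just smoothly flattened) coordinates in which the domain becomes a half-space $\{y_n>0\}\times(\text{bounded parameter set})$, and applies the classical Stein extension operator (or, more simply, an iterated reflection/mollification) in the $y_n$ variable; this yields a bounded linear extension $E:W^{s,p}(\Omega)\to W^{s,p}(\R^n)$ for every $s,p$ in the relevant range, in particular for the pairs $(L^2$, up to $2k$ derivatives$)$ and $(L^{3/2}$, up to $2k$ derivatives$)$ that appear in the hypothesis. The one subtlety is that the hypothesis and conclusion are phrased in terms of the vector fields $\Gamma^{\le 2k}$, not plain derivatives. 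I would handle this by noting that on the bounded rescaled region $C_1$ the vector fields $\Gamma=\{\partial_\mu,\Omega_{\alpha\beta}\}$ have smooth coefficients of size $O(1)$, so $\Gamma^{\le 2k}$-control is equivalent, with $O(1)$ constants, to plain $\partial^{\le 2k}$-control; one extends in the ordinary Sobolev sense and then transfers back. Alternatively, since in the relevant range of $[T,4T]$ the removed ball is bounded away from the cone, one can extend along the \emph{time} direction by even reflection about $t=2T$ after freezing the missing spatial ball, which automatically commutes well with the spatial vector fields $\Omega_{ij}$; either route works.

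Finally I would assemble: choose a smooth partition of unity $\chi_{\mathrm{flat}}+\chi_{\mathrm{cap}}=1$ subordinate to (i) a neighborhood of the flat exterior slab, where $C_{T,t}=\R^3$ already and nothing is removed, and (ii) a neighborhood of the hyperboloidal boundary piece, with cutoffs whose $\Gamma^{\le 2k}$-derivatives are $O(1)$ after rescaling; extend $\chi_{\mathrm{cap}}(F,G)$ by the operator above and leave $\chi_{\mathrm{flat}}(F,G)$ as is (it already lives on the full strip), then undo the rescaling. The $T$-weights $T^{1/2}$ and $T^{1/3}$ in the statement are precisely the ones that make the $L^2(C_T)$ and $L^{3/2}(C_T)$ norms scale-invariant under $(t,x)\mapsto(Tt,Tx)$ in $1+3$ dimensions, so boundedness of the rescaled (scale-$1$) extension operator immediately yields the claimed $T$-uniform bound. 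The main obstacle — though it is a soft one — is bookkeeping: verifying that the vector-field norms, the cutoffs, and the coordinate flattening all interact with constants independent of $T$; there is no hard analysis, only the care needed to see that every constant survives the rescaling. I would therefore present this as a short argument citing a classical extension theorem, with the rescaling and the $\Gamma$-versus-$\partial$ equivalence spelled out, and leave the routine verifications to the reader.
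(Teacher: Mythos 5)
Your geometric setup (the slices are all of $\R^3$ for $t\in[T,2T]$, only a ball $\{|x|\le\sqrt{t^2-4T^2}\}$ is missing for $t\in(2T,4T]$, rescale by $T$, flatten, reflect, patch) matches the spirit of the paper's one-line proof, but the step you use to handle the vector fields contains a genuine gap. You claim that after rescaling the region to unit size, $\Gamma^{\le 2k}$-control is equivalent to plain $\partial^{\le 2k}$-control with $O(1)$ constants. This is false in the quantitative sense the lemma needs: at scale $T$ the fields $\Omega_{\alpha\beta}=x_\beta\partial_\alpha-x_\alpha\partial_\beta$ have coefficients of size $T$ in the filled region (there $|x|\lesssim T$, $t\sim T$), and the rescaling $(t,x)=(T\tau,Ty)$ that normalizes the region turns the original translations into $T^{-1}\partial_{\tau,y}$ while leaving $\Omega$ unchanged. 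Consequently the hypothesis controls $\partial^{\le 2k}F$ and $\Omega^{\le}F$ but gives \emph{no} control of $T\partial F$ (one cannot manufacture $t\partial_t$ or $t\partial_j$ from $\{\partial,\Omega\}$ alone deep inside the cone, where $|x|\ll t$). So if you extend in plain Sobolev norms and then try to recover the $\Omega$-bounds of the extension pointwise via $|\Omega^J F^{ext}|\lesssim T^{|J|}|\partial^{\le|J|}F^{ext}|$, you prove $\|\Omega^J F^{ext}\|_{L^2}\lesssim T^{|J|-\frac12}$ instead of the required $T^{-\frac12}$, i.e.\ you lose a factor $T$ per vector field. Your alternative (even reflection in $t$ about $t=2T$) has the same defect for the boosts: the reflection commutes with rotations but its commutator with $\Omega_{0j}$ produces terms of the form $(2t-4T)\partial_jF\sim T\,\partial F$, and in addition plain even reflection does not even land in $C_T$ once $t>3T$ (one needs a Hestenes-type multi-parameter reflection).

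The missing idea, which is exactly what the paper's proof supplies, is that the extension operator must (essentially) commute with the Lorentz fields rather than be compared to them. Working in hyperbolic coordinates $\rho=\sqrt{t^2-x^2}$ together with coordinates along the hyperboloids, the boundary of $C_T$ is the level set $\{\rho=2T\}$, the fields $\Omega$ are tangential to the hyperboloids, and a (Hestenes-type) reflection in $\rho$ across $\rho=2T$ — with reflection parameters chosen small enough that the reflected points stay in $\{t\ge T\}$ — commutes exactly with the $\Omega$'s and acts one-dimensionally on $\partial_\rho$. Then the mixed $\Omega$--$\partial$ norms of the extension are bounded by those of the original with $T$-independent constants (after rescaling to $T=1$, this is the standard reflection-extension argument of Adams--Fournier), and no conversion between $\Omega$ and Euclidean derivatives — and hence no loss of powers of $T$ — ever occurs. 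Your write-up needs this replacement of the ``norm equivalence'' step; the partition of unity and the observation that nothing needs to be done for $t\in[T,2T]$ can be kept as is.
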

\begin{proof}
After rescaling to $T = 1$, we use hyperbolic coordinates in order to apply standard extension techniques such as reflection at the boundary, cf.~\cite{AF03} for the case of regular derivatives.
\end{proof} 

The above lemma. i.e., Lemma~\ref{lem:Extension}, allows us to replace the cup region $C_T$ with the full time slice $[T,4T]$.
Adding also the region $C_{T/2}$, we can assume
that $(\psi,A)$ satisfy $X_T$ type bounds in the larger time slab $[T/2,4T]$. However, we only need to prove the pointwise bounds in the theorem in the region $C_T$.  To this we will add the lower cap, thus working with the region $C_T^+$  defined in \eqref{eq:ct+}.

\medskip

Now we consider separately the Dirac and the
wave component, working in the full time slab 
$[T/2,4T]$. In this setting, it suffices to prove the desired pointwise bounds in the region 
$C_T^+$.  Our strategy will be to reduce the proof of the theorem to standard Sobolev embeddings in regions which, in suitable coordinates, have unit size.
To place ourselves in this situation, we decompose
the region $C_T^+$ into smaller regions which have fixed geometry, as follows:
\begin{equation}\label{CT-decomp}
C_T^+ := C_T^{int} \bigcup C_T^{ext} \bigcup_{\pm} \bigcup_{1 \leq S \leq T} C_{TS}^{\pm}  .  
\end{equation}
We now describe the sets in this decomposition:

\begin{itemize}
    \item The interior region $C_T^{int}$ is defined as 
 \[
C_T^{int} := ( [T/2,4T] \times \R^3 ) \cap 
\{ T^2/4 \leq   t^2 -x^2 \leq 4T^2 \}.
 \]
 This region can be foliated with large sections of hyperboloids.

 \item The exterior region $C_T^{ext}$ is far outside the cone, and is described as 
 \[
C_T^{ext} := \{ (t,x); t \in [T,4T]; r \geq 2 t \}.
 \]
 \item The region around the cone, we dyadically decompose with respect to the size
of $t-r$, which  measures how far or close we are to the cone
\begin{equation}
\label{cts}
\begin{aligned}
&C_{TS}^{+}:=\left\{ (t,x)\, :\,   S\leq t-r\leq 2S, \, T\leq t\leq 4T \right\},  \mbox{ where } 1\leq S\lesssim T, \\
&C_{TS}^{-}:=\left\{ (t,x)\, :\,   S\leq r-t\leq 2S, \, T\leq t\leq 4 T \right\}, \mbox{ where } 1\leq S \lesssim T;
\end{aligned}
\end{equation}
see Figure~\ref{f:D0}.
\end{itemize}

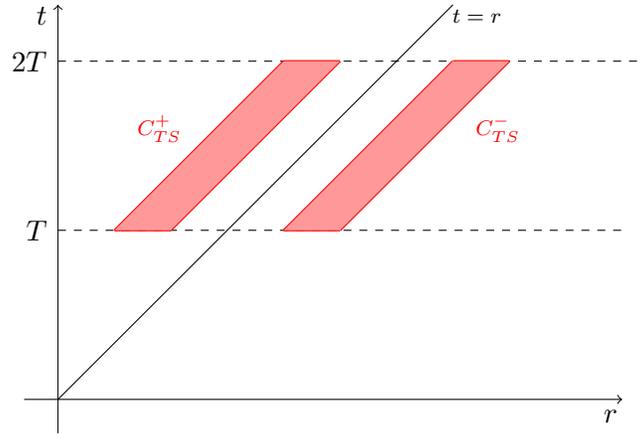
\begin{figure}[h]
\begin{center}
\begin{tikzpicture}[scale=1.5]

%Axis
\draw[->] (0,-0.3) -- (0,3.5);
\draw[->] (-0.3,0) -- (5,0);
\node[below] at (4.9,0) {\small $r$};
\node[left] at (0,3.4) {\small $t$};

\draw [domain=0:3.5] plot(\x, \x);
\node[right] at (3.4,3.4) {\tiny $t=r$};

\draw[dashed] [domain=0:5] plot (\x, 1.5);
\node[left] at (0,1.5) {\small $T$};
\draw[dashed] [domain=0:5.3] plot (\x, 3);
\node[left] at (0,3) {\small $4T$};
 
\draw[red, thick] [domain=0.5:2] plot(\x+.5, \x+1);
 \draw[red, thick] [domain=.5:2] plot(\x, \x+1);
 \draw[red, thick] [domain=0:0.5] plot(\x+.5, 1.5);
 \draw[red, thick] [domain=1.5:2] plot(\x+.5, 3);
 
  \node[red,thick] at (0.9, 2.4) {\tiny $C^+_{TS}$};
 
 \draw[red, thick] [domain=2.5:4] plot(\x-.5, \x-1);
 \draw[red, thick] [domain=3.5:5] plot(\x-1, \x-2);
\draw[red, thick] [domain=2.5:3] plot(\x-.5, 1.5);
 \draw[red, thick] [domain=4:4.5] plot(\x-.5, 3);
   \node[red,thick] at (3.9, 2.4) {\tiny $C^{-}_{TS}$};

\fill[red!40,nearly transparent] (2,1.5) -- (2.5,1.5) -- (4,3) -- (3.5,3) -- cycle;

\fill[red!40,nearly transparent]  (0.5,1.5) -- (1,1.5) -- (2.5,3) -- (2, 3) -- cycle;

\end{tikzpicture}
\caption{1D vertical section of space-time regions $C^{\pm}_{TS}$}
\label{f:D0}
\end{center}
\end{figure}

Here $C^+_{TS}$ represents a spherically symmetric dyadic region
inside the cone with width $S$, distance $S$ from the cone, and time
length $T$.  $C^-_{TS}$ is the similar region outside the cone where,
far from the cone, we would have $T\lesssim S$. To simplify the
exposition we will use the notation $C_{TS}$ as a shorthand for either
$C^+_{TS}$ or $C^-_{TS}$. 
These regions are also well foliated with sections of hyperboloids.
Such a decomposition has been introduced before 
by Metcalfe-Tataru-Tohaneanu~\cite{mtt12} in a linear setting; we largely follow their notations. 

In the above definition of the $C_{TS}$ sets we limit $S$ to $S \geq 1$ because our assumptions are invariant with respect to unit size translations. In particular,  this leaves out a conical shell region along the side of the cone $t=r$, which intersects both the interior and the exterior of the cone. To also include this region in our analysis we redefine 
\begin{equation}
\label{ct1}
C_{T1}:=\left\{ (t,x)\, :\,   \vert t-r\vert \leq 2, \, T\leq t\leq 2T \right\},  \mbox{ where } S\sim 1.
\end{equation}

\subsection{The bounds for the Dirac equation.}
We will prove our pointwise bounds for the 
Dirac equation separately in each of the regions of the $C_{T}^+$ decomposition in \eqref{CT-decomp}. To do this, we need $L^2$ bounds for extended $X_T$ 
functions $\psi$ in each of these regions. 
Just as in the case of the wave equation, not all components of $\psi$ will satisfy the same bounds. To understand this, we write the $L^2$
conservation for Dirac in a density flux form using \eqref{dens-flux}, and which, for convenience, we recall below
\begin{equation*}
\partial_t |\psi|^2 + \partial_j ( \psi^\dag \gamma^0 \gamma^j \psi ) = -2 \Im (\psi^\dag \gamma^0 F ).   
\end{equation*}
We note that here it is important that the matrices $\gamma^0 \gamma^j$ are all Hermitian.

The first use we have for this relation is to integrate it in the region $D$ between the time-slice $t = T$ and a hyperboloid $H$. Then we 
obtain an expression for the  energy  on the hyperboloid,
\[
E_H(\psi):= \int_{H\cap C_T} \nu_0 |\psi|^2 + \nu_j ( \psi^\dag \gamma^0 \gamma^j \psi)\, d\sigma = \int_{H\cap C_T} e_H(\psi) \, dV,
\]
namely
\[
E_H(\psi) =\int_{t=T} |\psi|^2\, dx - 
\iint_D 2 \Im ( \psi^\dag \gamma^0 F)\, dx dt -\int_{\left\{t=4T\right\}\setminus \mbox{cup}} |\psi|^2\, dx,
\]
which yields the bound
\begin{equation}
\label{Energy-H}
E_H(\psi) \lesssim \|\psi\|_{X_T}^2.   
\end{equation}
Here the cup region is the region above the hyperboloid $H$. 
The energy density on hyperboloids was computed earlier in \eqref{eH} in terms of the $\psi_{\pm}$ decomposition of $\psi$. Precisely, we can rewrite  the energy on hyperboloids
as 
\[
E_H(\psi):= \int_{H\cap C_T}  \frac{t-r}{\sqrt{t^2+r^2}} |\psi_+|^2
+ \frac{t+r}{\sqrt{t^2+r^2}} |\psi_-|^2\, d\sigma.
\]

In a strictly smaller angle inside the cone both coefficients have size one, so we control the full $L^2$ norm of $\psi$ on $H\cap C_T$. This immediately allows us to bound $\psi$ in the interior region $C_T^{int}$, which is well foliated by sections of hyperboloids with uniform size $O(T)$,
\[
\| \psi\|_{L^\infty(C_T^{int})} 
\lesssim \sup_{H} \| \psi\|_{L^\infty(H \cap C_T^{int})} \lesssim 
T^{-\frac32} \sup_H  \| \Omega^{\leq 2} \psi\|_{L^2(H \cap C_T^{int})} .
\]

Next we consider the $C_{TS}^{\pm}$ regions. As we approach the cone, applying the same argument as above loses a factor of $(T/S)^\frac12$ and is no longer sufficient. Outside the cone this cannot be done at all. Instead, we prove 
$L^2$ bounds in $C_{TS}$, both inside and outside the cone:

\begin{lem}\label{lem:CTS}
We have the following estimates:
\begin{equation}\label{psip-est}
\| \psi_+\|_{L^2_{t,x}
(C_{TS})} \lesssim T^\frac12 \|\psi\|_{X_T} ,   
\end{equation}
respectively
\begin{equation}\label{psim-est}
\| \psi_-\|_{L^2_{t,x}(C_{TS})} \lesssim S^\frac12 \|\psi\|_{X_T}  .
\end{equation}
\end{lem}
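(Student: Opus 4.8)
The plan is to bound the two components $\psi_+$ and $\psi_-$ by entirely different mechanisms. For $\psi_+$ I would simply use that $P^\theta_+$ is an orthogonal projection, so $|\psi_+|\le|\psi|$ pointwise, together with the fixed-time $L^2$ conservation of the Dirac flow: since $C_{TS}\subset C_T$ has time length $\sim T$,
\[
\|\psi_+\|_{L^2_{t,x}(C_{TS})}\le\|\psi\|_{L^2_{t,x}(C_{TS})}\lesssim T^{\frac12}\,\|\psi\|_{L^\infty L^2(C_T)}\lesssim T^{\frac12}\,\|\psi\|_{X_T},
\]
where the last step follows from Lemma~\ref{lem:LinearEnergyEstCT} and the definition of the $X_T$-norm. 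This already gives \eqref{psip-est}; it also gives \eqref{psim-est} in the range $S\gtrsim T$, so from here on I may assume $S\le\kappa T$ for a small fixed $\kappa$.

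For the sharper bound \eqref{psim-est}, the key point is that the incoming component $\psi_-$ is exactly what the Dirac current measures on outgoing null cones. I would work with $J^\mu:=\overline{\psi}\gamma^\mu\psi$, which satisfies $J^0=|\psi|^2$, the divergence identity $\partial_\mu J^\mu=-2\Im(\psi^\dag\gamma^0F)$ from \eqref{dens-flux}, and is future causal: $J^0\ge0$ and $|(J^1,J^2,J^3)|=\sup_{\theta\in\S^2}\psi^\dag\gamma^0\gamma^\theta\psi\le|\psi|^2$, using $(\gamma^0\gamma^\theta)^2=I_4$. Its flux density through the null cone $N_c:=\{t-r=c\}$ is $J^0-\theta_jJ^j=\psi^\dag(I_4-\gamma^0\gamma^\theta)\psi=2|\psi_-|^2$. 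The plan is to integrate the divergence identity over the Lipschitz region $R_c:=\{T\le t\le4T\}\cap\{t-r\ge c\}$ when $0<c\le2S$ (this covers $C_{TS}^+$) and over $\{T\le t\le4T\}\cap\{t-r\le c\}$ when $-2S\le c<0$ (this covers $C_{TS}^-$); both lie inside the enlarged slab $[T/2,4T]\times\R^3$ to which $(\psi,F)$ have already been extended in the proof of Theorem~\ref{t:KS}, and the smallness $S\le\kappa T$ keeps the null cones away from their vertices. Because $J$ is future causal, after the standard null-flux sign bookkeeping the contributions of the $\{t=T\}$, $\{t=4T\}$ and $N_c$ parts of $\partial R_c$ are all sign-definite, and discarding the nonnegative $\{t=4T\}$ term gives, uniformly in $c$,
\[
\int_{N_c\cap\{T\le t\le4T\}}|\psi_-|^2\,r^2\,dr\,d\omega\ \lesssim\ \|\psi(T)\|_{L^2}^2+\|\psi\|_{L^\infty L^2}\|F\|_{L^1L^2}\ \lesssim\ \|\psi\|_{X_T}^2,
\]
where the source term is handled by $\|F\|_{L^1L^2}\lesssim T^{\frac12}\|F\|_{L^2(C_T)}\lesssim\|\psi\|_{X_T}$ together with Lemma~\ref{lem:LinearEnergyEstCT}.

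Finally I would recover the bound on $C_{TS}$ by the coarea formula applied to $u=t-r$: in the coordinates $(c,r,\omega)=(t-r,|x|,x/|x|)$ on $\R^{1+3}$ one has $dt\,dx=dc\,r^2\,dr\,d\omega$, and on $C_{TS}$ the variable $c$ ranges over an interval of length $\sim S$ (namely $[S,2S]$ for $C_{TS}^+$ and $[-2S,-S]$ for $C_{TS}^-$). Hence
\[
\|\psi_-\|_{L^2_{t,x}(C_{TS})}^2=\int_{C_{TS}}|\psi_-|^2\,dt\,dx\ \le\ \int_{|c|\in[S,2S]}\Big(\int_{N_c\cap\{T\le t\le4T\}}|\psi_-|^2\,r^2\,dr\,d\omega\Big)d|c|\ \lesssim\ S\,\|\psi\|_{X_T}^2,
\]
which is exactly \eqref{psim-est}.

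I expect the main obstacle to be the exterior region $C_{TS}^-$: the hyperboloids $\{t^2-|x|^2=\mathrm{const}>0\}$ underlying the energy estimates used elsewhere in this section do not reach there, so there is no hyperboloid energy available. The device of replacing hyperboloids by outgoing null cones—whose Dirac flux sees precisely $|\psi_-|^2$—is what makes the argument uniform inside and outside the cone and simultaneously produces the gain from $T^{1/2}$ to $S^{1/2}$. A secondary technical point is the careful sign bookkeeping in the divergence theorem on $R_c$ (relying on future causality of $J$) and checking that $R_c$ stays within the slab on which the extension of $(\psi,F)$ is defined.
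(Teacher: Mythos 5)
Your proposal is correct and takes essentially the same route as the paper: the $\psi_+$ bound is the same H\"older-in-time energy estimate, and for $\psi_-$ your cone-by-cone flux identity (with flux density $J^0-\theta_j J^j=2|\psi_-|^2$ through $\{t-r=c\}$) integrated in $c$ via the coarea formula is precisely the disintegrated form of the paper's argument, which tests the density--flux relation \eqref{dens-flux} with a bounded nonincreasing weight $\chi(t-r)$ satisfying $\chi'\approx -S^{-1}$ on $[S,2S]$ (resp.\ $[-2S,-S]$). The technical points you flag (extension to the full slab, sign bookkeeping, the unbounded exterior region) are benign and are handled the same way, implicitly, in the paper's weighted version.
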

\begin{proof}
The bound for $\psi_+$ follows trivially from the energy estimate and H\"older's inequality, as 
\[
\| \psi_+\|_{L^2_{t,x}(C_{TS})} \lesssim 
\| \psi_+\|_{L^2_{t,x}(C_{T})} \lesssim T^\frac12 
\| \psi_+\|_{L^\infty_t L^2_{x}}.
\]
For $\psi_-$ we integrate the density-flux relation with a well chosen weight $\chi(t-r)$
which is bounded, nonnegative and a nonincreasing function of its argument.
This yields
\[
\int_{C_T} - \chi'(t-r) (|\psi|^2 - \langle \gamma^0 \gamma^\theta \psi,\psi \rangle) \, dx dt 
\lesssim \|\psi \|_{L^\infty_t L^2_x}^2 +
\| \psi\|_{L^\infty_t L^2_x} \|F\|_{L^1_tL^2_x} \lesssim \|\psi\|_{X_T}^2,
\]
which gives
\[
\int_{C_T} - \chi'(t-r) |\psi_-|^2 \, dx dt 
\lesssim \|\psi\|_{X_T}^2.
\]
To get \eqref{psim-est} it suffices to choose 
$\chi$ with the property that $\chi' \approx - S^{-1}$ in the interval $[S,2S]$ for $C_{TS}^+$,
respectively $[-2S,-S]$ for $C_{TS}^-$.
    
\end{proof}

In the context of Theorem~\ref{t:KS}, similar bounds apply for vector fields applied to $\psi_{\pm}$:

\begin{cor}\label{c:CTS}
Assume that \eqref{Ek} holds. Then with $k = 9$ we also have
\begin{equation}
\begin{aligned}
\|\Gamma^{\leq k} \psi_+\|_{L^2_{t,x}(C_{TS})} \lesssim & \  T^\frac12 \|\Gamma^{\leq k} \psi\|_{X_T}  ,  
\\
\|\Gamma^{\leq k} \psi_-\|_{L^2_{t,x}(C_{TS})} \lesssim & \ S^\frac12 \|\Gamma^{\leq k} \psi\|_{X_T} .
\end{aligned}
\end{equation}
\end{cor}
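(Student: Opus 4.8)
The plan is to transfer the two halves of Lemma~\ref{lem:CTS} to each function $u := \Gamma^J \partial^I \psi$ with $|I| + 3|J| \leq k$, and then to reconcile $(\Gamma^J\partial^I\psi)_{\pm}$ with $\Gamma^J\partial^I(\psi_{\pm})$ by commutator estimates. First I would record the two inputs that make the density--flux argument go through at the level of vector fields. On the one hand, by the computations of Subsections~\ref{ss:higher derivatives}--\ref{ss:mixed} the function $u$ solves a Dirac equation $-\imu\gamma^\mu\partial_\mu u + u = F_{J,I}$, where $F_{J,I}$ is obtained by distributing $\Gamma^J\partial^I$ across the nonlinearity $\gamma^\mu A_\mu\psi$; by H\"older in time and the definition~\eqref{XT-def} of the $X_T$-norm,
\begin{equation*}
\|F_{J,I}\|_{L^1 L^2(C_T)} \lesssim T^\frac12 \|F_{J,I}\|_{L^2(C_T)} \leq \|\Gamma^{J}\partial^{I}(\psi,A)\|_{X_T} \leq \|\Gamma^{\leq k}(\psi,A)\|_{X_T}.
\end{equation*}
On the other hand, the linear energy estimate of Lemma~\ref{lem:LinearEnergyEstCT} yields $\|u\|_{L^\infty_t L^2_x(C_T)} \lesssim \|u(T)\|_{L^2} + \|F_{J,I}\|_{L^1 L^2(C_T)} \lesssim \|\Gamma^{\leq k}(\psi,A)\|_{X_T}$.

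Next I would run the two estimates of Lemma~\ref{lem:CTS} verbatim for $u$. Since $P^\theta_{\pm}$ are orthogonal projectors with respect to the Euclidean inner product we have the pointwise bound $|u_{\pm}| \leq |u|$, so for the ``$+$'' component $\|u_+\|_{L^2_{t,x}(C_{TS})} \leq \|u\|_{L^2_{t,x}(C_T)} \lesssim T^\frac12 \|u\|_{L^\infty_t L^2_x(C_T)} \lesssim T^\frac12 \|\Gamma^{\leq k}\psi\|_{X_T}$. For the ``$-$'' component I would integrate the density--flux identity for $u$ against a bounded, nonnegative, nonincreasing weight $\chi(t-r)$ with $-\chi' \approx S^{-1}$ on $[S,2S]$ (resp.\ $[-2S,-S]$), using $|u|^2 - u^\dag\gamma^0\gamma^\theta u = 2|u_-|^2$, to obtain
\begin{equation*}
\int_{C_T} -\chi'(t-r)\,|u_-|^2 \, dx\, dt \lesssim \|u\|_{L^\infty_t L^2_x}^2 + \|u\|_{L^\infty_t L^2_x}\,\|F_{J,I}\|_{L^1 L^2(C_T)} \lesssim \|\Gamma^{\leq k}\psi\|_{X_T}^2,
\end{equation*}
and hence $\|u_-\|_{L^2_{t,x}(C_{TS})} \lesssim S^\frac12 \|\Gamma^{\leq k}\psi\|_{X_T}$, exactly as before. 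This already gives the stated bounds for $(\Gamma^{\leq k}\psi)_{\pm}$.

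Finally I would upgrade to $\Gamma^J\partial^I(\psi_{\pm})$ by writing $\Gamma^J\partial^I(\psi_{\pm}) = (\Gamma^J\partial^I\psi)_{\pm} + [\Gamma^J\partial^I, P^\theta_{\pm}]\psi$. The commutator expands into a sum of terms $M\,\Gamma^{J'}\partial^{I'}\psi$ with $|I'| + 3|J'| < |I| + 3|J|$, where $M$ is a matrix-valued coefficient arising from derivatives of $\theta = x/|x|$ and the constant spinor corrections in $\hat{\Omega}$. On $C_{TS}$ with $S \lesssim T/4$, and on $C_T^{ext}$, one has $r \gtrsim T$, so all such coefficients (including the $t/r$ factors produced by the boosts) are uniformly bounded, and the commutator terms are absorbed by lower-order instances of the estimate; the claim then follows by induction on $k$. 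On $C_{TS}$ with $S \sim T$, the desired ``$-$'' bound with constant $S^\frac12 \sim T^\frac12$ is no better than the ``$+$'' bound and follows directly from $|\Gamma^J\partial^I(\psi_{\pm})| \leq$ the full $L^2$ control of $\Gamma^{\leq k}\psi$ on $C_T$ established above. The main (minor) obstacle is precisely this bookkeeping of the $[\Gamma,P^\theta_{\pm}]$ commutators together with the mild singularity of $\theta$ near $r = 0$; everything else is a direct transcription of Lemma~\ref{lem:CTS}.
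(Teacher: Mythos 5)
Your first two steps coincide with the paper's: since the $X_T$ norm of $\Gamma^{\leq k}\psi$ already controls the Dirac source of each $u=\Gamma^J\partial^I\psi$, Lemma~\ref{lem:CTS} applies verbatim to $u$ (there is no need to re-run its proof), and this gives the bounds for $P^\theta_\pm\Gamma^{\leq k}\psi$. The genuine gap is in your commutator step, which is in fact the whole content of the corollary. You claim that because the coefficients of $[\Gamma^J\partial^I,P^\theta_\pm]$ are uniformly bounded on $C_{TS}$, the commutator terms are ``absorbed by lower-order instances of the estimate''. For the minus bound this does not close: the commutator produces terms $M\,\Gamma^{J'}\partial^{I'}\psi$ involving the \emph{full} spinor, not its minus part, and the inductive hypothesis only controls $\Gamma^{J'}\partial^{I'}\psi_+$ at the level $T^{1/2}$. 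A merely bounded $M$ therefore yields $\|\Gamma^J\partial^I\psi_-\|_{L^2(C_{TS})}\lesssim T^{1/2}$, destroying precisely the $S^{1/2}$ gain the corollary asserts. Concretely, for a transverse boost one computes $[\hat\Omega_{0j},P^\theta_-]=t\,\partial_jP^\theta_-+\tfrac12[\gamma_0\gamma_j,P^\theta_-]$, which is an $O(1)$ matrix near the cone (not $O(T^{-1})$), and since $\partial_jP^\theta_-$ is off-diagonal with respect to the $\pm$ splitting, this commutator feeds $\psi_+$ into the estimate for $\Gamma\psi_-$; boundedness alone cannot repair this.

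What rescues the argument -- and what the paper uses -- is finer algebraic structure, not size: the hatted rotations $\hat\Omega_{jk}$ and the radial boost $\hat\Omega_{0r}=\theta^j\hat\Omega_{0j}$ commute \emph{exactly} with $P^\theta_\pm$; the spatial-derivative commutators $[\partial_x,P^\theta_\pm]$ are $O(|x|^{-1})=O(T^{-1})$, and $T^{-1}\cdot T^{1/2}=T^{-1/2}\leq S^{1/2}$ is then acceptable; and a general boost decomposes as $\hat\Omega_{0j}=\theta_j\hat\Omega_{0r}+\tfrac{t}{r}\theta^k\hat\Omega_{jk}+M$ with $M$ a bounded matrix that multiplies $\psi_-$ \emph{directly}, so the minus structure is preserved term by term (equivalently: the $V_+\to V_-$ block of $[\hat\Omega_{0j},P^\theta_-]$ carries a factor $\sim|t-r|/r$, which is the smallness you would need but never exhibit). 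Your proposal does not identify any of this, so as written the induction fails for the $S^{1/2}$ bound. Two smaller remarks: the reconciliation is needed in the direction $\Gamma^J\partial^I(\psi_\pm)$ versus $(\Gamma^J\partial^I\psi)_\pm$, as you say, but your fallback for $S\sim T$ (``follows directly from the full $L^2$ control'') still requires bounding the commutators, whose coefficients degenerate like powers of $r^{-1}$ near the axis, so it is not ``direct''; and for the plus bound boundedness of the commutators is indeed enough, since only $T^{1/2}$ is claimed there.
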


\begin{proof}
From Lemma~\ref{lem:CTS} we have the bounds
   \begin{equation}
\|P^{\theta}_+ \Gamma^{\leq k} \psi\|_{L^2_{t,x}(C_{TS})} \lesssim T^\frac12 \|{\Gamma^{\leq k}}\psi\|_{X_T} ,   
\end{equation}
respectively
\begin{equation}
\|P_{-}^{\theta} \Gamma^{\leq k} \psi\|_{L^2_{t,x}(C_{TS})} \lesssim S^\frac12 \|\Gamma^{\leq k} \psi\|_{X_T}  .
\end{equation} 
The conclusion of the corollary would then immediately follow if we knew that the projectors
$P_{\pm}^{\theta}$ commute with our vector fields $\Gamma$. This is unfortunately not true, but we 
have the next best thing, namely that the commutators are perturbative. To see this it suffices to consider spatial derivatives and Lorentz boosts, as the angular vector fields are easily seen to commute with $P_{\pm}^{\theta}$. For regular derivatives we
compute the commutator
\[
Q^{\theta} :=[ \partial_x, P_+^{\theta}] .
%= O(T^{-1}) ,
\]
Since $P_+^{\theta}$ is smooth and zero-homogeneous in $x$ it follows that $Q^{\theta}$ is also smooth, and $-1$-homogeneous in $x$, and in particular it has pointwise size $ O(| x|^{-1})$ which in $C_{TS}$ means $ O(T^{-1})$.
For Lorentz boosts it is convenient 
to switch to the radial operator
\[
\Omega_{0r} := \theta^j \Omega_{0j},
\]
which together with the rotations 
generates all the others, with bounded coefficients, whose derivatives gain $T$ factors.
The advantage is that $\Omega_{0r}$ commutes 
with $P_{\pm}^{\theta}$, which immediately implies the conclusion of the corollary.
\end{proof}

Our next objective is to obtain pointwise bounds in $C^{\pm}_{TS}$ from the $L^2$ bounds.
For this we cannot just rely on vector fields, as these only span the directions tangent to hyperboloids. Instead, we also need to capture the transversal direction, which we do using the Dirac equation.
For this, it is convenient to write the Dirac equation in the polar frame, which has the form
\begin{equation}
-i \gamma^0 \partial_0 \psi - i \gamma^\theta \partial_r \psi - \frac{i}{r} \Omega \psi + \psi = F ,    
\end{equation}
where $\Omega$ captures the angular part.
We can also use the Lorentz boost to further 
simplify this as follows 
\begin{equation}
-i (- \frac{t}{r} \gamma^0 + \gamma^\theta) \partial_r \psi -\frac{i}{r} \Omega \psi + \psi = F .    
\end{equation}

Our vector field bounds in Corollary~\ref{c:CTS} allow us to include the $\Omega$ component part into $F$, so it suffices to consider the radial ordinary differential equation
\begin{equation}
- i (- \frac{t}{r} \gamma^0 + \gamma^\theta)\partial_r \psi + \psi = F_1 .
\end{equation}
Since we have already used one vector field, 
the bounds we have for $F_1$ have the form
\begin{equation}\label{reduced-source}
 \| \Gamma^{\leq 6} F_1\|_{L^2_{t,x}(C_{TS})} \lesssim T^{-\frac12}.   
\end{equation}

In the last equation we multiply by $\gamma^0$ and project, noting that $\gamma^0$ interchanges the projectors, and thus the corresponding subspaces,
\[
\gamma^0 P^\theta_{+} =  P^\theta_- \gamma^0. 
\]
We arrive at the coupled system
\begin{equation} \label{coupled}
\left\{
\begin{aligned}
&\frac{t-r}{r} \partial_r \psi_+ - i \gamma^0 \psi_- = (i\gamma^0 F_1)_+ := F_+\\
& \frac{t+r}{r} \partial_r \psi_- - i \gamma^0 \psi_+ =  (i\gamma^0 F_1)_- := F_-.
\end{aligned}
\right.
\end{equation}
This is hyperbolic inside the cone and elliptic outside. Apriori, for the entries above  
we have the estimates \eqref{psip-est},
\eqref{psim-est} and \eqref{reduced-source}, which we recall below:
\begin{equation}\label{system-bds}
\|\Gamma^{\leq 9} \psi_-\|_{L^2_{t,x}(C_{TS})} \lesssim S^\frac12, \quad 
\|\Gamma^{\leq 9} \psi_+\|_{L^2_{t,x}(C_{TS})} \lesssim T^\frac12, 
\quad \|\Gamma^{\leq 6} F_{\pm}\|_{L^2_{t,x}(C_{TS})} \lesssim T^{-\frac12}.
\end{equation}

To prove the pointwise bounds we need the estimates
\begin{align}
    \label{eqn:undiff}
    \| \Gamma^{\leq 6} \psi_{\pm} \|_{L^2_{t,x}(C_{TS})} \lesssim S T^{-\frac{1}{2}}.
\end{align}
 A direct application of~\eqref{coupled} and~\eqref{system-bds} yields the required estimate for $\Gamma^{\leq 6} \psi_{-}$, but only $\| \Gamma^{\leq 6} \psi_+ \|_{L^2_{t,x}(C_{TS})} \lesssim S^{\frac{1}{2}}$. To improve on that bound, we look at the first equation in~\eqref{coupled} for $\Gamma^{\leq 7} \psi_{-}$ and note that the corresponding right-hand side is given by
\begin{align*}
    i P^\theta_+ \gamma^0 \Gamma^{\leq 7} F - P^\theta_+ \gamma^0 \frac{1}{r} \Omega \Gamma^{\leq 7} \psi \sim i P^\theta_+ \gamma^0 \Gamma^{\leq 7} F - \gamma^0 \frac{1}{r} \Omega \Gamma^{\leq 7} \psi_{-}
\end{align*}
up to lower order commutator terms which are of size $T^{-\frac{1}{2}}$. For the first term we have $\| \Gamma^{\leq 7} F\|_{L^2(C_{TS})} \lesssim T^{-\frac{1}{2}}$, while for the second one, we get the following bound
\begin{align*}
    \|r^{-1} \Omega \Gamma^{\leq 7} \psi_{-}\|_{L^2_{t,x}(C_{TS})} &\lesssim \frac{1}{T} \|\partial \Omega \Gamma^{\leq 6} \psi_{-} \|_{L^2_{t,x}(C_{TS})} + T^{-\frac{1}{2}} \\ 
    &\lesssim \frac{1}{T} \|\partial^{\leq 2} \Omega \Gamma^{\leq 6} \psi_{-}\|_{L^2_{t,x}(C_{TS})}^{\frac{1}{2}} \|\Omega \Gamma^{\leq 6} \psi_{-} \|_{L^2_{t,x}(C_{TS})}^{\frac{1}{2}} + T^{-\frac{1}{2}} \\
    &\lesssim S^{\frac{1}{4}} T^{-\frac{1}{2}} \|T^{-1}\partial^{\leq 2} \Omega \Gamma^{\leq 6} \psi_{-}\|_{L^2_{t,x}(C_{TS})}^{\frac{1}{2}} +  T^{-\frac{1}{2}}\\
    &\lesssim S^{\frac{1}{4}} T^{-\frac{1}{2}} \|T^{-1} \Omega \Gamma^{\leq 8} \psi_{-}\|_{L^2_{t,x}(C_{TS})}^{\frac{1}{2}} + T^{-\frac{1}{2}}.
\end{align*}
Here we used interpolation as well as the bounds in ~\eqref{system-bds}. In getting our estimates we commuted $\Omega$ with regular derivatives $\partial$ which lead to  just regular derivatives of first order, which in turn could be bounded. Finally, we exploit that in $C_{TS}$ we can estimate $T^{-1} \Omega$ by first order derivatives, i.e.,
\begin{align*}
    \|T^{-1} \Omega \Gamma^{\leq 8} \psi_{-}\|_{L^2_{t,x}(C_{TS})}^{\frac{1}{2}} \lesssim \|\Gamma^{\leq 9} \psi_{-}\|_{L^2_{t,x}(C_{TS})}^{\frac{1}{2}} \lesssim S^{\frac{1}{4}}.
\end{align*}
System~\eqref{coupled} and the bounds~\eqref{system-bds} thus imply
\begin{align*}
    \| \Gamma^{\leq 7} \psi_{-} \|_{L^2_{t,x}(C_{TS})} \lesssim S T^{-\frac{1}{2}},
\end{align*}
and then
\begin{align*}
    \| \Gamma^{\leq 6} \psi_+ \|_{L^2_{t,x}(C_{TS})} \lesssim S T^{-\frac{1}{2}},
\end{align*}
finishing the proof of~\eqref{eqn:undiff}.

At this stage we are ready to consider separately the interior case $C_{TS}^+$ and the exterior case $C_{TS}^-$.

\bigskip

We first consider \textbf{ the interior case  $C_{TS}^+$}.
There we exploit the hyperbolic structure 
of the system \eqref{coupled} by introducing an approximately conserved energy density
\[
e := (t-r) |\psi_+|^2 + (t+r) |\psi_-|^2,
\]
where 
\[
|\partial_r e| \lesssim |\psi_+|^2 + 
|\psi_{-}|^2 + r |\psi_+| |F_{+}|  + r |\psi_-| |F_{-}|.
\]

For this density, employing~\eqref{eqn:undiff}, we first estimate its integral
\[
\int_{C_{TS}} e \, dx dt \lesssim S^2,
\]
and then the integral of its radial derivative,
\[
\int_{C_{TS}} | \partial_r e| \, dx dt \lesssim S,
\]
where we used~\eqref{eqn:undiff} and the bound~\eqref{system-bds} for $F_\pm$.
Now we think of $C_{TS}^+$ as being foliated by hyperboloids, with the transversal direction 
given by the radial direction. In the radial direction the set $C_{TS}^+$ has thickness $O(S)$,
so the  two estimates above allow us to bound the trace of $e$ on hyperboloids,
\[
\int_{H \cap C_{TS}^+} e \, d \sigma \lesssim S,
\]
or equivalently 
\begin{equation}
\label{eq:L2BoundHyperboloid}
\int_{H \cap C_{TS}^+} S |\psi_+|^2 + T |\psi_-|^2 \, d\sigma\lesssim S.
\end{equation}

 The same estimate applies to $\Omega^{\leq 2} \psi_{\pm}$. Therefore using Sobolev embeddings on  hyperboloid slices $H \cap C_{TS}^+$
we arrive at the pointwise bounds
\begin{equation}
    \label{eq:PointwiseBoundspsiplpsimin}
| \psi_{+}| \lesssim T^{-\frac32}, \qquad 
|\psi_-| \lesssim S^\frac12 T^{-2} \qquad \text{in } C_{TS}^+.
\end{equation}

\bigskip

Next we consider the case of \textbf{the exterior regions $C_{TS}^-$}.
To get a good elliptic bound we square the two equations in \eqref{coupled} with appropriate weights and a smooth cutoff $\chi(t-r)$ which selects a slight enlargement of $C_{TS}^-$,
\[
\int_{C_T} (t+r) \chi(t-r)|(t-r) \partial_r \psi_+ - i r \gamma^0 \psi_-|^2 \, dx dt  = \int_{C_T} (t+r) r^2 \chi(t-r)|F_+|^2 \, dx dt,
\]
respectively
\[
\int_{C_T} (r-t) \chi(t-r)|(t+r) \partial_r \psi_- - i r \gamma^0 \psi_+|^2 \, dx dt  = \int_{C_T} (r-t) r^2 \chi(t-r) |F_-|^2 \, dx dt .
\]
Now we add them up, expanding the squares and integrating by parts to cancel the cross terms.
We note that the weights were matched exactly so that we can achieve this last cancellation.
We obtain
\[
\begin{aligned}
\int_{C_T} \chi(t-r) (
S^2 T |\partial_r \psi_-|^2
+ T^3 |\psi_+|^2+ 
ST^2 |\partial_r \psi_+|^2
+ ST^2 |\psi_-|^2 )\, dx dt \\
\qquad \qquad \lesssim
\int_{2C_{TS}} T^2 |\psi|^2 + T^3 (|F_+|^2 +|F_-|^2) \, dx dt.
\end{aligned}
\]
As $|\psi|^2 = |\psi_+|^2 + |\psi_{-}|^2$, the coefficients of $\psi_+$ and $\psi_{-}$ on the left are larger than the one on the right by at least a factor of $S$. So reiterating with a slightly wider bump $\chi$ 
we arrive at 
\[
\begin{aligned}
\int_{C_T} \chi(t-r) (
S^2 T |\partial_r \psi_-|^2
+ T^3 |\psi_+|^2+ 
ST^2 |\partial_r \psi_+|^2
+ ST^2 |\psi_-|^2 ) \, dx dt
\\
\qquad \qquad \lesssim
\int_{3C_{TS}} S^{-1} T^2 |\psi|^2 + T^3 (|F_+|^2
+|F_-|^2 ) \, dx dt.
\end{aligned}
\]
We can use \eqref{eqn:undiff} to estimate the right hand side, which yields
\begin{equation}\label{eqn:elliptic}
\int_{C_{TS}}  \left(
\frac{S^2}{T} |\partial_r \psi_-|^2
+ T  |\psi_+|^2+ 
 S |\partial_r \psi_+|^2
+ S |\psi_-|^2 \right) \,dx dt  \lesssim 1.
\end{equation}

Now we are able to obtain $L^2$ bounds on the hyperboloids, by using an interpolation inequality of the form
\begin{equation}
\label{interp-H}
\| \psi_-\|_{L^2(H \cap C_{TS})}^2 
\lesssim \| \psi_-\|_{L^2_{t,x}( C_{TS})} \| \partial_r \psi_-\|_{L^2_{t,x}(C_{TS})} + S^{-1}  \| \psi_-\|_{L^2_{t,x}( C_{TS})}^2,
\end{equation}
and similarly for $\psi_+$, where 
the $S$ factor represents the thickness of $C_{TS}$ in the $r$ direction. We combine this interpolation inequality with  \eqref{eqn:elliptic}, and also 
with \eqref{eqn:undiff} in the case of $\psi_-$
in order to get the following bounds on the hyperboloids,
\[
\| \psi_-\|_{L^2(H \cap C_{TS})}^2 
\lesssim \min\left\{\frac{S}{T^\frac12}, \frac{1}{S^\frac12}\right\}  \frac{T^\frac12}{S} = \min\left\{ 1, \frac{T^\frac12}{S^\frac32}\right\},
\]
where we have better decay as we get farther 
away from the cone. Similarly,
\[
\| \psi_+\|_{L^2(H \cap C_{TS}^-)}^2 
\lesssim \| \psi_+\|_{L^2_{t,x}( C_{TS}^-)} \| \partial_r \psi_+\|_{L^2_{t,x}(C_{TS}^-)} \lesssim  \frac{1}{S^\frac12 T^\frac12}  \frac{1}{S} 
\]
which is even better away from the cone.

In view of our starting point in \eqref{system-bds}, we similarly obtain the same bounds for 
$\Omega^{\leq 2} \psi_{\pm}$. Then we can again apply Sobolev embeddings on the hyperboloids
to arrive at the pointwise estimates
\begin{equation}
\label{eq:PointwBoundCTSmin}
|\psi_-|  \lesssim T^{-\frac32}   \min\left\{ 1, \frac{T^\frac12}{S^\frac32}\right\}, \qquad 
|\psi_+| \lesssim T^{-\frac32} \frac{1}{S^\frac12 T^\frac12}  \frac{1}{S} \qquad \text{in } C_{TS}^-,
\end{equation}
which suffices for \eqref{eq:psi-uniform}.

Finally, we need to consider the exterior region $C_{T}^{ext}$. The argument is similar to the one 
in $C_{TS}^-$, but simpler since we no longer 
need to differentiate between $\psi_+$ and $\psi_-$ and between the factors $t+r$ and $t-r$. For brevity we leave the detailed computation to the interested reader. This concludes the proof of the 
uniform bounds for $\psi$ in the theorem.

\bigskip

\subsection{The bounds for the wave equation.}
To prove the pointwise bounds for the 
wave equation in \eqref{eq:A-uniform},
\eqref{eq:A-uniform2} and \eqref{eq:improved TA} we start by localizing in frequency,
decomposing dyadically 
\[
A = \sum A_\lambda.
\]

The $X_T$ vector field bounds still hold for all $A_\lambda$, with appropriate frequency factors. Then we prove pointwise bounds for each component 
separately in $C_{T}^+$, and simply add them up. 

We begin our analysis with a brief discussion of energy estimates, for which  we write the energy
in a density-flux form,
\begin{equation}\label{energy-A}
\partial_t T^{00} + \partial_j T^{0j} = G \cdot \partial_t A,   
\end{equation}
where the corresponding components of the energy momentum tensor are
\[
T^{00} := \frac12(|\partial_t A|^2 + |\nabla_x A|^2), \qquad T^{0j} := - \partial_0 A \partial_j A.
\]
We first integrate it in the region, which we denote by $D$, between the time-slice $t = T$ and a hyperboloid $H$. Then we 
obtain an energy bound on the hyperboloid,
\[
\begin{aligned}
E_H(A):=& \int_{H\cap C_T} \nu_0 T^{00} + \nu_j T^{0j}\, d\sigma  +\int_{\left\{t=4T\right\}\setminus \mbox{cup}} T^{00}\, dx\\
= & \int_{t=T} T^{00} dx + 
\iint_D G \cdot \partial_t A \,dx dt,
\end{aligned}
\]
which yields the bound
\begin{equation}
E_H(A) \lesssim \|A\|_{X_T}^2.    
\end{equation}
Here the energy on $H$ can be written as
\[
E_H(A):= \int_{H\cap C_T} \frac12\nu_0 (|\partial_t A|^2+|\nabla_x A|^2) - |\nu_x|  \theta \cdot \nabla_x A \partial_t A \, d\sigma ,
\]
which we can rewrite 
as 
\[
E_H(A):= \int_{H\cap C_T} (\nu_0 - |\nu_x|) |(\partial_t - \partial_r) A|^2
+
(\nu_0 + |\nu_x|) (|(\partial_t + \partial_r) A|^2+ |\slash\!\!\!\!\nabla A|^2) \, d\sigma,
\]
where the last term represents derivatives in angular directions. Here the two coefficients have size
\[
\nu_0 + |\nu_x| \approx 1, \qquad \nu_0 - |\nu_x|
\approx \frac{|t-r|}{t+r}.
\]

Inside $C_T^{int}$ both coefficients have size one, so we control the full $L^2$ norm of $\nabla A$ on $H$. This immediately allows us to bound $\nabla A$ in the interior region,
\[
\begin{aligned}
\| \nabla A\|_{L^\infty(C_T^{int})} 
\lesssim \sup_{H} \| \nabla A\|_{L^\infty(H \cap C_T^{int})} \lesssim 
T^{-\frac32} \sup_H  \| \Omega^{\leq 2} \nabla A\|_{L^2(H \cap C_T^{int})}. 
\end{aligned}
\]
We can apply this separately to each $A_\lambda$
noting that 
\[
\| \Omega^{\leq 2} \nabla A_\lambda\|_{L^2(H \cap C_T^{int})} \lesssim \lambda^{\frac12} 
\| \Gamma^{\leq 6} A_\lambda\|_{X_T} \lesssim \lambda^{-\frac52} 
\| \Gamma^{\leq 9} A_\lambda\|_{X_T}.
\]
As we approach the cone, the same argument can be applied to tangential derivatives but not to normal ones. Outside the cone this cannot be done at all. Instead, we prove $L^2$ bounds in $C_{TS}$, both inside and outside the cone:

\begin{lem}\label{lem:CTS-A}
We have the following estimates:
\begin{equation}
\| \nabla  A\|_{L^2_{t,x}(C_{TS})} \lesssim T^\frac12 (\|\nabla A\|_{L^\infty_t L^2_x} + \|\Box A\|_{L^1_tL^2_x}) ],   
\end{equation}
respectively
\begin{equation}
\| \mathcal T A\|_{L^2_{t,x}(C_{TS})} \lesssim S^\frac12  (\|\nabla A\|_{L^\infty_t L^2_x} + \|\Box A\|_{L^1_tL^2_x}). 
\end{equation}
\end{lem}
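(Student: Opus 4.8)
The two estimates are the wave counterparts of the bounds for $\psi_{\pm}$ in Lemma~\ref{lem:CTS}, and the plan is to reproduce that argument at the level of the energy--momentum tensor $T^{\mu\nu}$. A useful preliminary observation is that, by \eqref{cts}, $C_{TS}$ is contained in the flat slab $\{T\le t\le 2T\}$, on which $C_T$ coincides with $[T,2T]\times\R^3$; hence the whole analysis can be carried out on flat time slices and no hyperboloid boundary terms are needed.

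For the first bound I would simply use H\"older's inequality in time: since $C_{TS}$ has time--width $\lesssim T$,
\[
\|\nabla A\|_{L^2_{t,x}(C_{TS})}\le \|\nabla A\|_{L^2_{t,x}([T,2T]\times\R^3)}\lesssim T^{1/2}\,\|\nabla A\|_{L^\infty_t L^2_x},
\]
and the term $\|\Box A\|_{L^1_tL^2_x}$ is irrelevant here, being included on the right only to keep the statement uniform with the second bound.

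For the second bound the plan is to run a weighted energy identity with a weight $\chi(t-r)$, exactly as for $\psi_-$ in Lemma~\ref{lem:CTS}. Multiply the density--flux relation \eqref{energy-A} by $\chi(t-r)$, where $\chi$ is bounded, nonnegative and nonincreasing with $\chi'\approx -S^{-1}$ on the band $[S,2S]$ for $C_{TS}^+$, respectively $[-2S,-S]$ for $C_{TS}^-$, and integrate over $[T,2T]\times\R^3$. Using $\partial_j\chi(t-r)=-\chi'(t-r)\theta_j$, the bulk contribution is $\int \chi'(t-r)\,(T^{00}-\theta_j T^{0j})\,dx\,dt$, and $T^{00}-\theta_j T^{0j}$ equals the positive--definite quadratic form $\tfrac12|(\partial_t-\partial_r)A|^2+\tfrac12|\slash\!\!\!\!\nabla A|^2$, i.e.\ (up to a constant) $|\mathcal T A|^2$ — the same combination of null and angular derivatives that carries the nondegenerate coefficient $\nu_0+|\nu_x|\approx 1$ in the hyperboloid energy computed above. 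The boundary terms on $\{t=T\}$ and $\{t=2T\}$ are bounded by $\|\nabla A\|_{L^\infty_tL^2_x}^2$ (only $\chi$ bounded is used), while the source pairing $\int 2\chi(t-r)\,G\cdot\partial_t A$ is bounded by $\|\Box A\|_{L^1_tL^2_x}\|\nabla A\|_{L^\infty_tL^2_x}$ via H\"older in time. Since $\chi$ is monotone, this yields
\[
S^{-1}\int_{C_{TS}}|\mathcal T A|^2\,dx\,dt\lesssim \|\nabla A\|_{L^\infty_tL^2_x}^2+\|\Box A\|_{L^1_tL^2_x}\|\nabla A\|_{L^\infty_tL^2_x},
\]
and taking square roots (after Young's inequality) gives the claimed $S^{1/2}$ bound.

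The argument is essentially routine. The only point deserving care — the same one already present in the Dirac case — is to verify that the combination of derivatives singled out by contracting the energy flux with $\theta$ is exactly the good family $\mathcal T A$, and that $\chi$ is chosen monotone so that the bulk term has a favorable sign and $|\chi'|\approx S^{-1}$ on a band actually covering $C_{TS}^\pm$. Finally, as in Lemma~\ref{lem:CTS} and its corollary, exactly the same weighted identity applies to $\Gamma^{\leq k}A$ in place of $A$, which is the form in which the estimate will be used in the proof of Theorem~\ref{t:KS}.
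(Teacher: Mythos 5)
Your proposal follows the paper's own route exactly: the first bound by H\"older in time on the slab (the $\Box A$ term is indeed not needed there), and the second by integrating the density--flux identity \eqref{energy-A} against a monotone radial weight $\chi(t-r)$ with $|\chi'|\approx S^{-1}$ on the band defining $C_{TS}^{\pm}$, with boundary terms bounded by $\|\nabla A\|_{L^\infty_tL^2_x}^2$ and the source pairing by $\|\Box A\|_{L^1_tL^2_x}\|\nabla A\|_{L^\infty_tL^2_x}$. Working on the flat slab $[T,2T]\times\R^3$ is a legitimate simplification, since $C^{\pm}_{TS}\subset\{T\le t\le 2T\}$ and the right-hand norms are slab norms (after the extension of Lemma~\ref{lem:Extension}).

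One sign in your bulk computation needs fixing. The flux that actually makes the density--flux identity $\partial_t T^{00}+\partial_j T^{0j}=G\,\partial_t A$ true (with $T^{00}=\tfrac12(|\partial_tA|^2+|\nabla_xA|^2)$) is $T^{0j}=-\partial_tA\,\partial_jA$; the paper's displayed $T^{0j}=\partial_0A\,\partial_jA$ carries a sign typo, as one sees by checking the divergence directly. With the correct flux, the weighted bulk term is
\begin{equation*}
-\int \chi'(t-r)\,\bigl(T^{00}-\theta_jT^{0j}\bigr)\,dx\,dt,
\qquad
T^{00}-\theta_jT^{0j}=\tfrac12\,|(\partial_t+\partial_r)A|^2+\tfrac12\,|\slash\!\!\!\!\nabla A|^2,
\end{equation*}
i.e.\ exactly the tangential (good) derivatives $\mathcal T A$, consistent with your own remark that this is the combination carrying the nondegenerate coefficient $\nu_0+|\nu_x|\approx 1$ in the hyperboloid energy. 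As written, your quadratic form $\tfrac12|(\partial_t-\partial_r)A|^2+\tfrac12|\slash\!\!\!\!\nabla A|^2$ contains the bad derivative and is not $|\mathcal T A|^2$; moreover, had the literal $T^{0j}=\partial_tA\,\partial_jA$ been used, the identity \eqref{energy-A} itself would fail, so the boundary and source bookkeeping would not close. This is a sign slip inherited from the paper's convention rather than a gap in the method: once corrected, your argument is the paper's proof.
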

\begin{proof}
The first bound is a direct consequence of H\"older's inequality in time. The second one
is obtained exactly as in the Dirac case by integrating the density-flux relation \eqref{energy-A} against a radial weight $\chi(t-r)$.    
\end{proof}

Similar bounds hold for $A_\lambda$, 
as 
\[
\|\nabla A_{\lambda}\|_{L^\infty_t L^2_x} + \|\Box A_\lambda\|_{L^1_tL^2_x}
\lesssim \lambda^{\frac12} \| A_\lambda\|_{X_T},
\]
and even for vector fields applied to $A$,
for which we can write
\begin{equation}
\| \Gamma^{\leq 9} \nabla  A_\lambda\|_{L^2_{t,x}(C_{TS})} \lesssim T^\frac12 \lambda^{\frac12} ,  
\end{equation}
respectively
\begin{equation}
\| \Gamma^{\leq 9}\mathcal T A_\lambda\|_{L^2_{t,x}(C_{TS})} \lesssim S^\frac12  \lambda^{\frac12}.
\end{equation}
We first address the pointwise bounds for $\mathcal T A_\lambda$.
We observe that $\partial_t + \partial_r = \frac{1}{r} \Omega_{0r} - \frac{t-r}{r} \partial_r$ with $\Omega_{0r} = \frac{x_i}{r} \Omega_{0i}$. Using the pointwise estimate which we show for $\partial_r A$ in~\eqref{dr4A} below, we obtain $\|\frac{t-r}{r} \partial_r A\|_{L^\infty(H \cap C_{TS})} \lesssim S T^{-1} S^{-\frac{1}{2}} T^{-1} \lesssim T^{-\frac{3}{2}}$. For the remaining components $\frac{1}{r} \Omega$ of the tangential derivatives, we use
\begin{equation}
\label{eq:CommTangentialDerivatives}
\partial_r (r^{-1} \Omega A_\lambda) = r^{-1} \partial_r \Omega A_\lambda - r^{-2} \Omega A_\lambda.
\end{equation}
%Then
Using twice Lemma~\ref{lem:CTS-A},
we obtain
\[
\| \Omega^{\leq 2} \mathcal T A_\lambda\|_{L^2(C_{TS})} \lesssim  S^\frac12  \min \left\{
\lambda^{-\frac52}, \lambda^{\frac{1}{2}}\right\}, \qquad \|\partial_r \Omega^{\leq 2}  \mathcal T A_\lambda\|_{L^2(C_{TS})} \lesssim 
T^{-\frac12} \lambda^\frac12,
\]
where interpolating we get the $L^2$ bound on hyperboloids
\begin{equation}
 \| \Omega^{\leq 2} \mathcal T A_\lambda\|_{L^2(H\cap C_{TS})} \lesssim  \min \left\{ \lambda^{\frac{1}{2}}, \lambda^{-1} \right\},   
\end{equation}
and conclude using Sobolev embeddings on hyperboloids to obtain
\begin{equation}
 \|  \mathcal T A_\lambda\|_{L^\infty(H\cap C_{TS})} \lesssim T^{-\frac32} \min \left\{ \lambda^{\frac{1}{2}}, \lambda^{-1} \right\}.  
\end{equation}

We now turn our attention to the bounds for $\nabla A$, where to get pointwise bounds in $C_{TS}$ from the $L^2$ bounds we cannot just rely on vector fields, as these only span the directions tangent to hyperboloids. Instead, we also need to capture the transversal direction, which we do using the wave equation.
For this, it is convenient to write the wave equation in the polar frame, which has the form
\begin{equation}
\left(\partial_t^2 - \partial_r^2 -\frac{2}{r} \partial_r + \frac{1}{r^2} \Omega^2\right) A_\lambda  = G_\lambda.
\end{equation}
Using also the Lorentz boosts, we can rewrite this as 
\begin{equation}
\frac{(t^2-r^2)}{r^2} \partial_r^2 A_\lambda + \frac{1}{r} \Omega \nabla A_\lambda + \frac{1}{r} \nabla A_\lambda    = G_\lambda.
\end{equation}

Using Lemma~\ref{lem:CTS-A}, we can include 
all but the  first term on the left into the right hand side, arriving at an equation of the 
form
\begin{equation}\label{dr2}
   \frac{(t^2-r^2)}{r^2} \partial_r^2 A_\lambda = G_\lambda^1, 
\end{equation}
where $\partial_r A_\lambda$ and $G_\lambda^1$ satisfy
\begin{equation}\label{dr3}
   \| \Omega^{\leq 2} \partial_r A_\lambda\|_{L^2_{t,x}(C_{TS})}
   \lesssim T^{\frac12}  \min \left\{
\lambda^{-\frac52}, \lambda^{\frac{1}{2}}\right\},\qquad 
   \|\Omega^{\leq 2} G_\lambda^1\|_{L^{\frac32}_{t,x}(C_{TS})} \lesssim T^{-\frac13} + S^{\frac{1}{6}}\lambda^{\frac{1}{2}}.
\end{equation}
Writing $\lambda_* := \min\{\lambda^{-\frac{5}{2}}, \lambda^{\frac{1}{2}}\}$ and interpolating as in \eqref{interp-H} in order to bound the traces on hyperboloids, we obtain
\begin{align*}
 \| \Omega^{\leq 2} \partial_r A_\lambda\|_{L^{\frac{5}{3}}(H \cap C_{TS})}^{\frac{5}{3}}
   &\lesssim \| \Omega^{\leq 2} \partial_r A_\lambda\|_{L^2_{t,x}(C_{TS})}^{\frac{2}{3}} \| \Omega^{\leq 2} \partial_r^2 A_\lambda\|_{L^{\frac{3}{2}}_{t,x}(C_{TS})} \\
   &\qquad + S^{-1} (ST^3)^{\frac{1}{6}} \| \Omega^{\leq 2} \partial_r A_\lambda\|_{L^2_{t,x}(C_{TS})}^{\frac{5}{3}} \\
   &\lesssim (T^{\frac{1}{2}} \lambda_*)^{\frac{2}{3}} (S^{-1} T (T^{-\frac{1}{3}} + S^{\frac{1}{6}} \lambda^{\frac{1}{2}})) + S^{-\frac{5}{6}} T^{\frac{1}{2}} (T^{\frac{1}{2}} \lambda_* )^{\frac{5}{3}}
   \\
   &\lesssim  S^{-1} T \lambda_*^{\frac{2}{3}}  + S^{-\frac{5}{6}} T^{\frac{4}{3}} (\lambda_*^{\frac{2}{3}} \lambda^{\frac{1}{2}} + \lambda_*^{\frac{5}{3}})  \lesssim S^{-\frac{5}{6}} T^{\frac{4}{3}} \min\{\lambda^{-\frac{7}{6}}, \lambda^{\frac{5}{6}} \}.
\end{align*}
Finally, we apply Sobolev embeddings on hyperboloids to get
\begin{equation}
 \label{dr4A}
 \| \partial_r A\|_{L^\infty(H\cap C_{TS})} \lesssim T^{-\frac{9}{5}} \| \Omega^{\leq 2} \partial_r A\|_{L^{\frac{5}{3}}(H \cap C_{TS})} \lesssim T^{-\frac{9}{5}} S^{-\frac{1}{2}} T^{\frac{4}{5}} \lesssim S^{-\frac{1}{2}} T^{-1}.
\end{equation}

\subsection{An improved bound for the wave equation inside the cone}

Here we consider improved bounds for $\mathcal T A$ inside the cone. Our goal will be to prove the following
bound inside the cone
\begin{equation}
|\mathcal T A | \lesssim \frac{1}{\langle t \rangle^\frac32  \langle t-r \rangle^\frac12}.
\end{equation}
To accomplish this, we exploit energy  estimates on hyperboloids.
Precisely, we have 
\begin{equation}
\|\Omega^{\leq 3} \nabla A_\lambda\|_{L^2(H \cap C_{TS})}
\lesssim \frac{T^\frac12}{S^\frac12} \lambda^\frac12,
\end{equation}
which by Sobolev embeddings gives
\begin{equation}
\| \Omega \nabla A_\lambda \|_{L^\infty(H \cap C_{TS})}
\lesssim \frac{1}{T S^\frac12} \lambda^\frac12 .
\end{equation}
Commuting  $\Omega$ with $\nabla$  gives 
\begin{equation}
\| \nabla \mathcal T A_\lambda \|_{L^\infty(H \cap C_{TS})}
\lesssim \frac{1}{T^2 S^\frac12} \lambda^\frac12 , 
\end{equation}
and finally 
\begin{equation}\label{DTA}
|\nabla \mathcal T A_\lambda |
\lesssim \frac{\lambda^\frac12 }{\langle t \rangle^2  \langle t-r \rangle^\frac12} ,
\end{equation}
which is valid inside the cone.

On the other hand, our prior bound was 
\begin{equation}\label{TA}
| \mathcal T A_\lambda |
\lesssim \frac{\min\{\lambda^\frac12,\lambda^{-\frac12}\} }{\langle t \rangle^{\frac32}} .
\end{equation}
We seek to combine the last two bounds 
using the localization at frequency $\lambda$.
Without loss of generality we will assume $\lambda \lesssim 1$, as the bound for $\lambda > 1$ is similar to 
$\lambda = 1$, but better.

Using the localization at frequency $\lambda$, \eqref{DTA} would naively  imply
\begin{equation}
| \mathcal T A_\lambda |
\lesssim \frac{\lambda^{-\frac12} }{\langle t \rangle^2  \langle t-r \rangle^\frac12} .
\end{equation}
However we have to be careful that we have \eqref{DTA}
only inside the cone. So this is useful only 
in the region where $S \gg \lambda^{-1}$, while outside 
we have to rely on \eqref{TA}. Precisely, we 
consider a cutoff function $\chi$ which selects the region $t-r > \lambda^{-1}$, so that 
$\nabla \chi$ has size $\lambda$ and support 
in the region $t-r \approx \lambda^{-1}$. Then applying an inverse derivative at frequency $\lambda$ we write
\[
\mathcal T A_\lambda = \lambda^{-1} K_\lambda \ast \nabla \mathcal T A_\lambda ,
\]
where $K_\lambda$ is a regularizing kernel on the $\lambda^{-1}$ scale. Using the cutoff $\chi$
we write this as 
\[
\begin{aligned}
\mathcal T A_\lambda =& \lambda^{-1} K_\lambda \ast \nabla \chi \mathcal T A_\lambda
+ \lambda^{-1} K_\lambda \ast \nabla (1-\chi) \mathcal T A_\lambda\\
= &\lambda^{-1} K_\lambda \ast \nabla \chi \mathcal T A_\lambda
+ \lambda^{-1} \nabla K_\lambda \ast  (1-\chi) \mathcal T A_\lambda\\
=& 
\lambda^{-1} K_\lambda \ast  \chi \nabla \mathcal T A_\lambda
+
\lambda^{-1} K_\lambda \ast (\nabla \chi) \mathcal T A_\lambda
+ \lambda^{-1} \nabla K_\lambda \ast  (1-\chi) \mathcal T A_\lambda .
\end{aligned}
\]
We evaluate this in the interior region $t-r > \lambda^{-1}$.
For the first term we use \eqref{DTA} to obtain
\[
\lesssim \frac{\lambda^{-\frac12}}{\langle t \rangle^2  \langle t-r \rangle^\frac12} .
\]
For the other two terms we use \eqref{TA}, which gives 
\[
\lesssim \frac{\min\{\lambda^\frac12,\lambda^{-\frac12}\} }{\langle t \rangle^{\frac32}} 
\left( \frac{\lambda^{-1}}{\langle t-r \rangle}\right)^N, 
\]
which decays rapidly away from $t-r \approx \lambda^{-1}$. We have proved that 
\[
|\mathcal T A_\lambda| 
\lesssim \frac{\lambda^{-\frac12}}{\langle t \rangle^2  \langle t-r \rangle^\frac12} + \frac{\min\{\lambda^\frac12,\lambda^{-\frac12}\} }{\langle t \rangle^{\frac32}} 
\left( \frac{\lambda^{-1}}{\langle t-r \rangle}\right)^N, \quad \mbox{when } t-r>\lambda^{-1}.
\]

We combine this with \eqref{TA} in the region $ 0 < t-r <\lambda^{-1}$. Considering all cases 
and summing up with respect to $\lambda < 1$
we obtain 
\[
|\mathcal T A_\lambda| \lesssim \frac{1}{\langle t \rangle^\frac32  \langle t-r \rangle^\frac12}.
\]

\end{proof}

\section{Asymptotic analysis for \protect{$\psi$}}\label{s:wp} 

As noted earlier, for our
main bootstrap argument we assume that our solution $(\psi,A)$ satisfies the bootstrap assumption \eqref{eq:boot-psi-A}
in a region $C_{<T}$, and then we seek to improve the constant in the same region.
The goal of this section is to provide 
a more accurate asymptotic analysis for $\psi$, which in particular will allow 
us to close the $\psi$ component of the 
bootstrap, as well as to prove the pointwise bound for $\psi$ in \eqref{thm-point} in Theorem~\ref{thm:gwp-easy}.

For motivation, we begin by observing that combining the energy estimates for $\psi$ in Section~\ref{s:vf} with the Klainerman-Sobolev inequalities in Section~\ref{s:ks} implies a 
$t^{-\frac32+ C\epsilon}$ decay bound for $\psi$
inside the cone, which is not sufficient in order to close the bootstrap bound. 
This is what makes the analysis in this section necessary.

We will study the asymptotic behavior of the spinor $\psi$ via the method of testing by wave packets, introduced in~\cite{IT1, IT2} in the context of the nonlinear Schr\"odinger equation and water waves, respectively. The idea is to capture the asymptotic profile $\gamma(t,v)$ of the solution $\psi$ by testing it with a wave packet, i.e. an approximate solution of the corresponding linear problem, which travels along the ray $x = v t$. Here, $v \in B(0,1)$ spans the range of allowed group velocities for the massive Dirac flow.

\subsection{A heuristic derivation of the asymptotic equation}
Before we delve into the wave packet analysis, 
we begin with a heuristic computation of the asymptotic equation, which will serve as a guide for the more precise analysis later on.

Our starting point is the Klein-Gordon equation, 
which is satisfied by solutions to the linear homogeneous Dirac flow. The fundamental solution 
has $t^{-\frac32}$ decay inside the cone, and two phases
of the form 
\[
\phi_{\pm} = \pm \sqrt{t^2-x^2}
\]
associated to the two half-waves. Hence, if a function $u$ solves the Klein-Gordon equation 
\[
(\Box + 1) u = 0,
\]
and has smooth and localized initial data, then it will have an asymptotic expansion of the form
\[
u(t,x) \approx \sum_{\pm} (t^2-x^2)^{-\frac34} e^{i\phi_{\pm}} \rho^{\pm}(v), \qquad v = x/t, 
\]
with  nice functions $\rho^{\pm}$ with support inside the unit ball and rapid decay at the boundary. Here for the amplitude we have preferred the Lorentz invariant factor $t^2-x^2$ over the power of $t$,
which will be compensated by the decay of $\rho$ at the
boundary of the ball.

Now we turn our attention to the linear Dirac equation, beginning with the homogeneous one
\[
- i \gamma^\alpha \partial_\alpha \psi + \psi = 0.
\]
Assuming nice and localized initial data, 
its solutions will also have an asymptotic expansion as in the Klein-Gordon case above,
\begin{equation}\label{Dirac-asymptotic}
\psi(t,x) \approx \sum_{\pm} (t^2-x^2)^{-\frac34} e^{i\phi_{\pm}} \rho^{\pm}(v), \qquad v = x/t. 
\end{equation}
But unlike the Klein-Gordon case where the asymptotic 
profiles $\rho^{\pm}$ are independent complex valued
functions tied to the initial position and velocity,
here we have instead a first order system, which will 
yield algebraic constraints on $\rho^{\pm}$. To see
how this works, we apply the Dirac operator to the 
function $\psi$ in \eqref{Dirac-asymptotic}. This 
will be a consistent expansion if the error has at least $t^{-1-}$ better decay at infinity.  We organize
the terms by the decay rate at infinity:
\[
\begin{aligned}
  (-i \gamma^\alpha \partial_\alpha + 1) \psi   = & \ 
(t^2-x^2)^{-\frac34} e^{i\phi_{\pm}} (1 \mp \frac{x_\alpha}{\sqrt{t^2-x^2}} \gamma^\alpha)   \rho^{\pm}(v) 
\\
& \ - i \frac32 (t^2-x^2)^{-\frac74} e^{i\phi_{\pm}} {x_\alpha} \gamma^\alpha \rho^{\pm}(v) 
\\
& \ - i  (t^2-x^2)^{-\frac34} e^{i\phi_{\pm}} \gamma^\alpha \partial_\alpha \rho^{\pm}(v)
\\
:= & \ R_1^\pm + R_2^\pm + R_3^\pm,
\end{aligned}
\]
where the three terms $R_1$, $R_2$, $R_3$ have decay 
$t^{-\frac32}$, $t^{-\frac52}$, $t^{-\frac52}$, neither 
of which is an acceptable error. The first priority here
is to cancel the $R_1$ term, which involves the  projectors  $P^{\pm}_v$ defined in \eqref{def proj Pv}, which for convenience we recall here
\[
2 P^{\pm}_v = 1 \mp \frac{x_\alpha}{\sqrt{t^2-x^2}} \gamma^\alpha .
\]
Our cancellation condition thus has the form
\begin{equation}\label{cancel}
P^{\pm}_v \rho^{\pm}(v) = 0.    
\end{equation}

Assuming the cancellation condition \eqref{cancel}, 
the two asymptotic profiles $\rho^{\pm}(v)$ are restricted to the respective subspaces $V^{\pm}_{v}$.
We can also simplify the expression 
\[
R_2^{\pm} = \mp  i\frac32 (t^2-x^2)^{-\frac54} e^{i\phi_{\pm}} \rho^{\pm}(v),
\]
which is then seen to belong to $V^{\pm}_v$.

Finally we consider $R_3^{\pm}$, which apriori has  
both a $V^\pm_v$ and a $V^{\mp}_v$ component. The $V^\mp_v$
component is less important as it is nonresonant 
and can be eliminated with a lower order $t^{-\frac52}$
correction to $\psi$. But the $V^{\pm}_v$ component 
has to cancel $R_2^\pm$. Hence we compute 
\begin{equation}\label{R3pm}
\begin{aligned}
P^\mp_v R_3^\pm = & \ - i  (t^2-x^2)^{-\frac34} e^{i\phi_{\pm}} P^\mp_v \gamma^\alpha \partial_\alpha \rho^{\pm}(v)
\\
= & \ - i  (t^2-x^2)^{-\frac34} e^{i\phi_{\pm}}\gamma^\alpha P^{\pm}_v  \partial_\alpha \rho^{\pm}(v)
\pm i  (t^2-x^2)^{-\frac54} e^{i\phi_{\pm}} x^\alpha  \partial_\alpha \rho^{\pm}(v)
\\
= & \  i  (t^2-x^2)^{-\frac34} e^{i\phi_{\pm}}\gamma^\alpha  (\partial_\alpha P^{\pm}_v)  
\rho^{\pm}(v),
\end{aligned}
\end{equation}
where we have used the twisted commutation relation 
\begin{equation}\label{twist-com}
P^{\mp}_v \gamma^\alpha = \gamma^\alpha P^{\pm}_v \mp 
\frac{x^\alpha}{\sqrt{t^2-x^2}} I_4.
\end{equation}
Finally we compute 
\begin{equation}\label{dP-pm}
\gamma^\alpha  \partial_\alpha P^{\pm}_v = \pm
\frac12 \gamma^\alpha \gamma^\beta \left(\frac{g_{\alpha\beta}}{\sqrt{t^2-x^2}}
+ \frac{x_\alpha x_\beta}{(t^2-x^2)^{\frac32}}\right)
= \pm\frac32 \frac{1}{\sqrt{t^2-x^2}} I_4,
\end{equation}
which shows that we indeed have the cancellation
\[
R_2^\pm+ P^\mp_v R_3^\pm= 0.
\]
This concludes our justification of the leading order 
asymptotic expansion \eqref{Dirac-asymptotic}, 
under the necessary and sufficient condition \eqref{cancel}.

Now we turn our attention to the magnetic Dirac equation,
\[
  - i \gamma^\alpha \partial_\alpha \psi + \psi = A_\alpha \gamma^\alpha \psi
\]
with a magnetic potential $A$ which has $t^{-1}$ decay 
at infinity. This decay rate guarantees that the contribution of $A$ cannot be seen as perturbative 
for $\psi$ as in \eqref{Dirac-asymptotic}, which implies that the expansion \eqref{Dirac-asymptotic}
cannot hold at the leading order; in other words, if $A$ has just $t^{-1}$ decay along rays then classical scattering cannot hold for our solutions. 
Instead, we look for a corrected asymptotic expansion of the form
\begin{equation}\label{Dirac-asymptotic-nl}
\psi(t,x) \approx \sum_{\pm} (t^2-x^2)^{-\frac34} e^{i\phi_{\pm}} \rho^{\pm}(t,v), \qquad v = x/t,
\end{equation}
where we allow for a slow remodulation of $\rho^{\pm}$
along rays. For this we can repeat the computation above, with two differences:

i) We need to add the contribution of $A$. This is 
\[
(t^2-x^2)^{-\frac34} e^{i\phi_{\pm}} A_\alpha \gamma^\alpha \rho^{\pm}(t,v).
\]

ii) The scaling derivative of $\rho^{\pm}$ no longer vanishes. This arises in the $R_3^{\pm}$
computation in \eqref{R3pm}, precisely in the term
\[
  \pm i  (t^2-x^2)^{-\frac54} e^{i\phi_{\pm}} x^\alpha  \partial_\alpha \rho^{\pm}(t,v).
\]

The contribution of $A$ has size $t^{-\frac52}$,
so it does not affect the leading order cancellation that gives the relations \eqref{cancel}.  Instead, at the $t^{-\frac52}$ order the two terms need to be equal at least when projected onto $V^\pm_v$. This yields the relation
\[
  \pm i (t^2-x^2)^{-\frac12}  x^\alpha  \partial_\alpha \rho^{\pm}(t,v) = A_\alpha P_v^\mp \gamma^\alpha \rho^{\pm}(t,v).
\]
Using the identity \eqref{twist-com} and the constraint \eqref{cancel} on the right, this becomes
\begin{equation*}
 i t \partial_t \rho^{\pm}(t,v) =  i  x^\alpha  \partial_\alpha \rho^{\pm}(t,v) = - x^\alpha A_\alpha \rho^{\pm}(t,v),
\end{equation*}
i.e. the ode
\begin{equation} \label{asympt-eqn}
  i \partial_t \rho^\pm(t,v) = - v^\alpha A_\alpha \rho^{\pm}(t,v)  
\end{equation}
along rays $x = vt$, which we will refer to as the \emph{modulation equation}, and which  causes the \emph{asymptotic profile} $\rho^{\pm}(t,v)$ to rotate within $V^{\pm}$ on the slow time scale $s = \ln t$.

This concludes our heuristic analysis. The aim of the rest of this section will be to construct a good choice of the asymptotic profiles $\rho^{\pm}$, which has the following key properties:

\begin{itemize}
    \item We have the asymptotic expansion \eqref{Dirac-asymptotic-nl}, with lower order 
    errors.
    \item The asymptotic profiles $\rho^{\pm}$  are approximate solutions to the asymptotic equation \eqref{asympt-eqn}, again with lower order errors.
\end{itemize}
Once this is done, we can use the asymptotic equation \eqref{asympt-eqn} in order to prove uniform bounds for the asymptotic profile, and then transfer these bounds to $\psi$ using the expansion \eqref{Dirac-asymptotic-nl}. This in turn will close the bootstrap hypothesis for $\psi$.

\subsection{Wave packets for the Dirac equation}
The main idea of the method of wave packet testing, as developed in \cite{IT1, IT2, IT3},  is that 
the asymptotic profiles can be obtained by testing the solution $\psi$ with well chosen wave
packets, which are approximate solutions to the 
linear equation traveling along rays with given velocity $v$. Compared with the above references,
here there are two twists to this story:
\begin{itemize}
\item corresponding to each admissible velocity $v$ inside the unit ball there will be two wave  
packets $\psi_v^\pm$, associated to the two phase choices $\phi_{\pm}$.
\item the wave packets have an additional amplitude parameter $\sigma^{\pm} \in V^{\pm}_v$.
\end{itemize}

We begin with the construction of our wave packets. For each velocity $v$ with $|v|<1$ we need to produce
two such packets $\psi_v^\pm$, localized around 
the ray $x = vt$, which we will use in order to track the $\pm$ components of $\psi$ along the same ray. The simplest ansatz we can make is 
\begin{equation}\label{first-ansatz}
\psi^{\pm}_v(t,x) = \chi(y) e^{i\phi_\pm}  \sigma^{\pm}(x/t),
\qquad y = d_H^2(x/t,v)(t^2-x^2)^{\frac{1}{2}},
\end{equation}
where the entries in the above expression are as follows:
\begin{itemize}
\item $\chi$ is a  compactly supported bump function, normalized to have unit integral as a radial function in $\mathbb{R}^3$.

\item $d_H(x/t,v)$ represents the hyperbolic distance between $x/t$ and $v$,  viewed as points in the Beltrami-Klein disc model for the hyperbolic space, which can be 
written explicitly as
\[
\cosh^2 [d_H(v,w)] = 1+\frac{2|v-w|^2}{(1-|v|^2)(1-|w|^2)}
\]
with the simpler case 
\[
 d_H(0,v) = \frac12 \ln \frac{1+|v|}{1-|v|}. 
\]

\item The functions $\sigma^{\pm}$ are chosen to belong 
to the correct subspace, 
\begin{equation}
\sigma^{\pm}(x/t) \in V^{\pm}_{x/t}.
\end{equation}
For a more accurate choice, we select $\sigma^{\pm}(v)$ arbitrarily, and -- employing the Frobenius theorem -- uniquely extend it smoothly nearby respecting the Lorentz invariance, i.e. so that 
\begin{equation}
\label{hat}
\hat \Omega \sigma^{\pm} = 0.
\end{equation}
\end{itemize}
The motivation for these particular choices 
is to simplify our computation by making our 
wave packet choice invariant with respect to Lorentz transformations. We also note that we will use the wave packets only in the region $\{t^2 - x^2 \geq 1\}$, where the compact support of $\chi$ implies that $(t,x)$ stays away from the cone.

\medskip

Now we apply the Dirac operator to $\psi_v^{\pm}$,
which is a computation somewhat similar to the one in the previous subsection. To avoid distracting technicalities, in view of the Lorentz invariance it suffices to consider 
the simplest case when $v=0$, where
we get  
\[
\begin{aligned}
(- i \gamma^\alpha \partial_\alpha + 1)\psi^{\pm}_v
= & \  2 \chi(y)e^{i\phi_\pm}  P_{x/t}^\pm \sigma^{\pm}(x/t)
\\ 
  & \
 - i \chi(y)e^{i\phi_\pm}  \gamma^\alpha \partial_\alpha \sigma^{\pm}(x/t)
\\ & \ 
 + i (t^2-x^2)^{-\frac{1}{2}}  \chi'(y)  e^{i\phi_\pm}
d^2_H x_\alpha \gamma^\alpha \sigma^{\pm}(x/t)
\\ & \ 
 - i (t^2 - x^2)^{\frac{1}{2}}
\chi'(y) e^{i\phi_\pm} \partial_\alpha d^2_H  \gamma^\alpha \sigma^{\pm}(x/t)
\\ =: & R_1^{\pm}+ R_2^{\pm} +R_3^{\pm} +R_4^{\pm}.
\end{aligned}
\]
Now we consider each of these four terms, separating the $V^\pm_{x/t}$ and the $V^\mp_{x/t}$ contributions.
The $V^\pm_{x/t}$ terms are the important ones, which we
need to explicitly determine. The $V^\mp_{x/t}$ terms 
on the other hand can be thought off as nonresonant, and can be removed with a lower order
correction to our initial ansatz in \eqref{first-ansatz}. As a rule of thumb,
the errors which are more than $t^{-1}$ better are purely perturbative, while 
all larger errors are not and will have to have some structure.

\medskip

a) The first term $R_1^\pm$ vanishes since $\sigma^\pm$ is chosen to be in the correct subspace $V^\pm_{x/t}$. 

\medskip
b) The second term also has size $t^{-1}$ better, and a simple $V^\pm_{x/t}$ component. Indeed, using \eqref{twist-com} followed by \eqref{dP-pm}, we get
\[
P_{x/t}^\mp R_2^\pm = \pm \frac{3i}2  (t^2-x^2)^{-\frac{1}{2}}\chi(y) e^{i\phi_\pm} \sigma^{\pm}(x/t).
\]

\medskip 
c) The third term also has size 
$t^{-1}$ better, and its $V^\pm_{x/t}$ projection is
\[
P^{\mp}_{x/t} R_3^\pm = R_3^\pm = \pm i    \chi'(y)  e^{i\phi_\pm}  d^2_H \sigma^{\pm}(x/t).
\]

\medskip
d) The fourth term is the worst, as it only has  size
$t^{-\frac{1}{2}}$ better. However, using again \eqref{twist-com}, its $V^\pm_{x/t}$ projection is better,
\[
P_{x/t}^\mp R_4^{\pm} = \pm i \chi'(y) e^{i\phi_\pm} x^\alpha \partial_\alpha d^2_H\sigma^{\pm}(x/t) = 0.
\]
\bigskip

To summarize, we have established that
\begin{equation}\label{D-packet}
\begin{aligned}
(- i \gamma^\alpha \partial_\alpha + 1)\psi^{\pm}_v
= & \  \frac{i}{2} (t^2-x^2)^{-\frac{1}{2}}\left(   
   \pm 3 \chi(y)   \pm 2\chi'(y)   d^2_H(x/t,0) (t^2-x^2)^{\frac{1}{2}}\right) e^{i\phi_\pm} \sigma^{\pm}(x/t)
\\ & \
+ P_{x/t}^\pm (R_2^\pm  + R_4^\pm).
\end{aligned}
\end{equation}
To eliminate the terms on the second line we redefine our wave packet by setting
\begin{equation}\label{corrected-ansatz}
\begin{aligned}
\tilde{\psi}^{\pm}_v : =  & \ \psi^{\pm}_v -\frac{1}{2} P^{\pm}_{x/t} (R_2^\pm  + R_4^\pm)
\\  =: &\  e^{i\phi_\pm}  (\chi(y)\sigma^{\pm}(x/t)
+ \underline{\sigma}^{\pm}(x/t)),
\end{aligned}
\end{equation}
where $\underline{\sigma}^{\pm}$ has similar regularity and localization as the first term, but has size $(t^2-x^2)^{\frac{1}{4}}$ smaller. Applying the linear Dirac operator to $\psi_v^\pm$ we cancel the expression $P_{x/t}^\pm (R_2^\pm + R_4^\pm)$, obtaining instead additional error terms. These are all $(t^2-x^2)^{-\frac34}$ 
better than $\psi_v^{\pm}$ with a single exception, namely the counterpart of $R_4^{\pm}$,
which we denote by $R_5^{\pm}$,  obtaining
\begin{equation}\label{D-packet+}
\begin{aligned}
(- i \gamma^\alpha \partial_\alpha + 1)\tilde \psi^{\pm}_v
= & \  \frac{i}{2} (t^2-x^2)^{-\frac{1}{2}}\left(   
   \pm 3 \chi(y)   \pm 2\chi'(y)   d^2_H(x/t,0) (t^2-x^2)^{\frac{1}{2}}\right) e^{i\phi_\pm} \sigma^{\pm}(x/t)
\\ & \ 
+  R_5^\pm + O((t^2-x^2)^{-\frac34}) .
\end{aligned}
\end{equation}
The $V^\mp_{x/t}$ component of $R_5^{\pm}$ can be corrected again so it remains to compute the $V^\pm_{x/t}$ component, which is
\[
\begin{aligned}
P^\mp_{x/t} R^\pm_5 =&\frac{1}{2}(t^2-x^2)^{\frac{1}{2}} e^{i\phi_\pm} \left( \chi'(y) \partial_{\alpha}\partial_{\beta} d^2_H(x/t,0) + (t^2-x^2)^{\frac{1}{2}} \chi''(y) \partial_{\beta} d^2_H(x/t,0) \partial_{\alpha} d^2_H(x/t,0)\right)\\
&\hspace*{10.5cm} \cdot P^{\mp}_{x/t} \gamma^{\beta}P^{\pm}_{x/t} \gamma^{\alpha}\sigma^{\pm}(x/t).
\end{aligned}
\]
A direct computation using the relation \eqref{twist-com} gives us
\[
P^{\mp}_{x/t}\gamma^{\beta}P^{\pm}_{x/t}\gamma^{\alpha}\sigma^{\pm}(x/t)=g^{\alpha\beta}\sigma^{\pm}(x/t)\mp \frac{x^{\alpha}x^{\beta}}{t^2-x^2} \sigma^{\pm}(x/t).
\]
The second term on the right does not contribute to the outcome because it yields scaling derivatives of $d_H$, so we are left with
\[
\begin{aligned}
P^\mp_{x/t} R^\pm_5 =&(t^2-x^2)^{\frac{1}{2}} e^{i\phi_\pm} g^{\alpha\beta} \left( \chi'(y) \partial_{\alpha}\partial_{\beta} d^2_H + (t^2-x^2)^{\frac{1}{2}} \chi''(y) \partial_{\beta} d^2_H \partial_{\alpha} d^2_H\right) \sigma^{\pm}(x/t).
\end{aligned}
\]
We can simplify this expression modulo lower order terms, using $d_H^2(x/t,0) \approx 
x^2/t^2$ modulo quartic errors. In three dimensions this gives
\[
g^{\alpha \beta} \partial_{\alpha}\partial_{\beta} d^2_H \approx 
\frac{6}{t^2-x^2}, \qquad g^{\alpha\beta}\partial_{\beta} d^2_H \partial_{\alpha} d^2_H  \approx \frac{4 d_H^2}{t^2-x^2},
\]
which finally yields
\begin{equation}
  P^\mp R^\pm_5 =(t^2-x^2)^{-\frac{1}{2}} e^{i\phi_\pm}  \left( 6\chi'(y)  + 4(t^2-x^2)^{\frac{1}{2}} d^2_H \chi''\right) \sigma^{\pm}(x/t)  + O((t^2-x^2)^{-\frac34}).
\end{equation}
To summarize, our final wave packet choice is 
\begin{equation}\label{psi-tilde-tilde}
\begin{aligned}
\tilde{\psi}^{\pm}_v : =  & \ \psi^{\pm}_v -\frac{1}{2} P^{\pm}_{x/t} (R_2^\pm + R_3^\pm+ R_4^\pm + R_5^{\pm})
\\  =: &\  e^{i\phi_\pm}  (\chi(y)\sigma^{\pm}(x/t)
+ \underline{\sigma}^{\pm}(x/t)),
\end{aligned}
\end{equation}
which when inserted into the linear Dirac equation gives
\begin{equation}
\label{bun}
    \begin{aligned}
(- i \gamma^\alpha \partial_\alpha + 1)\tilde \psi^{\pm}_v
= & \  \frac{i}{2} (t^2-x^2)^{-\frac{1}{2}}\left(   
   \pm 3 \chi(y)   \pm 2\chi'(y)   d^2_H(x/t,0) (t^2-x^2)^{\frac{1}{2}}\right) e^{i\phi_\pm} \sigma^{\pm}(x/t)
\\ & \ 
+  (t^2-x^2)^{-\frac{1}{2}} e^{i\phi_\pm}  \left( 6\chi'(y)  + 4(t^2-x^2)^{\frac{1}{2}} d^2_H \chi''(y)\right) \sigma^{\pm}(x/t)  + O((t^2-x^2)^{-\frac34}).
\end{aligned}
\end{equation}

We drop the tilde in the following and write $\psi_v^\pm$ again for our wave packets. This is a good approximate solution for the linear Dirac system as follows:

\begin{lem}
The wave packets $\psi_v^{\pm}$ defined above solve an equation of the form
\begin{equation}\label{fv}
(- i \gamma^\alpha \partial_\alpha + 1)   \psi_v^{\pm} = (t^2-
x^2)^{-\frac{1}{2}} e^{i\phi_{\pm}} r_v^{\pm} := f_v^{\pm}
\end{equation}
in the region $\{t^2 - x^2 \geq 1\}$, where the errors amplitudes $r_v^\pm$ have a similar size (in the $H$ norm), localization and regularity as our original bump function $ \chi(y)$. Furthermore, $r_v^{\pm}$ 
admit a representation of the form
\begin{equation}\label{rv-rep}
r_v^{\pm} =(t^2-x^2)^{-\frac14} (\hat \Omega \tilde r_v^{\pm} +\tilde{\tilde r}_v^{\pm} )
\end{equation}
with $\tilde r_v^{\pm}$  and $\tilde{\tilde r}_v^{\pm}$ also in the same class.
\end{lem}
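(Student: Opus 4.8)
The plan is to read the lemma directly off the computation already recorded in \eqref{bun} for the corrected wave packet $\psi^{\pm}_v$ (tilde dropped) and then verify the two claimed properties. Dividing \eqref{bun} by the scalar factor $(t^2-x^2)^{-\frac12}e^{i\phi_{\pm}}$ and using the defining relation $y=d_H^2(x/t,0)(t^2-x^2)^{\frac12}$, one obtains
\[
r^{\pm}_v=\tfrac{i}{2}\bigl(\pm 3\chi(y)\pm 2\,y\,\chi'(y)\bigr)\sigma^{\pm}(x/t)+\bigl(6\chi'(y)+4\,y\,\chi''(y)\bigr)\sigma^{\pm}(x/t)+(t^2-x^2)^{-\frac14}\,\mathfrak{r}^{\pm}_v,
\]
where $\mathfrak{r}^{\pm}_v$ absorbs the $O((t^2-x^2)^{-\frac34})$ remainder in \eqref{bun}; the case of general $v$ is reduced to $v=0$ by the same Lorentz covariance argument already used for \eqref{bun}. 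Since $\chi,\chi',y\chi',y\chi''$ are smooth and compactly supported in $y$ (with supports inside those of $\chi$ and its derivatives) and $\sigma^{\pm}(x/t)$ is a smooth, $O(1)$ section of $V^{\pm}_{x/t}$, the amplitude $r^{\pm}_v$ is a smooth section of $V^{\pm}_{x/t}$, localized in $y$, with $\|r^{\pm}_v\|_H\lesssim 1$; this is the first assertion, modulo checking that $\mathfrak{r}^{\pm}_v$ is of the same type. The latter I would obtain by tracking the remainder through the correction steps \eqref{corrected-ansatz} and \eqref{psi-tilde-tilde}: each correction subtracts $P^{\pm}_{x/t}$ applied to a packet object carrying an extra $(t^2-x^2)^{-\frac14}$ (or better) gain, and applying $-i\gamma^{\alpha}\partial_{\alpha}+1$ to such an object again produces packet-type amplitudes with the same regularity, localization and gain, by the Leibniz rule together with the smoothness and $0$-homogeneity of $P^{\pm}_{x/t}$ and $\sigma^{\pm}$ and the $(t^2-x^2)$-homogeneity of $y,\phi_{\pm}$ in $(t,x)$ (on the packet $|x|\lesssim (t^2-x^2)^{\frac14}$ and $t=(t^2-x^2)^{\frac12}(1+O((t^2-x^2)^{-\frac12}))$).

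The key observation for the representation \eqref{rv-rep} is that, after passing to the rescaled variable $z=x(t^2-x^2)^{-\frac14}$ (so that $y=|z|^2$ and $\chi(y)=\chi(|z|^2)$), the explicit leading amplitude of $r^{\pm}_v$ is a total $z$-divergence. Indeed, for any smooth $f$ one has $\operatorname{div}_z(z\,f(|z|^2))=3f(y)+2y f'(y)$, and the particular coefficients $3,2,6,4$ appearing in \eqref{bun} are exactly such that
\[
\tfrac{i}{2}\bigl(\pm 3\chi(y)\pm 2\,y\,\chi'(y)\bigr)+6\chi'(y)+4\,y\,\chi''(y)=\operatorname{div}_z\Bigl(z\bigl(\pm\tfrac{i}{2}\chi(|z|^2)+2\chi'(|z|^2)\bigr)\Bigr)=:\operatorname{div}_z W(z),
\]
with $W$ a smooth, compactly supported vector field (no singularity at the packet center $z=0$). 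Now $\hat\Omega$ annihilates $\phi_{\pm}$ and all powers of $t^2-x^2$, while $\hat\Omega\sigma^{\pm}=0$ by \eqref{hat}, and a direct computation gives $\Omega_{0j}z_k=t\,\delta_{jk}(t^2-x^2)^{-\frac14}=(t^2-x^2)^{\frac14}\delta_{jk}+O((t^2-x^2)^{-\frac14})$ on the packet; hence $\sum_j\hat\Omega_{0j}(W_j(z)\sigma^{\pm})=(t^2-x^2)^{\frac14}(\operatorname{div}_z W)\,\sigma^{\pm}+(t^2-x^2)^{-\frac14}(\text{bump})$. Dividing by $(t^2-x^2)^{\frac14}$ shows that the leading part of $r^{\pm}_v$ equals $(t^2-x^2)^{-\frac14}\hat\Omega\tilde r^{\pm}_v$ modulo $(t^2-x^2)^{-\frac12}$-sized bump terms, where $\tilde r^{\pm}_v$ has components $W_j(z)\sigma^{\pm}$; collecting these terms together with $\mathfrak r^{\pm}_v$ into $\tilde{\tilde r}^{\pm}_v$ (both genuine localized bump amplitudes in the same class) gives \eqref{rv-rep}, again reducing general $v$ to $v=0$ by Lorentz covariance.

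The step I expect to be the main obstacle, and the place where care is genuinely needed, is the bookkeeping of the error terms: one must verify at every stage -- the corrections \eqref{corrected-ansatz}, \eqref{psi-tilde-tilde}, the subleading part of $\Omega_{0j}z_k$, the commutators arising when passing from the $\hat\Omega_{0j}$ and the rotations (which commute with $P^{\pm}_{x/t}$) to a general $\hat\Omega$, and the change of variables between $y$ and $z$ -- that every remainder is again a bump-type amplitude with the same localization and regularity as $\chi$ and carries the advertised $(t^2-x^2)^{-\frac14}$ gain. The one conceptual point hidden in this is the divergence identity above: it is essential that the undifferentiated $\chi(y)$-contribution (which does not vanish at $z=0$) nevertheless combines with the $\chi'$, $\chi''$ terms into $\operatorname{div}_z W$ with $W$ smooth and compactly supported, since otherwise the primitive needed to realize $r^{\pm}_v$ inside the $\hat\Omega$-structure would acquire a $\sqrt{y}$-type singularity at the packet center and fail to be localized; this is a three-dimensional phenomenon tied to the specific form of \eqref{bun}.
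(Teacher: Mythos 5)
Your proposal is correct and follows essentially the same route as the paper's proof: reduce to $v=0$ by Lorentz invariance, read $r_v^{\pm}$ off \eqref{bun}, and realize the leading amplitude $3f(y)+2yf'(y)$ (with $f=\pm\tfrac{i}{2}\chi+2\chi'$) as $\hat\Omega_{0j}$ applied to amplitudes of the form $x^j(t^2-x^2)^{-\frac12}f(y)\sigma^{\pm}$, with the mismatch controlled by $|x|\lesssim (t^2-x^2)^{\frac14}$ on the packet support and $d_H^2(v,0)=|v|^2+O(|v|^4)$, $t(t^2-x^2)^{-\frac12}=1+O(x^2/t^2)$. The only differences are cosmetic: you phrase the cancellation as the divergence identity $\operatorname{div}_z\bigl(z\,f(|z|^2)\bigr)=3f+2yf'$ in the rescaled variable $z=x(t^2-x^2)^{-\frac14}$ and combine the two lines of \eqref{bun} into a single vector field $W$, whereas the paper guesses the primitive $x^j\chi(y)(t^2-x^2)^{-\frac12}\sigma^{\pm}$ directly and treats the $\chi'$ term analogously.
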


\begin{proof}
By Lorentz invariance, it suffices to consider the case $v=0$.  We need to consider the right hand side terms in \eqref{bun}. We begin with the term on the first line, 
\[
r_v^{\pm} =( 3\chi(y)  +2\chi'(y)   d^2_H(x/t,0)
(t^2-x^2)^{\frac{1}{2}} )\sigma^\pm(x/t).
\]
For this one can directly guess the appropriate representation, and compute
the error corresponding to this term
\[
\begin{aligned}
(t^2 - x^2)^{-\frac{1}{4}} \tilde{\tilde r}_v^{\pm} := & 
  \ r_v^{\pm} - \hat\Omega_{0j} \Big(\frac{x^j \chi(y)}{ \sqrt{t^2-x^2}}\sigma^\pm(x/t)\Big)\\
= & \   r_v^{\pm} - \Omega_{0j} \Big(\frac{x_j \chi(y)}{ \sqrt{t^2-x^2}}\Big)\sigma^{\pm}(x/t)
\\ 
= & \ 3\chi(y) \Big(1 -\frac{t}{\sqrt{t^2-x^2}}\Big) \sigma^{\pm}(x/t)  
\\ & +\chi'(y)  ( 2(t^2-x^2)^{\frac{1}{2}} d^2_H(x/t,0) - 
x_j (t \partial_j + x^j \partial_t) d^2_H(x/t,0)) \sigma^{\pm}(x/t).
\end{aligned}
\]
This corresponds to the choices
\[
\tilde r^\pm_{v,j} = (t^2 - x^2)^{\frac{1}{4}} \frac{x^j \chi(y)}{ \sqrt{t^2-x^2}}\sigma^\pm(x/t),
\]
which are acceptable since $|x| \lesssim (t^2-x^2)^{\frac14}$ in the support of $\chi(y)$.

At this point it suffices to observe that 
\[
d^2_H(v,0) = |v|^2 + O(|v|^4),
\]
which shows that the quadratic term cancels 
in the second expression, and we arrive at
\[
\tilde{\tilde r}_v^{\pm} = (t^2 - x^2)^{\frac{1}{4}} 
(\chi(y) O(\frac{x^2}{t^2}) + \chi'(y) O(\frac{x^2}{t^2})).
\]
Since $|x| \lesssim t^\frac12 \approx (t^2-x^2)^\frac14$ in the support of $\chi(y)$ and $\chi'(y)$, this suffices for the conclusion of the Lemma.

The term on the second line of \eqref{bun} is similar, just with $\chi$ replaced by $\chi'$.
\end{proof}

\subsection{Asymptotic profiles  via wave packet testing} Now that we have our wave packets, 
the next step is to use them in order to construct asymptotic profiles for the Dirac component $\psi$ of our solution. 

Departing slightly from the standard approach 
introduced in \cite{IT4}, here we will do the testing on hyperboloids, rather then on time slices. There are two primary reasons for that:
\begin{itemize}
    \item to gain orthogonality between $V^+$ and $V^-$ in the energy functional.
    \item to facilitate integrations by parts 
    for the Lorentz vector fields.
\end{itemize}

We begin by recalling the conserved energy functional associated to the linear Dirac equation
on hyperboloids, which has the form (see Section~\ref{s:not})
\[
E_H(\psi) =  \int_H (t^2-x^2)^\frac32 \langle  \psi, \psi \rangle_H \, dV_H.
\]

\medskip

We use the above energy to define our asymptotic profiles
$\rho^{\pm}$ via wave packet testing:

\begin{de}
Let $t > 0$, $v \in B(0,1)$ so that $(t,vt) \in H$. 
Then the asymptotic profiles $\rho^{\pm}(t,v) \in V^{\pm}_v$ associated to $\psi$ are uniquely determined by
\begin{equation}
\label{eq:Defrhopm}
\langle \rho^{\pm}, \sigma_j^\pm(v)\rangle_H := (t^2-(vt)^2)^\frac32
\int_{H} \langle \psi, \psi_v^{\pm}\rangle_H \, dV_H, \quad j=1,2,
\end{equation}
where $\psi_v^{\pm}$ are as in \eqref{corrected-ansatz} and  $\sigma^\pm_1(v),\sigma^\pm_2(v)$ are chosen to form an orthonormal base with respect to $\langle \cdot , \cdot \rangle_H$ of $V^\pm_v$ and so that the last bullet point below  \eqref{first-ansatz} is satisfied.
\end{de}
In the following, we will sometimes drop the subscript on $\sigma_j^\pm$.

We first consider pointwise and asymptotic bounds for the asymptotic profiles:

\begin{lem}
The asymptotic profiles $\rho^{\pm}$ satisfy the pointwise bounds
\begin{equation}\label{rho-point}
\| \rho^{\pm}(t,v)\|_{H} \lesssim \epsilon t^{-2+C\epsilon} (t^2-(tv)^2), 
\end{equation}
and the $L^2$ bounds
\begin{equation}\label{rho-2}
\| \Omega^{\leq 3} \rho^{\pm}\|_{L^2(H_{\leq t})} \lesssim t^{C \epsilon}    .
\end{equation}
\end{lem}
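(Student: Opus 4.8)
The plan is to extract both bounds directly from the definition \eqref{eq:Defrhopm}, using the Cauchy--Schwarz inequality with respect to the positive-definite inner product $\langle \cdot,\cdot\rangle_H$, together with the energy estimate \eqref{Energy-H} (recast on hyperboloids as $E_H(\psi)\lesssim \|\psi\|_{X_T}^2$), the vector field bounds of Proposition~\ref{prop:EnergyEstGamma}, and the explicit size/localization of the wave packets $\psi_v^\pm$. First I would record the basic computation of the $H$-norm of the wave packet on the hyperboloid $H$ through $(t,vt)$: since $\psi_v^\pm$ is essentially $e^{i\phi_\pm}\chi(y)\sigma^\pm(x/t)$ with $\chi$ localized to $y=d_H^2(x/t,v)(t^2-x^2)^{1/2}\lesssim 1$, the packet lives in a hyperbolic ball of radius $\sim (t^2-x^2)^{-1/4}$ around $v$, and on that region $|\sigma^\pm|\sim 1$; computing $\int_H \|\psi_v^\pm\|_H^2\, dV_H$ then gives a factor coming from the volume of that ball times the appropriate power of $(t^2-x^2)$. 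This is the place where the $(t^2-(tv)^2)$ weight in \eqref{rho-point} originates. One should also keep track that the testing is being done on $H_{\le t}$, i.e. the part of the hyperboloid inside $C_T$.

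For the pointwise bound \eqref{rho-point}: apply Cauchy--Schwarz in \eqref{eq:Defrhopm} to get $\|\rho^\pm(t,v)\|_H \lesssim (t^2-(vt)^2)^{3/2}\,\big(\int_H \|\psi\|_H^2\,dV_H\big)^{1/2}_{\mathrm{loc}} \big(\int_H \|\psi_v^\pm\|_H^2\,dV_H\big)^{1/2}$, where in the first factor the integral is localized to the support of the packet. Here one uses the pointwise decay of $\psi$ in the interior region (the $t^{-3/2+C\epsilon}$ bound that follows from combining Theorem~\ref{t:KS} with the $t^{C\epsilon}$ energy growth) to bound $\|\psi\|_H$ on the small ball, rather than just the $L^2$ energy — this is what produces the extra decay. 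Multiplying the three factors — the weight $(t^2-(vt)^2)^{3/2}$, the localized $L^2$ mass of $\psi$ (which is $(t^2-x^2)^{3/4}$ times the pointwise size of $\psi$ times the square root of the ball volume), and the packet norm — and simplifying the powers of $(t^2-x^2)$ yields $\|\rho^\pm(t,v)\|_H\lesssim \epsilon\, t^{-2+C\epsilon}(t^2-(tv)^2)$ after the bookkeeping. The $\epsilon$ comes from the $\psi$ bootstrap/energy bounds being of size $\epsilon$, and the $t^{C\epsilon}$ is inherited from the energy growth.

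For the $L^2$ bound \eqref{rho-2}: commute $\Omega^{\le 3}$ into the definition. The key point is that $\hat\Omega$ (the corrected Lorentz vector fields) commute with the linear Dirac operator, and by construction \eqref{hat} the amplitudes $\sigma^\pm$ satisfy $\hat\Omega\sigma^\pm=0$, so up to controllable commutator terms $\Omega^{\le 3}\rho^\pm$ is obtained by testing $\hat\Omega^{\le 3}\psi$ against the same family of packets (and integrating by parts the vector fields off $\psi_v^\pm$, which is exactly why the testing is set up on hyperboloids). Then Cauchy--Schwarz on $H_{\le t}$, Fubini to integrate over $v$, and the energy bound $\|\hat\Omega^{\le 3}\psi\|^2_{L^\infty L^2}$, or rather the hyperboloidal energy $E_H(\Gamma^{\le 9}\psi)\lesssim T^{C\epsilon}$ from Proposition~\ref{prop:EnergyEstGamma}, give $\|\Omega^{\le 3}\rho^\pm\|_{L^2(H_{\le t})}\lesssim t^{C\epsilon}$. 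The main obstacle I expect is the careful bookkeeping of powers of $(t^2-x^2)$ across the packet-size computation and the change of variables from $x$ to the velocity $v$ (the Jacobian between $dV_H$ and $dv$ involves $(1-|v|^2)$ factors), together with verifying that the commutator terms generated when pushing $\Omega^{\le 3}$ through the packet and the weights are genuinely lower order — neither is deep, but both require attention, and the Lorentz invariance of the whole construction is what keeps these manageable by reducing everything to the base case $v=0$.
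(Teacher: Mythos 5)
Your overall route is the paper's: both bounds are read off from the testing definition \eqref{eq:Defrhopm}, the first by combining a pointwise bound on $\|\psi\|_{H}$ with the localization of the packet to a hyperbolic ball of radius $\sim (t^2-x^2)^{-1/4}$ (volume $\sim (t^2-x^2)^{-3/4}$), the second from the hyperboloidal energy bound $E_H(\Omega^{\leq 3}\psi)\lesssim \epsilon^2 t^{c\epsilon}$ supplied by Proposition~\ref{prop:EnergyEstGamma}, with the Lorentz invariance of the packet family ($\hat\Omega\sigma^\pm=0$) used to move vector fields around. So the architecture is correct and matches the paper.

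There is, however, one step in your pointwise bookkeeping that would fail as written. The input you name is the isotropic Klainerman--Sobolev bound $|\psi|\lesssim \epsilon t^{-3/2+C\epsilon}$, but that alone does not give the stated weight $(t^2-(tv)^2)$ near the cone: converting it to the $H$-norm costs a factor $\bigl(\tfrac{t+r}{t-r}\bigr)^{1/4}$ on the $\psi_-$ component, and the resulting estimate is $\|\rho^\pm\|_H\lesssim \epsilon t^{C\epsilon}(1-v^2)^{1/2}$, weaker than \eqref{rho-point} by $(1-v^2)^{-1/2}$. What is actually needed (and what the paper uses) is the anisotropic bound
\begin{equation*}
\|\psi(t,x)\|_{H}\lesssim \epsilon\, t^{-2+C\epsilon}(t^2-x^2)^{\frac14},
\end{equation*}
which follows from the separate component bounds \eqref{eq:PointwiseBoundspsiplpsimin} in the proof of Theorem~\ref{t:KS}, i.e. $|\psi_+|\lesssim t^{-3/2}$ together with the improved $|\psi_-|\lesssim S^{1/2}t^{-2}$, combined with the identity $\|\psi\|_H^2=\bigl(\tfrac{t+r}{t-r}\bigr)^{1/2}|\psi_-|^2+\bigl(\tfrac{t-r}{t+r}\bigr)^{1/2}|\psi_+|^2$. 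With this input the count is clean: $(t^2-(vt)^2)^{3/2}\cdot \epsilon t^{-2+C\epsilon}(t^2-x^2)^{1/4}\cdot (t^2-x^2)^{-3/4}\lesssim \epsilon t^{-2+C\epsilon}(t^2-(tv)^2)$. Relatedly, the extra factor $(t^2-x^2)^{3/4}$ you insert into the ``localized $L^2$ mass'' does not belong there: the weight $(t^2-(vt)^2)^{3/2}$ in \eqref{eq:Defrhopm} sits outside the integral and there is no additional $(t^2-x^2)^{3/2}$ density inside it, so your displayed product does not reproduce the claimed bound except well inside the cone. The $L^2$ part of your plan is fine and is essentially the paper's one-line argument.
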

\begin{proof}
  The estimate \eqref{rho-point} is an immediate consequence of the uniform bound 
\begin{equation}\label{psi-point}
\|\psi(t,x) \|_{H} \lesssim \epsilon t^{-2+C\epsilon}(t^2-x^2)^\frac14,
\end{equation}
which in turn follows from \eqref{eq:PointwiseBoundspsiplpsimin} in the proof of Theorem~\ref{t:KS}.
Note that we can express the $H$ norm of $\psi$ in terms of $\psi_+$ and $\psi_-$ as 
\[
\| \psi\|_{H}^2 = \left(\frac{t+r}{t-r}\right)^\frac12|\psi_{-}|^2 + 
\left(\frac{t-r}{t+r}\right)^\frac12
|\psi_+|^2 ,
\]
which can be verified by direct computation.

The estimate \eqref{rho-2} is a consequence of the $L^2$ energy bounds on hyperboloids
which follow from the $X_T$ bounds in Section~\ref{s:vf}. More precisely, note that we can replace $\psi$ by $\Omega^{\leq 3} \psi$ in~\eqref{Energy-H}, which yields 
\begin{equation}\label{psi2}
E_H(\Omega^{\leq 3} \psi) \lesssim \epsilon^2 t^{c\epsilon}
\end{equation} 
by Proposition~\ref{prop:EnergyEstGamma}.
\end{proof} 
Our next objective is to establish that the above asymptotic profiles satisfy a precise form 
of the asymptotic formula \eqref{Dirac-asymptotic-nl}:

\begin{lem}\label{l:asympt}
We have the following asymptotic formula:
\begin{equation}\label{exact-Dirac-asymptotic-nl}
\psi(t,vt) = \sum_{\pm} (t^2-(vt)^2)^{-\frac34} e^{i\phi_{\pm}} \rho^{\pm}(t,v) + O(\epsilon (t^2-r^2)^{-\frac78} t^{C\epsilon} ) 
\end{equation}
in the region $\{ t^2-x^2 \geq 1\}$.
\end{lem}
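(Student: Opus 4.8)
\textbf{Proof strategy for Lemma~\ref{l:asympt}.} The plan is to test the exact solution $\psi$ with the wave packets $\psi_v^\pm$ on hyperboloids and compare the result with the leading order profile in the asymptotic expansion, exploiting the energy identity on the region bounded by two hyperboloids together with the approximate-solution property \eqref{fv}--\eqref{rv-rep} of the wave packets. First I would fix $v$ and, by Lorentz invariance, reduce to $v = 0$, so that the relevant ray is the $t$-axis and the wave packet $\psi_0^\pm$ is concentrated in a region where $|x| \lesssim (t^2-x^2)^{1/4}$, i.e.\ $|x|\lesssim t^{1/2}$; this is exactly the regime in which the computations in the previous subsection were carried out. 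The key observation is that the quantity $(t^2-(vt)^2)^{3/2}\int_H \langle \psi,\psi_v^\pm\rangle_H\, dV_H$ that defines $\rho^\pm$ can be differentiated in $t$ (equivalently, compared across two hyperboloids $H_{t_1}$ and $H_{t_2}$), and the variation is controlled by the Dirac equation for $\psi$ and by \eqref{fv} for $\psi_v^\pm$. On the other hand, the same inner product, when $\psi$ is replaced by its putative leading profile $\sum_\pm (t^2-x^2)^{-3/4}e^{i\phi_\pm}\rho^\pm(t,v)$, reproduces $\langle\rho^\pm,\sigma^\pm\rangle_H$ up to errors coming from: (i) the spread of the packet away from $x=vt$ (so $\rho^\pm$ is evaluated at nearby velocities, costing derivatives $\Omega\rho^\pm$), (ii) the cross terms between the $+$ and $-$ half-waves, which are nonresonant because $\phi_+ - \phi_- = 2\sqrt{t^2-x^2}$ oscillates, and (iii) the correction term $\sigma_1^\pm$ in \eqref{corrected-ansatz}, which is $(t^2-x^2)^{1/4}$ smaller.

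The main steps, in order, are as follows. Step 1: localize and write $\psi = \psi_{\mathrm{lead}} + \psi_{\mathrm{err}}$, where $\psi_{\mathrm{lead}} := \sum_\pm (t^2-x^2)^{-3/4}e^{i\phi_\pm}\rho^\pm(t,v)$ is the quantity we wish to identify; the goal becomes bounding $\|\psi_{\mathrm{err}}(t,vt)\|_H \lesssim \epsilon (t^2-r^2)^{-7/8}t^{C\epsilon}$. Step 2: plug $\psi_{\mathrm{lead}}$ into the defining relation \eqref{eq:Defrhopm} and show that it produces $\langle\rho^\pm,\sigma^\pm\rangle_H$ up to an acceptable error; here one uses that $\int_H$ against $\chi(y)\,dV_H$ is essentially an average at unit hyperbolic scale, that $\Omega^{\leq 2}$-derivatives of $\rho^\pm$ are $L^2$-controlled by \eqref{rho-2}, and that the $+/-$ cross terms are killed by a nonstationary phase / integration by parts in the transversal (radial) direction using the oscillation of $e^{i(\phi_+-\phi_-)}$. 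Step 3: analyze $\langle \psi_{\mathrm{err}}, \psi_v^\pm\rangle_H$ by an energy argument: integrate the density-flux identity for the Dirac bilinear form $\langle \psi_{\mathrm{err}},\psi_v^\pm\rangle_H$ over the slab between two hyperboloids, using $(-i\gamma^\alpha\partial_\alpha+1)\psi = \gamma^\mu A_\mu\psi$ and \eqref{fv}; the source terms are $\int \langle \gamma^\mu A_\mu \psi, \psi_v^\pm\rangle_H$ and $\int\langle \psi, f_v^\pm\rangle_H$. Step 4: estimate these source terms. For the $A$ term, use the pointwise bound $|A|\lesssim t^{-1}$ from Theorem~\ref{t:KS} (or rather $|\mathcal T A|$ plus the bound on $|\partial A|$), together with the $H$-norm decay of $\psi$ in \eqref{psi-point} and the $L^1$-in-scale summability of the packet; for the $f_v^\pm$ term, use the representation \eqref{rv-rep}: the factor $(t^2-x^2)^{-1/4}$ gives the gain, the $\hat\Omega$ in front of $\tilde r_v^\pm$ is moved onto $\psi$ by integration by parts on the hyperboloid (legitimate because $\hat\Omega$ is tangent to $H$ and $\psi_v^\pm$ is compactly supported there), converting it into $\Omega^{\leq 1}\psi$ which is $L^2$-controlled by \eqref{psi2}.

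Step 5: assemble. Combining Steps 2--4 gives, for each $t$,
\[
\Big| (t^2-(vt)^2)^{3/2}\int_H \langle \psi_{\mathrm{err}},\psi_v^\pm\rangle_H\, dV_H \Big| \lesssim \epsilon (t^2-r^2)^{-7/8} t^{C\epsilon}\cdot (t^2-r^2)^{3/4},
\]
after accounting for the normalizing powers in \eqref{eq:Defrhopm}; this controls the component of $\psi_{\mathrm{err}}(t,vt)$ in $V^\pm_v$. The two components $V^+_v$ and $V^-_v$ together span $\mathbb C^4$ and are $\langle\cdot,\cdot\rangle_H$-orthogonal by Lemma~\ref{lem:VpmOrthogonal}, so controlling both components controls $\|\psi_{\mathrm{err}}(t,vt)\|_H$, which is the claim. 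I expect the main obstacle to be Step 2, specifically the bookkeeping of the nonresonant $+/-$ interaction and the packet-spread error: one must show that averaging the slowly varying profile $\rho^\pm(t,\cdot)$ against $\chi(y)$ over a unit hyperbolic ball, and integrating the oscillatory cross term, each costs only $\Omega$-derivatives that are affordable given the merely $t^{C\epsilon}$ energy growth in \eqref{rho-2}; getting the exponent $-7/8$ (rather than something worse) requires that the packet spread in the transversal direction be exactly balanced against the $(t^2-x^2)^{-1/4}$ gains, which is a delicate but essentially algebraic accounting of powers of $(t^2-x^2)$.
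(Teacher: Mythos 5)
There is a genuine gap at the final step, and it is precisely the step that constitutes the paper's proof. By the definition \eqref{eq:Defrhopm}, testing $\psi$ against $\psi_v^\pm$ on the hyperboloid \emph{is} $\langle\rho^\pm,\sigma^\pm\rangle_H$ (up to normalization), so your Steps 2--4 at best show that the packet average of $\psi_{\mathrm{err}}$ over a hyperbolic ball of radius $(t^2-x^2)^{-1/4}$ is small. In Step 5 you then assert that this "controls the component of $\psi_{\mathrm{err}}(t,vt)$ in $V^\pm_v$", but it does not: smallness of a local average gives no pointwise information at the center of the ball unless you also quantify how much the function can vary across the averaging scale. The lemma is exactly the statement that the pointwise value and the packet average agree, and the paper closes this by a Morrey-type estimate, $|f(v)-\int \chi_{\mathfrak r} f| \lesssim \mathfrak r^{1/2}\|\Omega^{\leq 2} f\|_{L^2(H)}$ with $f(w)=\langle\psi(s,ws),\sigma^\pm(w)\rangle_H$ and $\mathfrak r \approx (t^2-x^2)^{-1/4}$, using the Lorentz vector fields as unit hyperbolic derivatives and the energy bound \eqref{psi2}, which gives $\|\Omega^{\leq 2}f\|_{L^2(H)}\lesssim \epsilon (t^2-r^2)^{-3/4}t^{C\epsilon}$; the exponent $-\tfrac78=-\tfrac34-\tfrac18$ comes from exactly this combination, not from a balance of packet spread against the $(t^2-x^2)^{-1/4}$ gain as you suggest. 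Without this regularity-versus-scale step (applied either to $\psi$ directly, as in the paper, or to your $\psi_{\mathrm{err}}$, which would additionally require pointwise-in-$v$ control of $\Omega^{\leq 2}\rho^\pm$ that you only have in $L^2$ via \eqref{rho-2}), the conclusion does not follow.

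A secondary point: your Steps 3--4 (density-flux identity between two hyperboloids, the approximate-solution property \eqref{fv}--\eqref{rv-rep}, the magnetic source term) are not needed for this lemma and cannot substitute for the missing step, since they again only produce statements about packet-averaged quantities; that machinery is what the paper uses for the asymptotic equation in Lemma~\ref{lem:asympt-eqn}. Likewise your Step 2 (cross-term nonresonance, re-testing $\psi_{\mathrm{lead}}$) is extra work created by the decomposition $\psi=\psi_{\mathrm{lead}}+\psi_{\mathrm{err}}$, which the paper avoids entirely: it works with $\psi$ itself, handles the $\sigma_1^\pm$ correction in \eqref{corrected-ansatz} directly via the pointwise bound \eqref{psi-point} (your item (iii) is consistent with this), and reduces everything else to the averaging estimate described above.
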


\begin{proof}
Splitting into the $\pm$ components and testing 
against $\sigma_j^{\pm}$, it suffices to show that
\[
\langle \psi(t,vt), \sigma_j^{\pm}(v)\rangle_H = 
(t^2-(vt)^2)^{\frac34} e^{i\phi_{\pm}} \int_H \langle \psi,\psi_v^\pm \rangle_H \, dV_H + O( \epsilon (t^2-r^2)^{-\frac78} t^{C\epsilon}  ).
\]

The phase in $\psi^{\pm}_v$ cancels the one in front, so we are left with 
\[
\langle \psi(t,vt), \sigma_j^{\pm}(v)\rangle_H = 
(t^2-(vt)^2)^{\frac34} \int_H \langle \psi,\chi(y)\sigma^{\pm} + \underline{\sigma}^{\pm} \rangle_H \, dV_H + O(\epsilon (t^2-(vt)^2)^{-\frac78}t^{c\epsilon}  ).
\]
 To prove this we use again the uniform bound \eqref{psi-point} and 
the $L^2$ energy bounds \eqref{psi2}.

The uniform bound \eqref{psi-point} is used to estimate directly the 
contribution of $\underline{\sigma}^\pm$, which has size $(t^2-x^2)^{-\frac14}$ times $\sigma^{\pm}(v)$ and support similar to $\chi$, which has size 
$(t^2-x^2)^{-\frac14}$. Then we can estimate
\[
\begin{aligned}
Err^1:=& \ \left| (t^2-(vt)^2)^{\frac34} \int_H \chi(y)\langle \psi,\underline{\sigma}^{\pm} \rangle_H \, dV_H \right|
\\ \lesssim & \  \epsilon
(t^2-x^2)^{\frac34} (t^2-x^2)
^{\frac12}
t^{-\frac52+C\epsilon} (t^2-x^2)^{-\frac14} (t^2-x^2)^{-\frac34}
\| \sigma^{\pm}(v) \|_{H}
\\
\approx & \ \epsilon t^{-\frac52+C\epsilon} (t^2-x^2)^{\frac14} .
\end{aligned}
\]

The $L^2$ bound \eqref{psi2} is used to estimate the remaining error,
\[
Err^2 := \langle \psi(t,vt), \sigma^{\pm}(v)\rangle_H - 
(t^2-(vt)^2)^{\frac34} \int_H \langle \psi,\chi(y)\sigma^{\pm} \rangle_H \, dV_H.
\]
We denote 
\[
f(w) := \langle \psi(s,w s), \sigma^{\pm}(w)\rangle_H,
\]
where $s$ is chosen such that $(s, w s)$ is on the hyperboloid through $(t, v t)$.   From \eqref{psi2} we have 
\begin{equation}
\| \Omega^{\leq 2} f \|_{L^2(H)} \lesssim \epsilon (t^2-r^2)^{-\frac34} t^{C\epsilon}.   
\end{equation}
Rewriting 
\[
Err^2 = f(v) - (t^2-(vt)^2)^{\frac34} \int_H \chi((t^2-(tv)^2)^{\frac12} d^2(w,v)) f(w) \, dV_H(w),
\]
this can be essentially thought of as the difference 
between $f$ and its local average on the $(t^2-x^2)^{-\frac14}$ scale, up to an error that can be included in $\underline{\sigma}^{\pm}$. 

To understand the role of the scale we start with a spherically symmetric average on the unit scale in the Euclidean setting, where 
in three space dimensions we can estimate
\[
| f(0) - \int \chi_1 f\, dx | \lesssim  
\| \nabla^2 f\|_{L^2}.
\]
Rescaling this to a ball of radius $\mathfrak r$ we get the same bound
\[
| f(0) - \int \chi_{\mathfrak r} f dx | \lesssim   
\mathfrak r^\frac12 \| \nabla^2 f\|_{L^2} .
\]
In the hyperbolic space the Lorentz vector fields play exactly the role of unit derivatives, so applying this to 
$Err^2$ we obtain
\[
|Err^2| \lesssim \epsilon (t^2-r^2)^{{-\frac78}} t^{C\epsilon}.   \]
The conclusion of the lemma follows by adding the bounds for $Err^1$ and $Err^2$.
\end{proof}

The next objective of our asymptotic analysis 
is to prove a rigorous form of the asymptotic equation \eqref{asympt-eqn}.

\begin{lem}\label{lem:asympt-eqn}
 The asymptotic profiles $\rho^{\pm}$ solve the asymptotic equation
\begin{equation}\label{exact-asympt-eqn}
 i  \partial_t \rho^{\pm}(t,v) = v^\alpha A_\alpha \rho^{\pm}(t,v) 
 + e^{2i \phi_\pm} A_\alpha  \sqrt{1-v^2} P^{\pm}_v\gamma^\alpha \rho^{\mp}(t,v)
 + O(\epsilon t^{-\frac{3}{2} + c \epsilon}(1-v^2)^{-\frac14} ).
\end{equation}
 \end{lem}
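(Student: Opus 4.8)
The plan is to test the Maxwell--Dirac equation against the wave packets $\psi_v^{\pm}$ on the hyperboloids $H$, exactly as in the definition \eqref{eq:Defrhopm} of $\rho^{\pm}$, and then compute the time derivative of the resulting pairing. Differentiating \eqref{eq:Defrhopm} in $t$ (equivalently, moving from the hyperboloid $H_t$ to the nearby hyperboloid $H_{t+dt}$ and using the divergence theorem with the density-flux identity \eqref{dens-flux}), the key identity is
\begin{equation*}
\partial_t \langle \rho^{\pm}, \sigma^{\pm}(v)\rangle_H = (t^2-(vt)^2)^{\frac32}\Big( \int_H \langle (-i\gamma^\alpha\partial_\alpha+1)\psi, \psi_v^{\pm}\rangle_H\, dV_H - \int_H \langle \psi, (-i\gamma^\alpha\partial_\alpha+1)\psi_v^{\pm}\rangle_H\, dV_H \Big) + \text{(geometric terms)}.
\end{equation*}
Here the first integral uses the equation $(-i\gamma^\mu\partial_\mu+1)\psi = \gamma^\mu A_\mu\psi$, while the second uses the wave-packet equation \eqref{fv}, whose right-hand side is $(t^2-x^2)^{-\frac12}e^{i\phi_\pm}r_v^{\pm}$ with the structural representation \eqref{rv-rep}. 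The geometric terms arise because the base point $(t,vt)$ moves and the measure $dV_H$ and the matrix $\gamma^H$ change; these are accounted for by the hyperboloid conservation law of the linear Dirac flow, and one should verify they produce no resonant contribution (they are handled by the fact that, to leading order, $\psi \approx \sum_\pm (t^2-r^2)^{-3/4}e^{i\phi_\pm}\rho^\pm$ with $\rho^\pm \in V^\pm_v$, and $V^+_v \perp V^-_v$ in $\langle\cdot,\cdot\rangle_H$).

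First I would handle the source term $\int_H \langle \gamma^\mu A_\mu \psi, \psi_v^{\pm}\rangle_H\, dV_H$. Substituting the asymptotic expansion from Lemma~\ref{l:asympt}, $\psi = \sum_{\pm'} (t^2-r^2)^{-3/4}e^{i\phi_{\pm'}}\rho^{\pm'} + O(\epsilon(t^2-r^2)^{-7/8}t^{C\epsilon})$, the packet localizes the integral to the $(t^2-r^2)^{1/4}$-ball around $(t,vt)$, so $A_\alpha$, $\rho^{\pm'}$ and $v^\alpha$ may be frozen at $v$ up to acceptable errors (using $|A| \lesssim t^{-1}$ from \eqref{eq:A-uniform2}/\eqref{eq:improved TA} and the vector-field $L^2$ control \eqref{rho-2} to bound the variation). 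The diagonal term $\pm' = \pm$ contributes, via the twisted commutation relation \eqref{twist-com} and $P^{\pm}_v\gamma^\alpha P^{\pm}_v$-type identities together with $\langle\gamma^H\cdot,\cdot\rangle$ being what $\langle\cdot,\cdot\rangle_H$ is built from, exactly $v^\alpha A_\alpha \rho^{\pm}$ (the $\sqrt{1-v^2}$ factors from $x^\alpha = t v^\alpha$ and $\sqrt{t^2-x^2} = t\sqrt{1-v^2}$ recombining with the normalization $(t^2-(vt)^2)^{3/2}$ against the $dV_H$ volume element). The off-diagonal term $\pm' = \mp$ carries the oscillatory factor $e^{i(\phi_\mp - \phi_\pm)} = e^{\mp 2i\phi}$ and, after the same algebra, produces $e^{\pm 2i\phi}A_\alpha\sqrt{1-v^2}\,P^{\pm}_v\gamma^\alpha\rho^{\mp}$. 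The remaining pieces — the $O(\epsilon(t^2-r^2)^{-7/8}t^{C\epsilon})$ error from Lemma~\ref{l:asympt}, the contribution of $\sigma_1^{\pm}$ in the packet, and the variation of the frozen quantities — are all $O(\epsilon t^{-3/2+c\epsilon}(1-v^2)^{-1/4})$ after accounting for the normalizing power $(t^2-(vt)^2)^{3/2}$.

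Next I would treat the wave-packet error term $\int_H \langle \psi, (t^2-x^2)^{-1/2}e^{i\phi_\pm}r_v^{\pm}\rangle_H\, dV_H$. Here the crucial point is the representation \eqref{rv-rep}, $r_v^{\pm} = (t^2-x^2)^{-1/4}(\hat\Omega\tilde r_v^{\pm} + \tilde{\tilde r}_v^{\pm})$: for the $\hat\Omega$-term I would integrate by parts in the Lorentz vector field along $H$ (legitimate because $\hat\Omega$ is tangent to the hyperboloid and commutes with the linear Dirac flow, hence with the $\langle\cdot,\cdot\rangle_H$ structure up to controlled commutators), moving $\hat\Omega$ onto $\psi$, and then use the vector-field $L^2$ bound \eqref{rho-2}/\eqref{psi2} together with Cauchy--Schwarz on $H$; the extra $(t^2-x^2)^{-1/4}$ gain is what upgrades this to an acceptable $O(\epsilon t^{-3/2+c\epsilon}(1-v^2)^{-1/4})$ error. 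The $\tilde{\tilde r}_v^{\pm}$-term is estimated directly using the pointwise bound \eqref{psi-point}. Finally I would re-express $\partial_t\langle\rho^{\pm},\sigma^{\pm}\rangle_H = \langle i\partial_t\rho^{\pm},\sigma^{\pm}\rangle_H \cdot (-i) + \langle\rho^{\pm},\partial_t\sigma^{\pm}\rangle_H$; since $\sigma^{\pm}(v)$ is $t$-independent at fixed $v$ (it depends on $x/t = v$ only through $v$, and the Lorentz-invariant extension \eqref{hat} is chosen consistently), the second term vanishes, and matching real/imaginary parts against the orthonormal basis $\sigma^\pm_1,\sigma^\pm_2$ of $V^\pm_v$ yields \eqref{exact-asympt-eqn}.

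The main obstacle I anticipate is the careful bookkeeping of the geometric/commutator terms: the hyperboloid $H_t$ through $(t,vt)$ varies with $t$, the volume element $dV_H$, the matrix $\gamma^H$ defining $\langle\cdot,\cdot\rangle_H$, and the projectors $P^\pm_v$ all depend on the base point, so the naive "differentiate under the integral sign" must be justified via the conservation law for the linear Dirac flow on hyperboloids and the divergence theorem, keeping track of which of the $O((t^2-x^2)^{-1/2})$ and $O((t^2-x^2)^{-3/4})$ error terms from \eqref{bun} survive the normalization. The subtle algebraic identities — $P^{\mp}_v\gamma^\beta P^{\pm}_v\gamma^\alpha\sigma^{\pm} = g^{\alpha\beta}\sigma^{\pm} \mp \frac{x^\alpha x^\beta}{t^2-x^2}\sigma^{\pm}$ and its contractions — are exactly the ones already established in the wave-packet construction, so the difficulty is organizational rather than conceptual, but it is where an error would most easily creep in.
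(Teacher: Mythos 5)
Your proposal follows essentially the same route as the paper: the paper likewise writes the duality/divergence identity $\partial_\alpha\langle\gamma^0\gamma^\alpha\psi,\psi_v^\pm\rangle = iA_\alpha\langle\gamma^0\gamma^\alpha\psi,\psi_v^\pm\rangle - i\langle\gamma^0\psi,f_v^\pm\rangle$, integrates it between two nearby hyperboloids so that the boundary fluxes are exactly the defining pairings for $\rho^\pm$ (so the ``geometric terms'' you worry about never appear), freezes $A$ at $(t,vt)$ using the $\mathcal{T}A$ bounds, identifies the resonant and nonresonant terms through the $P_v^\pm$ decomposition and the twisted commutation relation \eqref{twist-com}, and handles the packet error via the representation \eqref{rv-rep} by integrating $\hat\Omega$ by parts. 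The only cosmetic differences are that the paper reads off the leading pairings directly from the definition of $\rho^\pm$ rather than substituting the expansion of Lemma~\ref{l:asympt}, and bounds the $\hat\Omega$ term with pointwise $H$-norm estimates on $\hat\Omega\psi$ rather than Cauchy--Schwarz in $L^2$.
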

 
Compared to \eqref{asympt-eqn}, here 
we have the additional second term.
But this is nonresonant, so it will 
not play a significant role in the global
asymptotics for $\psi$.

\begin{proof}
To prove this result we combine the linear 
Dirac equation \eqref{fv} for the wave packets  with the magnetic wave equation for $\psi$, by writing the duality relation 
\[
\langle (\gamma^0 \gamma^\alpha \partial_\alpha + i \gamma^0) \psi, \psi_v^\pm \rangle +   \langle  \psi, (\gamma^0 \gamma^\alpha \partial_\alpha +i \gamma^0) \psi_v^\pm \rangle =  i A_\alpha \langle \gamma^0 \gamma^\alpha \psi, \psi_v^{\pm}\rangle 
- i \langle {\gamma^0} \psi, f_v^{\pm} \rangle,
\]
or equivalently
\[
\partial_\alpha \langle \gamma^0 \gamma^\alpha  \psi, \psi_v^\pm \rangle = i A_\alpha \langle \gamma^0 \gamma^\alpha \psi, \psi_v^{\pm}\rangle 
- i \langle \gamma^0 \psi, f_v^{\pm} \rangle.
\]
Integrating this relation in the region $H_{[t_1,t_2]}$, between two hyperboloids 
$H_{t_1}$ and $H_{t_2}$ through $(t_1,vt_1)$ and $(t_2,vt_2)$, we obtain
\[
\langle(\rho^{\pm}(t_2,v) - \rho^{\pm}(t_1,v)),\sigma^{\pm}(v)\rangle_H = 
\int_{H_{[t_1,t_2]}}  i A_\alpha \langle \gamma^0 \gamma^\alpha \psi, \psi_v^{\pm}\rangle 
-i \langle \gamma^0 \psi, f_v^{\pm} \rangle \,dx dt.
\]
We divide by $t_2-t_1$ and pass to the limit 
$t_2-t_1 \to 0$, to obtain the differential relation
\[
\begin{aligned}
t_1\frac{d}{dt_1} \langle\rho^{\pm}(t_1,v),\sigma^{\pm}(v)\rangle_H = & \  
\int_{H_{t_1}}  \frac{t^2-x^2}{{\sqrt{t^2 + x^2}}}  \left(i A_\alpha \langle \gamma^0 \gamma^\alpha \psi, \psi_v^{\pm}\rangle 
- i  \langle \gamma^0 \psi, f_v^{\pm} \rangle \right) \, {d\sigma}
\\
 = & \  
\int_{H_{t_1}} (t^2-x^2)^2  \left(i A_\alpha \langle \gamma^0 \gamma^\alpha \psi, \psi_v^{\pm}\rangle 
- i \langle \gamma^0 \psi, f_v^{\pm} \rangle \right) \, dV_H.
\end{aligned}
\]
We can simplify the right hand side by peeling off some error terms. We begin with the contribution of $f_v^{\pm}$, which is as in \eqref{fv} with $r_v^{\pm}$ as in \eqref{rv-rep}. Then this contribution has the form
\[
\begin{aligned}
Err^1 =& \ - i \int_{H_{t_1}} (t^2-x^2)^\frac54 e^{i\phi_{\pm}}  
\langle \gamma^0 \psi, \hat \Omega \tilde r_v^{\pm} +\tilde{\tilde r}_v^{\pm} \rangle \, dV_H
\\ 
= & \ - i \int_{H_{t_1}} (t^2-x^2)^\frac54 e^{i\phi_{\pm}}  
\left(-\langle \hat \Omega \gamma^0 \psi, \tilde r_v^{\pm}\rangle +\langle \psi,\tilde{\tilde r}_v^{\pm} \rangle \right)\, dV_H.
\end{aligned}
\]

Using the pointwise bounds for $\psi$ and 
$\hat \Omega \psi$ in the $H$ norm, we obtain
\[
|Err^1| \lesssim \epsilon (t^2-x^2)^{-\frac14} t^{C\epsilon}\| \psi_v^{\pm}\|_{H},
\]
which is acceptable. The pointwise estimates for $\hat{\Omega} \psi$ follow from~\eqref{Energy-H} with $\psi$ replaced by $\Omega^{\leq 3} \psi$ combined with Proposition~\ref{prop:EnergyEstGamma}.

The next error term arises from freezing $A$ 
at $x_1= t_1 v$,
\[
\begin{aligned}
Err^2 =   \int_{H_{t_1}} (t^2-x^2)^2  i (A_\alpha-A_\alpha(t_1,vt_1)) \langle \gamma^0 \gamma^\alpha \psi, \psi_v^{\pm}\rangle  \,  dV_H.
\end{aligned}
\]
Using the pointwise bounds for $\mathcal TA$,
within the support of $\psi_v^\pm$ we have 
\[
|(A_\alpha-A_\alpha(t_1,x_1))| \lesssim 
t_1 \|\mathcal T A\|_{L^\infty} d(x/t,v)
\lesssim \epsilon t_1  t_1^{-1+c\epsilon}(t_1^2-x_1^2)^{-\frac12}
(t_1^2-x_1^2)^{-
\frac14} = \epsilon t_1^{\epsilon} (t^2-x^2)^{-\frac34},
\]
 which suffices in order to estimate $Err^2$,
\[
|Err^2| \lesssim \epsilon(t_1^2-x_1^2)^{-\frac14}\, 
t_1^{c\epsilon}\,  \| \psi_v^{\pm}\|_{H}.
\]

It remains to
look at the leading term in the coefficient of $A_\alpha$, which is
\[
\begin{aligned}
\langle \gamma^0 \gamma^\alpha \psi, \psi_v^{\pm} \rangle= & \ 
\langle \psi, \gamma^0 \gamma^\alpha \psi_v^\pm \rangle
\\
= & \ 
\langle \psi, \gamma^H\gamma^\alpha \psi_v^\pm \rangle_{H}
\\
= & \ 
\langle \psi, (P^+_v - P_v^-)\gamma^\alpha \psi_v^\pm \rangle_{H}.
\end{aligned}
\]
To avoid ambiguities we fix the $+$ sign on the left. Integrating, this gives
\[
\begin{aligned}
\int (t^2-x^2)^2 \langle \gamma^0 \gamma^\alpha \psi, \psi_v^{+} \rangle \, dV_H= & \ 
\sqrt{t^2-x^2}(\langle \rho^+, P^+ \gamma^\alpha \sigma_v^+ \rangle_H
- e^{2i\phi} \langle \rho^-, P^- \gamma^\alpha \sigma_v^+ \rangle_H)
\\ = & \  \frac{x^\alpha}{\sqrt{t^2-x^2}}\langle \rho^+, \sigma_v^{+} \rangle_{H} 
- e^{2i\phi} \langle \rho^-,  \gamma^\alpha \sigma_v^+ \rangle_H
\\ = & \  x^\alpha \langle \rho^+, \sigma_v^{+} \rangle_{H} 
+ e^{2i\phi} \sqrt{t^2-x^2} \langle \gamma^\alpha \rho^-,  \sigma_v^+ \rangle_H.
\end{aligned}
\]
Summing up the estimates for all the terms,
we obtain 
\[
\begin{aligned}
t_1\frac{d}{dt_1} \langle\rho^{+}(t_1,v),\sigma_v^{+}\rangle_H = &\ x^\alpha A_\alpha \langle \rho^+, \sigma_v^{+} \rangle_{H} 
+ e^{2i\phi} \sqrt{t_1^2-x_1^2} A_\alpha\langle \gamma^\alpha \rho^-,  \sigma_v^+ \rangle_H
\\
& \ 
+ O(\epsilon(t_1^2-x_1^2)^{-\frac14}
t_1^{c\epsilon} \| \psi_v^{\pm}\|_{H}),
\end{aligned}
\]
with a similar relation with changed signs.
This implies the conclusion of the lemma.
\end{proof}

\subsection{Global bounds for $\psi$} Our primary objective here is to 
use the asymptotic equation for the asymptotic profiles $\rho^{\pm}$ for $\psi$ in order to establish  global bounds for $\psi$, which in particular will close the $\psi$ part of our bootstrap.

We begin with the global uniform bound for $\psi$ from our Klainerman-Sobolev inequalities, which we recall here,
\begin{equation}
|\psi_{+}| \lesssim \epsilon t^{-\frac32} t^{c\epsilon}, \qquad  |\psi_{-}| \lesssim \epsilon \sqrt{t^2-x^2} t^{-\frac{5}{2}} t^{c\epsilon},
\qquad t-r \geq 1,
\end{equation}
which when switched to the $H$ norm yields
\begin{equation}
\|\psi\|_{H}  \lesssim \epsilon t^{-2} (t^2-x^2)^{\frac14} t^{c\epsilon}.
\end{equation}
This immediately yields a corresponding bound for the asymptotic profiles $\rho^{\pm}$, 
\begin{equation}\label{start-rho}
\| \rho^{\pm}(t,v)\|_{H} \lesssim 
\epsilon t^{C \epsilon} (1-v^2). 
\end{equation}
This is not sufficient for large $t$ because of the $t^{C\epsilon}$ growth. Instead we will use it in order to initialize the asymptotic equation. Precisely, we consider 
the subset of the forward light cone 
\[
 C^{in} =\{(t,x) \in (0,\infty) \times \mathbb{R}^3 \colon t > t_0(v):=(1-v^2)^{-10}, \text{ where } v = x/t \}.
\]
On the boundary of this set,  \eqref{start-rho} yields
\begin{equation}\label{start-rho2}
\| \rho^{\pm}(t,v)\|_{H} \lesssim 
 \epsilon(1-v^2)^{1-C\epsilon}, \qquad t = t_0(v) .
\end{equation}  
This will serve as initial data for the 
asymptotic equation on each ray $x = vt$.
Now we turn our attention to the asymptotic equation \eqref{exact-asympt-eqn}.

A preliminary step here is to eliminate 
the nonresonant terms, which is achieved 
by introducing the corrections
\begin{align*}
    \rho_1^\pm(t,v) = \rho^{\pm}(t,v) -
e^{\pm 2i \phi} A_\alpha P_v^\pm \gamma^\alpha 
\rho^{\mp}(t,v).
\end{align*}

On the one hand these corrections are small and decay at infinity, 
\begin{equation}
\label{del-rho}
\|\rho_1^\pm(t,v) - \rho^{\pm}(t,v)\|_{H}
\lesssim C \epsilon^2 t^{-2+C\epsilon} \sqrt{t^2-x^2}.
\end{equation}
On the other hand, they remove the nonresonant terms in the asymptotic equation. Precisely,
a direct computation using the pointwise bounds for $\nabla A$ and $\rho^{\pm}$ yields
\begin{equation}\label{exact-asympt-eqn+}
  i  \partial_t \rho^{\pm}_1(t,v) = v^\alpha A_\alpha \rho^{\pm}_1(t,v)
 + O((\epsilon+C \epsilon^2)t^{-1}(t^2-t^2 v^2)^{-\frac14}
t^{C\epsilon} ).
\end{equation}
Integrating this equation from $t_0$ to infinity yields the uniform bound
\[
\begin{aligned}
\|\rho_1^{\pm}(t,v)\|_{H} \lesssim & \ \epsilon(1-v^2)^{1-C\epsilon} + \int_{(1-v^2)^{-10}}^\infty t^{-1} (\epsilon+C \epsilon^2)(t^2-t^2 v^2)^{-\frac18} t^{C\epsilon} \, dt
\\
\lesssim & \ (\epsilon+C \epsilon^2)(1- v^2)^{1-C\epsilon}.
\end{aligned}
\]
We can now return to $\rho^{\pm}$ to obtain a similar bound,
\begin{equation}
  \|\rho^{\pm}(t,v)\|_{H} \lesssim   (\epsilon+C \epsilon^2)(1- v^2)^{1-C\epsilon}, \qquad  t^2 - x^2 > t.
\end{equation}
We remark that initially this is valid for $ t \geq t_0(v)$, but the bound \eqref{start-rho}

allows us to extend it to the full range above,
which is our final uniform bound for the asymptotic profile. From here, we can use 
Lemma~\ref{l:asympt} to transfer the bound to $\psi$, which yields
\begin{equation}\label{global-psi}
  \|\psi(t,x)\|_{H} \lesssim  t^{-\frac32}   (\epsilon+C \epsilon^2)(1- v^2)^{\frac14-C\epsilon}, \qquad  t > t_0(v),
\end{equation}
where, assuming $\epsilon$ is small enough, we can also replace $\epsilon+C \epsilon^2$ simply by 
$\epsilon$.  Component-wise this yields
\begin{equation}
|\psi_{+}| \lesssim \epsilon t^{-\frac32} (1-v^2)^{-C\epsilon}, \qquad  |\psi_{-}| \lesssim \epsilon  t^{-\frac{3}{2}}(1-v^2)^{\frac{1}{2} - C \epsilon}, \qquad t \geq t_0(v).
\end{equation}
This can be combined with the global bound 
for $\psi$ from Klainerman-Sobolev inequalities outside $C^{in}$. We summarize the outcome in the following
\begin{lem}\label{psi-final}
The spinor field $\psi$ satisfies the following bounds inside the cone:
\begin{equation}\label{psi-in}
\begin{aligned}
|\psi_{+}| \lesssim & \ \epsilon t^{-\frac32} (1-v^2)^{-C\epsilon}, 
\\
|\psi_{-}| \lesssim & \ \epsilon t^{-\frac32} (1-v^2)^{\frac12 -C\epsilon}.
\end{aligned}
\end{equation}
\end{lem}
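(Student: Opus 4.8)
The plan is to assemble the proof by patching together two regimes inside the light cone: the far region $C^{in}=\{\,t>t_0(v)\,\}$ with $t_0(v)=(1-v^2)^{-10}$, where the asymptotic equation of Lemma~\ref{lem:asympt-eqn} must be invoked, and the complementary region $t\le t_0(v)$, where the Klainerman--Sobolev bounds of Theorem~\ref{t:KS} already suffice. First I would record the crude uniform bound $\|\psi\|_H\lesssim \epsilon t^{-2}(t^2-x^2)^{1/4}t^{c\epsilon}$ coming from \eqref{eq:psi-uniform}–\eqref{eq:PointwiseBoundspsiplpsimin}, transfer it to the asymptotic profiles to obtain $\|\rho^\pm(t,v)\|_H\lesssim \epsilon t^{C\epsilon}(1-v^2)$ as in \eqref{start-rho}, and in particular read off \eqref{start-rho2} on the hypersurface $\{t=t_0(v)\}$, which will serve as initial data for the modulation equation on each ray $x=vt$.

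Next, inside $C^{in}$ I would pass to the resonantly corrected profiles $\rho_1^\pm=\rho^\pm-e^{\pm2i\phi}A_\alpha P_v^\pm\gamma^\alpha\rho^\mp$. By \eqref{del-rho} these differ from $\rho^\pm$ by a term that is both small and decaying in $t$, and by \eqref{exact-asympt-eqn+} they satisfy the scalar transport ODE $i\partial_t\rho_1^\pm=v^\alpha A_\alpha\rho_1^\pm+O((\epsilon+C\epsilon^2)t^{-1}(t^2-t^2v^2)^{-1/4}t^{C\epsilon})$. The crucial point is that the leading coefficient $v^\alpha A_\alpha$ is real, so it contributes nothing to $\partial_t\|\rho_1^\pm\|_H$; hence a direct integration from $t_0(v)$ to infinity, using $\int_{t_0(v)}^\infty t^{-1+C\epsilon}(t^2-t^2v^2)^{-1/8}\,dt\lesssim (1-v^2)^{1-C\epsilon}$, gives the uniform bound $\|\rho_1^\pm(t,v)\|_H\lesssim (\epsilon+C\epsilon^2)(1-v^2)^{1-C\epsilon}$, and then the same for $\rho^\pm$ after undoing the correction. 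Transferring this back to $\psi$ via Lemma~\ref{l:asympt} produces \eqref{global-psi}, namely $\|\psi(t,x)\|_H\lesssim t^{-3/2}(\epsilon+C\epsilon^2)(1-v^2)^{1/4-C\epsilon}$ for $t>t_0(v)$ — the $O((t^2-r^2)^{-7/8}t^{C\epsilon})$ error in Lemma~\ref{l:asympt} being dominated by the main term in this range. Decomposing via $\|\psi\|_H^2=\big(\tfrac{t+r}{t-r}\big)^{1/2}|\psi_-|^2+\big(\tfrac{t-r}{t+r}\big)^{1/2}|\psi_+|^2$ yields exactly the two componentwise estimates of the lemma in $C^{in}$, and $\epsilon+C\epsilon^2$ can be absorbed into $\epsilon$ for $\epsilon$ small.

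Finally, in the region $t\le t_0(v)=(1-v^2)^{-10}$ one has $t^{c\epsilon}\lesssim (1-v^2)^{-10c\epsilon}$, so the Klainerman--Sobolev bounds $|\psi_+|\lesssim \epsilon t^{-3/2}t^{c\epsilon}$ and $|\psi_-|\lesssim \epsilon\sqrt{t^2-x^2}\,t^{-5/2}t^{c\epsilon}=\epsilon t^{-3/2}(1-v^2)^{1/2}t^{c\epsilon}$ already imply the claimed bounds after relabeling the constant $C$; patching the two regimes (and recalling that everything is restricted to $t-r\ge 1$ inside the cone) completes the proof. The main obstacle here is conceptual rather than computational: one must upgrade the $t^{C\epsilon}$ growth coming from the energy and Klainerman--Sobolev estimates into a bound that is uniform in $t$ along each ray, which is possible only because the asymptotic ODE is, to leading order, a pure phase rotation by the real quantity $v^\alpha A_\alpha$, so that the $H$-norm of the corrected profile is essentially conserved with a time-integrable remainder. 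The remaining care is bookkeeping: tracking the powers of $(1-v^2)$ and choosing the universal constant $C$ large enough to swallow both the $(1-v^2)^{-10}$ cutoff and the accumulated $C\epsilon$-losses.
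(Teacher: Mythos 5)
Your proposal follows essentially the same route as the paper's proof: initialize the asymptotic ODE at $t_0(v)=(1-v^2)^{-10}$ via the Klainerman--Sobolev bound \eqref{start-rho2}, integrate the nonresonantly corrected equation \eqref{exact-asympt-eqn+} using that the coefficient $v^\alpha A_\alpha$ is real, return to $\rho^\pm$ via \eqref{del-rho}, transfer to $\psi$ through Lemma~\ref{l:asympt}, and patch with the Klainerman--Sobolev bounds in the region $t\le t_0(v)$. The only point the paper adds afterwards is that, inside the bootstrap region $C_{<T}$, the profiles are only defined on a smaller set $D_{<T}$, so a short fundamental-theorem-of-calculus argument along hyperboloids is needed to cover $C_{<T}\setminus D_{<T}$; your global-in-time phrasing glosses over this, exactly as the paper's main text does before its clarifying remark.
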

This in particular proves the pointwise bound for $\psi$ in 
\eqref{thm-point} in Theorem~\ref{thm:gwp-easy}.

Now we are finally able to close the bootstrap
assumption for $\psi$.  For this we use 
the above bound in the region $C^{in}$, 
and our initial Klainerman-Sobolev bound outside, taking advantage of the better decay of $\psi$ away from the cone and the size of the remaining region,
\[
\| \psi(t)\|_{L^6}^6 \lesssim 
\int_{C_t^{in}} |\psi|^6 \, dx + \int_{\R^3\setminus C_t^{in}} |\psi|^6\,  dx  
\lesssim \epsilon^6 t^{-6} + \epsilon^6 t^{-6 + 6C\epsilon - \frac{2}{3}} + \epsilon^6 t^{-6(1+\delta)+C\epsilon} .
\]
This gives
\begin{equation}
 \| \psi(t) \|_{L^6} \lesssim \epsilon t^{-1}, \end{equation}
as needed in order to close the $\psi$
part of our bootstrap assumption, since the implicit constant is independent of the (large) constant in the bootstrap assumption~\eqref{eq:boot-psi-A}.

To avoid cluttering the proof, the above arguments omit a minor but necessary technical point, which we now explain and 
clarify. Precisely, our bootstrap assumption \eqref{eq:boot-psi-A} is made 
in the set $C_{<T}$, and we need to improve it within the same set. However, 
if $\psi$ is  known only in $C_T$, then the asymptotic profiles $\rho_{\pm}$, whose
definition involves localized integrals 
on hyperboloids, are only defined within a smaller subset $D_{<T} \subset C_{<T}$, 
which can be easily computed as follows:
\[
D_{<T} = \{ (t,x) \in C_{<T}; t^2-x^2 > t, \ t \leq 4T ( 1 - C (t^2-x^2)^{-\frac14})\}
\]
with a fixed large universal constant $C$.
To easily see what is  the above upper boundary of $D_{<T}$, we need to move
down along hyperboloids from the top of $C_T$ for a hyperbolic 
distance $(t^2-x^2)^{-\frac14}$. The full
$C_T$ has size $1$ along hyperboloids, 
so it suffices to select the proportional 
region.

Then our arguments only yield the bound 
\eqref{psi-in} within $D_{<T}$, and we need an additional argument in order to 
cover the remaining region $C_{<T}\setminus D_{< T}$. This is a direct application of the Fundamental Theorem of Calculus along hyperboloids, which is most readily seen using the $H$
norm. Precisely, the bound \eqref{psi-in}
is equivalently written as
\begin{equation}\label{psi-H-DT}
\| \psi(t,x)\|_{H} \lesssim \epsilon t^{-\frac32}(1-v^2)^{\frac14-C\epsilon}.
\end{equation}
On the other hand the energy estimates on hyperboloids, combining Proposition~\ref{prop:EnergyEstGamma}
and \eqref{Energy-H}, yield the $L^2$
bound 
\[
\int_{H \cap C_T} \|(t^2-x^2)^\frac32 \hat \Omega^{\leq 3} \psi\|_{H}^2 dV_H \lesssim \epsilon T^{C\epsilon} ,
\]
which by Sobolev embeddings on $H \cap C_T$
yield the pointwise bound
\[
\|\hat \Omega \psi\|_{H}\lesssim \epsilon (t^2-x^2)^{-\frac32} T^{C \epsilon}.
\]
Here we use the operators $\hat \Omega$ whose transport conveniently generates isometries in the $H$ norm.

Now each point $(t_0,x_0)$ in $C_T$ may be connected along a hyperbolic geodesic to a point $(t_1,x_1)$ in $D_T$ at distance $\lesssim (t^2-x^2)^{-\frac12}$. Applying the fundamental theorem of calculus along this geodesic in the $H$ norm we obtain
\[
\begin{aligned}
\| \psi(t_0,x_0)\|_{H} 
\lesssim & \ \| \psi(t_1,x_1)\|_{H} + 
\epsilon (t^2-x^2)^{-\frac12} (t^2-x^2)^{-\frac32} T^{C \epsilon}
\\
\lesssim & \ \epsilon T^{-\frac32}(1-v^2)^{\frac14-C\epsilon} + 
\epsilon T^{-2+C\epsilon} (1-v^2)^{-2} .
\end{aligned}
\]
Here we see that the contribution of the 
second term is at most comparable if $(1-v^2) > T^{-\frac27}$, so we recover \eqref{psi-H-DT} at $(t_0,x_0)$. On the other hand if $(1-v^2) < T^{-\frac27}$
then we obtain the same outcome simply by combining the energy estimates in Proposition~\ref{prop:EnergyEstGamma}
with the Klainerman-Sobolev inequality in 
Theorem~\ref{t:KS}.

\vspace{1em}

%%%%%%%%%%%%%%%%%%%%%%%%%%%%%%%%%%
%%%%%%%%%%%%%%%%%%%%%%%%%%%%%%%%%%
\section{Uniform bounds for $A$}
\label{s:A}
%%%%%%%%%%%%%%%%%%%%%%%%%%%%%%%%%%
%%%%%%%%%%%%%%%%%%%%%%%%%%%%%%%%%%
Our primary goal here is  to capture
the $t^{-1}$ decay of $A$, and thus close the $A$ component of our bootstrap bound \eqref{eq:boot-psi-A}.

Recall that the wave equation that $A$ satisfies is 
\[
 \Box A_\mu = - \overline{\psi} \gamma_\mu \psi,
\]
with initial data
\[
A_\mu(0) = a_\mu, \quad \partial_t A_\mu(0) = \dot a_\mu .
\]
We separate the contributions from the 
source and from the initial data, writing
\[
A= A^{hom} + A^{inhom}.
\]
We will obtain separate bounds for the two components. 

\subsection{Bounds for $A^{hom}$.}

Our goal here will be to prove the following: 

\begin{prop}\label{prop:Ahom}
Assume that the initial data $a,\dot a$ for $A^{hom}$ is as in Theorem~\ref{thm:gwp-easy}. Then $A^{hom}$ satisfies the uniform bound
\begin{equation}\label{eq:A-hom}
|A^{hom}(t,x)| \lesssim \frac{\epsilon}{\langle t \rangle \langle t-r\rangle^{\nu/6}  } .   
\end{equation} 
\end{prop}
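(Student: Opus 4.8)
The plan is to prove the bound \eqref{eq:A-hom} by decomposing the homogeneous wave solution into dyadic frequency pieces and estimating each piece separately, exploiting both the vector field bounds on the data \eqref{small-data} and the additional low frequency assumption \eqref{small-data-nu}. First I would write $A^{hom} = \sum_{\lambda \in 2^{\Z}} A^{hom}_\lambda$, where $A^{hom}_\lambda$ solves the homogeneous wave equation with frequency-localized data $(a_\lambda, \dot a_\lambda)$. For each $\lambda$ the solution is given by the standard half-wave representation, and the key input is the stationary phase / dispersive decay estimate for the wave equation together with the vector field structure: applying the Lorentz boosts and rotations $\Omega$ (which commute with $\Box$) converts angular and tangential regularity on the initial data into decay in $\langle t - r\rangle$ away from the light cone, in the spirit of Klainerman-Sobolev. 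Since the data carries $\Omega^J \partial^K$-type control with $3|J| + |K| \le 9$ in $\dot H^{\pm 1/2}$, we have enough vector fields (at least two $\Omega$'s) to run a stationary phase argument on dyadic space-time regions.

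The main quantitative step is to track the $\lambda$-dependence carefully. For a fixed frequency $\lambda$, the linear wave evolution of data of size $\|a_\lambda\|_{\dot H^{1/2}} + \|\dot a_\lambda\|_{\dot H^{-1/2}} \lesssim \epsilon c_\lambda$ (with $\sum c_\lambda^2 \lesssim 1$ from \eqref{small-data}, and additionally $\lambda^{-\nu}$-weighted smallness at low frequencies from \eqref{small-data-nu}) produces a pointwise bound of the form $|A^{hom}_\lambda(t,x)| \lesssim \epsilon \lambda \cdot \lambda^{1/2} \langle t\rangle^{-1} \langle t - r\rangle^{-N} c_\lambda$ for $\lambda \gtrsim 1$ (with rapid off-cone decay coming from the vector fields), while for $\lambda \lesssim 1$ one gets $|A^{hom}_\lambda(t,x)| \lesssim \epsilon \lambda^{1/2} \langle \lambda t\rangle^{-1} \cdot (\text{angular gain})$, using the $\dot H^{-1/2}$ normalization and the finite speed/spreading of the low-frequency piece. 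Summing over $\lambda \gtrsim 1$ is immediate by the rapid decay in $\langle t-r\rangle$ and square-summability of $c_\lambda$; the genuinely delicate sum is over $\lambda \lesssim 1$, where naively one loses a logarithm $\log \frac{\langle t + r\rangle}{\langle t - r\rangle}$ near the cone — this is exactly the logarithmic loss flagged in the Remark after Theorem~\ref{thm:ms}. The low frequency assumption \eqref{small-data-nu} supplies the extra factor $\lambda^{\nu}$ (or $\lambda^{\nu/6}$ after interpolating against the vector field bounds) which, combined with $\langle t - r\rangle^{-\nu/6}$ arising from trading a fractional power of the off-cone decay, makes the dyadic sum in $\lambda$ converge geometrically and yields \eqref{eq:A-hom}.

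Concretely, the steps in order: (1) Littlewood-Paley decompose the data and set up the half-wave parametrix for each $A^{hom}_\lambda$; (2) prove the frequency-localized fixed-time dispersive bound with off-cone decay using the $\Omega$ vector fields commuting with $\Box$ and Klainerman-Sobolev on dyadic regions, keeping explicit $\lambda$ powers; (3) for high frequencies $\lambda \gtrsim 1$, use the $\dot H^{1/2}$-normalized part of \eqref{small-data} together with many vector fields to get rapid decay in $\langle t - r\rangle$ and sum trivially; (4) for low frequencies $\lambda \lesssim 1$, combine the $\dot H^{-1/2}$ part of \eqref{small-data} with the improved smallness \eqref{small-data-nu}, interpolate to distribute a small power $\nu/6$ between an off-cone weight and a $\lambda$-gain, and sum the geometric series; (5) assemble the pieces to conclude \eqref{eq:A-hom}.

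The hard part will be step (4): near the light cone the low-frequency contributions are individually only borderline-summable (the source of the logarithm), so one must be careful to extract a genuine power $\langle t - r\rangle^{-\nu/6}$ rather than just $\langle t-r\rangle^{-0+}$, and this requires correctly balancing how much of the $\lambda^{\nu}$ gain from \eqref{small-data-nu} is spent to beat the $\lambda$-summation versus how much is converted into off-cone decay via the vector fields. A secondary technical nuisance is that the data is only assumed in homogeneous Sobolev spaces, so very low frequencies must be handled via the $\dot H^{-1/2}$ norm and the $\langle \lambda t \rangle^{-1}$ spreading bound rather than any $L^\infty$-based estimate, and one should check that the constraint equations do not interfere (they do not, since this is the purely linear homogeneous evolution). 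Once \eqref{eq:A-hom} is in hand, the complementary bound for $A^{inhom}$ from the quadratic source is handled in the next subsection using the already-established $t^{-3/2}$ decay of $\psi$ from Lemma~\ref{psi-final}.
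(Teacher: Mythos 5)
Your proposal is essentially the paper's own argument: Littlewood--Paley decompose $A^{hom}$, run the frequency-localized vector-field/Klainerman--Sobolev bounds keeping the $\lambda$-powers, handle $\lambda\gtrsim 1$ directly, and for $\lambda\lesssim 1$ interpolate the $\dot H^{\frac12}\times\dot H^{-\frac12}$ vector-field bounds against the improved bound \eqref{small-data-nu} to get a $\lambda^{\nu/6}$ gain, which makes the dyadic sum near the cone produce $\langle t-r\rangle^{-\nu/6}$ instead of the logarithm. The only caveat is your claim of rapid $\langle t-r\rangle^{-N}$ decay for the high-frequency pieces, which is not available with only three $\Omega$'s and is also unnecessary: the paper simply controls $P_{\geq 1}A^{hom}$ through the Klainerman--Sobolev gradient bound $|\nabla A^{hom}|\lesssim \epsilon\langle t\rangle^{-1}\langle t-r\rangle^{-\frac12}$, which already beats the required $\langle t-r\rangle^{-\nu/6}$.
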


We remark here the important role played by the $\dot H^{\frac12-\nu} \times \dot H^{-\frac12 - \nu}$ bound
in \eqref{small-data-nu}, where the condition $\nu > 0$ is needed in order to guarantee the uniform $t^{-1}$ decay included in \eqref{eq:A-hom}. 

\begin{rem}\label{rem:mu0}
In the limiting case  $\nu = 0$ 
the bound \eqref{eq:A-hom} is no longer
valid. Instead the best uniform bound
has a logarithmic loss, close to the cone,
\begin{equation}\label{eq:A-hom-limit}
|A^{hom}(t,x)| \lesssim \frac{\epsilon}{\langle t \rangle}  \log \frac{2\langle t \rangle} {\langle t-r\rangle}.  
\end{equation} 
Alternatively, one may retain the $t^{-1}$ decay in the BMO setting, 
\begin{equation}\label{eq:A-hom-BMO}
\|A^{hom}(t,x)\|_{BMO} \lesssim \frac{\epsilon}{\langle t \rangle}.
\end{equation}
\end{rem}

We now prove the above result.

\begin{proof}[Proof of Proposition~\ref{prop:Ahom}]
Since we are solving a homogeneous problem,
the vector field energy bounds are immediate
and without the $t^{C\epsilon}$ loss. In particular we have
\begin{equation}
\| \Gamma^{\leq 9} A^{hom}\|_{X_T} \lesssim \epsilon,
\end{equation}
which by our Klainerman-Sobolev inequalities 
yields
\begin{equation}
| \nabla A^{hom}| \lesssim \langle t\rangle^{-1} 
\langle t-r \rangle^{-\frac12}.
\end{equation}
This immediately implies a similar bound for 
the high frequencies of $A^{hom}$,
\begin{equation}
| P_{\geq 1} A^{hom}| \lesssim \langle t\rangle^{-1} 
\langle t-r \rangle^{-\frac12}.
\end{equation}
It remains to consider low frequencies,
which we index according to the dyadic frequency $\lambda < 1$.

Here we have on one hand the $\dot H^{\frac12} \times \dot H^{-\frac12}$ initial data bounds from \eqref{small-data},
which implies the energy bounds
\begin{equation}\label{hom-energy}
\| \Gamma^{\leq 9} A^{hom}_\lambda [\cdot ]  \|_{L^\infty (\dot H^1 \times L^2)} \lesssim \epsilon \lambda^{\frac12}  .   
\end{equation}
On the other hand from \eqref{small-data-nu}
we also have the better undifferentiated bound
\begin{equation}\label{hom-energy-nu}
\|  A^{hom}_\lambda [\cdot ]\|_{L^\infty (\dot H^1 \times L^2)} \lesssim \epsilon \lambda^{\frac12+\nu}     .
\end{equation}

Using only \eqref{hom-energy}, our proof of the Klainerman-Sobolev inequalities in Theorem~\ref{t:KS} gives the pointwise 
bound 
\begin{equation}
| \nabla A^{hom}_{\lambda}| \lesssim \lambda^\frac12\langle t\rangle^{-1} 
\langle t-r \rangle^{-\frac12}.
\end{equation}
Since localization to frequency $\lambda$ 
means averaging on the $\lambda^{-1}$ scale,
the above bound implies that
\begin{equation}
|  A^{hom}_{\lambda}| \lesssim \lambda^{-\frac12} (t + \lambda^{-1})^{-1} 
( |t-r|+\lambda^{-1})^{-\frac12} \lesssim \frac{1}{t},
\end{equation}
which is tight only  when $t \gtrsim \lambda^{-1}$ and $|t-r| \lesssim \lambda^{-1}$. Here dyadic summation over 
$\lambda < 1$ would lead to \eqref{eq:A-hom-limit}.

On the other hand if we also use the improved bound \eqref{hom-energy-nu},
then the same argument in 
Theorem~\ref{t:KS} combined with interpolation gives a slightly better pointwise bound 
\begin{equation}
| \nabla A^{hom}_{\lambda}| \lesssim \lambda^{\frac12+\frac{\nu}6} \langle t\rangle^{-1} 
\langle t-r \rangle^{-\frac12},
\end{equation}
which in turn implies that
\begin{equation}
|  A^{hom}_{\lambda}| \lesssim \lambda^{-\frac12+\frac{\nu}{6}} (t + \lambda^{-1})^{-1} 
( |t-r|+\lambda^{-1})^{-\frac12}.
\end{equation}
Here dyadic summation for $\lambda \leq 1$
directly gives the bound \eqref{eq:A-hom}.

\end{proof}

\subsection{Bounds for $A^{inhom}$}
To estimate $A^{inhom}$ inside the cone,
we use the pointwise bounds for $\psi$
in order to estimate the source term
\[
G^\alpha = \overline{\psi} \gamma^\alpha \psi. 
\]
Precisely, from Lemma~\ref{psi-final}
we have the bound inside the cone
\begin{equation}\label{Gint}
|G^\alpha| \lesssim \epsilon \langle t\rangle^{-3}
\left( \frac{ | t + r | }{| t-r |}  \right)^{C\epsilon},
\end{equation}
while from Theorem~\ref{t:KS} and~\eqref{eq:PointwBoundCTSmin} we have a better bound outside the cone,
\begin{equation}\label{Gout}
  |G^\alpha| \lesssim \epsilon \langle t+r\rangle^{-3} \langle t-r \rangle^{-\delta}  .
\end{equation}
We use these two bounds together with the fundamental solution of the wave equation
in order to estimate $A^{inhom}$:

\begin{prop} \label{prop:Ainhom}
We have 
\begin{equation}
|A^{inhom}| \lesssim \epsilon t^{-1}   \langle t-r \rangle_-^{-\delta}.
\end{equation}
\end{prop}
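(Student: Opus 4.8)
The plan is to estimate $A^{inhom}$ via the Kirchhoff representation
\[
A^{inhom}(t,x) = \frac{1}{4\pi}\int_0^t \frac{1}{t-s}\int_{|x-y|=t-s} G^\alpha(s,y)\, d\sigma(y)\, ds,
\]
splitting the source $G^\alpha$ into its interior and exterior contributions and using the bounds \eqref{Gint} and \eqref{Gout} respectively. The main point is that $G^\alpha$ decays like $\langle s\rangle^{-3}$ (up to the harmless $C\epsilon$ weights involving $\langle s-|y|\rangle$), which is integrable enough to recover the sharp $t^{-1}$ decay. First I would record the elementary pointwise bound for the wave propagator: if $|G(s,y)| \lesssim \epsilon \langle s+|y|\rangle^{-3}\langle s-|y|\rangle^{-a}$ for suitable small $a\ge 0$ (here $a = -C\epsilon$ on the interior, $a = \delta - C\epsilon > 0$ on the exterior), then
\[
\Big| \int_0^t \frac{1}{t-s}\int_{|x-y|=t-s} G(s,y)\, d\sigma\, ds \Big| \lesssim \epsilon\, t^{-1}\,\langle t-r\rangle_-^{-\delta},
\]
where $\langle\cdot\rangle_-$ denotes the negative-part weight that only sees the region $r > t$.

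The computation proceeds by the standard reduction: parametrizing the backward light cone from $(t,x)$, the surface measure gives a factor $(t-s)^2\, d\omega$ that cancels the $(t-s)^{-1}$ and one more power, and one is left controlling
\[
\int_0^t (t-s)\,\sup_{|x-y|=t-s} |G(s,y)|\, ds,
\]
so it suffices to understand, for each $s$, how large $|G(s,y)|$ can be on the sphere $|x-y| = t-s$. Here I would split into the two natural regimes, depending on whether the backward light cone from $(t,x)$ at time $s$ lies predominantly inside or outside the forward cone $\{|y|<s\}$, and interpolate the two source bounds accordingly. For points $(t,x)$ inside the cone, the worst interactions come from $s \sim t$, $|y|$ near the cone, where \eqref{Gint} is used; the time integral $\int^t (t-s)\langle s\rangle^{-3}\langle s-|y|\rangle^{C\epsilon}\, ds$ then yields $t^{-1}$ after carefully tracking that the $\langle s-|y|\rangle^{C\epsilon}$ growth is absorbed by a small loss in the power of $t$ — precisely the $C\epsilon$ in the final bound. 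For the exterior region $r>t$ one uses \eqref{Gout}, where the extra $\langle s-|y|\rangle^{-\delta}$ decay is exactly what produces the $\langle t-r\rangle_-^{-\delta}$ factor: the key geometric fact is that for $(t,x)$ with $r-t$ large, every point $(s,y)$ on the backward light cone satisfies $|y|-s \gtrsim r-t$, so the $\langle s-|y|\rangle^{-\delta}$ weight is bounded by $\langle r-t\rangle^{-\delta}$ uniformly along the cone.

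The main obstacle is the bookkeeping near the light cone $r = t$: there the $\langle t+r\rangle/\langle t-r\rangle$ ratios in \eqref{Gint}-\eqref{Gout} blow up, and one must verify that the $C\epsilon$-power of this ratio, when integrated against the wave kernel, only produces a $C\epsilon$-power of $t$ and does not destroy the $t^{-1}$ rate or the $\langle t-r\rangle^{-\delta}_-$ factor. A clean way to handle this is to dyadically decompose the source in $\langle s-|y|\rangle$ — i.e., work on the regions $C_{SS'}^{\pm}$-type slabs from Section~\ref{s:ks} — estimate the contribution of each dyadic piece to $A^{inhom}(t,x)$ using the kernel bound above, and then sum the geometric-type series in the dyadic parameter, where the $\langle s-|y|\rangle^{\pm}$ powers with small exponent guarantee convergence. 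I expect the interior estimate to be the more delicate of the two, since there is no genuine extra decay off the cone to spend, only the margin encoded in the $C\epsilon$ exponents; the exterior estimate is more robust because of the $\delta$-decay in \eqref{Gout}.
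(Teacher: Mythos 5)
The step where your argument breaks is the reduction to $\int_0^t (t-s)\sup_{|x-y|=t-s}|G(s,y)|\,ds$. Replacing the spherical mean by the sup times the full area $4\pi(t-s)^2$ discards the angular localization of the source, and near the light cone this loss is fatal rather than a matter of bookkeeping. Concretely, take $(t,x)$ inside the cone with $t-r=O(1)$. For each $s\lesssim t/2$ the sphere $|x-y|=t-s$ is nearly tangent to the cone $\{|y|=s\}$: its point closest to the origin has $|y|=|r-(t-s)|\approx s$ with $s-|y|\approx t-r=O(1)$, so by \eqref{Gint} the sup of $|G(s,\cdot)|$ over that sphere is of size $\epsilon\langle s\rangle^{-3+C\epsilon}$, and hence
\[
\int_0^t (t-s)\sup_{|x-y|=t-s}|G(s,y)|\,ds \ \gtrsim\ t\int_1^{t/2}\epsilon\, s^{-3+C\epsilon}\,ds\ \approx\ \epsilon\, t ,
\]
which is nowhere near the claimed $\epsilon t^{-1}$. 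The point is that the region where $|G(s,\cdot)|$ attains this sup (the shell $|s-|y||\lesssim 1$) meets the sphere only in a thin cap whose area is a vanishing fraction of $(t-s)^2$, so the true spherical average is far smaller than the sup. Your proposed fix --- dyadic decomposition in $\langle s-|y|\rangle$ and summation --- does not repair this as described, because you still estimate each piece ``using the kernel bound above,'' i.e.\ with the full sphere area; what is actually needed is the surface measure of the intersection of the backward light cone with each dyadic shell, and that geometric computation is the entire content of the interior estimate.

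This is also where your plan diverges from the paper's proof, which sidesteps the issue structurally: by positivity of the forward fundamental solution in $3+1$ dimensions, $A^{inhom}$ is majorized by the radial solution $B$ driven by the radial majorant of \eqref{Gint}--\eqref{Gout}, and $rB$ solves a $1+1$-dimensional problem whose solution is the explicit integral of $r_1G$ over the null region $\{\,t_1-r_1\le t-r,\ t-r\le t_1+r_1\le t+r,\ t_1\ge 0\,\}$. The lower constraint $t_1+r_1\ge t-r$ (equivalently, the fact that a sphere about $x$ of radius $t-t_1$ only meets spheres about the origin of radii $r_1\in[\,|r-(t-t_1)|,\,r+(t-t_1)\,]$) confines the outgoing null variable to an interval of length $2r$ sitting at height at least $t-r$, and it is precisely this confinement that prevents the logarithmic accumulation along the cone and yields the clean $\epsilon t^{-1}$ bound; with the extra $\langle t_1-r_1\rangle^{-\delta}$ decay of \eqref{Gout} the exterior contribution then produces the $\langle t-r\rangle_-^{-\delta}$ factor much as you describe (and your observation that $|y|-s\ge r-t$ on the backward cone is correct). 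So the exterior half of your plan is sound, but the interior/near-cone half must be redone keeping the precise geometry of the backward cone --- either via the paper's radial $1$D reduction or by explicitly computing the sphere--shell intersection measures --- rather than via the sup-on-sphere kernel bound.
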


Together with Proposition~\ref{prop:Ahom}, 
the above proposition implies the 
global pointwise bound 
\[
\| A(t)\|_{L^\infty} \lesssim \frac{\epsilon}{\langle t\rangle}, 
\]
which in turn closes the bootstrap assumption on $A$, and thus completes the proof of  our global well-posedness result in Theorem~\ref{thm:gwp-easy}.

\medskip

For later use we remark that the bulk 
of the output here comes from a region strictly inside the cone:

\begin{rem}\label{rem:peel-ext}
The main contribution to $A$ comes from the portion of $G$ strictly inside the cone. Precisely, if the source term $G$ is localized
to an exterior region, say 
$G^{ext} := 1_{D^{ext}} G$ where
\[
D^{ext} := \{ t < (1-v^2)^{-10}\}, 
\]
then the corresponding solution $A^{ext}$
to $\Box A^{ext} = G^{ext}$ satisfies 
a better bound
\begin{equation}
|A^{ext}| \lesssim \epsilon t^{-1}   \langle t-r \rangle^{-\delta},
\end{equation}
with decay also inside the cone. This 
is proved in the same way as in the proof of the proposition below.
\end{rem}

\begin{proof}[Proof of Proposition~\ref{prop:Ainhom}]
Here we  use the positivity of the fundamental solution for the wave equation in three space dimensions to reduce to the radial case  solving 
\[
\Box B = G
\]
with  the radial source as in the right hand side of \eqref{Gint}, respectively \eqref{Gout},
\[
G := G^{int} + G^{out}.
\]
Then we must have $|A| \lesssim B$.
We can further reduce 
to the one dimensional case, solving 
\[
\Box_1(r B) = r G
\]
forward in time with zero Cauchy data at $t=0$ and zero boundary condition at $r = 0$.

This has fundamental solution
\[
r B(t,r) = \frac12 \int_{D(r,t) \cap \{t > 0\}} r_1 G(r_1, t_1)\,  dr_1 dt_1 ,
\]
where the domain $D(r,t)$ is the backward 
infinite null rectangle with two vertices 
at $(r,t)$ and $(0,t-r)$.
The rest of the proof is a straightforward computation of the last integral, which is left for the interested reader.
\end{proof}

\section{Modified scattering}

The aim of this final section is to 
study the modified scattering asymptotics 
for $A$ and $\psi$, and thus to 
complete the proof of Theorem~\ref{thm:ms}.
The steps are as follows:

\begin{enumerate}[label=\alph*)]
\item We solve the asymptotic equation for $\rho^{\pm}$, and show that the limit of $\rho^{\pm}$ exists up to a phase rotation.

\item Using the $\psi$ asymptotics, we compute the $A$ asymptotics inside the cone, showing 
that the limit of $t A$ exists at infinity.

\item Using the $A$ asymptotics, we return to 
$\rho^{\pm}$ and compute the phase rotation factor, completing the proof of Theorem~\ref{thm:ms}.

\end{enumerate}

\subsection{ The radiation profile of $\psi$}
\label{sec:rad-psi}
Here we renormalize the asymptotic profiles $\rho^{\pm}$ for the spinor field $\psi$ by setting
\begin{equation}
\tilde\rho^{\pm} (t,v) := \rho^{\pm} (t,v) e^{i \theta(t,v) }   ,
\end{equation}
where the real phase $\theta$ is
\begin{equation}
\theta(t,v) = \int_{t_0(v)}^t \frac{x^\alpha A_\alpha(t,vt)}{t} \, dt ,      
\end{equation}
and where $t_0(v)$ is chosen on a fixed hyperboloid, say $t^2-x^2 = 1$. Then we have the following

\begin{lem}
The limit 
\begin{equation}
\rho^\pm_\infty(v) := \lim_{t \to \infty}  \tilde \rho^{\pm} (t,v)    
\end{equation}
exists for each $|v| < 1$, and satisfies 
the uniform bounds
\begin{equation}\label{rho-inf-point}
\|\rho^{\pm}_\infty(v)\|_{H} \lesssim \epsilon
(1-v^2)^{1-C\epsilon}, 
\qquad \|\tilde \rho^{\pm}(t,v)  -  \rho^\pm_\infty(v) \|_H \lesssim 
\epsilon t^{-\frac1{16}},
\end{equation}
% and the $L^2$ bounds 
% \begin{equation}
% \| \Omega^{\leq 2} \rho^{\pm}_{\infty}\|_{L^2(H)} \lesssim \epsilon     
% \end{equation}
as well as the energy identity
\begin{equation}
\sum_{\pm} \|\rho^{\pm}_{\infty}\|_{L^2(H)}^2
= \| \psi_0\|_{L^2}^2.
\end{equation}
\end{lem}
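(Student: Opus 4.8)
The plan is to establish the three assertions in turn: existence of the limit with a quantitative rate, the pointwise bound on $\rho^\pm_\infty$, and the energy identity. The renormalizing phase is chosen precisely so that $\partial_t\theta(t,v)=x^\alpha A_\alpha(t,vt)/t=v^\alpha A_\alpha(t,vt)$ matches the resonant coefficient in the asymptotic equation of Lemma~\ref{lem:asympt-eqn}; since $A$ is real, $\theta$ is real, so multiplication by $e^{i\theta}$ is an isometry for $\|\cdot\|_H$. For the first step I would work with the nonresonant-corrected profiles $\rho_1^\pm$ of the previous subsection, which satisfy the clean ODE \eqref{exact-asympt-eqn+}. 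Setting $\tilde\rho_1^\pm:=\rho_1^\pm e^{i\theta}$, the leading term cancels and one is left, in the $H$ norm, with
\[
i\partial_t\tilde\rho_1^\pm(t,v)=\bigl(i\partial_t\rho_1^\pm-(\partial_t\theta)\rho_1^\pm\bigr)e^{i\theta}=O\bigl((\epsilon+C\epsilon^2)\,t^{-\frac32+C\epsilon}(1-v^2)^{-\frac14}\bigr),
\]
using $(t^2-t^2v^2)^{-1/4}=t^{-1/2}(1-v^2)^{-1/4}$. The right-hand side is integrable in $t$ on $[t_0(v),\infty)$, so $\tilde\rho_1^\pm(t,v)$ converges to a limit $\rho^\pm_\infty(v)$ with $\|\tilde\rho_1^\pm(t,v)-\rho^\pm_\infty(v)\|_H\lesssim(\epsilon+C\epsilon^2)t^{-\frac12+C\epsilon}(1-v^2)^{-\frac14}$ for $t\ge t_0(v)$; since $t\ge t_0(v)=(1-v^2)^{-10}$ forces $(1-v^2)^{-1/4}\le t^{1/40}$, this improves to $\lesssim\epsilon t^{-1/16}$. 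By \eqref{del-rho}, $\|\rho^\pm-\rho_1^\pm\|_H\lesssim C\epsilon^2 t^{-1+C\epsilon}\sqrt{1-v^2}\lesssim\epsilon t^{-1/16}$, so, using that $e^{i\theta}$ preserves $\|\cdot\|_H$, the same limit and rate pass to $\tilde\rho^\pm=\rho^\pm e^{i\theta}$. In the complementary regime $t<t_0(v)$ (still with $t^2-x^2>t$) one uses instead \eqref{start-rho}, which gives $\|\rho^\pm(t,v)\|_H\lesssim\epsilon t^{C\epsilon}(1-v^2)\lesssim\epsilon t^{-1/10+C\epsilon}$, together with the pointwise bound for $\rho^\pm_\infty$ below, to obtain the same estimate there.

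For the pointwise bound I would simply let $t\to\infty$ in the uniform bound $\|\rho^\pm(t,v)\|_H\lesssim(\epsilon+C\epsilon^2)(1-v^2)^{1-C\epsilon}$ established at the end of the previous section; as $e^{i\theta}$ is an $H$-isometry this passes to $\tilde\rho^\pm$ and hence to the limit, and for $\epsilon$ small one absorbs $\epsilon+C\epsilon^2$ into $\epsilon$. In particular $\rho^\pm_\infty\in L^2(H)$, since $\|\rho^\pm_\infty(v)\|_H^2\,dV_H\lesssim\epsilon^2(1-v^2)^{-C\epsilon}\,dv$ is integrable over $B(0,1)$.

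For the energy identity the idea is to compare charge on hyperboloids. First, the $L^2$ conservation for Dirac is exact on hyperboloids: in \eqref{dens-flux} the source term $-2\Im(\psi^\dagger\gamma^0\gamma^\mu A_\mu\psi)$ vanishes because $\gamma^0\gamma^\mu$ is Hermitian and $A_\mu$ is real, so integrating between $\{t=0\}$ and $H=\{t^2-x^2=c^2\}$ gives $E_H(\psi)=\|\psi_0\|_{L^2}^2$ for every $c>0$, with $E_H(\psi)=\int_H(t^2-x^2)^{3/2}\|\psi\|_H^2\,dV_H$. Next I would insert the asymptotic expansion of Lemma~\ref{l:asympt}: on a fixed $H$ a point of velocity $v$ has time $t(v)=c(1-v^2)^{-1/2}$, and on the bulk $\{|v|\le 1-\eta\}$, where the Euclidean and $H$ norms are comparable, the cross term $e^{2i\phi}\langle\rho^+,\rho^-\rangle_H$ vanishes by the $\langle\cdot,\cdot\rangle_H$-orthogonality of $V^+_v$ and $V^-_v$ (Lemma~\ref{lem:VpmOrthogonal}), leaving
\[
\int_{|v|\le 1-\eta}(t^2-x^2)^{\frac32}\|\psi\|_H^2\,dV_H=\sum_{\pm}\int_{|v|\le 1-\eta}\|\rho^\pm(t(v),v)\|_H^2\,dV_H+\mathrm{Err}(c,\eta),
\]
where for fixed $\eta$ one has $\mathrm{Err}(c,\eta)\to 0$ as $c\to\infty$ (the expansion error in Lemma~\ref{l:asympt} is $O_\eta(\epsilon c^{-7/4+C\epsilon})$ there). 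On the boundary layer $\{1-\eta<|v|<1\}$ the near-cone decay of Lemma~\ref{psi-final} (via \eqref{psi-in}) gives $(t^2-x^2)^{3/2}\|\psi\|_H^2\lesssim\epsilon^2(1-v^2)^{2-C\epsilon}$, so this part of $E_H(\psi)$ is $\lesssim\epsilon^2\eta^{1-C\epsilon}$ uniformly in $c$, and the same bound controls $\sum_\pm\int_{1-\eta<|v|<1}\|\rho^\pm_\infty\|_H^2\,dV_H$. Letting first $c\to\infty$ (dominated convergence on the bulk, with dominating function $\epsilon^2(1-v^2)^{-C\epsilon}$ and the pointwise convergence $\rho^\pm(t(v),v)\to\rho^\pm_\infty(v)$ from the first step) and then $\eta\to 0$ yields $\|\psi_0\|_{L^2}^2=\sum_\pm\|\rho^\pm_\infty\|_{L^2(H)}^2$.

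The main obstacle is this last step: the expansion of Lemma~\ref{l:asympt} degrades as $|v|\to 1$ on a fixed hyperboloid (i.e.\ as $t\to\infty$ along $H$), so it cannot be used uniformly; one must peel off a boundary layer near the cone, dominate its contribution using the sharper estimates of Lemma~\ref{psi-final}, and then carefully justify interchanging the two limits $c\to\infty$ and $\eta\to 0$ via the uniform-integrability/dominated-convergence argument above. By contrast, the existence of the limit and the rate and pointwise bounds are comparatively routine, being direct consequences of the asymptotic equation \eqref{exact-asympt-eqn+} and the uniform bounds already in hand.
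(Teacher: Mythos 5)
Your proposal is correct, and for the existence of the limit and the two pointwise bounds it is exactly the paper's argument: pass to the nonresonant-corrected profiles $\rho_1^\pm$, observe that the phase $\theta$ cancels the resonant term in \eqref{exact-asympt-eqn+} so that $\tilde\rho_1^\pm$ satisfies an ODE with integrable right-hand side, then return to $\rho^\pm$ via \eqref{del-rho} and pass to the limit in the uniform bound; your explicit handling of the regime $t<t_0(v)$ via \eqref{start-rho} is a harmless addition. For the energy identity your route is a mild but genuine variant. The paper integrates the density-flux identity over a cup region bounded by the truncated hyperboloid $H_{<T}$ and part of the time slice $\{t=T\}$, with $T$ matched to the $(1-v^2)^{-10}$ threshold, so that the near-cone portion of the hyperboloid never appears and the only thing to check is that the slice term $\int_{t=T,\ \mathrm{below}\ H}|\psi|^2\,dx$ vanishes as $T\to\infty$ by the pointwise bounds of Theorem~\ref{thm:gwp-easy}. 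You instead use exact charge conservation on the full hyperboloid and control the boundary layer $\{1-\eta<|v|<1\}$ of the hyperboloid integral directly via the near-cone decay of Lemma~\ref{psi-final} (together with \eqref{rho-inf-point} for the limit profiles), then interchange the limits $c\to\infty$, $\eta\to 0$. Both arguments rest on the same three ingredients (conservation of charge on hyperboloids, the expansion of Lemma~\ref{l:asympt} with the $\langle\cdot,\cdot\rangle_H$-orthogonality of Lemma~\ref{lem:VpmOrthogonal} killing the cross terms, and near-cone control); the paper's matched time-slice truncation buys a single limit and no layer analysis, while your version avoids choosing the matched slice at the price of invoking \eqref{psi-in} in the layer and a two-parameter limit. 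One small imprecision: what converges is $\tilde\rho^\pm=\rho^\pm e^{i\theta}$, not $\rho^\pm$ itself, so in the dominated-convergence step you should phrase the pointwise convergence as $\|\rho^\pm(t(v),v)\|_H\to\|\rho^\pm_\infty(v)\|_H$ (which is all the identity needs), rather than convergence of $\rho^\pm$.
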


\begin{proof}
To obtain the asymptotic limit it is more convenient to work with the modified functions $\rho_1^\pm$ introduced earlier,  where the nonresonant contribution to the asymptotic equation is removed.
Setting 
\[
\tilde\rho_1^{\pm} :=  e^{i\theta} \rho_1^{\pm},
\]
the equation \eqref{exact-asympt-eqn+}
becomes
\begin{equation}\label{corrected-asympt-eqn}
 i  t  \partial_t \tilde \rho_1^{\pm}(t,v) =  O(\epsilon (t^2-x^2)^{-\frac1{4}} t^{c\epsilon} ).
\end{equation}
This suffices in order to obtain the 
asymptotic limit
\[
\rho_{\infty}^\pm(v) = \lim_{t \to \infty}
\tilde \rho_1^{\pm}(t,v),
\]
as well as the difference bound
\[
\|\rho_{\infty}^\pm(v) 
- \tilde \rho_1^{\pm}(t,v)\|_{H} \lesssim 
\epsilon (t^2-x^2)^{-\frac1{4}} t^{c\epsilon}
.
\]
Combining this with \eqref{del-rho}
we can return to $\rho^\pm$ and obtain
\begin{equation}\label{rhoinf1}
  \|\rho_{\infty}^\pm(v) -
\tilde \rho^{\pm}(t,v)\|_{H} \lesssim  \epsilon (t^2-x^2)^{-\frac1{4}} t^{c\epsilon} \qquad t^2 -x^2 \geq t.  
\end{equation}
Employing \eqref{start-rho2} and evaluating the above difference bound in $t_0(v) = (1-v^2)^{-10}$, we obtain
\begin{equation}\label{rhoinf2}
   \|\rho_{\infty}^\pm(v)\|_{H}
   \lesssim \epsilon(1-v^2)^{1-C\epsilon}.
\end{equation}
We can also use Lemma~\ref{l:asympt}
to  introduce $\rho^{\pm}_\infty$ in the asymptotic expansion for $\psi$,
\begin{equation}\label{psi-asymptotic+}
\psi(t,x) - \sum_{\pm}
(t^2-x^2)^{-\frac34} e^{i\phi_\pm} 
\rho_\infty^{\pm} e^{-i \theta} 
= O_H(\epsilon (t^2-x^2)^{-\frac{7}{8}} t^{c\epsilon}).
\end{equation}

This allows us to compare $L^2$ norms. 
Given a hyperboloid $H$ and a time slice $t=T$ intersecting it, we can integrate the 
density flux relation for $\psi$ over 
the cup region above $t = 0$ and below 
both the hyperboloid and the time slice. We arrive at
\[
\int |\psi_0|^2\, dx = \int_{H_{<T}}
(t^2 -x^2)^{\frac32} \|\psi\|_{H}^2
\, dV_H + \int_{t = T,\, below \, H} |\psi|^2 \,dx.
\]
We choose $T$ so that the plane $\{t = T\}$ intersects the hyperboloid $H$ at $T = (1-v^2)^{-10}$.
Then the last integral on the right decays to zero as $T \to \infty$ in view of the pointwise bounds for $\psi$ in Theorem~\ref{thm:gwp-easy},
while in the the first integral 
we can substitute $\psi$ as in \eqref{psi-asymptotic+} with a decaying error. We arrive at 
\[
\|\psi_0\|_{L^2}^2 = \lim_{T \to \infty} \int_{H_{<T}} \| \rho^+\|_{H}^2
+\| \rho^-\|_{H}^2\,  d V_H = \int \sum_{\pm} \|\rho_{\infty}^\pm \|_{H}^2 \, dV_H,
\]
which is the desired energy identity.

\end{proof}

\subsection{ The radiation profile of $A$}

The radiation profile for $A$ comes from the source term in the wave equation for $A$, for which we look at the asymptotics for $\psi$ in \eqref{psi-asymptotic+}, namely 
\[
\psi(t,x) = \sum_\pm (t^2-x^2)^{-\frac34} \rho^{\pm}_\infty(v) e^{i\phi_{\pm}} e^{-i\theta} + O_H(\epsilon(t^2-x^2)^{-\frac{7}{8}}t^{c\epsilon})
\]
in the region 
\[
D^{int} : = \{t >(1-|v|^2)^{-10}\},
\]
where we recall that the contributions
from the exterior of this region yield 
output which decays faster than $t^{-1}$ along rays, see Remark~\ref{rem:peel-ext}.

Inserting this into the equation for $A^{inhom}$ we obtain in the same set
the equation
\[
 \Box A^{\mu,inhom} := - \sum_{\pm,\pm}(t^2-x^2)^{-\frac32}  
  \overline{\rho_\infty^{\pm}(v) e^{i\phi_\pm}} \gamma^\mu \rho_\infty^{\pm}(v)e^{i\phi_{\pm}} + 
  O(\epsilon^2(t^2-x^2)^{-\frac{13}{8}}t^{c \epsilon}).
\]
We discard again the contribution of the error term, which has better than $t^{-1}$ decay at infinity inside the cone in timelike directions.
If in the leading term we have two $\rho_\infty^+$ or two $\rho_\infty^-$ then the phases cancel.
But for the mixed terms $\rho_\infty^{+} \rho_\infty^{-}$ they add up, so we expect a lot of cancellation from the oscillations when we solve the wave 
equation. So we separate the principal part into two,
\[
\Box A^\mu_{main} := 1_C (t^2-x^2)^{-\frac32}  \sum_{\pm}
\overline{\rho}_\infty^{\pm}(v)  \gamma^\mu \rho_\infty^{\pm}(v),
\]
respectively 
\[
\Box A^\mu_{err} = 1_C (t^2-x^2)^{-\frac32} \sum_{\pm}
e^{\mp 2i \sqrt{t^2-x^2}} \overline{\rho}_\infty^{\pm}(v) \gamma^\mu \rho_\infty^{\mp}(v).
\]
Both of these are taken with zero Cauchy data. In both terms we 
have completed the region $D^{int}$ to the full cone $C$, at the expense of another error with better than $t^{-1}$
decay.

\bigskip

We first consider the main component, for which we have

\begin{prop}
$A^\mu_{main}$ can be expressed in the form
\[
A^{\mu}_{main} = (t^2-x^2)^{-\frac12}  a^\mu_\infty(v) ,
\]
where $a^\mu_\infty$ solve the equations
\begin{equation}\label{eqn:ainf}
(- 1 - \Delta_H)a^\mu_\infty =  - \frac{v^\mu}{\sqrt{1-|v|^2} } (\|\rho_\infty^+\|_H^2
+ \|\rho_\infty^-\|_H^2),
\end{equation}
and satisfy pointwise bounds
\begin{equation}
\label{eq:ainf-bounds}
\| (1-v^2)^{-\frac12} a^{\mu}_\infty(v)\|_{C^{2,\frac14}(H)} \lesssim \epsilon^2 .
\end{equation}

\end{prop}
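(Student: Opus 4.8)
The plan is to solve the wave equation $\Box A^\mu_{\mathrm{main}} = 1_C (t^2-x^2)^{-3/2} \sum_\pm \overline{\rho}^\pm_\infty(v)\gamma^\mu\rho^\pm_\infty(v)$ by exploiting the fact that both the source and the expected solution are functions of the hyperbolic variable $v=x/t$ alone, modulated by an explicit power of $t^2-x^2$. First I would recall the identity \eqref{cuplu}, which gives $\overline{\rho^\pm_\infty}\gamma^\mu\rho^\pm_\infty = \frac{v^\mu}{\sqrt{1-v^2}}(\|\rho^+_\infty\|_H^2 + \|\rho^-_\infty\|_H^2)$, so the source is $(t^2-x^2)^{-3/2}$ times a pure function of $v$. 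Then I would make the ansatz $A^\mu_{\mathrm{main}} = (t^2-x^2)^{-1/2} a^\mu_\infty(v)$ and compute $\Box$ applied to a function of the form $(t^2-x^2)^{-1/2} g(x/t)$ in hyperbolic coordinates: writing $\tau = \sqrt{t^2-x^2}$ for the hyperbolic "radius" and using that on each hyperboloid $\{\tau = \mathrm{const}\}$ the d'Alembertian decomposes into a radial part in $\tau$ plus $\tau^{-2}\Delta_H$, a direct computation shows $\Box\big(\tau^{-1/2} g(v)\big) = \tau^{-5/2}\big(c\, g + \Delta_H g\big)$ for an explicit constant $c$; matching the power $\tau^{-5/2} = (t^2-x^2)^{-3/2}\cdot\tau$ — wait, one must check the exponent bookkeeping carefully — one is led precisely to the elliptic equation \eqref{eqn:ainf}, namely $(-1-\Delta_H) a^\mu_\infty = -\frac{v^\mu}{\sqrt{1-v^2}}(\|\rho^+_\infty\|_H^2 + \|\rho^-_\infty\|_H^2)$. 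The constant $c=-1$ here is exactly the mass appearing in the Klein--Gordon/Dirac equation, which is the reason the operator is $-1-\Delta_H$ and not $-\Delta_H$; this is the structurally important point and should be double-checked against the form of the phases $\phi_\pm = \pm\sqrt{t^2-x^2}$.

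Next I would address solvability and uniqueness of the elliptic equation $(-1-\Delta_H)a = F$ on hyperbolic $3$-space $H$. The key fact is that the bottom of the spectrum of $-\Delta_H$ on $H = \mathbb{H}^3$ is $1$ (i.e. $(d-1)^2/4$ with $d=3$), so $-1-\Delta_H$ is a nonnegative operator with $0$ at the bottom of its spectrum; it is injective on the natural energy space and the equation is solvable provided $F$ lies in the appropriate weighted space and one imposes the correct decay/boundary condition at infinity (equivalently, vanishing of $(1-v^2)^{-1/2}a$ having a nondegenerate but finite limit at $\partial B(0,1)$, as flagged in item (4) after Theorem~\ref{thm:ms}). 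I would use the pointwise and $L^2$ bounds on $\rho^\pm_\infty$ from \eqref{rho-inf-point} (equivalently \eqref{point-rho-profile}, \eqref{point-vf-rho-profile}): since $\|\rho^\pm_\infty(v)\|_H \lesssim \epsilon(1-v^2)^{1-C\epsilon}$, the source $F(v) = -\frac{v^\mu}{\sqrt{1-v^2}}(\|\rho^+_\infty\|_H^2+\|\rho^-_\infty\|_H^2)$ is $O(\epsilon^2 (1-v^2)^{3/2-C\epsilon})$, i.e. it decays like $(1-v^2)^{3/2}$ up to $\epsilon$-losses near the boundary. In hyperbolic terms this is exponential decay of the source, which is more than enough to run the fixed-point/variational construction of $a^\mu_\infty$ in a weighted space, and to pin down the solution uniquely.

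For the pointwise and regularity bound \eqref{eq:ainf-bounds}, namely $\|(1-v^2)^{-1/2}a^\mu_\infty\|_{C^{2,1/4}(H)} \lesssim \epsilon^2$, I would combine two ingredients: (i) elliptic regularity for $-1-\Delta_H$ on scales of order one in the hyperbolic metric — interior Schauder estimates give that $a^\mu_\infty$ inherits $C^{2,\alpha}$ regularity (in hyperbolic charts, where $\Omega$-vector fields are the normalized derivatives) from the Hölder regularity of $F$, which in turn follows from the $\Omega^{\le 2}$ bounds on $\rho^\pm_\infty$ in \eqref{point-vf-a-profile}/\eqref{point-vf-rho-profile} together with Sobolev embedding on $H$; and (ii) a boundary-layer analysis showing that the weight $(1-v^2)^{-1/2}$ is the correct one: the homogeneous solutions of $(-1-\Delta_H)u=0$ behave near $\partial B(0,1)$ like $(1-v^2)^{1/2}$ and $(1-v^2)^{1/2}\log$, so dividing $a^\mu_\infty$ by $(1-v^2)^{1/2}$ produces a function that extends continuously (indeed $C^{2,1/4}$) up to the boundary with a nondegenerate limit, consistent with the Klein--Beltrami picture. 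Concretely I would convert to a Fermi/defining-function coordinate near $\partial B$, write the equation as a degenerate-elliptic (Fuchsian) ODE in that coordinate plus tangential corrections, and read off the indicial roots to justify the weight. The main obstacle, I expect, is precisely this boundary analysis at $\partial B(0,1)$ — getting the sharp $C^{2,1/4}$ (rather than merely $C^{1/2}$) control of $(1-v^2)^{-1/2}a^\mu_\infty$ up to the boundary, since it requires a careful matching of the indicial behavior of $-1-\Delta_H$ with the decay rate $(1-v^2)^{3/2-C\epsilon}$ of the source, and the $C\epsilon$ losses in the source decay must be shown not to destroy the $C^{2,1/4}$ conclusion (they only affect the next-order term). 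The verification of the $\Box$-computation in hyperbolic coordinates, by contrast, is routine once the coordinates are set up, so I would relegate it to a direct calculation.
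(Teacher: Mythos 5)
Your proposal reproduces the right skeleton (self-similar ansatz, reduction to the hyperbolic Laplacian, elliptic bootstrap for the regularity), but it has two genuine gaps. First, the identification step is missing: $A^\mu_{main}$ is \emph{defined} as the forward solution, with zero Cauchy data, of a specific wave equation, and constructing some solution $a^\mu_\infty$ of the elliptic problem \eqref{eqn:ainf} does not by itself show that this forward solution equals $(t^2-x^2)^{-\frac12}a^\mu_\infty(x/t)$. The paper's mechanism here is a one-line scaling argument: the source $1_C(t^2-x^2)^{-\frac32}b^\mu_\infty(v)$ is homogeneous of degree $-3$ and supported in the solid cone, so by uniqueness of the forward solution $A^\mu_{main}$ is homogeneous of degree $-1$ and supported in the cone, hence of the claimed form; the equation \eqref{eqn:ainf} then follows from $\Box(t^2-x^2)^{-\frac12}=-(t^2-x^2)^{-\frac32}$, and the starting pointwise bound on $a^\mu_\infty$ comes for free from the positive fundamental solution of the wave equation, exactly as in Proposition~\ref{prop:Ainhom}. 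Your substitute route --- inverting $-1-\Delta_H$ ``variationally/by fixed point in the energy space'' --- is genuinely problematic: $0$ is the bottom of the \emph{continuous} spectrum, so there is no bounded inverse on $L^2(H)$, and uniqueness among decaying solutions fails outright, since the spherical function at the bottom of the spectrum is a globally smooth solution of $(-1-\Delta_H)u=0$ decaying like $(1-v^2)^{\frac12}\log\frac{1}{1-v^2}$; even after fixing a sharper asymptotic class you would still have to prove the forward wave solution lies in it, which is the missing step. (Two smaller slips point the same way: the ansatz is $\tau^{-1}a(v)$ with $\tau=\sqrt{t^2-x^2}$, not $\tau^{-1/2}a(v)$, and the constant $-1$ is the value of $\sigma(\sigma+2)$ at $\sigma=-1$, i.e.\ it comes from the homogeneity, not from the Dirac mass.)

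Second, your source-regularity argument is circular in the paper's logical order. You propose to get H\"older regularity of the right-hand side of \eqref{eqn:ainf} from the $\Omega^{\leq2}$ bounds on $\rho^\pm_\infty$ in \eqref{point-vf-rho-profile}; but that bound is \eqref{l2-rhoinf}, proved only in the final subsection, and its proof uses the $C^{2,\frac14}$ bound \eqref{eq:ainf-bounds} of the very proposition at hand. At this stage one only has the pointwise bounds \eqref{rho-inf-point} on $\rho^\pm_\infty$ together with vector-field $L^2$ bounds on the finite-time profiles $\rho^\pm(t,\cdot)$ with a $t^{C\epsilon}$ loss, as in \eqref{rho-2}. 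The paper bridges this in Lemma~\ref{lem:bmu} by interpolating between the convergence-rate bound \eqref{interp1} and the finite-time vector-field bound \eqref{interp2} to obtain \eqref{bmu2}, and then Morrey plus interpolation gives the weighted $C^{\frac14}$ bound \eqref{bmu-holder}; only then does the unit-hyperbolic-scale elliptic bootstrap yield \eqref{eq:ainf-bounds}. Your item (i) is otherwise the same as the paper's; your Fuchsian/indicial boundary analysis in (ii) is not needed for the $C^{2,\frac14}(H)$ statement, since the weights $(1-v^2)^{\pm\frac12}$ are slowly varying on unit hyperbolic scales --- the real work sits in the two points above, not at the boundary.
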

Here $-\Delta_H$ is the hyperbolic Laplacian in 3D, which, we recall, has spectrum $[1,\infty)$.
Returning to $A^\mu_{main}:=(t^2-x^2)^{-\frac12}a^\mu_\infty$, we note that the last bound 
in the above proposition implies the vector field bound
\begin{equation}
| \Omega^{\leq 2} A^{\mu}_{main}(v)| \lesssim \epsilon^2 t^{-1} 
\end{equation}
inside the Poincare disk (unit ball).

\begin{proof}
Here the source term is homogeneous of degree $-3$ and supported inside the cone. Then solving the wave equation will give a solution which is homogeneous of degree $-1$ and still supported inside the cone, so we can write it in the form
\[
A^{\mu}_{main} = (t^2-x^2)^{-\frac12}  a^\mu_\infty(v) .
\]

Again we use the $(t^2-x^2)^{-\frac12}$ weight because of the Lorentz invariance.

So our equation now has the form
\[
\Box [(t^2-x^2)^{-\frac12}  a^\mu_\infty(v) ] =  (t^2-x^2)^{-\frac32} b^\mu_\infty(v), 
\qquad b^\mu_{\infty}(v):=
-  \overline{\rho_{\infty}^{\pm}(v)} \gamma^\mu \rho_\infty^{\pm}(v).
\]
We want to rewrite this as an equation connecting $a$ and $b$. This 
equation is Lorentz invariant so it will involve the hyperbolic space Laplacian, where $v$ is the hyperbolic variable in the Klein-Beltrami model.
To find the contribution of the weights we compute
\[
\Box (t^2-x^2)^{-\frac12} = - (t^2-x^2)^{-\frac32}.
\]
Plugging into the equation above, we obtain an equation in the hyperbolic space of the form (see e.g. \cite{tataru-hyp} for a very similar computation)
\begin{equation}\label{deltaH-a}
(- 1 - \Delta_H)a^\mu_\infty =  b^\mu_\infty(v),
\end{equation}
where $\Delta_H$ is the hyperbolic Laplacian in 3D. We can simplify 
the expression for $b^\mu_{\infty}$ by writing
\[
b^\mu_{\infty} = - \langle \gamma^0 \rho_\infty^\pm, \gamma^\mu \rho^{\pm}_\infty \rangle = \mp \langle \gamma^0 \gamma^H \rho_\infty^\pm, \gamma^\mu \rho^{\pm}_\infty \rangle 
= \pm \langle \rho_\infty^\pm, \gamma^\mu \rho^{\pm}_\infty \rangle_H
= \pm \langle \rho_\infty^\pm, P^{\mp}_v \gamma^\mu \rho^{\pm}_\infty \rangle_H.
\]
Now we use the relation \eqref{twist-com} to finally obtain 
\begin{equation}
b^{\mu}_\infty = - \frac{v^\mu}{\sqrt{1-|v|^2}} (\|\rho_\infty^+\|_H^2
+ \|\rho_\infty^-\|_H^2),
\end{equation}
as needed.

Now we consider the size and regularity of 
$b^\mu_\infty$.

\begin{lem}\label{lem:bmu}
The expression $b^\mu_\infty$ satisfies the 
pointwise bounds
\begin{equation}\label{bmu-point}
|b^\mu_\infty| \lesssim   \epsilon^2 (1-v^2)^{\frac32-C\epsilon} , 
\end{equation}
and the $L^2$ bounds 
\begin{equation}\label{bmu2}
 \| (1-v^2)^{-\frac12+C\epsilon}\Omega^{\leq 2} b^\mu_\infty \|_{L^2(H)}
\lesssim  \epsilon^2.
 \end{equation}
\end{lem}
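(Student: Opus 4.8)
The plan is to obtain the pointwise bound \eqref{bmu-point} directly from the explicit formula
\[
b^\mu_\infty = - \frac{v^\mu}{\sqrt{1-|v|^2}}\left(\|\rho^+_\infty\|_H^2 + \|\rho^-_\infty\|_H^2\right),
\]
combined with the already established pointwise bound \eqref{rhoinf2}, i.e.\ $\|\rho^\pm_\infty(v)\|_H \lesssim \epsilon(1-v^2)^{1-C\epsilon}$. Indeed, squaring gives $\|\rho^\pm_\infty(v)\|_H^2 \lesssim \epsilon^2 (1-v^2)^{2-C\epsilon}$, and since $|v^\mu| = |x^\mu|/t \leq 1$ and the factor $(1-|v|^2)^{-1/2}$ contributes a loss of half a power, we get $|b^\mu_\infty| \lesssim \epsilon^2 (1-v^2)^{3/2 - C\epsilon}$, which is exactly \eqref{bmu-point}. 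The only mild care needed is to absorb the possible $C\epsilon$ discrepancy between the exponent $2 - C\epsilon$ coming from squaring and the target exponent $3/2$: since $(1-v^2) < 1$, we always have $(1-v^2)^{2-C\epsilon} = (1-v^2)^{3/2 - C\epsilon} \cdot (1-v^2)^{1/2} \leq (1-v^2)^{3/2 - C\epsilon}$, so nothing is lost.

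For the $L^2$ bound \eqref{bmu2}, the strategy is to commute the Lorentz vector fields $\Omega$ through the formula for $b^\mu_\infty$. Since $\Omega$ acts as a normalized (hyperbolic) derivative, applying up to two of them to the rational prefactor $v^\mu (1-|v|^2)^{-1/2}$ produces at worst another bounded multiple of $(1-|v|^2)^{-1/2}$ (in hyperbolic coordinates this prefactor is essentially $\sinh$-type and smooth). Applying $\Omega$'s to the scalar factors $\|\rho^\pm_\infty\|_H^2 = \langle \rho^\pm_\infty, \rho^\pm_\infty\rangle_H$ redistributes by the Leibniz rule onto the two copies of $\rho^\pm_\infty$; here one uses that the $H$-inner product is Lorentz invariant (so $\Omega$, really $\hat\Omega$, is compatible with $\langle\cdot,\cdot\rangle_H$) together with the fact that $\hat\Omega \sigma^\pm = 0$ by construction \eqref{hat}, so that $\Omega$ applied to $\|\rho^\pm_\infty\|_H^2$ is controlled by $\|\rho^\pm_\infty\|_H \cdot \|\Omega^{\leq 1}\rho^\pm_\infty\|_H$ and similarly at second order. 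One then pairs the pointwise bound \eqref{rhoinf2} for one factor with the $L^2$-type bound for $\Omega^{\leq 2}\rho^\pm_\infty$. The latter comes from passing to the limit $t\to\infty$ in the hyperboloid energy bound \eqref{rho-2}, namely $\|\Omega^{\leq 3}\rho^\pm\|_{L^2(H_{\leq t})}\lesssim t^{C\epsilon}$, rescaled so as to yield $\|(1-v^2)^{-3/2 + C\epsilon}\Omega^{\leq 2}\rho^\pm_\infty\|_{L^2(H)} \lesssim \epsilon$ (this is exactly \eqref{point-vf-rho-profile}); the extra weight $(1-v^2)^{-3/2+C\epsilon}$ records the relation between the Euclidean measure $d\sigma$ and the hyperbolic volume element $dV_H$, together with the $(t^2-x^2)$ weights in the definition of $E_H$.

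Putting these together, a typical worst term in $\Omega^{\leq 2} b^\mu_\infty$ is bounded pointwise by $(1-v^2)^{-1/2} \cdot \|\rho^\pm_\infty\|_H \cdot \|\Omega^{\leq 2}\rho^\pm_\infty\|_H \lesssim \epsilon (1-v^2)^{1/2 - C\epsilon} \|\Omega^{\leq 2}\rho^\pm_\infty\|_H$, so that
\[
\|(1-v^2)^{-\frac12 + C\epsilon}\Omega^{\leq 2} b^\mu_\infty\|_{L^2(H)} \lesssim \epsilon \,\|(1-v^2)^{-2C\epsilon}(1-v^2)^{\frac12 - C\epsilon}\Omega^{\leq 2}\rho^\pm_\infty\|_{L^2(H)} \lesssim \epsilon \cdot \epsilon = \epsilon^2,
\]
after checking that the accumulated weight $(1-v^2)^{1/2 - 3C\epsilon}$ is controlled by (indeed much better than) the weight $(1-v^2)^{-3/2 + C\epsilon}$ appearing in the $\rho$-bound, using $(1-v^2)<1$. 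The main obstacle is the bookkeeping of these weights and the Leibniz redistribution: one must be careful that every commutator of $\Omega$ with the prefactor, or of two $\Omega$'s with each other, stays within the stated class, and that the $C\epsilon$ losses accumulate in the harmless direction. Once the weight arithmetic is arranged this way the estimate is routine, so I would present it compactly, flagging only the use of \eqref{hat}, the Lorentz invariance of $\langle\cdot,\cdot\rangle_H$, and the limiting form of \eqref{rho-2}.
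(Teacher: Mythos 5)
Your treatment of the pointwise bound \eqref{bmu-point} is fine and coincides with the paper's: it is an immediate consequence of \eqref{rho-inf-point} (i.e.\ \eqref{rhoinf2}), since $b^\mu_\infty$ is built from $\|\rho^\pm_\infty\|_H^2$ with the prefactor $v^\mu(1-v^2)^{-1/2}$.

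The $L^2$ bound, however, has a genuine gap: your argument rests on the vector field bound for the limiting profile, $\|(1-v^2)^{-3/2+C\epsilon}\Omega^{\leq 2}\rho^\pm_\infty\|_{L^2(H)}\lesssim \epsilon$ (that is, \eqref{point-vf-rho-profile}, equivalently \eqref{l2-rhoinf}), which is not available at this point. In the paper that bound is established only in the final subsection, and its proof uses the $C^{2,\frac14}$ regularity of $a^\mu_\infty$ in \eqref{eq:ainf-bounds}, which in turn is derived from the very lemma you are proving; so invoking it here is circular. Nor can it be obtained, as you suggest, by ``passing to the limit'' in \eqref{rho-2}: that estimate only controls $\Omega^{\leq 3}\rho^\pm(t,\cdot)$ with a $t^{C\epsilon}$ growth, the convergence $\rho^\pm(t,v)\to$ (phase-rotated limit) is only pointwise in $v$ and says nothing about convergence of $\Omega$-derivatives, and moreover at this stage $\rho^\pm_\infty$ is defined through the phase $e^{i\theta(t,v)}$ with $\theta$ an integral of $A$ along rays, so $\Omega\rho^\pm_\infty$ would also require control of $\Omega\theta$, which one does not have (this is precisely why the profile is renormalized later). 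The paper circumvents all of this by exploiting that $b^\mu$ is phase-insensitive: it compares $b^\mu_\infty$ with the finite-time quantity $b^\mu(t,v)=-\frac{v^\mu}{\sqrt{1-v^2}}(\|\rho^+(t,v)\|_H^2+\|\rho^-(t,v)\|_H^2)$, derives the pointwise difference bound \eqref{interp1} (decaying like $(t^2-x^2)^{-\frac14}t^{c\epsilon}$ from \eqref{rhoinf1}--\eqref{rhoinf2}) together with the $L^2$ vector field bound \eqref{interp2} for $b^\mu(t,\cdot)$ (with $t^{C\epsilon}$ growth, coming from $\|\Omega^{\leq 3}\rho^\pm(t,\cdot)\|_{L^2(H)}\lesssim t^{C\epsilon}$), and then concludes by the interpolation-type argument of \cite{IT1}, balancing the decaying difference against the slowly growing norms. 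To repair your proof you would need to replace the appeal to \eqref{point-vf-rho-profile} by such a finite-time comparison and interpolation step, or otherwise prove vector field bounds for $\rho^\pm_\infty$ without using $a^\mu_\infty$.
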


We will not use the $L^2$ bounds directly, but instead via Sobolev embeddings and interpolation. Precisely, we  have the 
following 
\begin{cor}
 The expression $b^\mu_\infty$ satisfies the 
H\"older bounds
\begin{equation}\label{bmu-holder}
\|(1-v^2)^{-\frac12} b^\mu_\infty\|_{C^\frac14(H)} \lesssim   \epsilon^2 .
\end{equation}
\end{cor}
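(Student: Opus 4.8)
The statement to prove is the Hölder bound
\[
\|(1-v^2)^{-\frac12} b^\mu_\infty\|_{C^\frac14(H)} \lesssim \epsilon^2,
\]
which should follow by interpolating the two bounds from Lemma~\ref{lem:bmu} — the pointwise bound \eqref{bmu-point} and the weighted $L^2$ bound \eqref{bmu2} on vector fields — using the fact that on the hyperbolic space $H$ (viewed in the Klein--Beltrami model with the Lorentz boosts $\Omega$ playing the role of unit-size derivatives) Sobolev embedding and Gagliardo--Nirenberg type interpolation inequalities hold on unit scales.

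The plan is as follows. First I would set $c^\mu := (1-v^2)^{-\frac12} b^\mu_\infty$ and rewrite the two inputs of Lemma~\ref{lem:bmu} in terms of $c^\mu$. Since the weight $(1-v^2)^{-\frac12}$ and its $\Omega$-derivatives behave like bounded functions on $H$ in the hyperbolic sense (the boosts $\Omega$ differentiate $1-v^2$ only up to comparable powers, because $\Omega (1-v^2) \sim (1-v^2)$ type identities hold, reflecting that $1-v^2 = (t^2-x^2)/t^2$ along a hyperboloid), the bound \eqref{bmu2} gives
\[
\|(1-v^2)^{C\epsilon}\,\Omega^{\leq 2} c^\mu\|_{L^2(H)} \lesssim \epsilon^2,
\]
while \eqref{bmu-point} gives the pointwise bound $|c^\mu| \lesssim \epsilon^2 (1-v^2)^{1-C\epsilon}$, which is in particular bounded by $\epsilon^2$. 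Second, I would localize to a hyperbolic unit ball $B \subset H$ centered at a point $v_0$, on which all the $(1-v^2)^{\pm C\epsilon}$ weights are comparable to the constant $(1-v_0^2)^{\pm C\epsilon}$; thus on $B$ we have $\|\Omega^{\leq 2} c^\mu\|_{L^2(B)} \lesssim \epsilon^2 (1-v_0^2)^{-C\epsilon}$ and $\|c^\mu\|_{L^\infty(B)} \lesssim \epsilon^2 (1-v_0^2)^{1-C\epsilon}$. Third, on such a unit ball the standard interpolation inequality in three dimensions (Sobolev embedding $H^2 \hookrightarrow C^{1/2}$, or more precisely the Gagliardo--Nirenberg inequality $\|u\|_{C^{1/4}} \lesssim \|u\|_{H^2}^{1/2}\|u\|_{L^\infty}^{1/2}$) yields
\[
\|c^\mu\|_{C^{1/4}(B)} \lesssim \big(\epsilon^2 (1-v_0^2)^{-C\epsilon}\big)^{1/2}\big(\epsilon^2 (1-v_0^2)^{1-C\epsilon}\big)^{1/2} = \epsilon^2 (1-v_0^2)^{1/2 - C\epsilon} \lesssim \epsilon^2,
\]
uniformly in $v_0$. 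Finally, since a $C^{1/4}$ bound that holds uniformly on every hyperbolic unit ball patches up to a global $C^{1/4}(H)$ bound (the hyperbolic distance and the chart distance on unit balls are comparable, and Hölder continuity is a local-to-global property along geodesics of bounded length), this gives the claimed estimate.

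The main obstacle is the bookkeeping of the $(1-v^2)^{\pm C\epsilon}$ weights: one must check that the hyperbolic vector fields $\Omega$ interact with these weights without destroying the structure — i.e. that $\Omega$ applied to $(1-v^2)^{s}$ produces again $(1-v^2)^{s}$ times bounded hyperbolic-smooth factors, so that the weighted $L^2$ bound on $\Omega^{\leq 2} b^\mu_\infty$ transfers to a weighted $L^2$ bound on $\Omega^{\leq 2} c^\mu$ with only the same power of the weight (up to harmless $O(\epsilon)$ shifts). This is routine but needs the explicit form of $1-v^2$ in the Klein--Beltrami coordinates and the commutation of $\Omega$ with multiplication by these radial weights; once this is in place, the interpolation step and the localization are standard. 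I would therefore present the argument as: (i) reduce to unit hyperbolic balls where weights are frozen, (ii) apply three-dimensional Gagliardo--Nirenberg/Sobolev on the unit ball, (iii) sum/patch, leaving the weight commutation as a short remark referring to the explicit coordinate expressions.
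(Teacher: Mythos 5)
Your proposal is correct and follows essentially the same route as the paper's (very terse) argument: a local form of the Morrey/Sobolev embedding applied to the weighted $L^2$ bound \eqref{bmu2}, with the Lorentz vector fields $\Omega$ serving as unit hyperbolic derivatives, interpolated against the pointwise bound \eqref{bmu-point} to trade H\"older exponent ($\tfrac12 \to \tfrac14$) for the correct power of $(1-v^2)$. Your bookkeeping — freezing the $(1-v^2)^{\pm C\epsilon}$ weights on hyperbolic unit balls (where $\Omega(1-v^2)=-2v_j(1-v^2)$ makes the commutation harmless), doing the Gagliardo--Nirenberg step there, and patching via the $L^\infty$ bound for distant points — is just a localized reorganization of the same interpolation.
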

Here neither of the exponents $\frac12$ and $\frac14$ are sharp. This is obtained by using 
(a local form of) the Morrey inequality in \eqref{bmu2}, which gives $C^\frac12$ but with a power of $(1-v^2)$ above $-\frac12$, and then by interpolating  with \eqref{bmu-point} to improve the power of $(1-v^2)$ at the expense of a lower H\"older exponent.

\begin{proof}[Proof of Lemma~\ref{lem:bmu}]
The estimate \eqref{bmu-point} is a direct consequence of \eqref{rho-inf-point}.

For the $L^2$ bound we compare $b^\mu_\infty$
with 
\[
b^\mu(t,v) = - \frac{v^\mu}{\sqrt{1-|v|^2}} (\|\rho^+(t,v)\|_H^2
+ \|\rho^-(t,v)\|_H^2).
\]
On the one hand, from \eqref{rhoinf1} and \eqref{rhoinf2} we have the pointwise difference bound
\begin{equation}\label{interp1}
|b^\mu_\infty(v) - b^\mu(t,v)|  \lesssim  \epsilon^2 (1-v^2)^{\frac{1}{2} - c \epsilon}
(t^2-x^2)^{-\frac14} t^{c\epsilon}.
\end{equation}
On the other hand, we have the $L^2$ bounds on hyperboloids
\[
\|  \Omega^{\leq 3} \rho^{\pm}(t,v)\|_{L^2(H)} \lesssim t^{C\epsilon}, 
\]
which in turn implies 
\begin{equation}\label{interp2}
\|(1-v^2)^{-\frac12+C\epsilon}\Omega^{\leq 3} b^\mu(t,v)\|_{L^2(H)} \lesssim t^{C\epsilon} .
\end{equation}
We combine the bounds \eqref{interp1} and \eqref{interp2} in an interpolation type argument to obtain the desired estimate \eqref{bmu2} for $b^\mu_{\infty}$  (see \cite{IT1}  for a similar argument).

\end{proof}

Now we return to the bounds for $a^{\mu}_\infty$. The same computation as in Proposition~\ref{prop:Ainhom} yields the bound
\[
|(1-v^2)^{-\frac12} a^\mu_{\infty}| \lesssim \epsilon^2.
\]
Inserting this and \eqref{bmu-holder} into  \eqref{deltaH-a} and using local elliptic regularity, we improve the bound for $a_\infty$ say to $C^1$,
\[
\|(1-v^2)^{-\frac12} a^\mu_{\infty}\|_{C^1(H)} \lesssim \epsilon^2.
\]
Reiterating, we finally obtain 
\begin{equation}
\label{eq:HoelderBoundWaveProfile}
\|(1-v^2)^{-\frac12} a^\mu_{\infty}\|_{C^{2,\frac14}(H)} \lesssim \epsilon^2,
\end{equation}
as needed.
\end{proof}

\begin{rem}\label{r:fundamental}
The operator $-1-\Delta_H$ is a nonnegative  invariant operator in the hyperbolic space. Its inverse can be expressed as an integral operator 
with a radial kernel,
\[
(-1-\Delta_H)^{-1} b(v') = \int_H K(v,v') b(v) \, dV(v) ,
\]
which has a known explicit expression, cf.\ \cite{He84},
\[
K(v,v') = K(d) = \frac{1}{4\pi \sinh d}
\]
which is a special case of a hypergeometric function.
Since in the Klein-Beltrami model we have 
\[
d(0,v) = \frac12 \log\frac{1+|v|}{1-|v|},
\]
it follows that 
\[
K(0,v) = \frac{1}{4\pi} \frac{\sqrt{1-|v|^2}}{|v|}.
\]
The behavior near the unit sphere can be used to prove the expected limit behavior of $(1-|v|^2)^{-\frac12}a^\mu_\infty (v)$ as $|v|\to 1$ (we omit the details).
\end{rem}

We now consider the term $A^\mu_{err}$,
which has the source term
\[
c_\mu = 1_C (t^2-x^2)^{-\frac32}  \sum_{\pm}
e^{\mp 2i \sqrt{t^2-x^2}} \overline{\rho}_\infty^{\pm}(v) \gamma^\mu \rho_\infty^{\mp}(v).
\]
For this we will prove that it has a better than $t^{-1}$ decay at infinity,

\begin{prop}
We have, with a small positive universal $\delta$, 
\begin{equation}
|A^\mu_{err}(t,x)| \lesssim \epsilon^2 
\langle t \rangle^{-1} \langle t-r \rangle^{-\delta} .
\end{equation}
\end{prop}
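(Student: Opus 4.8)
The plan is to estimate $A^\mu_{err}$ by exploiting the oscillation of the source term $c_\mu$, which distinguishes it from $A^\mu_{main}$. The source has the form $c_\mu = 1_C (t^2-x^2)^{-3/2} 2\Re \sum_\pm e^{2i\sqrt{t^2-x^2}} \overline{\rho}^\pm_\infty(v) \gamma^\mu \rho^\mp_\infty(v)$, and is again homogeneous of degree $-3$ and supported inside the cone, but now carries the rapidly oscillating phase $\Phi := 2\sqrt{t^2-x^2}$. First I would record the pointwise and regularity bounds on the amplitude $d_\mu(v) := 2\Re \overline{\rho}^\pm_\infty(v)\gamma^\mu \rho^\mp_\infty(v)$, which by \eqref{rho-inf-point} satisfies $|d_\mu| \lesssim \epsilon^2 (1-v^2)^{2-C\epsilon}$ together with the $\Omega^{\leq 2}$ and $L^2$ analogues inherited from \eqref{bmu2}-type interpolation; in particular $(1-v^2)^{-1} d_\mu$ is $C^{1/4}$-Hölder in the hyperbolic metric with size $\epsilon^2$.

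Next I would use the positivity/representation of the forward fundamental solution of $\Box$ in $\R^{1+3}$ as in the proof of Proposition~\ref{prop:Ainhom}, writing $A^\mu_{err}(t,x) = \frac{1}{4\pi}\int \frac{c_\mu(t-|x-y|, y)}{|x-y|}\, dy$. Because $c_\mu$ is homogeneous of degree $-3$, the natural guess is that $A^\mu_{err}$ is homogeneous of degree $-1$ modulo an oscillatory gain, but the key point is that the phase $\Phi = 2\sqrt{t^2-x^2}$ restricted to the backward light cone through $(t,x)$ is nonstationary away from the ray $y = vt$, so integration by parts in the spatial variable $y$ (or, more cleanly, along the hyperboloid foliation) produces an extra factor $\langle t^2-x^2\rangle^{-1/2}$ per integration. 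I would set this up by switching to hyperbolic coordinates $(\tau, v)$ with $\tau = \sqrt{t^2-x^2}$, under which $\Box$ becomes, modulo lower order, $-\partial_\tau^2 - \frac{3}{\tau}\partial_\tau + \tau^{-2}(-\Delta_H)$ acting on functions of $(\tau,v)$; an ansatz $A^\mu_{err} = \tau^{-1/2} g(\tau, v) e^{i\Phi}$ with $\Phi = 2\tau$ turns the equation into an ODE-in-$\tau$ with an elliptic perturbation $-\tau^{-2}\Delta_H$ and a forcing $\tau^{-3/2} d_\mu(v)$, where the leading $e^{2i\tau}$ term gives a $4i\tau^{-1/2}\partial_\tau g$ contribution. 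Solving this by variation of constants (Duhamel in $\tau$) and integrating by parts once in $\tau$ to exploit the non-vanishing coefficient of $\partial_\tau g$ yields $|g| \lesssim \epsilon^2 \tau^{-\delta_0}$ for some small $\delta_0 > 0$, hence $|A^\mu_{err}| \lesssim \epsilon^2 \tau^{-1/2-\delta_0} \approx \epsilon^2 \langle t\rangle^{-1}\langle t-r\rangle^{-\delta}$ inside the cone for a suitable $\delta$; the elliptic perturbation $\tau^{-2}\Delta_H$ is lower order in $\tau$ and only costs Hölder regularity of $d_\mu$, which we have.

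For the region near and outside the cone I would argue separately: outside the cone the source $c_\mu$ vanishes (it is supported in $C$), so $A^\mu_{err}$ there is determined purely by the wave propagation from the interior, and the standard exterior decay estimate for the wave equation with a compactly-in-angle, interior-supported source — combined with Remark~\ref{rem:peel-ext}-type reasoning that interior contributions from $t < (1-v^2)^{-10}$ already decay faster than $t^{-1}$ — gives $|A^\mu_{err}| \lesssim \epsilon^2 \langle t+r\rangle^{-1}\langle t-r\rangle^{-\delta}$; near the cone ($\langle t-r\rangle \lesssim \langle t\rangle$, $\tau \lesssim \langle t\rangle^{1/2}$) one interpolates between the interior estimate and the trivial bound $|A^\mu_{err}| \lesssim \epsilon^2 \langle t\rangle^{-1}$ coming from $\|A^\mu_{err}\|_{L^\infty} \lesssim \|c_\mu\|_{L^1 L^2}$-type energy input, losing only a power of $\langle t-r\rangle$. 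The main obstacle is making the oscillatory gain rigorous: one must be careful that the non-stationary phase argument genuinely produces a gain uniformly up to the cone (where $\tau \to 0$ and the phase degenerates) and that the elliptic perturbation in the hyperbolic ODE does not destroy the $\tau$-decay; this is handled by combining the interior Duhamel estimate with the near-cone interpolation above, so that the small $\delta$ absorbs all borderline losses. I expect the hyperbolic ODE analysis with the $-\tau^{-2}\Delta_H$ perturbation, and its coupling to the Hölder regularity of $d_\mu$, to be the delicate technical step, though it is entirely parallel to the treatment of $A^\mu_{main}$ above with the extra oscillation only helping.
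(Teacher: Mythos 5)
Your proposal contains a genuine structural gap at its core. The phase $\Phi=2\sqrt{t^2-x^2}$ is \emph{not} characteristic for $\Box$: as computed in the paper, $\Box e^{2i\sqrt{t^2-x^2}} = -4\,e^{2i\sqrt{t^2-x^2}} + i(t^2-x^2)^{-\frac12}e^{2i\sqrt{t^2-x^2}}$, since the Lorentzian eikonal factor equals $-4\neq 0$. Consequently, in your ansatz $A^\mu_{err}=\tau^{-\frac12}g(\tau,v)e^{2i\tau}$ the leading contribution of $\Box$ is the zeroth-order term $4\tau^{-\frac12}g$, not the transport term $4i\tau^{-\frac12}\partial_\tau g$; there is no ODE in $\tau$ to solve by variation of constants and no integration by parts to perform. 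The correct way to exploit the oscillation is elliptic division: one takes as approximate solution $A_{app}$ the source divided by the nonvanishing factor $-4$, so that $A_{app}\sim(t^2-x^2)^{-\frac32}$ already has much better than $t^{-1}$ decay, and the remaining source is $O((t^2-x^2)^{-2})f_\infty + O((t^2-x^2)^{-\frac52})\Delta_H f_\infty$, which one then feeds back into the positive fundamental solution as in Proposition~\ref{prop:Ainhom}. Moreover, even if your intermediate claim $|g|\lesssim\epsilon^2\tau^{-\delta_0}$ were granted, the asserted conclusion $|A^\mu_{err}|\lesssim\epsilon^2\tau^{-\frac12-\delta_0}\approx\epsilon^2\langle t\rangle^{-1}\langle t-r\rangle^{-\delta}$ is false deep inside the cone: at $r=0$ one has $\tau=t$ and your bound only gives $\epsilon^2 t^{-\frac12-\delta_0}$, far weaker than the $t^{-1}$ decay the Proposition requires, so the final conversion from $\tau$-decay to $\langle t\rangle^{-1}\langle t-r\rangle^{-\delta}$ does not follow from your own estimate.

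The second, related gap is that the step you dismiss with ``only costs H\"older regularity of $d_\mu$, which we have'' is in fact the crux of the argument. The error source produced by the elliptic division involves $\Delta_H f_\infty$, and this is controlled only in $L^2$ through two vector fields, cf.\ \eqref{finf-2}; a $C^{\frac14}$ bound on the amplitude gives no pointwise control of its hyperbolic Laplacian, so the loss of derivatives cannot be absorbed the way you suggest. The paper resolves this by splitting $f_\infty=P_{\leq\lambda}f_\infty+P_{>\lambda}f_\infty$ in hyperbolic frequency, using Bernstein to get $|\Delta_H f^{lo}_\infty|\lesssim\epsilon^2\lambda^{\frac32}(1-v^2)^{\frac12-C\epsilon}$ and $|f^{hi}_\infty|\lesssim\epsilon^2\lambda^{-\frac12}(1-v^2)^{\frac12-C\epsilon}$, solving the wave equation for each piece by the positivity argument, and optimizing over $\lambda$ to obtain $|A^\mu_{err}|\lesssim\epsilon^2(t^2-x^2)^{-\frac58}$; it is this decomposition, combined with the truncation to $t>(1-v^2)^{-10}$ and Remark~\ref{rem:peel-ext} near the cone, that produces the small $\delta$ --- not a stationary-phase gain along rays. (Two smaller points: the pointwise amplitude bound is $|f_\infty|\lesssim\epsilon^2(1-v^2)^{\frac32-C\epsilon}$, not $(1-v^2)^{2-C\epsilon}$, and the ``trivial bound $\|A^\mu_{err}\|_{L^\infty}\lesssim\|c_\mu\|_{L^1L^2}$'' you invoke near the cone is not a valid estimate --- energy input controls $L^2$-based norms, and a uniform $\langle t\rangle^{-1}$ bound there is essentially what must be proven.)
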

\begin{proof}
The key idea here is that we want to take 
advantage of the oscillations in the phase. 
But this in turn requires some regularity
for the amplitude of the source term, namely the expression 
\[
f_{\infty}^\pm(v) = \overline{\rho}_\infty^{\pm}(v) \gamma^\mu \rho_\infty^{\mp}(v).
\]
We consider this first:

\begin{lem}
The function $f_\infty$ satisfies the pointwise
bound
\begin{equation}\label{finf-inf}
|f_\infty^\pm(v)| \lesssim \epsilon^2 (1-v^2)^{\frac32-C\epsilon} ,   
\end{equation}
and the $L^2$ bound 
\begin{equation}\label{finf-2}
\| (1-v^2)^{-\frac12+C\epsilon} \Omega^{\leq 2} f_\infty^\pm\|_{L^2(H)} \lesssim \epsilon^2  .   
\end{equation}
\end{lem}
The proof is virtually identical to the proof of Lemma~\ref{lem:bmu}, and is omitted. We now return to the proof of the proposition.

\medskip

A direct computation yields
\[
\Box e^{2i \sqrt{t^2-x^2}} = - 4 e^{2i \sqrt{t^2-x^2}} + 6 i(t^2-x^2)^{-\frac12}
e^{2i \sqrt{t^2-x^2}}.
\]
This might suggest that a good approximate solution to our equation
\[
\Box A = 1_C (t^2-x^2)^{-\frac32}  \sum_{\pm}
e^{\mp 2i \sqrt{t^2-x^2}} f_\infty^\pm(v)
\]
should be 
\[
A_{app}= 1_C (t^2-x^2)^{-\frac32}  \sum_{\pm} \pm \frac14
e^{\mp 2i \sqrt{t^2-x^2}} f_\infty^\pm(v),
\]
which has much better $t^{-3}$ decay at infinity. Computing the remaining source term
we get 
\begin{equation}\label{new-source}
\Box (A - A_{app}) = O((t^2-x^2)^{-2}) f_\infty^\pm(v) + O((t^2-x^2)^{-\frac52}) \Delta_H f_\infty^\pm, 
\end{equation}
where $\Delta_H f$ is the hyperbolic space Laplacian, which roughly has form
\[
\Delta_H f_\infty^\pm = \Omega^2 f_\infty^\pm.
\]
The coefficients on the right in \eqref{new-source} have better decay but we still have two problems:
\medskip

(i) Coefficient growth near the cone; but this can be solved by truncating say to the region $t > (1-v^2)^{-10}$, and treating the outside part directly, without the use of the initial $A_{app}$
guess, see Remark~\ref{rem:peel-ext}.

\medskip

(ii) We do not control the pointwise size of $\Delta_H f_\infty^\pm$, only the $L^2$ size.
This can be addressed by dividing $f_\infty^\pm$ into two parts:
\smallskip

(a) a low frequency part for which we control
$\Delta_H f_\infty^\pm$ in $L^\infty$, and

(b) a high frequency part which has smallness in $L^\infty$.

\smallskip

Precisely, given a truncation frequency $\lambda^2 \geq  1$ for the hyperbolic Laplacian, we consider an associated partition of unity
\[
1 = P_{\leq \lambda} + P_{> \lambda},
\qquad P_{\leq \lambda} = \chi_{\leq \lambda^2}(-\Delta_H).
\]
Correspondingly we split 
\[
f_\infty^\pm = P_{\leq \lambda} f_\infty^\pm + 
P_{>\lambda} f_\infty^\pm: = f_{\infty}^{\pm,lo} + f_\infty^{\pm,hi}.
\]
Then for $f_\infty^{\pm,lo}$ we have by  
\eqref{finf-2} and Bernstein's inequality
\begin{equation}
 |\Delta_H f_{\infty}^{\pm,lo}|   \lesssim \epsilon^2\lambda^\frac32 (1-v^2)^{\frac12-C\epsilon}.
\end{equation}
On the other hand for $f_{\infty}^{\pm,hi}$
we get, also by  \eqref{finf-2} and Bernstein's inequality,
\begin{equation}
 |f_{\infty}^{\pm,hi}|   \lesssim \epsilon^2 \lambda^{-\frac12} (1-v^2)^{\frac12-C\epsilon}.
\end{equation}

We insert $f_{\infty}^{\pm,lo}$ into the 
above correction, and solve 
the wave equation with the remaining source term in \eqref{new-source}, 
using pointwise bounds as in the proof of Proposition~\ref{prop:Ainhom}.
This  yields a solution which satisfies a bound of the form
\[
\lesssim \epsilon^2 \lambda^\frac32 (t^2-x^2)^{-1} .
\]
On the other hand we solve the wave equation directly 
for the contribution of $f_{\infty}^{\pm,hi}$, exactly as in 
Proposition~\ref{prop:Ainhom}.
This  yields a solution which satisfies a bound
\[
\lesssim \epsilon^2 \lambda^{-\frac12} (t^2-x^2)^{-\frac12}.
\]

So we arrive at 
\[
|A_{err}^\mu| \lesssim \epsilon^2 \lambda^\frac32 (t^2-x^2)^{-1}+\epsilon^2 \lambda^{-\frac12} (t^2-x^2)^{-\frac12}.
\]
This is true for every $\lambda\geq  1$.
Optimizing the choice of $\lambda$ on each hyperboloid we get 
\[
|A_{err}^\mu| \lesssim  \epsilon^2 (t^2-x^2)^{-\frac58},
\]
which is a better than $t^{-1}$ decay, as claimed in the Proposition.
\end{proof}

We collect our bounds for $A^\mu_{main}$ and $A^\mu_{err}$ and summarize the outcome of this subsection as follows:

\begin{prop}\label{p:asympt-A}
Inside the light cone we have the following asymptotics for $A^{\mu}$:
\begin{equation}
A^\mu = (t^2-x^2)^{-\frac12} a^\mu_\infty
+ O(\epsilon^2 \langle t\rangle^{-1} 
\langle t-r \rangle^{-\delta}).
\end{equation}
\end{prop}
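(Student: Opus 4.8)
The plan is to treat Proposition~\ref{p:asympt-A} as a bookkeeping step in which one collects all the contributions to $A$ produced in Section~\ref{s:A} and in the preceding parts of this section, and verifies that everything apart from the leading term is an admissible error. First I would split $A = A^{hom} + A^{inhom}$ as in Section~\ref{s:A}. By Proposition~\ref{prop:Ahom} the homogeneous part already satisfies $|A^{hom}| \lesssim \epsilon\langle t\rangle^{-1}\langle t-r\rangle^{-\nu/6}$, which is of the required error form once $\delta$ is fixed to be at most $\nu/6$; thus $A^{hom}$ contributes only to the $O(\epsilon^2\langle t\rangle^{-1}\langle t-r\rangle^{-\delta})$ remainder (after the trivial bound $\epsilon \le \epsilon^2/\epsilon$, or simply by absorbing constants).

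Next I would analyze $A^{inhom}$ by inserting the $\psi$-asymptotics \eqref{psi-asymptotic+} into the source $G^\mu = -\overline{\psi}\gamma^\mu\psi$, restricting to the region $C^{int} = \{t > (1-|v|^2)^{-10}\}$ where that expansion is valid; the contribution of the complementary exterior region is handled separately via Remark~\ref{rem:peel-ext}, which already supplies the stronger bound $\lesssim \epsilon t^{-1}\langle t-r\rangle^{-\delta}$ with decay also inside the cone. Squaring the expansion and completing the integration domain to the full cone $C$, the source decomposes into the diagonal part defining $A^\mu_{main}$, the oscillatory off-diagonal part defining $A^\mu_{err}$, and a set of remainder sources: the cross terms pairing the leading profile $(t^2-x^2)^{-3/4}\rho^{\pm}_\infty$ with the $O_H(\epsilon(t^2-x^2)^{-1}t^{c\epsilon})$ remainder in \eqref{psi-asymptotic+}, the term quadratic in that remainder, and the error from replacing $D$ by $C$. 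Each of these remainder sources carries decay strictly better than $t^{-3}$ inside the cone in timelike directions, so the scheme of Proposition~\ref{prop:Ainhom} — reduction to the radial one-dimensional wave equation with its positive fundamental solution, together with the exterior estimate of Remark~\ref{rem:peel-ext} — shows that the corresponding solutions decay faster than $t^{-1}$ along rays and are absorbed into $O(\epsilon^2\langle t\rangle^{-1}\langle t-r\rangle^{-\delta})$.

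Finally I would invoke the two structural results of this subsection: $A^\mu_{main} = (t^2-x^2)^{-1/2}a^\mu_\infty(v)$ exactly, with $a^\mu_\infty$ solving \eqref{eqn:ainf} and satisfying \eqref{eq:ainf-bounds}, and $|A^\mu_{err}| \lesssim \epsilon^2\langle t\rangle^{-1}\langle t-r\rangle^{-\delta}$. Summing the contributions of $A^{hom}$, of the exterior region, of the remainder sources, of $A^\mu_{err}$, and of the exact main term, and then shrinking $\delta$ so that it lies below each exponent that appeared along the way, yields $A^\mu = (t^2-x^2)^{-1/2}a^\mu_\infty(x/t) + O(\epsilon^2\langle t\rangle^{-1}\langle t-r\rangle^{-\delta})$ throughout the light cone, which is the asserted asymptotics.

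The step I expect to be the main obstacle is the treatment of the cross terms in the source: one must check that pairing the leading $\psi$-profile with the relatively slowly decaying remainder in \eqref{psi-asymptotic+} and then solving the wave equation still produces $t^{-1-}$ decay along rays. This is delicate near the cone, where the $t^{c\epsilon}$ loss competes with the $(1-v^2)$ weights, and it is exactly the reason for the truncation to $C^{int}$ together with the separate, cleaner estimate of Remark~\ref{rem:peel-ext} for the exterior piece. Once this bookkeeping is carried out, no further ideas enter.
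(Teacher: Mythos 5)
Your proposal is essentially the paper's own argument: Proposition~\ref{p:asympt-A} is proved there precisely by collecting the preceding results — the exact identity $A^\mu_{main}=(t^2-x^2)^{-\frac12}a^\mu_\infty(v)$, the bound $|A^\mu_{err}|\lesssim \epsilon^2\langle t\rangle^{-1}\langle t-r\rangle^{-\delta}$, the discarded source remainders of size $O(\epsilon^2(t^2-x^2)^{-\frac74})$ coming from the error in \eqref{psi-asymptotic+} and from completing $D$ to the full cone, and Remark~\ref{rem:peel-ext} for the exterior region — so your bookkeeping matches the paper step for step (and your exponent $(t^2-x^2)^{-\frac12}$ is the intended one; the $+\frac12$ in the displayed statement is a typo). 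The one quibble is your claim that $A^{hom}$ can be absorbed into the $O(\epsilon^2\langle t\rangle^{-1}\langle t-r\rangle^{-\delta})$ remainder: Proposition~\ref{prop:Ahom} only gives $O(\epsilon\langle t\rangle^{-1}\langle t-r\rangle^{-\nu/6})$, and $\epsilon$ cannot be traded for $\epsilon^2$, which is exactly why the expansion \eqref{A-asymptotic} in Theorem~\ref{thm:ms} carries an $O(\epsilon\cdot)$ error; so either read the proposition as describing the source-generated part of $A$, or weaken $\epsilon^2$ to $\epsilon$ once $A^{hom}$ is included.
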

As a consequence, we obtain the expansion \eqref{psi-asymptotic} in Theorem~\ref{thm:ms}. The $C^\frac12$ 
bound for $a^\mu_\infty$ also follows from \eqref{eq:ainf-bounds}.

\subsection{ The radiation profile of $\psi$, revisited}
In Section~\ref{sec:rad-psi} we have defined 
the radiation profile $\rho^\pm_\infty$ of $\psi$ using a somewhat arbitrary initialization
for the asymptotic equation, which possibly yields a relatively rough phase rotation
factor.  Our objective here is to redefine the 
phase rotation factor so that we achieve two objectives:

\begin{itemize}
    \item We uniquely define a canonical radiation profile $\rho^\pm_\infty$ for $\psi$.

    \item For this radiation profile we prove 
    vector field bounds.
\end{itemize}

Before we proceed, we remark that redefining the phase of the radiation profile does not change its size, and thus does not affect the right hand side of the coupling equation \eqref{eqn:ainf}.

To motivate our reset of the radiation profiles $\rho^{\pm}_\infty$, we recall the asymptotic equation
\[
 t\frac{d}{dt} \rho^{\pm}(t,v) = i x_\alpha A^\alpha   \rho^{\pm}(t,v) + O(t^{-\frac14}).
\]
Substituting $A^\alpha$ by $(t^2-x^2)^{-\frac12} a^\alpha_\infty$ we arrive at 
\[
 t\frac{d}{dt} \rho^{\pm}(t,v) \approx \frac{i v_\alpha}{\sqrt{1-v^2}} a^\alpha_\infty   \rho^{\pm}(t,v),
\]
where we have set $v_0 = -1$.
For this we have exact solutions of the form 
\[
\rho^{\pm}(t,v) \approx  \rho^{\pm}_\infty e^{i\log(t^2-x^2) \frac{v_\mu}{2\sqrt{1-v^2}} a^\mu_\infty} e^{i \theta}
\]
with an additional phase rotation factor.
Here we chose the factor $\frac12 \log(t^2-x^2)$
rather that the natural $\log t$ factor in order to have better Lorenz invariance.
Apriori $\rho_{\infty}^\pm$, and thus $\theta$, depends on our initialization  for the asymptotic equation; one such initialization was chosen as a starting point at the beginning of this section. 

\bigskip

{\bf Here we  redefine}  $\rho_{\infty}^\pm$
by setting instead $\theta = 0$ in the above 
formula, which is equivalent to setting
\begin{equation}\label{redef}
 \rho^{\pm}_\infty(v) := \lim_{t \to \infty} \rho^{\pm}(t,v)e^{-i\log(t^2-x^2) \frac{v_\mu}{2\sqrt{1-v^2}} a^\mu_\infty}  .
\end{equation}
This in some sense means that we initialize $\theta$ at infinity.

\bigskip

Corresponding to this choice, we will prove the following:

\begin{prop}
a) The limit in \eqref{redef} exists for each $v \in B(0,1)$.

b) The asymptotic profile $\rho^{\pm}_\infty(v)$
satisfies the pointwise bounds
\begin{equation}\label{good-asympt}
  |\rho^{\pm}(t,v) -  \rho^{\pm}_\infty e^{i\log(t^2-x^2) \frac{v_\mu}{2\sqrt{1-v^2}} a^\mu_\infty} e^{i \theta}|  \lesssim \epsilon \langle t-r \rangle^{-\delta},
\end{equation}
as well as the $L^2$ bounds
\begin{equation}\label{l2-rhoinf}
\| (1-v^2)^{-\frac32+C\epsilon}
\Omega^{\leq 2} \rho^{\pm}_\infty\|_{L^2} 
 \lesssim \epsilon  .  
\end{equation}
\end{prop}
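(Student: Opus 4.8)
The plan is to derive the refined radiation profile from the asymptotic equation \eqref{exact-asympt-eqn} by substituting in the now-established asymptotics for $A$ from Proposition~\ref{p:asympt-A}. First I would set $A^\alpha = (t^2-x^2)^{-\frac12} a^\alpha_\infty(v) + E^\alpha$, where $E^\alpha = O(\epsilon^2 \langle t\rangle^{-1}\langle t-r\rangle^{-\delta})$ inside the cone; along a ray $x = vt$ one has $t^2 - x^2 = t^2(1-v^2)$ and $\langle t-r\rangle \sim t(1-v)$ up to harmless factors, so the error term $E^\alpha$, when multiplied by $v_\alpha$ and integrated in $\frac{dt}{t}$ against the ray, is summable and contributes a total phase error of size $O(\epsilon^2\langle t-r\rangle^{-\delta})$. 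Plugging this into the scalar version of the asymptotic equation $t\frac{d}{dt}\rho^\pm = i v_\alpha A^\alpha \rho^\pm + O(t^{-\frac14})$ (after removing the nonresonant term as in $\rho_1^\pm$), the leading behavior is governed by $t\frac{d}{dt}\rho^\pm \approx \frac{i v_\alpha a^\alpha_\infty}{\sqrt{1-v^2}}\rho^\pm$; since $\frac{d}{dt}\log(t^2-x^2) = \frac{2t - 2x\cdot v}{t^2-x^2}\cdot\frac{1}{1} = \frac{2}{t}$ along $x=vt$, the ansatz $\rho^\pm(t,v) = \rho^\pm_\infty \, e^{i\log(t^2-x^2)\frac{v_\mu a^\mu_\infty}{2(1-v^2)}} e^{i\theta}$ solves this exactly, and dividing $\rho^\pm$ by the explicit phase factor gives a quantity whose time derivative is $O(t^{-1}(1-v^2)^{-\frac14}t^{C\epsilon}) + O(\epsilon^2 t^{-1}\langle t-r\rangle^{-\delta})$, which integrates (from $t_0(v)$ to infinity) to a convergent limit. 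This yields parts (a) and (b), i.e. the existence of the limit \eqref{redef} and the pointwise difference bound \eqref{good-asympt}, with the $\langle t-r\rangle^{-\delta}$ gain coming precisely from the $\langle t-r\rangle^{-\delta}$ factor in the $A$ asymptotics.

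For the $L^2$ vector field bound \eqref{l2-rhoinf}, I would follow the interpolation strategy already used for $b^\mu_\infty$ in Lemma~\ref{lem:bmu}: we have, on one hand, the pointwise difference bound between $\rho^\pm(t,v)$ (suitably phase-corrected) and its limit, of size $O(\epsilon(1-v^2)^{?}(t^2-x^2)^{-\delta'}t^{C\epsilon})$ or similar, decaying as $t\to\infty$; on the other hand, the energy estimates on hyperboloids from Proposition~\ref{prop:EnergyEstGamma} combined with \eqref{Energy-H} give $\|\Omega^{\leq 3}\rho^\pm(t,v)\|_{L^2(H_{\leq t})} \lesssim t^{C\epsilon}$, hence after weighting $\|(1-v^2)^{-\frac32+C\epsilon}\Omega^{\leq 2}\rho^\pm(t,v)\|_{L^2(H)} \lesssim t^{C\epsilon}$ — but the weight power $-\frac32$ rather than a positive power is important and has to be checked carefully, using that on a hyperboloid the measure $dV_H$ together with the $(1-v^2)$ weights accounts for the difference between the $L^2(H)$ norm and the $L^2$ energy norm. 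One then interpolates the slowly growing $L^2$ bound against the decaying pointwise difference bound (as in \cite{IT1}) to conclude that the weighted $\Omega^{\leq 2}$-norm of the limit $\rho^\pm_\infty$ is $\lesssim \epsilon$, uniformly. A subtle point: the phase factor $e^{i\log(t^2-x^2)\frac{v_\mu a^\mu_\infty}{2(1-v^2)}}$ carries its own derivatives when $\Omega$ is applied, producing factors of $\log(t^2-x^2)$ times $\Omega^{\leq 2}$ applied to $\frac{v_\mu a^\mu_\infty}{1-v^2}$; by \eqref{eq:ainf-bounds} the quantity $(1-v^2)^{-1}a^\mu_\infty$ is bounded in $C^{2,\frac14}$, so these extra terms are controlled, but they produce an additional $\log t$ which is absorbed into $t^{C\epsilon}$.

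The main obstacle I expect is twofold. First, making the interpolation argument for \eqref{l2-rhoinf} fully rigorous requires tracking the exact powers of $(1-v^2)$ through the change between the Euclidean energy density on hyperboloids (which is what Proposition~\ref{prop:EnergyEstGamma} controls, via \eqref{Energy-H} and the $H$-norm) and the intrinsic $L^2(H, dV_H)$ norm appearing in the statement — the Jacobian factor $d\sigma = \sqrt{t^2+x^2}(t^2-x^2)\,dV_H$ and the various $\frac{t\pm r}{\sqrt{t^2-x^2}}$ coefficients must be bookkept so that one arrives at exactly the weight $(1-v^2)^{-\frac32+C\epsilon}$. Second, one must verify that applying $\Omega$ to the asymptotic equation commutes well enough: the vector fields $\hat\Omega$ generate isometries of the $H$-inner product and commute with the linear Dirac operator, but the coefficient $v_\alpha A^\alpha$ in the asymptotic ODE is not $\Omega$-invariant, so $\Omega^{\leq 2}$ applied to the equation produces commutator terms involving $\Omega^{\leq 2}A$, which must be estimated using the improved bound \eqref{eq:improved TA} for $\mathcal T A$ and the vector field bounds for $A$ — this is where the full strength of the earlier sections is needed, and where one has to be careful that no term grows faster than $t^{C\epsilon}$ along rays. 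Everything else (convergence of the phase integral, the pointwise bound \eqref{good-asympt}) is a routine consequence of the absolutely convergent error integrals, given Proposition~\ref{p:asympt-A}.
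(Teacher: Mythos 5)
Your proposal follows the paper's own proof in both steps. Part (a) and the pointwise bound are obtained, exactly as in the paper, by inserting the $A$-asymptotics of Proposition~\ref{p:asympt-A} into the asymptotic equation of Lemma~\ref{lem:asympt-eqn}, checking that the explicit logarithmic phase removes the leading term along rays, and integrating the remaining error; the $L^2$ bound \eqref{l2-rhoinf} is obtained, again as in the paper, by interpolating the pointwise convergence of the phase-corrected profile against slowly growing vector field bounds on hyperboloids, using \eqref{rho-2} for $\rho^{\pm}$ and \eqref{eq:ainf-bounds} for the phase factor.

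Two corrections, though. First, the error size you quote for the time derivative of the phase-corrected profile, $O(t^{-1}(1-v^2)^{-\frac14}t^{C\epsilon})$, is \emph{not} integrable in $t$ along a ray, so as written your convergence claim does not follow from it; the correct input is the $O(\epsilon t^{-\frac32+c\epsilon}(1-v^2)^{-\frac14})$ error in \eqref{exact-asympt-eqn}, together with the $O(\epsilon^2 t^{-1}\langle t-r\rangle^{-\delta})$ contribution from replacing $A$ by its profile, which the paper bundles into $O(\epsilon t^{-1}\langle t-r\rangle^{-\delta})$; since $\langle t-r\rangle\approx t(1-v^2)$ along a fixed ray this is integrable and directly yields the $\langle t-r\rangle^{-\delta}$ difference bound \eqref{good-asympt}. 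Second, the ``main obstacle'' you flag --- commuting $\Omega^{\leq 2}$ through the asymptotic ODE and controlling the commutators via \eqref{eq:improved TA} --- is not needed: the paper never differentiates the asymptotic equation. It applies the vector fields directly to the fixed-time quantity $\rho^{\pm}(t,v)e^{-i\log(t^2-x^2)\frac{v_\mu}{2(1-v^2)}a^\mu_\infty}$, using the already established bounds \eqref{rho-2} (three vector fields for $\rho^{\pm}$) and \eqref{eq:ainf-bounds} (slightly more than two derivatives for $a^\mu_\infty$) to get an $H^{2+\frac14}(H)$ bound growing like $(t^2-x^2)^{c\epsilon}$, and then interpolates this against the pointwise convergence estimate --- which is precisely the strategy of your second paragraph, so the flagged obstacle is a detour rather than a necessity. (Also note that the coefficient $\frac{v_\mu}{2(1-v^2)}$ in the phase, versus the $\frac{v_\alpha}{\sqrt{1-v^2}}$ coefficient in the reduced ODE, is an inconsistency you inherit from the paper's Section 8 notation; your claim that the ansatz ``solves this exactly'' requires the phase consistent with \eqref{psi-asymptotic}, namely $\frac{x_\mu a^\mu_\infty}{2\sqrt{t^2-x^2}}\log(t^2-x^2)$.)
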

By Sobolev embeddings, \eqref{l2-rhoinf}
gives the $C^{\frac12}$ regularity for 
$\rho_{\infty}^\pm$ in Theorem~\ref{thm:ms}. On the other hand 
the estimate \eqref{good-asympt} combined with Lemma~\ref{l:asympt} yield the asymptotic expansion \eqref{psi-asymptotic} in the same theorem, completing its proof.

\begin{proof}
a) Using the asymptotic equation for 
$\rho^\pm$ in Lemma~\ref{lem:asympt-eqn} and the asymptotic expansion for $A^\mu$ in Proposition~\ref{p:asympt-A} we compute
\[
\begin{aligned}
\frac{d}{dt} [\rho^{\pm}(t,v) e^{-i\log(t^2-x^2) \frac{v_\mu}{2\sqrt{1-v^2}} a^\mu_\infty}] = & \ it^{-1} (x_\mu A^\mu(t,vt)- \frac{v_\mu}{\sqrt{1-v^2}} a^\mu_\infty))  
\rho^{\pm}(t,v)e^{-i\log(t^2-x^2) \frac{v_\mu}{2\sqrt{1-v^2}} a^\mu_\infty}
\\ & \ + O(\epsilon t^{-1} \langle t-r \rangle^{-\delta})
\\ = & \   O(\epsilon t^{-1} \langle t-r \rangle^{-\delta}).
\end{aligned}
\]

The error is integrable along time-like rays $x = vt$, which implies that the limit in \eqref{redef} exists. Furthermore, integrating, we obtain the difference bound
\begin{equation}\label{redef-est}
 |\rho^{\pm}_\infty(v) -  \rho^{\pm}(t,v)e^{-i\log(t^2-x^2) \frac{v_\mu}{2\sqrt{1-v^2}} a^\mu_\infty}| \lesssim \epsilon  \langle t-r \rangle^{-\delta} .
\end{equation}
To obtain vector field bounds for $\rho^{\pm}$,
we use vector field bounds for the second term above. For $\rho^{\pm}$ we can use three vector fields (hyperbolic derivatives), see \eqref{rho-2}. For $a^\mu_\infty$ we can use 
slightly more than two derivatives, see \eqref{eq:ainf-bounds}. Combining the two we obtain 
\begin{equation}\label{redef-vf}
\|  \rho^{\pm}(t,v)e^{-i\log(t^2-x^2) \frac{v_\mu}{2\sqrt{1-v^2}} a^\mu_\infty}\|_{H^{2+\frac14}(H)} \lesssim \epsilon (t^2-x^2)^{c\epsilon}.
\end{equation}
The bound \eqref{l2-rhoinf} then follows by interpolating between \eqref{redef-est} and \eqref{redef-vf}.
\end{proof}

\bibliographystyle{abbrv}
\bibliography{ModifiedScatteringMaxwellDirac3D} 
 
\end{document}